\newcommand{\beqref}[1]{\eqref{#1}}
\newcommand{\bbref}[2]{#1\;\ref{#2}}
\newcommand{\itemup}[1]{\item[\textup{#1}]}
\newcommand{\widebar}[2]{\mkern #1mu\overline{\mkern-#1mu#2\mkern-#1mu}\mkern #1mu}
\numberwithin{equation}{section}
\newtheorem{thm}{Theorem}[section]
\newtheorem{cor}[thm]{Corollary}
\newtheorem{lem}[thm]{Lemma}
\theoremstyle{definition}
\newtheorem{rem}[thm]{Remark}
\newtheorem{defi}[thm]{Definition}
\newcommand{\R}{{\mathbb R}}
\newcommand{\Sp}{{\mathbb S}}
\newcommand{\Z}{{\mathbb Z}}
\DeclareMathOperator{\arcsinh}{arcsinh}
\DeclareMathOperator{\dist}{dist}
\DeclareMathOperator{\capa}{cap}
\DeclareMathOperator{\area}{area}
\DeclareMathOperator{\sys}{sys}
\newcommand{\vol}{\textup{vol}}
\DeclareMathOperator{\cl}{cl}
\newcommand{\scp}[2]{\big\langle #1,#2 \big\rangle}
\newcommand{\smscp}[2]{\langle #1,#2 \rangle}
\newcommand{\pt}{\rule{1pt}{0pt}}
\newcommand{\lin}{{\scriptscriptstyle \mathsf{lin}}}
\newcommand{\nlin}{{\scriptscriptstyle \mathsf{nl}}}
\newcommand{\sF}{{\scriptscriptstyle\mathsf F}}
\newcommand{\sG}{{\scriptscriptstyle\mathsf G}}
\newcommand{\ssR}{{\scriptscriptstyle\mathscr{R}}}
\newcommand{\FG}{\text{\tiny\raisebox{0.3ex}{$\Lambda$}}}
\newcommand{\lef}{\mathsf{L}}
\newcommand{\rig}{\mathsf{R}}
\newcommand{\Ess}{\mathscr{E}}
\newcommand{\Esss}[1]{\mathscr{E}_{#1}}
\newcommand{\mT}{\tau}
\newcommand{\sR}{\mathscr{R}}
\newcommand{\Hv}[1]{H^{1,v}(#1,\R)}
\newcommand{\Hstar}{\hspace{0.1ex}{\star}}
\newcommand{\defeq}{\overset{\text{\tiny def}}{=}}
\newcommand{\ssqrt}[1]{\sqrt{\text{\raisebox{-0.4ex}{\rule{0ex}{2.3ex}}} \smash{#1}}}
\title{Energy distribution of harmonic 1-forms and Jacobians of Riemann surfaces with a short closed geodesic}
\author{Peter Buser, Eran Makover, Bjoern Muetzel and Robert Silhol}
\begin{document}
\maketitle

\begin{abstract}
We study the energy distribution of harmonic 1-forms on a compact hyperbolic Riemann surface $S$ where a short closed geodesic is pinched. If the geodesic separates the surface into two parts, then the Jacobian variety of $S$ develops into a variety that splits. If the geodesic is nonseparating then the Jacobian degenerates. The aim of this work is to get insight into this process and give estimates in terms of geometric data of both the initial surface $S$ and the final surface, such as its injectivity radius and the lengths of geodesics that form a homology basis. The Jacobians in this paper are represented by Gram period matrices.

As an invariant we introduce new families of symplectic matrices that compensate for the lack of full dimensional Gram-period matrices in the noncompact case.
\\
\\
Mathematics Subject Classifications: 14H40, 14H42, 30F15 and 30F45\\
Keywords: Riemann surfaces, Jacobian tori, harmonic 1-forms and Teichm\"uller space.\\
\end{abstract}

\tableofcontents

\section{Introduction}\label{sec:Introd}

This paper grew out of the following question: given a compact hyperbolic Riemann surface $S$ of genus $g \geq 2$ and a simple closed geodesic $\gamma$ separating it into two parts $S_1$, $S_2$; if $\gamma$ is fairly short, e.g.\ if it has length $\ell(\gamma) \leq \frac{1}{2}$, does that imply that the Jacobian of $S$ comes close to a direct product? 

We shall answer this and related questions by analysing the energy distribution of real harmonic 1-forms on thin handles. 

\medskip

{\bf Energy distribution.}\; In what follows a differential form or 1-form on a Riemann surface $S$ (compact or not) shall always be understood to be a \emph{real} differential 1-form. If it is harmonic we shall call it a harmonic form. Given two differential forms $\omega$, $\eta$ we denote by $\omega \wedge \eta$ the wedge product and by $\star \eta$ the Hodge star operator applied to $\eta$. The \emph{pointwise} norm squared of $\omega$ is denoted by $\Vert \omega \Vert^2 = \omega \wedge \star \omega$. When necessary we write $\Vert \omega(p) \Vert^2$ to indicate that it is evaluated at point $p \in S$. The integral of an integrable function $f$ over a domain $D \subseteq S$ shall be denoted by $\int_D f$. Provided the integral exists we define the \emph{energy} of a differential form $\omega$ over  $D$ as
 \begin{equation}\label{eq:energy}%
E_D(\omega) = \int_D \Vert \omega \Vert^2.
 \end{equation}%
When $D = S$ we write, more simply,  $E_S(\omega) = E(\omega)$ and call it \emph{the} energy of $\omega$. We also work with the scalar product for 1-forms $\omega$, $\eta$ of class $L^2$ on $S$,
 \begin{equation}\label{eq:scalpr}%
\scp{\omega}{\eta} = \int_S \omega \wedge \star \eta.
 \end{equation}%

In this paper we make extensively use of the fact that when $S$ is compact, then in each cohomology class of closed 1-forms on $S$ there is a unique harmonic form and that the latter is an energy minimiser, i.e.\  the harmonic form is the unique element in its cohomology class that has minimal energy.

Our main technical tool will be the analysis of the energy distribution of a harmonic form in a cylinder (Section \ref{sec:HarmCy}). By the Collar Lemma for hyperbolic surfaces, discussed in \bbref{Section}{sec:ColCus}, any simple closed geodesic $\gamma$ on $S$ is embedded in a cylindrical neighbourhood $C(\gamma)$ whose width is defined by $\gamma$, the so called standard collar. The gray shaded area in \bbref{Fig.}{fig:gamma_small} is an illustration in the case where $\gamma$ is separating. A first application of this analysis towards the  question asked above is the following, where the constant $\mu(\gamma)$ is defined as
 \begin{equation}\label{eq:mugamma_intro}%
 \mu (\gamma) = \exp\left(-2\pi^2\left(\frac{1}{\ell(\gamma)}-\frac{1}{2}\right)\right).
 \end{equation}%
 \begin{thm}[Vanishing theorem]\label{thm:energy_distribution}%
Let $S$ be a compact  hyperbolic Riemann surface and let $\gamma$ be a simple closed geodesic of length $\ell(\gamma) \leq \frac{1}{2}$ separating $S$ into two parts $S_1$, $S_2$. If $\sigma$ is a non-trivial harmonic 1-form on $S$ all of whose periods over cycles in $S_2$ vanish, then
\begin{enumerate}
\itemup{a)}
$E_{S_2}(\sigma) \leq \mu(\gamma) E(\sigma)$
\itemup{b)} $E_{S_2 \smallsetminus C(\gamma)}(\sigma) \leq \mu^2(\gamma) E_{C(\gamma)}(\sigma)$.
\end{enumerate}
 \end{thm}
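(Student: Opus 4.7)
The plan is to observe that the hypothesis forces $\sigma|_{S_2}$ to be exact with harmonic antiderivative, and then to use Fourier analysis on the standard collar $C(\gamma)$ to compare $\sigma$'s behavior on the two sides of $\gamma$. Specifically, since $\gamma$ bounds both $S_1$ and $S_2$ in $S$, closedness of $\sigma$ yields $\int_\gamma\sigma = 0$; combined with the hypothesis of vanishing periods on $S_2$-cycles, this gives $\sigma|_{S_2} = df$ for a harmonic function $f\colon S_2\to\R$, which I normalize via $\int_\gamma f = 0$. Since $H^1(C(\gamma))\cong\R$ is generated by $\gamma$, one also has $\sigma|_{C(\gamma)} = dF$ with $F$ matching $f$ on $S_2\cap C(\gamma)$.

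\textbf{Collar Fourier analysis.} On the standard collar, in Fermi coordinates $(\rho,s)\in[-w,w]\times\R/\ell(\gamma)\Z$ with metric $d\rho^2+\cosh^2\rho\,ds^2$, pass to the conformal coordinate $\phi\in[-T,T]$ where $T = \arccos(\tanh(\ell(\gamma)/2))$. Fourier-expand $F(\rho,s) = F_0(\rho) + \sum_{n\neq 0}F_n(\rho)e^{2\pi i n s/\ell(\gamma)}$; each nonzero mode is $F_n(\phi) = A_n e^{2\pi n\phi/\ell(\gamma)} + B_n e^{-2\pi n\phi/\ell(\gamma)}$. Since $F$ matches $f$ on $S_2\cap C(\gamma)$ and $f$ extends to a bounded harmonic function on all of $S_2$, $F_n$ on the $S_2$-half is predominantly the exponential solution decaying into $S_2$; its Fourier amplitude is a factor $e^{-2\pi n T/\ell(\gamma)}$ smaller at $\gamma_-$ than at $\gamma$. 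Combined with the standard collar identity $\sinh w\sinh(\ell(\gamma)/2)=1$, the mode-by-mode energy ratio across the half-collar is bounded above by $\mu(\gamma)$, with an additional decay of the same factor from $S_2\cap C(\gamma)$ into $S_2\setminus C(\gamma)$ producing the squared bound $\mu^2(\gamma)$.

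\textbf{Upgrading via energy minimization.} To convert the mode-wise comparison into the stated theorem, I would use the energy-minimizing property of $\sigma$ in its cohomology class. Construct a competitor $\tilde\sigma = \sigma - d\tilde f$ by choosing a smooth function $\tilde f$ on $S$ equal to $f$ on $S_2\setminus C(\gamma)$ (or, for (b), on all of $S_2\cap C(\gamma)$), vanishing on $S_1\setminus C(\gamma)$, with a cutoff interpolation inside the collar. Then $\tilde\sigma$ vanishes on $S_2\setminus C(\gamma)$ and $\tilde\sigma = \sigma$ on $S_1\setminus C(\gamma)$, so the minimization $E(\sigma)\leq E(\tilde\sigma)$ rearranges to
\[
E_{S_2\setminus C(\gamma)}(\sigma) + E_{C(\gamma)}(\sigma) \leq E_{C(\gamma)}\bigl(d(F - \tilde f)\bigr),
\]
with the corresponding half-collar version for (b). Minimizing the right-hand side over admissible $\tilde f$ reduces it to the Fourier-mode comparison above, producing $\mu(\gamma)$ in (a) and $\mu^2(\gamma)$ in (b).

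\textbf{Main obstacle.} The principal technical difficulty is the zero Fourier mode, whose harmonic ODE solutions $F_0'(\rho) = C/\cosh\rho$ do not exhibit exponential decay. The normalization $\int_\gamma f = 0$ eliminates the constant part of the boundary trace, and the constraint that $f$ extends to a bounded harmonic function on all of $S_2$ pins the remaining constant $C$ through a Dirichlet-to-Neumann relation on $S_2$, whose contribution can be absorbed. Careful bookkeeping through the Fourier computation, using the hypothesis $\ell(\gamma)\leq 1/2$ (which guarantees that the conformal half-width $T$ is close enough to $\pi/2$), then produces the sharp exponent $2\pi^2(1/\ell(\gamma)-1/2)$ defining $\mu(\gamma)$.
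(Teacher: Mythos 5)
Your overall strategy is the same as the paper's: conformally map the collar to a flat cylinder, Fourier-expand the primitive of $\sigma$, cut off the exponentially decaying modes, and invoke the energy-minimizing property of harmonic representatives. The paper's dampening lemmas (\bbref{Lemmas}{lem:dampening} and \bbref{}{lem:dampening2}) formalize precisely the mode-by-mode comparison you sketch. However, there are three concrete gaps in the write-up.

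First, the cutoff region for part (a) is set up incorrectly. The inequality you derive, $E_{S_2\setminus C(\gamma)}(\sigma) + E_{C(\gamma)}(\sigma)\le E_{C(\gamma)}(d(F-\tilde f))$, controls $E_{S_2\setminus C(\gamma)}(\sigma)$, but (a) bounds $E_{S_2}(\sigma) = E_{S_2\setminus C(\gamma)}(\sigma) + E_{S_2\cap C(\gamma)}(\sigma)$, and your competitor says nothing about the half-collar energy $E_{S_2\cap C(\gamma)}(\sigma)$. To capture that term, the test form must vanish on \emph{all} of $S_2$, which forces the cutoff to sit entirely inside a narrow strip of $S_1\cap C(\gamma)$ — not "inside the collar" in general. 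This is what the paper does: $\chi$ lives on $[-w,0]\times\Sp_1$ with $w=\tfrac18$, and $H\equiv 0$ for $x\ge 0$ (see \beqref{eq:H_separating}), so the two halves of the collar enter the minimization asymmetrically.

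Second, the treatment of the zero mode (the linear part $b_0\,dx$) is unnecessary and misleading as you describe it. No Dirichlet-to-Neumann relation on $S_2$ is needed. The dampening only alters the nonlinear part; by the \bbref{Orthogonality Lemma}{thm:lemorthogonal} the linear part's energy contribution is identical for $\sigma$ and for the dampened competitor in the cutoff strip, so it cancels in the comparison, and the part of the linear energy on the $S_2$ side appears on the bounded (left-hand) side of the minimization inequality and needs no separate control. For part (b) the paper does need a specific normalization $a_0 = b_0 M$ (so $h^{\lin}$ vanishes at the far end), but this is a choice of additive constant, not a consequence of a boundary operator on $S_2$.

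Third, the mechanism for the $\mu^2(\gamma)$ bound in (b) is not "additional decay from $S_2\cap C(\gamma)$ into $S_2\setminus C(\gamma)$." The doubling of the exponent comes from a two-stage dampening \emph{within} the collar: dampen $h^+$ near $x\approx -M$ after normalizing the linear part, then dampen $h^-$ near $x\approx M$; the combination yields a factor $e^{-8\pi M + O(1)}$ (versus $e^{-4\pi M + O(1)}$ for part (a)), and one further converts the resulting bound in terms of $E_{Z_M}(dh^-)$ to one in terms of $E_{C(\gamma)}(\sigma)$. Without that second dampening step you only recover the exponent of $\mu(\gamma)$, not $\mu^2(\gamma)$.
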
%

In other words, if $\sigma$ ``lives on $S_1$'' then its energy decays rapidly along $C(\gamma)$ and stays small in the rest of $S_2$. The proof is given in \bbref{Section}{sec:SepCas}. For technical reasons it is split: statement a) appears as the first part of \bbref{Theorem}{thm:small_blocks} and statement b) appears as \bbref{Theorem}{thm:EngInC}.

\medskip

{\bf Jacobian and Gram period matrix.}\; We denote by $H_1(S,\Z)$ the first homology group and by $H^1(S,\R)$ the vector space of all real harmonic 1-forms on $S$. Together with the scalar product \beqref{eq:scalpr} $H^1(S,\R)$ is a Euclidean vector space of dimension $2g$, and $H_1(S,\Z)$ is an Abelian group of rank $2g$.

For any homology class $[\alpha] \in H_1(S,\Z)$ and any closed 1-form $\omega$ the \emph{period} of $\omega$ over $[\alpha]$ is the path integral $\int_{\alpha}\omega$, defined independently of the choice of the cycle $\alpha$ in the homology class and we write it also in the form $\int_{\alpha}\omega = \int_{[\alpha]}\omega$. When $[\alpha]$ is the class of a simple closed curve we usually take $\alpha$ to be a geodesic (with respect to the hyperbolic metric on $S$). We call a set of $2g$ oriented simple closed geodesics

 \begin{equation*}%
 {\rm A} = (\alpha_1, \alpha_{2},\ldots,\alpha_{2g-1},\alpha_{2g})
 \end{equation*}%
a \emph{canonical homology basis} if for $k = 1, \dots, g$,
$\alpha_{2k-1}$ intersects $\alpha_{2k}$ in exactly one point, with algebraic  intersection number +1, and if for all other couples $i < j$ one has $\alpha_i \cap \alpha_j = \emptyset$. For given $\rm A$ we let $\left( {\sigma_k } \right)_{k = 1,\ldots,2g} \subset H^1(S,\R)$ be the \emph{dual basis of harmonic forms} 

 \begin{equation*}%
\int_{[\alpha_i]} {\sigma_k } = \delta_{ik} , \quad i,k = 1,\dots, 2g.
 \end{equation*}%
It is at the same time a basis of the lattice $\Lambda = \{n_1\sigma_1 + \dots + n_{2g}\sigma_{2g} \mid n_1,\dots,n_{2g} \in \Z \}$, where $\Lambda$ is also defined intrinsically as the set of all $\lambda \in H^1(S,\R)$ for which all periods are integer numbers. The quotient $J(S) = H^1(S,\R)/\Lambda$ of the Euclidean space $H^1(S,\R)$ by the lattice $\Lambda$ is a real $2g$-dimensional flat torus, the (real) \emph{Jacobian variety} of the Riemann surface $S$ (for the relation with the complex form see, e.g., \cite[Chapter III]{FK92}). Up to isometry, the lattice $\Lambda$ is determined by the Gram matrix

 \begin{equation*}%
P_S=\left(\left\langle {\sigma_i,\sigma_j} \right\rangle  \right)_{i,j= 1,\ldots,2g}=\left( \int_S {\sigma_i  \wedge \star \sigma_j } \right)_{i,j= 1,\ldots,2g}
 \end{equation*}%
(suppressing the mentioning of ${\rm A}$). As a Riemannian manifold $J(S)$ is isometric to the standard torus $\R^{2g}/\Z^{2g}$ endowed with the flat Riemannian metric induced by the matrix $P_S$. Hence, knowing $J(S)$ is the same as knowing $P_S$. We call $P_S$ the \emph{Gram period matrix} of $S$ with respect to ${\rm A}$.
\medskip

{\bf Going to the boundary of $\mathcal{M}_g$.}\; Let $\mathcal{M}_g$ be the moduli space of all isometry classes of compact Riemann surfaces of genus $g$, $g\geq2$ and let $\partial \mathcal{M}_g$ be its boundary in the sense of Deligne-Mumford. The members of $\partial \mathcal{M}_g$ are \emph{Riemann surfaces with nodes} where some collars are degenerated into cusps (e.g. Bers \cite{Be74}).

Several authors have studied the Jacobian under the aspect of \emph{opening up nodes} i.e., by giving variational formulas in terms of $t$ for the period matrix along paths $F_t$ in $\mathcal{M}_g \cup \partial \mathcal{M}_g$ that start on the boundary, for $t=0$, and lead inside $\mathcal{M}_g$ for $t \neq 0$; see e.g. \cite{Fay73}, \cite{Ya80}, \cite[Appendix]{IT92}, \cite{Fa07} and, more recently, the accounts \cite{GKN17}, \cite{HN18}, where one may also find additional literature. 

In contrast to this, our starting point is a surface $S$ inside $\mathcal{M}_g$, a certain amount away from the boundary. Therefore, in order to formulate an answer to the question asked above we first have to  determine some element on the boundary $\partial\mathcal{M}_g$ whose (limit) Jacobian we may use for comparison.  We shall apply two constructions that yield paths from $S$ to such an element on the boundary, one uses Fenchel-Nielsen coordinates (e.g.\ \cite[Chapter 1.7]{Bu92}),  the other construction is grafting.

\medskip

\textit{Fenchel-Nielsen construction}\;  

This way of going to the boundary seems to have been applied for the first time in \cite{CC89} in connection with the small eigenvalues of the Laplace-Beltrami operator. 

Fix a set of Fenchel-Nielsen coordinates corresponding to a partition of $S$  into hyperbolic pairs of pants, where the partition is chosen such that $\gamma$ is among the partitioning geodesics and  $\ell(\gamma)$ is one of the coordinates. For any $t > 0$, in particular for $t \in (0, \ell(\gamma)]$, there is a Riemann surface $S_t$ that has the same Fenchel-Nielsen coordinates as $S$ for the same partitioning pattern, with the sole exception that the coordinate $\ell(\gamma)$ is changed to $\ell(\gamma_t) = t$. Furthermore, there is a natural marking homeomorphism $\phi_t : S \to S_t$ that sends the partitioning geodesics of $S$ to those of $S_t$ and preserves the twist parameters. Hence, a path $\big(S_t\big)_{t \leq \ell(\gamma)}$ in $\mathcal{M}_g$, where ``$\ell(\gamma)$ is shrinking to zero.'' 

If $\gamma$ is a nonseparating geodesic then the limit for $t \to 0$ is a noncompact hyperbolic surface $S^{\sF}$ of genus $g-1$ with two cusps, and the marking homeomorphisms $\phi_t$ degenerate into a marking homeomorphism $\phi^{\sF} : S \smallsetminus \gamma \to S^{\sF}$. 

If $\gamma$ separates $S$ into two bordered surfaces $S_1$, $S_2$, say of genera $g_1$, $g_2$, then the limit is a pair of noncompact hyperbolic surfaces $S_1^{\sF}$, $S_2^{\sF}$ of genera $g_1$, $g_2$ each with one cusp, and the marking homeomorphisms $\phi_t$ degenerate into a couple of marking homeomorphisms $\phi_i^{\sF}: S_i \smallsetminus \gamma \to S_i^{\sF}$, $i=1,2$.

\begin{figure}[h!]
\SetLabels
\L(.50*.93) $\gamma$\\
\L(.40*.76) $S_1$\\
\L(.61*.76) $S_2$\\
\L(.38*.58) $S_t$\\
\L(.61*.58) $S(l)$\\
\L(.33*.20) $S^{\sF}$\\
\L(.67*.20) $S^{\sG}$\\
\L(.23*.0) $S_1^{\sF}$\\
\L(.44*.0) $S_2^{\sF}$\\
\L(.54*.0) $S^{\sG}_1$\\
\L(.825*.0) $S_2^{\sG}$\\
\L(.25*.70) $\mathcal{M}_g$\\
\L(.90*.37) $\partial \mathcal{M}_g$\\
\endSetLabels
\AffixLabels{%
\centerline{%
\includegraphics[height=7.2cm,width=12.6cm]{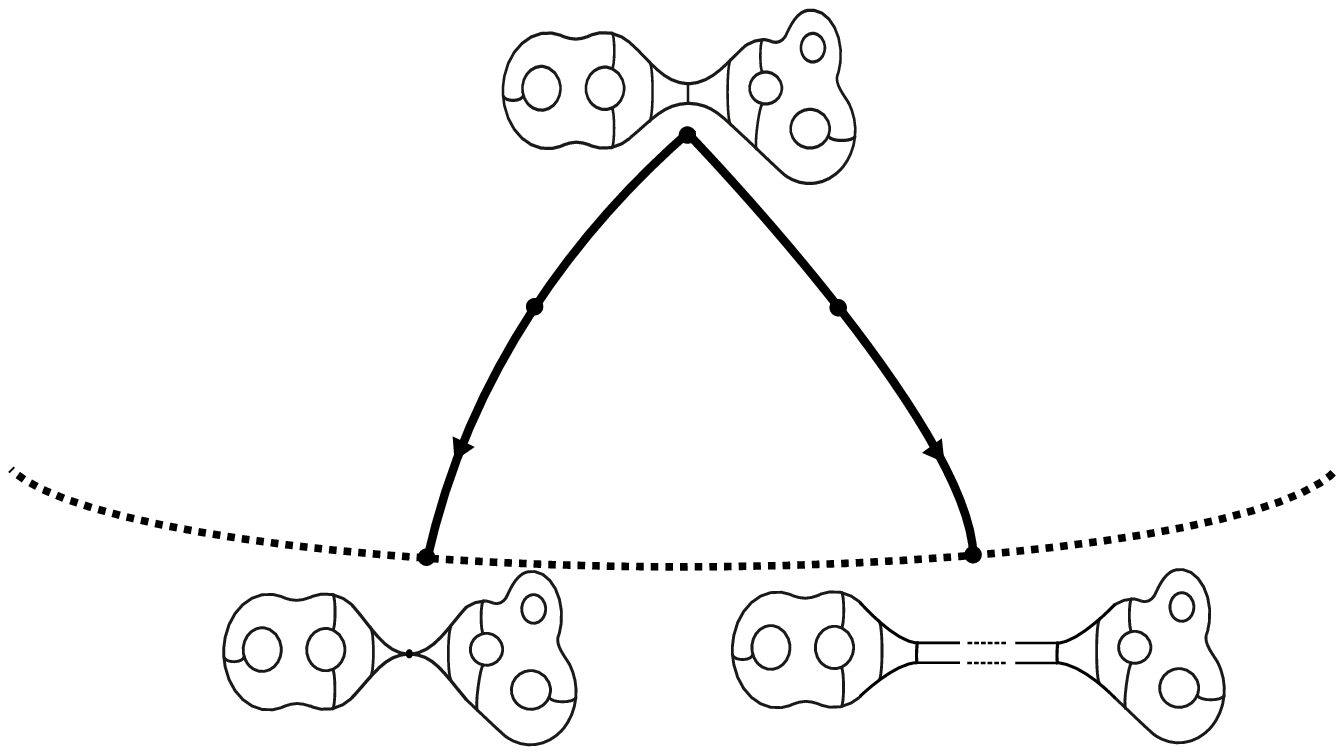}}}
\caption{Starting with a surface $S$ that has a separating simple closed geodesic $\gamma$, we approach the boundary $\partial \mathcal{M}_g$ of moduli space in two different ways.}
\label{fig:boundary}
\end{figure}

\textit{Grafting construction}\;  This is more widespread in the literature (see e.g.\ \cite{McM98} or \cite{DW07} for an account). The idea is to cut $S$ open along $\gamma$ and insert a Euclidean cylinder  $[0,l] \times \gamma$, $l \geq 0$ with the appropriate twist parameter (details below \beqref{eq:Lgamma}). The resulting family of surfaces $\big(S(l)\big)_{l \geq 0}$ is a path in $\mathcal{M}_g$ that converges to a boundary point which is a surface $S^{\sG}$ of genus $g-1$ with two cusps if $\gamma$ is nonseparating and a pair of surfaces $S_1^{\sG}, S_2^{\sG}$ of genera $g_1$, $g_2$ each with one cusp if $\gamma$ is separating. Here too we have natural marking homeomorphisms $\phi_{(l)}: S \to S(l)$ and their limits $\phi^{\sG} : S \smallsetminus \gamma \to S^{\sG}$ respectively, $\phi_i^{\sG}: S_i \smallsetminus \gamma \to S_i^{\sG}$, $i=1,2$. 

The hyperbolic metrics in the conformal classes of $S^{\sG}$, respectively $S_1^{\sG}$, $S_2^{\sG}$ are given by the Uniformization theorem, but there is no explicit way known to compute them. This ``drawback'' is compensated by the better comparison with the limit Jacobians in the results below.

Figure \ref{fig:boundary} illustrates the two ways to go to the  boundary in the case of a separating geodesic. The limit surfaces are depicted as noded surfaces with the two cusps pasted together at the node, i.e. along their ideal points at infinity.

\medskip

{\bf Separating case.}\; This is the context of the question asked in the beginning. We shall compare the Jacobian of $S$ with the direct products of the Jacobians of $S_1^{\sF}, S_2^{\sF}$ respectively, $S_1^{\sG}, S_2^{\sG}$ in terms of Gram period matrices.

To this end we choose the homology basis ${\rm A} = \big(\alpha_i \big)_{i=1,\dots,2g}$ in such a way that $\alpha_1, \dots, \alpha_{2g_1} \subset S_1$ and $\alpha_{2g_1+1}, \dots, \alpha_{2g} \subset S_2$. On $S_1^{\sF}$, more precisely on the one point compactification $\widebar{1.5}{S_1^{\sF}}$, the homology classes $[\phi_1^{\sF} \circ \alpha_i]$, $i=1, \dots, 2g_1$ form a canonical homology basis. We let $\big(\sigma_i^{\sF}\big)_{i=1,\dots,2g_1}$ be the corresponding dual basis of harmonic forms on $\widebar{1.5}{S_1^{\sF}}$ and denote the associated Gram period matrix by $P_{S_1^{\sF}}$. The matrices $P_{S_2^{\sF}}$, $P_{S_1^{\sG}}$, $P_{S_2^{\sG}}$ are defined in the same way.

With this setting we have the following result, where the members in the block matrices 
${
\left(\begin{smallmatrix}
R_1&\Omega\\
{}^{\rm t} \Omega&R_2
\end{smallmatrix}\right)}
$
are enumerated consecutively disregarding their relative positions in the blocks, so that e.g. the first line is $r_{11}, \dots, r_{1,2g_1}, \omega_{1,2g_1+1}, \dots, \omega_{1,2g}$ and the last is $\omega_{2g,1}, \dots, \omega_{2g,2g_1}, r_{2g,2g_1+1}, \dots, r_{2g,2g}$.

\begin{thm}\label{thm:small_scg_intro}
Assume that the separating geodesic $\gamma$ has length $\ell(\gamma) \leq \frac{1}{2}$. Then
 \begin{equation*}%
P_S=
\left(\rule{-2pt}{0pt} {\begin{array}{*{20}c}
   P_{S_1^{\sF}} & 0  \\
   0 & P_{S_2^{\sF}}  
\end{array}} \rule{-2pt}{0pt}\right)
+
\left(\rule{-2pt}{0pt} {\begin{array}{*{20}c}
   R_1^{\sF} & \Omega  \vphantom{P_{S_1^{\sF}}}\\
   {}^{\rm t} \Omega & R_2^{\sF}  \vphantom{P_{S_1^{\sF}}}
\end{array}} \rule{-2pt}{0pt}\right),
\quad
P_S=
\left(\rule{-2pt}{0pt} {\begin{array}{*{20}c}
   P_{S_1^{\sG}} & 0  \\
   0 & P_{S_2^{\sG}}  
\end{array}} \rule{-2pt}{0pt}\right)
+
\left(\rule{-2pt}{0pt} {\begin{array}{*{20}c}
   R_1^{\sG} & \Omega  \vphantom{P_{S_1^{\sG}}}\\
   {}^{\rm t} \Omega & R_2^{\sG}  \vphantom{P_{S_1^{\sG}}}
\end{array}} \rule{-2pt}{0pt}\right),
\end{equation*}%
where the remainder matrices $\Omega = \big(\omega_{ij}\big)$, $R_k^{\sF} = \big(r_{ij}^{\sF} \big)$, $R_k^{\sG} = \big(r_{ij}^{\sG} \big)$ have the following bounds,
\begin{enumerate}
\itemup{i)} $\vert \omega_{ij} \vert \leq e^{-2\pi^2(\frac{1}{\ell(\gamma)}-\frac{1}{2})} \sqrt{p_{ii}p_{jj}\mathstrut}$,
\itemup{ii)} $\vert r_{ij}^{\sG}\vert  \leq  e^{-4\pi^2(\frac{1}{\ell(\gamma)}-\frac{1}{2})} \sqrt{p_{ii}p_{jj}\mathstrut}$,
\itemup{iii)} $\vert r_{ij}^{\sF}\vert  \leq  6 \ell(\gamma)^2 \sqrt{p_{ii}p_{jj}\mathstrut}$,
\end{enumerate}
and for the $p_{ii}$, $p_{jj}$ we may take either, the diagonal elements of $P_S$ or the diagonal elements of the corresponding limit matrix.
\end{thm}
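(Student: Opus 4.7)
The starting point is that $\gamma$ is separating, so the cycles $\alpha_1,\dots,\alpha_{2g_1}$ lie in $S_1$, the cycles $\alpha_{2g_1+1},\dots,\alpha_{2g}$ lie in $S_2$, and $[\gamma]=0$ in $H_1(S,\Z)$. Combined with the duality $\int_{\alpha_k}\sigma_i=\delta_{ik}$, this shows that every $\sigma_i$ with $i\leq 2g_1$ has zero period over every cycle of $S_2$, and symmetrically for $i>2g_1$. Theorem~\ref{thm:energy_distribution} then furnishes $E_{S_2}(\sigma_i)\leq \mu(\gamma)\,p_{ii}$ and $E_{S_2\setminus C(\gamma)}(\sigma_i)\leq \mu(\gamma)^2\,p_{ii}$ for $i\leq 2g_1$, together with the mirror inequalities for $i>2g_1$. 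This exponential concentration of each $\sigma_i$ on ``its side'' of $\gamma$ is the engine of the whole proof.

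For (i), fix $i\leq 2g_1<j$. A direct Cauchy--Schwarz on $\omega_{ij}=\int_S\sigma_i\wedge\star\sigma_j$ using only part a) of Theorem~\ref{thm:energy_distribution} would only yield the weaker bound $2\sqrt{\mu(\gamma)}\sqrt{p_{ii}p_{jj}}$. To sharpen it to the claimed $\mu(\gamma)\sqrt{p_{ii}p_{jj}}$, I would split
\[
\omega_{ij}=\int_{S_1\setminus C(\gamma)}\sigma_i\wedge\star\sigma_j+\int_{C(\gamma)}\sigma_i\wedge\star\sigma_j+\int_{S_2\setminus C(\gamma)}\sigma_i\wedge\star\sigma_j,
\]
bound the first and third summand by Cauchy--Schwarz combined with part b) of Theorem~\ref{thm:energy_distribution}, and treat the cylinder term with the harmonic-form Fourier expansion of Section~\ref{sec:HarmCy}. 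The latter is available because $[\gamma]=0$ forces both $\sigma_i$ and $\sigma_j$ to be exact on $C(\gamma)$, so that the inner product reduces to a boundary pairing on $\partial C(\gamma)$; on the $S_2$-boundary a potential of $\sigma_i$ is exponentially small, on the $S_1$-boundary the normal derivative of a potential of $\sigma_j$ is exponentially small, and the resulting factors combine to $\mu(\gamma)$.

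For the diagonal blocks in (ii) and (iii) I would work with $i,j\leq 2g_1$ (the block $R_2^{\bullet}$ is handled identically). In both cases $\bullet\in\{\sF,\sG\}$ the scheme is to pull back $\sigma_j^{\bullet}$ through the marking $\phi_1^{\bullet}$ to a test 1-form $\tilde\sigma_j^{\bullet}$ on $S_1\subset S$ and to compare $\langle\sigma_i,\sigma_j\rangle_S$ with $\langle\sigma_i^{\bullet},\sigma_j^{\bullet}\rangle_{S_1^{\bullet}}$. In the grafting case (ii) the marking $\phi_1^{\sG}$ is conformal outside the inserted Euclidean cylinder, so the discrepancy is entirely concentrated in $C(\gamma)$; the $\mu^2$-estimate from part b) of the Vanishing Theorem together with the energy-minimising property of $\sigma_i,\sigma_j$ yields the bound $\mu(\gamma)^2\sqrt{p_{ii}p_{jj}}$.

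In the Fenchel--Nielsen case (iii) the hyperbolic metrics of $S$ and $S_1^{\sF}$ are genuinely different: on $S$ the curve $\gamma$ carries a standard collar of length $\ell(\gamma)$, while on $S_1^{\sF}$ it has become a cusp. Outside an $O(1)$-neighbourhood of $\gamma$ (resp.\ of the cusp) the two metrics are close, and a direct inspection of the hyperbolic trigonometry of the two pairs of pants adjacent to $\gamma$ gives a quasi-conformal identification with dilatation $1+O(\ell(\gamma)^2)$. Pulling back $\sigma_j^{\sF}$ via $\phi_1^{\sF}$ and tracking the change of the scalar product under this quasi-conformal identification produces the factor $\ell(\gamma)^2$, and a careful accounting of the numerical constants yields the stated $6\ell(\gamma)^2\sqrt{p_{ii}p_{jj}}$. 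The main obstacle is precisely this last step: whereas (i) and (ii) reduce cleanly to the cylinder analysis of Section~\ref{sec:HarmCy}, the bound (iii) requires a quantitatively sharp metric comparison between a short-geodesic collar and a cusp, and the quadratic (rather than linear) dependence on $\ell(\gamma)$ only emerges after cancellations in the pants-trigonometric identities.
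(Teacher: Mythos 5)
Your broad architecture is right -- exponential concentration from the Vanishing Theorem drives everything, $S^{\sG}$ is reached conformally, and $S^{\sF}$ requires a quasi-conformal map with dilatation $1+O(\ell(\gamma)^2)$ -- but in two places the mechanism you describe diverges from the paper's and, in one of them, would not produce the stated constant.

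For \textup{(i)}, your three-part decomposition $\omega_{ij}=\int_{S_1\smallsetminus C}+\int_C+\int_{S_2\smallsetminus C}$ together with \bbref{Theorem}{thm:energy_distribution}\,b) gives, for the two outer terms alone, $\vert\int_{S_1\smallsetminus C}\vert\le\sqrt{E_{S_1\smallsetminus C}(\sigma_i)\,E_{S_1\smallsetminus C}(\sigma_j)}\le\sqrt{p_{ii}}\cdot\mu(\gamma)\sqrt{p_{jj}}$ and the same for $\int_{S_2\smallsetminus C}$, so already $2\mu(\gamma)\sqrt{p_{ii}p_{jj}}$ before the cylinder is touched; and the cylinder cross-terms $h_i^{-}\!\leftrightarrow h_j^{+}$ do not decay in $x$ (the products $e^{-2\pi n x}e^{2\pi n x}$ are constant), so the boundary-pairing argument you sketch needs real work to show that contribution is comparably small. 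Without cancellation the method gives something like $3\mu(\gamma)$, not $\mu(\gamma)$. The paper obtains the sharp constant by a different, orthogonality-based route: dampen $\sigma_i$ to a test form $s=\sigma_i+dG_{\sigma_i}$ supported in $S_1$ and $\sigma_j$ to $t=\sigma_j+dG_{\sigma_j}$ supported in $S_2$; since $s$ and $t$ have disjoint supports $0=\int_S s\wedge\star t=\int_S\sigma_i\wedge\star\sigma_j+\int_S dG_{\sigma_i}\wedge\star dG_{\sigma_j}$, and one then bounds the single exact term by Cauchy--Schwarz using $E_S(dG_{\sigma_k})\le\mu(\gamma)E_S(\sigma_k)$. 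This trick -- exploiting the vanishing of $\int s\wedge\star t$ rather than estimating each region separately -- is the missing idea; without it the constant in (i) degrades.

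For \textup{(ii)} and \textup{(iii)}, your plan is closer to the paper's but misses the two-step structure. The paper first compares $P_S$ with $P_{S_1^{\sG}}$ by constructing test forms \emph{in both directions} (dampen $\sigma$ on $S$ to a form supported in $S'=S_1\cup C(\gamma)$ using the two-step dampening of \bbref{Theorem}{thm:EngInC}; dampen $\tilde\sigma$ on $S_1^{\sG}$ similarly), obtaining the two-sided estimate $(1-\varepsilon)E\le E'\le(1+\varepsilon)E$ with $\varepsilon\sim e^{-8\pi M+6}$, and then applies the polarization-type \bbref{Lemma}{L:LinAlgIneq} to convert the two-sided energy estimate into a scalar-product estimate. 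Only then does it compare $P_{S_1^{\sG}}$ with $P_{S_1^{\sF}}$ via \bbref{Theorem}{thm:psRGRF} and \bbref{Lemma}{L:qcIneq}, and (iii) is obtained by adding the errors from (ii) and the qc step. Your proposal to pull $\sigma_j^{\sF}$ back directly to $S$ via $\phi_1^{\sF}$ collapses these two steps, which is possible in principle but loses the clean separation between the purely conformal discrepancy (handled by cylinder dampening) and the quasi-conformal one (handled by the pants-trigonometry map), and you nowhere invoke the two-sided estimate plus polarization, which is what turns energy comparisons into Gram-matrix-entry comparisons.
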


\medskip

{\bf Nonseparating case.}\; Here we adapt the homology basis ${\rm A} = (\alpha_1, \dots, \alpha_{2g})$ of $S$ to the given setting by requiring that $\alpha_2$ coincides with the nonseparating geodesic $\gamma$. 

The nonseparating case is quite a different world. Two complications arise from the fact that the (compactified) limit surface has genus $g-1$ instead of $g$: the rank of the homology and the dimension of the Gram period matrix are not the same as for $S$ and, in addition, the condition $\int_{[\alpha_1]}\sigma_j = 0$ for the members $\sigma_j$ of the dual basis of harmonic forms disappears. 

A way to overcome the first difficulty could be to find a useful concept of $2g$ by $2g$ generalised Gram period matrix that can intrinsically be attributed to the limit surface. We did not quite succeed in this. However, in \bbref{Section}{sec:TwoCusps} we shall 
find an ersatz for the lack of such a matrix in form of a package of geometric invariants that are defined in terms of harmonic differentials. Based on this we shall define, a concept of ``blown up'' Gram period matrices consisting of a one parameter family of symmetric $2g$ by $2g$ matrices $P_{\sR}(\lambda)$, $\lambda > 0$, attributable to any Riemann surface $\sR$ of genus $g-1$ with two cusps that is marked by the selection of a homology basis and a ``twist at infinity'' (\bbref{Sections}{sec:TwistInf} and \bbref{}{sec:Pmatpar}). To formulate the main results we outline here the geometric invariants for  $P_{\sR}(\lambda)$ in the case of the grafting limit $\sR = S^{\sG}$.

First, we describe a variant of the grafting construction better adapted to our needs. Inside the standard collar $C(\gamma)$ introduced above there is a smaller such cylindrical neighborhood $C'(\gamma)$ with boundary curves $c_1$, $c_2$ of constant distance to $\gamma$ that have lengths $\ell(c_1) = \ell(c_2) = 1$.  Removing $C'(\gamma)$ from $S$ we obtain a compact surface $\frak{M}$ of genus $g-1$ with two boundary curves $c_1$, $c_2$, the ``main part'' of $S$. The closure of the hyperbolic cylinder $C'(\gamma)$ is conformally equivalent to the Euclidean cylinder $[0, L_{\gamma}] \times \Sp_1$, where $\Sp_1 = \R/\Z$ is the circle of length 1 and
 \begin{equation}\label{eq:Lgamma}%
L_{\gamma} = \frac{\pi}{\ell(\gamma)}-\frac{2\arcsin \ell(\gamma)}{\ell(\gamma)}  \text{ \ with \ } \lim_{\ell(\gamma) \to 0} \frac{2\arcsin \ell(\gamma)}{\ell(\gamma)} = 2
 \end{equation}%
(redefined in \beqref{eq:lengcyl}). From the conformal point of view, $S$ is the Riemann surface obtained out of $\frak{M}$ by re-attaching $[0,L_{\gamma}] \times \Sp_1$. We now define $S_L$, for any $L> 0$, to be the Riemann surface obtained out of $\frak{M}$ by attaching, in the same way, $[0,L] \times \Sp_1$ instead of $[0,L_{\gamma}] \times \Sp_1$. The short hand ``in the same way'' means that if points $p \in c_1$, $q \in c_2$ on the boundary of $\frak{M}$ are connected by the straight line  $[0,L_{\gamma}] \times \{y_0\}$ in $[0,L_{\gamma}] \times \Sp_1 \subset S$ then they are connected by the straight line $[0,L] \times \{y_0\}$ in $[0,L] \times \Sp_1 \subset S_L$.

The part $\big(S_L\big)_{L\geq L_{\gamma}}$ of the family of surfaces described in this way differs from the previously defined $\big(S(l)\big)_{l \geq 0}$ only by an additive change of parameter. The limit surface $S^{\sG}$ is $\frak{M}$ with two copies of the infinite Euclidean cylinders $[0,\infty) \times \Sp_1$ attached.

 \begin{figure}[b]
 \vspace{0pt}
 \begin{center}
 \leavevmode
 \SetLabels
(0.15*0.59) $\alpha_1^{\sG}$\\
(0.352*0.56) $\alpha_2^{\sG}$\\
(0.48*0.17) $\alpha_3^{\sG}$\\
(0.425*-0.05) $\alpha_4^{\sG}$\\
(0.62*0.50) $\alpha_5^{\sG}$\\
(0.585*-0.01) $\alpha_6^{\sG}$\\
(0.65*0.94) $\frak{M}$\\
(0.72*0.25) $S^{\sG}$\\
 \endSetLabels
 \AffixLabels{
 \includegraphics{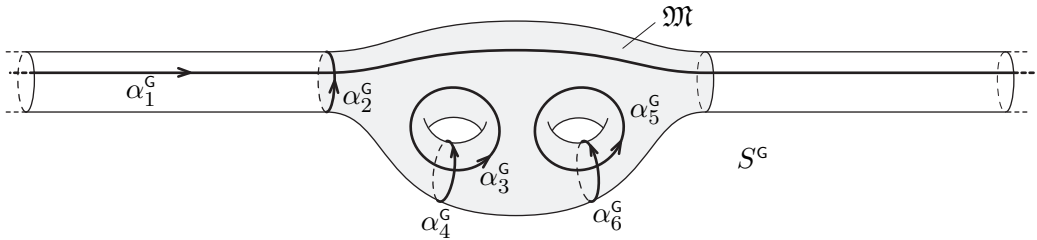} }
 \end{center}
 \vspace{-0pt}
 \caption{\label{fig:curvesSG} The main part $\frak{M}$ with two copies of $[0,) \times \Sp_1$ attached.}
 \end{figure}

The natural marking homeomorphism $\phi_L : S \to S_L$ is taken to be the mapping that acts as the identity from $\frak{M} \subset S$ to $\frak{M} \subset S_L$ and performs a linear stretch from $[0,L_{\gamma}] \times \Sp_1$ to $[0,L] \times \Sp_1$. In the limit,  $\phi^{\sG}:S\smallsetminus \gamma \to S^{\sG}$ is defined similarly but with the stretching of the cylinders fixed in some nonlinear way. 

For any $L>0$ the mapping $\phi_L$ sends the given homology basis on $S$ to a homology basis on $S_L$ and we have the associated Gram period matrix $P_{S_L}$. For $S^{\sG}$ this is no longer fully the case and we make the following adaptions. For $j=3,\dots,2g$, we set $\alpha_j^{\sG} = \phi^{\sG}(\alpha_j)$. These curves form a canonical homology basis of the compactified surface 

 \begin{equation*}%
\widebar{1.5}{S^{\sG}} = S^{\sG} \cup\{q,q'\},
 \end{equation*}%
where $q$, $q'$ symbolise the two ideal points at infinity of $S^{\sG}$. There is the dual basis of harmonic forms $\big(\sigma_j^{\sG}\big)_{j=3,\dots,2g}$ on $\widebar{1.5}{S^{\sG}}$ and we denote by

 \begin{equation*}%
P_{\widebar{4}{S^{\sG}}} = \Big(\;q_{ij}^{\sG}\;\Big)_{i,j=3,\dots,2g}
 \end{equation*}%
the corresponding Gram period matrix. We append two curves to the system $\alpha_3^{\sG},\dots,\alpha_{2g}^{\sG}$, setting $\alpha_1^{\sG} = \phi^{\sG}(\alpha_1')$, $\alpha_2^{\sG} = \phi^{\sG}(\alpha_2')$, where $\alpha_1'$ is $\alpha_1$ minus the intersection point with $\gamma = \alpha_2$, and $\alpha_2'$ is one of the boundary curves of $\frak{M}$ oriented in such a way that it lies in the homology class of $\alpha_2$. On $\widebar{1.5}{S^{\sG}}$ $\alpha_2^{\sG}$ is a null homotopic curve and $\alpha_1^{\sG}$ is an arc from $q$ to $q'$. 

A word is to be said about the latter. In \bbref{Section}{sec:TwistInf} we shall define a concept of ``twist at infinity'' by markings with particular classes of curves that go from cusp to cusp. From that point of view our limit surface is $S^{\sG}$ marked with the class of $\alpha_1^{\sG}$.

In addition to the curves we also append two harmonic forms $\mT_1^{\sG}$, $\sigma_2^{\sG}$, of infinite energy, to the system $\sigma_3^{\sG},\dots, \sigma_{2g}^{\sG}$. The first one is $\mT_1^{\sG} = df_1^{\sG}$, where $f_1^{\sG}$ is a (real valued) harmonic function on $S^{\sG}$ with logarithmic poles in $q$, $q'$ (\cite[Theorem II.4.3]{FK92}). It is made unique by the condition that $\int_{\alpha_2^{\sG}} \star \mT_1^{\sG} = 1$. Since $\mT_1^{\sG}$ is exact all its periods vanish. The second form is defined by the condition that $\sigma_2^{\sG}-\star \mT_1^{\sG}$ is harmonic on $\widebar{1.5}{S^{\sG}}$ and that it has the periods
 \begin{equation}\label{eq:line2}%
\int_{\alpha_j^{\sG}}\sigma_2^{\sG}=\delta_{2j}, \quad j=2,\dots,2g.
 \end{equation}%
Even though $\sigma_2^{\sG}$ is not an $L^2$-form the following integrals are well defined 

\begin{equation*}
q_{2j}^{\sG}=q_{j2}^{\sG}=\int_{S^{\sG}} \sigma_2^{\sG} \wedge \star \sigma_{j}^{\sG}, \quad j=3,\dots,2g.
 \end{equation*}%
For $j=2$ the above integral does not converge. Hence, there is no $q_{22}^{\sG}$. But one can define the ``gist of  $q_{22}^{\sG}$'': since $\sigma_2^{\sG}$ and $\star \mT_1^{\sG}$ have the same singularities the following quantity is well defined,

 \begin{equation*}
\pi_{22}^{\sG} = \int_{S^{\sG}} \Vert \sigma_2^{\sG} - \star\mT_1^{\sG} \Vert^2.
 \end{equation*}%
In a certain way $\pi_{22}^{\sG}$ measures how much additional energy it costs for $\sigma_2^{\sG}$ to satisfy the duality condition \beqref{eq:line2}. There is also a ``gist of $p_{11}^{\sG}$'' : if we denote, for $L > 0$, by $S_L^{\sG} \subset S^{\sG}$ the subsurface obtained out of $\frak{M}$ by attaching two copies of $[0, L/2] \times \Sp_1$, then the energy $E_{S_L^{\sG}}(\mT_1^{\sG})$ grows asymptotically like \textit{constant} + $L$ and we set 

 \begin{equation*}
\frak{m}^{\sG} = \lim_{L\to \infty}(E_{S_L^{\sG}}(\mT_{1}^{\sG}) - L).
 \end{equation*}%
Finally, we introduce the following limit periods, where we point out that the integral is well defined also for $j = 2$ despite the singularities of $\sigma_2^{\sG}$.%

 \begin{equation*}
\kappa_j^{\sG} = -\int_{\alpha_1^{\sG}}\sigma_j^{\sG}, \quad j=2,\dots,2g.
 \end{equation*}%
For an overview we put the invariants together into an array.%

 \begin{equation}\label{eq:ArrayPG}%
\frak{P}_{S^{\sG}}=
\boxed{\begin{matrix}
\frak{m}^{\sG}&\kappa_2^{\sG}&\kappa_3^{\sG}&\dots&\kappa_{2g}^{\sG}\\
\kappa_{2}^{\sG}&\pi_{22}^{\sG}&q_{23}^{\sG}&\dots&q_{2,2g}^{\sG}\\
\kappa_3^{\sG}&q_{32}^{\sG}   \\
\vdots & \vdots&&\smash{\raisebox{7pt}{$P_{\widebar{4}{S^{\sG}}}$}}\\
\kappa_{2g}^{\sG}&q_{2g,2}^{\sG}
\end{matrix}
}
 \end{equation}%
$\frak{P}_{S^{\sG}}$ is not meant to be regarded as a matrix. However, we shall define a one parameter family of ``true'' matrices out of it in \beqref{eq:ErsatzThmIntro} and, more generally, in \bbref{Section}{sec:Pmatpar}.

All entries of $\frak{P}_{S^{\sG}}$ except $\frak{m}^{\sG}$ are defined intrinsically by the conformal structure of $S^{\sG}$ and the given curve system $\alpha_1^{\sG}, \dots, \alpha_{2g}^{\sG}$. The entry $\frak{m}^{\sG}$, however, makes use of the description of $S^{\sG}$ in terms of the main part $\frak{M}$ of $S$.

In contrast to the case of \bbref{Theorem}{thm:small_scg_intro} the bounds for the error terms in the nonseparating case do not have a simple expression. We shall therefore use $\mathcal{O}$-terms to formulate the results. We make the following definition to indicate on what quantities the bounds depend: for real valued functions $\rho(x)$, $F(x)$, $x \in (0, + \infty)$, we say that 
 \begin{equation}\label{eq:OAsymbol}%
\rho(x) = \mathcal{O}_{\rm{A}}(F(x)),
 \end{equation}%
if $\vert \rho(x)\vert \leq c_{\rho} F(x)$, $x \in (0, + \infty)$, for some constant $c_{\rho}$ that can be explicitly estimated in terms of $g$, a lower bound on $sys_{\gamma}(S)$ and an upper bound on the lengths $\ell(\alpha_1), \dots, \ell(\alpha_{2g})$ on $S$, where $\sys_{\gamma}(S)$ is the length of the shortest simple closed geodesic on $S$ different from $\gamma$.

For the grafting limit the result is as follows.

\begin{thm}\label{thm:nonsepGIntro}%
Let $p_{ij}(S_L)$ be the entries of the Gram period matrix $P_{S_L}$. Then for any $L \geq 4$ we have the following comparison,
\begin{enumerate}
\item[{}]%
$p_{11}(S_L) = \dfrac{1}{\frak{m}^{\sG}+L} + \mathcal{O}_{\rm A}(\frac{1}{L}e^{-2\pi L})$, \quad
$p_{12}(S_L) = \dfrac{\kappa_2^{\sG}}{\frak{m}^{\sG}+L} + \mathcal{O}_{\rm A}(\frac{1}{L}e^{-\pi L}),$\\
$p_{1j}(S_L) = \dfrac{\kappa_j^{\sG}}{\frak{m}^{\sG}+L} + \mathcal{O}_{\rm A}(\frac{1}{L}e^{-2\pi L})$, \quad $j=3, \dots, 2g$,
\item[{}]%
$p_{22}(S_L) = \frak{m}^{\sG}+L + \pi_{22}^{\sG} + \dfrac{(\kappa_2^{\sG})^2}{\frak{m}^{\sG}+L} + \mathcal{O}_{\rm A}(\frac{1}{L}e^{-\pi L})$,
\item[{}]%
$p_{ij}(S_L)=q_{ij}^{\sG} + \dfrac{\kappa_i^{\sG}\kappa_j^{\sG}}{\frak{m}^{\sG}+L}+\mathcal{O}_{\rm A}(e^{-2\pi L})$, \quad $i,j = 2,\dots,2g$, $(i,j) \neq (2,2)$.
\end{enumerate}
\end{thm}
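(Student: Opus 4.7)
\;
The plan is to build, for each $i\in\{1,\dots,2g\}$, an explicit closed trial 1-form $\widetilde\sigma_i$ on $S_L$ in the same cohomology class as the true dual harmonic form $\sigma_i(L)$. Each $\widetilde\sigma_i$ is obtained by splicing pieces of the limit differentials $\mT_1^{\sG},\sigma_2^{\sG},\dots,\sigma_{2g}^{\sG}$ on the main part $\frak{M}$ with flat $dx$- or $dy$-continuations on the Euclidean cylinder $[0,L]\times\Sp_1$. Since the harmonic form is the unique energy minimiser in its cohomology class, the entries $p_{ij}(S_L)=\langle\sigma_i(L),\sigma_j(L)\rangle$ coincide with the computable pairings $\langle\widetilde\sigma_i,\widetilde\sigma_j\rangle$ up to a remainder controlled by the $L^2$-norms of the exact differences $\widetilde\sigma_i-\sigma_i(L)$. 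These differences decay exponentially in $L$ by the nonseparating analogue of Theorem \ref{thm:energy_distribution} applied to $[0,L]\times\Sp_1$: because the limit forms $\sigma_j^{\sG},\mT_1^{\sG}$ differ from their cylinder-asymptotic values ($0$, $\pm dx$, or $\pm dy$) only by exponentially small tails, the trial forms match true harmonic boundary values on $S_L$ up to errors of order $e^{-2\pi L}$.

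For $j\geq 3$ the limit form $\sigma_j^{\sG}$ is finite-energy harmonic on $\widebar{1.5}{S^{\sG}}$ and satisfies $\int_{\alpha_2^{\sG}}\sigma_j^{\sG}=0$; hence its constant $\Sp_1$-Fourier mode on each cylindrical end vanishes, so $\sigma_j^{\sG}$ decays like $e^{-2\pi x}$ into the ends. Truncating $\sigma_j^{\sG}$ to $\frak{M}$ and extending by zero across the cylinder of $S_L$ yields a representative whose only residual period defect is over $\alpha_1$, of size $-\kappa_j^{\sG}+\mathcal{O}_{\rm A}(e^{-2\pi L})$ by the definition of $\kappa_j^{\sG}$. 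Adding a $\kappa_j^{\sG}$-multiple of the candidate $\widetilde\sigma_1$ (constructed next) restores the required periods. Computing $\langle\widetilde\sigma_i,\widetilde\sigma_j\rangle$ using that $\|\widetilde\sigma_1\|^2=(\frak{m}^{\sG}+L)^{-1}$ and that the $L^2$-pairings of the truncations on $\frak{M}$ reproduce $q_{ij}^{\sG}$ yields $p_{ij}(S_L)=q_{ij}^{\sG}+\frac{\kappa_i^{\sG}\kappa_j^{\sG}}{\frak{m}^{\sG}+L}+\mathcal{O}_{\rm A}(e^{-2\pi L})$.

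The candidate $\widetilde\sigma_1$ is $(\frak{m}^{\sG}+L)^{-1}$ times the transplant of $\mT_1^{\sG}$: it equals $\mT_1^{\sG}|_{\frak{M}}$ on $\frak{M}$ and on $[0,L]\times\Sp_1$ it is obtained by smoothly joining the asymptotic $\pm dx$-behaviours of $\mT_1^{\sG}$ on the two cylindrical ends of $S^{\sG}$. The normalisation $\int_{\alpha_2^{\sG}}\star\mT_1^{\sG}=1$ fixes the cylinder coefficient, and the definition $\frak{m}^{\sG}=\lim_L(E_{S_L^{\sG}}(\mT_1^{\sG})-L)$ is precisely what makes the period of this transplant over $\alpha_1$ equal $L+\frak{m}^{\sG}+\mathcal{O}_{\rm A}(e^{-2\pi L})$. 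Hence $p_{11}(S_L)=\|\widetilde\sigma_1\|^2=(\frak{m}^{\sG}+L)^{-1}+\mathcal{O}_{\rm A}(\frac{1}{L}e^{-2\pi L})$. The pairing $p_{1j}(S_L)$ for $j\geq 3$ reduces via Stokes and the Riemann bilinear relation for exact forms with logarithmic primitive to $\kappa_j^{\sG}/(\frak{m}^{\sG}+L)$, with the same exponential error.

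The case $i=2$ is handled by the decomposition $\sigma_2^{\sG}=\star\mT_1^{\sG}+\eta$, where $\eta$ is finite-energy harmonic on $\widebar{1.5}{S^{\sG}}$. The $\star\mT_1^{\sG}$ summand is transplanted exactly as for $\widetilde\sigma_1$ (but now flat-$dy$ on the cylinder), providing the leading $\frak{m}^{\sG}+L$ term in $p_{22}$; the $\eta$ summand contributes the finite part $\pi_{22}^{\sG}$ by its very definition; and a $\kappa_2^{\sG}$-multiple of $\widetilde\sigma_1$ is added to enforce the vanishing period over $\alpha_1$, producing the cross term $(\kappa_2^{\sG})^2/(\frak{m}^{\sG}+L)$. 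The slower $e^{-\pi L}$ rate in $p_{12}$ and $p_{22}$ arises because the nontrivial constant $\Sp_1$-Fourier mode of $\sigma_2^{\sG}$ on the cylinder forces the error to be estimated by Cauchy--Schwarz against an $e^{-2\pi L}$-energy tail, yielding the square-root bound. The main obstacle throughout is the careful cylinder bookkeeping: rigorously extracting the finite constant $\frak{m}^{\sG}$ from the logarithmically divergent energy of $\mT_1^{\sG}$, verifying that the period-restoring corrections do not degrade the exponential Fourier decay of the trial forms, and tracking precisely which pairings receive the slower $e^{-\pi L}$ versus the faster $e^{-2\pi L}$ remainder.
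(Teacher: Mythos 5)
Your overall strategy — transplant the limit differentials onto $S_L$, compare via the energy-minimizing property, and track exponentially small Fourier tails in the cylinder — is close in spirit to the paper, but as written the argument has a genuine gap: it only yields one-sided estimates. If $\widetilde\sigma_i = \sigma_i(L) + dG_i$ with $\widetilde\sigma_i$ your trial form, then orthogonality gives $\langle\widetilde\sigma_i,\widetilde\sigma_j\rangle = p_{ij}(S_L) + \langle dG_i,dG_j\rangle$, and Cauchy--Schwarz requires you to bound $\|dG_i\|^2 = E(\widetilde\sigma_i) - E(\sigma_i(L))$. You compute $E(\widetilde\sigma_i)$, but the minimizing property only gives $E(\sigma_i(L)) \leq E(\widetilde\sigma_i)$ — you have no matching lower bound for $E(\sigma_i(L))$ in terms of the limit quantities, so $\|dG_i\|$ is unbounded by your argument. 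This is precisely the reason the paper's Lemma \ref{L.kappa1} and Lemma \ref{lem:capabreve} construct test forms in \emph{both} directions (dampening a form on $S_L$, restricting it, reinserting and extending to build a competitor on $S_{\tilde L}$, and vice versa), yielding two-sided estimates for the essential energies and periods.

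The paper also proceeds differently in a second important respect: rather than constructing $\widetilde\sigma_i$ with exactly prescribed periods and estimating pairings of trial forms, it first derives \emph{exact} identities via the relaxed dual basis $(\sigma_1,\tau_2,\dots,\tau_{2g})$ of Section \ref{sec:RelaxDual}: by Theorem \ref{L:sigma1} and \eqref{eq:Esskap}--\eqref{eq:sigmaT1}, $p_{11}(S_L) = 1/(\frak{m}_L+L)$ exactly; by \eqref{eq:sigmakapj}, \eqref{eq:Fkapp1} and Lemma \ref{lem:sigma1ortho}, $p_{1j}(S_L) = \kappa_{j,L}\,p_{11}(S_L)$ and $p_{ij}(S_L) = \scp{\tau_{i,L}}{\tau_{j,L}} + \kappa_{i,L}\kappa_{j,L}\,p_{11}(S_L)$. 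The problem is thereby reduced to comparing the finitely many ``essential'' scalars $\frak{m}_L$, $\kappa_{j,L}$, $\scp{\tau_{i,L}}{\tau_{j,L}}_{\frak{M}}$ with their limits — a cleaner target than estimating $\langle\widetilde\sigma_i,\widetilde\sigma_j\rangle$ directly, and one that sidesteps the delicate bookkeeping of whether a trial $\widetilde\sigma_1$ with the dampened $dx$-splice has \emph{exactly} period $1$ over $\alpha_1$, which your normalization by $(\frak{m}^{\sG}+L)^{-1}$ only achieves approximately. Your heuristic for the $e^{-\pi L}$ rate is also attributed to the wrong cause: in the paper this comes from the square root in the period estimate $\vert\kappa_{2,L}-\kappa_2^{\sG}\vert^2 \leq 3\rho_L(1+\mathfrak{z}\Gamma)q_{22}(S^{\sG})$ of Lemma \ref{lem:capabreve}, which is forced because, unlike $\kappa_j$ for $j\geq 3$ (see \eqref{eq:Bkapj2} and Remark \ref{R.Bkapj1}), the period $\kappa_2$ admits no representation as a Riemann-bilinear pairing of two $\tau$'s and must be estimated via an auxiliary test-form construction.

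To repair your proof you would need to add the converse construction: push your spliced/dampened forms from $S_L$ back to $S^{\sG}$ (or onto a larger $S_{\tilde L}$), argue that they are admissible competitors there, and deduce a lower bound for $E(\sigma_i(L))$; only then do you control $\|dG_i\|$. You would also need the orthogonality relation $\scp{\tau_j}{\sigma_1}=0$ (Lemma \ref{lem:sigma1ortho}) and the vanishing of the linear $dx$-terms of the $\tau_j$ (Lemma \ref{lem:nolinear}), neither of which your proposal mentions, to make the cylinder decompositions clean.
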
%

For $S$ itself in this theorem one has to take $L=L_{\gamma}$ with $L_{\gamma}$ as in \beqref{eq:Lgamma}.

For the Fenchel-Nielsen limit the array of constants $\frak{P}_{S^{\sF}}$ is similar to $\frak{P}_{S^{\sG}}$ except that it shall be defined in terms of the hyperbolic metric of $S^{\sF}$. In analogy to \beqref{eq:Lgamma} we abbreviate, for $t>0$, 

 \begin{equation*}
L_t = \frac{\pi}{t}-\frac{2\arcsin t}{t}  \text{ \ with \ } \lim_{t \to 0} \frac{2\arcsin t}{t} = 2.
 \end{equation*}%
The result is then as follows, where for $S$ itself one has to take $t = \ell(\gamma)$.

 \begin{thm}\label{thm:nonsepFIntro}%
For any surface in the family $\big(S_t\big)_{t\leq \frac{1}{4}}$ of the Fenchel-Nielsen construction we have the comparison.
\begin{enumerate}
\item[{}]%
$p_{11}(S_t) = \dfrac{1}{\frak{m}^{\sF}+L_t} + \mathcal{O}_{\rm A}(t^4)$, \quad
$p_{12}(S_t) = \dfrac{\kappa_2^{\sF}}{\frak{m}^{\sF}+L_t} + \mathcal{O}_{\rm A}(t^2)$\\ $p_{1j}(S_t) = \dfrac{\kappa_j^{\sF}}{\frak{m}^{\sF}+L_t} + \mathcal{O}_{\rm A}(t^3)$, \quad $j=3, \dots, 2g$,
\item[{}]%
$p_{22}(S_t) = \frak{m}^{\sF}+L_t + \pi_{22}^{\sF} + \dfrac{(\kappa_2^{\sF})^2}{\frak{m}^{\sF}+L_t} + \mathcal{O}_{\rm A}(t^2)$,
\item[{}]%
$p_{ij}(S_t)=q_{ij}^{\sF} + \dfrac{\kappa_i^{\sF}\kappa_j^{\sF}}{\frak{m}^{\sF}+L_t}+\mathcal{O}_{\rm A}(t^2)$, \quad $i,j = 2,\dots,2g$, $(i,j) \neq (2,2)$.
\end{enumerate}.
 \end{thm}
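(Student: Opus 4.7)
The plan is to reduce this theorem to Theorem \ref{thm:nonsepGIntro} via a conformal comparison between the Fenchel-Nielsen family $\big(S_t\big)$ and a grafting-type family built over the main part of the limit surface $S^{\sF}$. The key geometric fact is that the standard hyperbolic collar around a closed geodesic of length $t$ is conformally equivalent to a flat cylinder $[0, L_t] \times \Sp_1$, with $L_t$ as defined just before the theorem; this mirrors the formula \beqref{eq:Lgamma} underlying the grafting setup. Consequently, $S_t$ is conformally obtained by attaching a flat cylinder of length $L_t$ to a hyperbolic main part $\frak{M}_t^{\sF} := S_t \smallsetminus C'(\gamma_t)$, which converges to the main part of $S^{\sF}$ as $t \to 0$ and whose harmonic differentials therefore approach those used in the definition of $\frak{m}^{\sF}, \kappa_j^{\sF}, \pi_{22}^{\sF}, q_{ij}^{\sF}$.

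First I would compare the main parts: the Fenchel-Nielsen construction leaves the pants decomposition intact outside the pants adjacent to $\gamma$ and only perturbs the hyperbolic metric near the boundary curves of those pants. Standard trigonometric estimates for pairs of pants show that $\frak{M}_t^{\sF}$ is $(1+\mathcal{O}_{\rm A}(t^2))$-quasiconformally equivalent to the main part $\frak{M}^{\sF}$ of $S^{\sF}$, with the deviation concentrated near the two boundary curves of length 1. This lets me transplant the reference harmonic forms $\mT_1^{\sF}, \sigma_2^{\sF}, \sigma_3^{\sF}, \dots, \sigma_{2g}^{\sF}$ from $S^{\sF}$ onto $S_t$ as test forms with a relative error $\mathcal{O}_{\rm A}(t^2)$ in energies and pairings. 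Then I would apply the argument used for Theorem \ref{thm:nonsepGIntro}: decompose each harmonic form $\sigma_k$ on $S_t$ into a main-part contribution from $\frak{M}_t^{\sF}$ and a cylinder contribution on $[0,L_t] \times \Sp_1$; Fourier-expand on the cylinder to read off periods and pairings; invoke the Vanishing theorem \ref{thm:energy_distribution} to control off-diagonal terms. The exponential errors $e^{-\pi L_t}, e^{-2\pi L_t}$ inherited from the grafting theorem decay faster than any polynomial in $t$ (since $L_t \sim \pi/t$) and are therefore absorbed into the stated polynomial bounds.

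The main obstacle will be justifying the differentiated polynomial exponents $t^4, t^3, t^2$ for different matrix entries. These reflect both the natural size of each entry (for instance $p_{11}(S_t) \sim 1/L_t \sim t/\pi$) and additional cancellation owing to the specific behavior of $\mT_1^{\sF}$ versus $\sigma_2^{\sF}$ on the collar: $\mT_1^{\sF}$ is exact with logarithmic poles balanced symmetrically at the two cusps, whereas $\sigma_2^{\sF}$ carries the nontrivial period around $\gamma$. Tracking these refined bounds will require separating the zero-mode from the higher-mode Fourier coefficients inside the collar, since higher modes contribute super-polynomially small terms while the zero mode produces the dominant polynomial error, and then combining this with a careful accounting of the factors of $1/(\frak{m}^{\sF}+L_t)$ that appear when expressing $P_{S_t}$ through the reference harmonic forms. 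Once these expansions are set up, the remaining estimates parallel those in the proof of Theorem \ref{thm:nonsepGIntro} with only the substitution of polynomial error sources in place of exponential ones.
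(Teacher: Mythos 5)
Your overall strategy matches the paper's: the Fenchel--Nielsen case is reduced to the grafting case by the $(1+\mathcal{O}_{\rm A}(t^2))$-quasiconformal map between the main parts (this is exactly \bbref{Theorem}{thm:psRGRF} specialised to one shrinking geodesic), and then the machinery of \bbref{Theorem}{thm:nonsepG} carries over with the exponential decay rates replaced by the polynomial ones inherited from the dilatation. That identification is correct, and your observation that the exponential terms $e^{-\pi L_t}$, $e^{-2\pi L_t}$ are absorbed into the polynomial bounds is how the paper handles them.

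Where your sketch goes off track is in the explanation of the differentiated exponents $t^4$, $t^3$, $t^2$. You attribute them to a zero-mode vs.\ higher-mode splitting inside the collar, with the zero mode producing the dominant polynomial error. That is not the mechanism. On the cylinder the higher modes decay exponentially and the linear (zero-mode) parts are captured \emph{exactly} by the $\frak{m}^{\sF}+L_t$ denominators in the target expression; they contribute no polynomial error at all. The actual arithmetic comes from three sources combined. First, the quasiconformal map produces relative errors $\nu_{L_t},\rho_{L_t} = \mathcal{O}_{\rm A}(t^2)$ in essential energies, essential scalar products, and in $|\frak{m}_{L_t}-\frak{m}^{\sF}|$; this is the statement of \beqref{eq:capabreve1} and \beqref{eq:capabreve15} with the dilatation factor included. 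Second, $1/L_t = \mathcal{O}_{\rm A}(t)$, and each entry picks up one such factor per explicit $1/(\frak{m}^{\sF}+L_t)$ appearing in it: this gives $t^2\cdot t^2 = t^4$ for $p_{11}$, $t^2\cdot t = t^3$ for $p_{1j}$ with $j\geq 3$, and $t^2$ for the block $i,j\geq 2$. Third, and this you do not address, the entry $\kappa_2^{\sF}$ has a \emph{weaker} comparison than $\kappa_j^{\sF}$, $j\geq 3$: the pairing formula $\kappa_j=\scp{\tau_2}{\Hstar\tau_j}$ (see \beqref{eq:Bkapj2}) only holds for $j\geq 3$, so $\kappa_2$ must be controlled via \beqref{eq:capabreve2}, which bounds $|\kappa_{2,L}-\kappa_2^{\sF}|^2$ rather than $|\kappa_{2,L}-\kappa_2^{\sF}|$, producing a square-root loss $\mathcal{O}_{\rm A}(t)$ instead of $\mathcal{O}_{\rm A}(t^2)$. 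This is precisely what makes $p_{12}$ and $p_{22}$ land at $\mathcal{O}_{\rm A}(t^2)$ rather than $\mathcal{O}_{\rm A}(t^3)$. Without distinguishing these two regimes for $\kappa_j$ you will not recover the stated exponents. Also, the Vanishing theorem \bbref{Theorem}{thm:energy_distribution} concerns the separating case and plays no role here; the relevant decay inputs are the dampening lemmas of \bbref{Section}{sec:HarmCy} and the comparison lemmas of \bbref{Sections}{sec:sigT1} and \bbref{}{sec:RelaxDual}.
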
%

In \bbref{Theorem}{thm:nonsepGIntro} all occurrences of $L$ are in the combined form $\frak{m}^{\sG}+L$ (and a similar remark holds for \bbref{Theorem}{thm:nonsepFIntro}). This observation suggests to translate the array $\frak{P}_{S^{\sG}}$ in \beqref{eq:ArrayPG} (and in a similar way $\frak{P}_{S^{\sF}}$) into a parametrized family of matrices $P_{S^{\sG}}(\lambda)$, $\lambda \in (0,+\infty)$, that may be regarded as an ersatz for the missing Gram period matrix for $S^{\sG}$.

For the translation we regard the constants $q_{ij}^{\sG}$ as constant functions $q_{ij}^{\sG}(\lambda) = q_{ij}^{\sG}$, $i,j = 2,\dots, 2g$, $(i,j) \neq (2,2)$; then define $q_{22}^{\sG}(\lambda) = \pi_{22}^{\sG}+\lambda$; and, for completion, $q_{1j}^{\sG}(\lambda) = q_{j1}^{\sG}(\lambda) = 0$, $j=1,\dots,2g$. We also complete the list $\kappa_2^{\sG}, \dots, \kappa_{2g}^{\sG}$ setting $\kappa_1^{\sG} = 1$. The definition then is
 \begin{equation}\label{eq:ErsatzPSGIntro}%
P_{S^{\sG}}(\lambda) = \left(\vphantom{\rule{0pt}{15pt}}q_{ij}^{\sG}(\lambda)\right)_{i,j=1, \dots, 2g}
+
\left(\frac{\kappa_i^{\sG}\kappa_j^{\sG}}{\lambda}\right)_{i,j=1, \dots, 2g}
 \end{equation}%
\bbref{Theorem}{thm:nonsepGIntro} now states that 
 \begin{equation}\label{eq:ErsatzThmIntro}%
P_{S_L}=P_{S^{\sG}}(\frak{m}^{\sG}+L) + \rho^{\sG}(L),
\end{equation}%
with the bounds for the entries of $\rho^{\sG}(L)$ as given. There is a similar expression for \bbref{Theorem}{thm:nonsepFIntro}.

We shall prove in \bbref{Corollary}{cor:symplectic} that the $P_{S^{\sG}}(\lambda)$ are symplectic matrices. We do not know, however, whether or not they are Gram period matrices of compact Riemann surfaces.
\bigskip

The paper is divided into seven parts: \bbref{Section}{sec:Introd} is the introduction. In  \bbref{Section}{sec:MapDeg} we construct mappings used to embed Riemann surfaces with a small geodesic into the limit surfaces with cusps obtained by the Fenchel-Nielsen construction. These mappings shall be used to compare the energies of corresonding harmonic forms with each other.
In \bbref{Section}{sec:HarmCy} we give decay and dampening down estimates for harmonic forms on flat cylinders. These are later used to provide the exponential decay estimates given in \bbref{Theorem}{thm:energy_distribution} - \bbref{}{thm:nonsepGIntro}. \bbref{Section}{sec:SepCas} treats the case where the shrinking curve $\gamma$ is separating and \bbref{Sections}{sec:sigT1} - \bbref{}{sec:TwoCusps} the case where $\gamma$ is nonseparating. \bbref{Theorems}{thm:nonsepGIntro} and \bbref{}{thm:nonsepFIntro} are proven as \bbref{Theorems}{thm:nonsepG} and \bbref{}{thm:nonsepF} in \bbref{Section}{sec:ProofThmNonsep}.

At various places we estimate energies of harmonic forms via test forms and then simplify the results ``by elementary considerations''. For the latter it is -- and was -- helpful to use a computer algebra system.

\section{Mappings for degenerating Riemann surfaces}\label{sec:MapDeg}

In this section we construct mappings used to embed Riemann surfaces with a small geodesic into the limit surfaces with cusps obtained by the Fenchel-Nielsen construction. These mappings shall be used to compare the energies of corresonding harmonic forms with each other.

\subsection{Collars and cusps}\label{sec:ColCus}

We first collect a number of facts about collars and cusps that follow from the so-called \emph{Collar Theorem}. For proofs we refer e.g.\ to \cite[Chapter 4]{Bu92}. In the present section $S$ is a surface of signature $(g,m;n)$ with a metrically complete hyperbolic metric for which the $m$ boundary components are simple closed geodesics and the $n$ punctures are cusps. We assume, furthermore, that $2g + m + n \geq 3$. 

For any simple closed geodesic $\gamma$ on $S$ (on the boundary or in the interior) the \emph{standard collar} $C(\gamma)$ is defined as the neighbourhood 

 \begin{equation}\label{eq:stacol}%
C(\gamma) :=\left\{p \in S \mid \dist(p,\gamma) < \cl(\ell(\gamma)) \right\}, \quad \textup{where}
 \end{equation}%
 \begin{equation}\label{eq:clfctn}%
\cl(s):=  \arcsinh\left(\frac{1}{\sinh(\frac{s}{2})}\right) > \ln\left(\frac{4}{s}\right).
 \end{equation}%
By the collar theorem $C(\gamma)$ is an embedded cylinder; moreover, the geodesic arcs of length $\cl(\gamma)$ emanating perpendicularly from $\gamma$ are pairwise disjoint (except for the common initial points of pairs of opposite arcs). This allows us to use \emph{Fermi coordinates} for points in $C(\gamma)$ by taking $\gamma$ as base curve, parametrised in the form $t \mapsto \gamma(t)$, $t \in [0,1]$, with constant speed $\ell(\gamma)$. The parameter $t$ will also be interpreted as running through $\Sp_1$, where we set
 \begin{equation}\label{eq:Sp1}%
\Sp_1 := \R/\Z.
 \end{equation}%
For any $p \in C(\gamma)$ we then have Fermi coordinates $(\rho,t)$ defined such that $\vert \rho \vert$ is the distance from $p$ to $\gamma$ and $p$ lies on the orthogonal geodesic through $\gamma(t)$. If $\gamma$ lies in the interior of $S$ then all $\rho$ have negative signs on one side of $\gamma$ and positive signs on the other. For the case where $\gamma$ is a boundary geodesic, we adopt the \emph{convention} that all $\rho$  shall be nonpositive. The metric tensor in these coordinates is
 \begin{equation}\label{eq:tensorcollar}%
g_C = d \rho^2 + \ell(\gamma)^2 \cosh^2(\rho) d t^2.
 \end{equation}%
Hence, if $\gamma$ lies in the interior of $S$ then $C(\gamma)$ is isometric to the cylinder 
 \begin{equation}\label{eq:coocollar}%
({-}\cl(\ell(\gamma)),\cl(\ell(\gamma))){}\times \Sp_1
 \end{equation}%
endowed with the metric tensor \beqref{eq:tensorcollar}. If $\gamma$ is a boundary geodesic then $C(\gamma)$ is isometric to $({-}\cl(\ell(\gamma)),0]\times \Sp_1$ endowed with this metric. In this latter case we also call $C(\gamma)$ a \emph{half collar}.

In a similar way, also by the collar theorem, there exists for any puncture $q$ (to be understood as an isolated ideal boundary point at infinity) an open neighbourhood $V(q)$ that is isometric to the cylinder 
\begin{equation}\label{eq:coocusp}
({-}\ln(2),{+}\infty){}\times \Sp_1
 \end{equation}%
endowed with the metric tensor
\begin{equation}\label{eq:tensorcusp}
g_V = d r^2 + e^{-2r} d t^2.
\end{equation}%
Under this isometry any curve $\{r\} \times \Sp_1$, for $r \in ({-}\ln(2),{+}\infty)$, corresponds to a horocycle $\eta_{\lambda}$ of length $\lambda = e^{-r}$ on $V(q)$ and the couples $(r,t)$ are Fermi coordinates for points in $V(q)$ based on the curve $\eta_1$. 

We shall also look at parts of cusps: for $a > b \geq 0$ the \emph{truncated cusp} $V_a^b$ is defined as the annular region

\begin{equation}\label{eq:trunccusp}
V_a^b := \{p \in V(q) \mid  \textup{$p$ lies between $\eta_a$ and $\eta_b$} \},
\end{equation}

where `between'' shall allow that $p$ lies on $\eta_a$, or $\eta_b$. When $b=0$ the truncated cusp becomes a punctured disk and we denote it by $V_a$.

\bigskip

Finally, the collar theorem further states that all cusps are pairwise disjoint, no cusp intersects a collar, and collars belonging to disjoint simple closed geodesics are disjoint.
\subsection{Conformal mappings of collars and cusps}\label{sec:ConfCC}

We shall make use of the standard conformal mappings $\psi_{\gamma}$, $\psi_q$ that map collars and cusps into the flat cylinder
 \begin{equation}\label{eq:flatcylinder}%
Z = ({-}\infty,{+}\infty) \times \Sp_1, \quad \text{with the Euclidean metric $g_E = d x^2 + d y^2$,}
 \end{equation}%
where we use $(x,y)$ as coordinates for points in $Z$ with $x \in ({-}\infty, {+}\infty)$, $y \in \Sp_1 = \R/\Z$, and $y$ is treated as a real variable. It is straightforward to check that for any simple closed geodesic $\gamma$ on $S$ the mapping $\psi_{\gamma} : C(\gamma) \to Z$ defined in Fermi coordinates by 
 \begin{equation}\label{eq:psigamma}%
\psi_{\gamma}(\rho,t) = (F_{\gamma}(\rho),t)
 \end{equation}%
with
 \begin{equation}\label{eq:Fgamma}
 \begin{aligned}%
F_{\gamma}(\rho) = \frac{2}{\ell(\gamma)} \arctan\left(\tanh\left(\frac{\rho}{2}\right)\right) &= \text{sign}(\rho)\frac{1}{\ell(\gamma)} \arccos\left( \frac{1}{\cosh(\rho)}\right)\\
&= \text{sign}(\rho)\frac{1}{\ell(\gamma)} \left\{\frac{\pi}{2}-\arcsin\left( \frac{1}{\cosh(\rho)}\right)\right\}
 \end{aligned}
 \end{equation}%
is conformal. Similarly, for any cusp $V(q)$ we have a conformal mapping $\psi_q : V(q) \to Z$ defined in Fermi coordinates by 
 \begin{equation}\label{eq:psiq}%
\psi_q (r,t) = (e^r,t).
 \end{equation}%
For later use we note that by  \beqref{eq:stacol}, \beqref{eq:clfctn}, \beqref{eq:Fgamma} $\psi_{\gamma}$ maps $C(\gamma)$ onto $({-}M,M) \times \Sp_1$, where $M = M(\ell(\gamma))$ satisfies

 \begin{equation}\label{eq:Lleta}%
M(\ell(\gamma)) = \frac{1}{\ell(\gamma)} \left\{\frac{\pi}{2} - \arcsin\left(\tanh\left(\frac{\ell(\gamma)}{2}\right)\right)\right\} \geq \frac{\pi}{2\ell(\gamma)} -\frac{1}{2},
 \end{equation}%
and the inequality follows from elementary consideration. ($M$ is not the same as $L_{\gamma}$ defined in \beqref{eq:Lgamma} and later used in \beqref{eq:lengcyl}.) In the next subsection we shall combine $\psi_{\gamma}$ with $\psi_q^{-1}$ to conformally map half collars into cusps. 
 
\subsection{Mappings of Y-pieces}\label{sec:MapYpc}

The results of this subsection are presented in  \cite{bmms14}. We summarize them for convenience and slightly extend them for the present needs. A \emph{Y-piece} or \emph{hyperbolic pair of pants} is a surface as in the preceding subsection that has signature $(0,3;0)$. To include limit cases we shall also admit signatures $(0,2;1)$ and $(0,1;2)$.\\
We denote by $Y=Y_{l_1,l_2,l_3}$ the Y-piece with boundary geodesics $\gamma_1,\gamma_2,\gamma_3$ of lengths $\ell(\gamma_i)=l_i$, for $i \in \{1,2,3\}$. For any pair $\gamma_{i-1}, \gamma_i$, $i=1,2,3$, (indices $\mod 3$) there is a unique connecting geodesic arc $a_i$ orthogonal to $\gamma_{i-1}$ and $\gamma_i$ at its end points. These arcs are called the \emph{common perpendiculars} or \emph{orthogeodesics} of $Y$. They decompose $Y$ into two identical right angled geodesic hexagons, and there is an orientation reversing isometry $\sigma_Y : Y \to Y$, the \emph{natural symmetry} of $Y$, that keeps the orthogeodesics pointwise fixed.

The boundary geodesics are parametrized with constant speed $\gamma_i: [0,1] \rightarrow \partial Y$, with positive boundary orientation (with respect to a fixed orientation on $Y$) and such that $\gamma_i(0)$ coincides with the endpoint of $a_i$ (see \bbref{Fig.}{fig:quasiY}). We call this the \textit{standard parametrization}. By the aforementioned collar theorem the sets

\begin{equation*}
   \mathcal{C}_i = \{ p \in Y \mid \dist(p,\gamma_i) < \cl(\ell(\gamma_i)) \} 
\end{equation*}
are pairwise disjoint embedded cylinders. Moreover, for $0 \leq \rho \leq \cl(\ell(\gamma_i))$ the equidistant curves

 \begin{equation*}%
\gamma_i^{\rho} = \{ p \in Y \mid \dist(p,\gamma_i) = \rho \}
 \end{equation*}%
are embedded circles. We parametrize them with constant speed in the form $\gamma_i^{\rho}:[0,1] \rightarrow Y_{l_1,l_2,l_3}$, such that there is an orthogonal geodesic arc of length $\rho$ from $\gamma_i(t)$ to $\gamma_i^{\rho}(t), t \in [0,1]$. This too shall be called a \textit{standard parametrization}.\\
We extend these conventions to degenerated Y-pieces with cusps writing symbolically `$\ell(\gamma_i)=0$', if the $i$-th end is a puncture. In this case the equidistant curves are horocycles and the collar $\mathcal{C}_i$ is isometric to a cusp whose boundary length is equal to 2.\\
We also consider \textit{restricted Y-pieces}, where parts of the collars have been cut off: Take $Y_{l_1,l_2,l_3}$. Select in each $\mathcal{C}_i$ an equidistant curve $\beta_i = \gamma_i^{\rho_i}$ (respectively, a horocycle if $\mathcal{C}_i$ is a cusp) of some length $\lambda_i$, possibly $\beta_i= \gamma_i$, and cut away the outer part of the collar along this curve. The thus \textit{restricted Y-piece}, $Y_{l_1,l_2,l_3}^{\lambda_1,\lambda_2,\lambda_3}$, is the closure of the connected component of $Y_{l_1,l_2,l_3} \smallsetminus \{\beta_1,\beta_2,\beta_3\}$ that has signature $(0,3;0)$.\\
Our mappings shall have the following two properties.  A homeomorphism

 \begin{equation*}%
 \phi: Y \rightarrow Y'
 \end{equation*}%
of, possibly restricted, Y-pieces is called {\textit{boundary coherent}} if for corresponding boundary curves $\beta_i$ of $Y$ and $\beta'_i$ of $Y'$ in standard parametrization one has $\phi(\beta_i(t))=\beta'_i(t),t \in [0,1]$. The mapping $\phi$ is called \emph{symmetric} if it is compatible with the natural symmetries, i.e.\ if $\phi \circ \sigma_Y = \sigma_{Y'} \circ \phi$. Since the fixed point sets of $\sigma_Y$ and $\sigma_{Y'}$  are the orthogeodesics it follows, in particular, that a symmetric $\phi$ sends orthogeodesics to orthogeodesics.\\

We need, in particular, the following restriction of Y-pieces. Assume first that for some $i \in \{1,2,3\}$ we have  $0<\ell(\gamma_i) = \epsilon < 2$. Then we define the \emph{reduced collar}

 \begin{equation*}%
 \widehat{\mathcal{C}}_i = \{p \in Y \mid \dist(p, \gamma_i) < \widehat{w}(\epsilon)\},
 \end{equation*}%
where, using \beqref{eq:clfctn},
 \begin{equation}\label{eq:whatepsilon}%
\widehat{w}(\epsilon) := \log\left(\frac{2}{\epsilon}\right) < \cl(\epsilon)-\log 2.
 \end{equation}%
The inner boundary curve of $\widehat{\mathcal{C}}_i$ has length
 \begin{equation}\label{eq:echwhat}%
 \epsilon \cosh(\widehat{w}(\epsilon)) = 1 + \frac{\epsilon^2}{4}.
 \end{equation}%
By \beqref{eq:whatepsilon} we have $\widehat{\mathcal{C}}_i \subset \mathcal{C}_i$. For convenient notation we complete the definition by setting $\widehat{\mathcal{C}}_i = \emptyset$ if $\ell(\gamma_i) \geq 2$.

We now extend the definition of $\widehat{\mathcal{C}}_i$ to the case where $\gamma_i$ is a puncture. In this case $\widehat{\mathcal{C}}_i$ is defined as the subset of all points in the cusp $\mathcal{C}_i$ that lie outside the horocycle of length 1. With this set up we define, for all cases, the \emph{reduced} Y-piece as 

 \begin{equation*}
\widehat{Y} = Y \smallsetminus (\widehat{\mathcal{C}}_1 \cup \widehat{\mathcal{C}}_2 \cup \widehat{\mathcal{C}}_3).
 \end{equation*}%

We are ready to introduce the mappings. Let us first consider the case of a Y-piece where ``one boundary geodesic is shrinking'', i.e where $l_1,l_2 \geq 0$, and $l_3 = \epsilon > 0$ with $\epsilon$ small.  In \cite[Theorem 5.1]{bmms14}, it is shown that there exists a symmetric  boundary coherent quasiconformal homeomorphism $\phi : \widehat{Y}_{l_1,l_2,\epsilon} \to \widehat{Y}_{l_1,l_2,0}$ of dilatation $\leq 1+ 2\epsilon^2$. We extend $\phi$ to all of $Y$ by letting it be an isometry on $\widehat{\mathcal{C}}_1$, $\widehat{\mathcal{C}}_2$ and a conformal mapping on $\widehat{\mathcal{C}}_3$. To compute the image set we use the preceding subsection: the mapping $\psi_{\gamma}$ as in \beqref{eq:psigamma}
with $\gamma = \gamma_3$ maps the closure of $\hat{C}_3$ conformally to the flat cylinder $[-\hat{M},0] \times \Sp_1$, where by \beqref{eq:Fgamma} and \beqref{eq:echwhat}
 \begin{equation}\label{eq:Lhat}
\hat{M} = F_{\gamma_3}(\hat{w}(\epsilon)) = \frac{1}{\epsilon}\left(\frac{\pi}{2} - \arcsin\left(\frac{\epsilon}{1+\frac{\epsilon^2}{4}}\right)\right) > \frac{\pi}{2\epsilon}  - 1,   \text{\; for }  0<\epsilon \leq2,
 \end{equation}
and the inequality follows from elementary consideration. We then shift the cylinder forward to $[1, \hat{M}+1] \times \Sp_1$ and apply the inverse of the conformal mapping $\psi_q$ defined in \beqref{eq:psiq}, where $q$ is the corresponding puncture of $Y_{l_1,l_2,0}$. The combination of the three mappings is our extension of $\phi$ to $\widehat{C}_3$. From the above expression for $\hat{M}$ and the form of the metric tensor in \beqref{eq:tensorcusp} it follows that
 \begin{equation}\label{eq:epsilonhat}%
\phi(\widehat{C}_3) = V_1^{\hat{\epsilon}} \quad \textup{with} \quad \hat{\epsilon} := \frac{1}{\hat{M}+1}.
 \end{equation}%
In particular, the boundary geodesic $\gamma_3$ of length $\epsilon$ goes to the horocycle $h_{\hat{\epsilon}}$ of length $\hat{\epsilon}$, where asymptotically $\hat{\epsilon} \sim \frac{2}{\pi}\epsilon$, as $\epsilon \to 0$.

\begin{figure}[h!]
\SetLabels
\L(.145*.60) $Y_{l_1,l_2,\epsilon}$\\%
\L(.78*.58) $Y_{l_1,l_2,0}^{l_1,l_2,\hat{\epsilon}}$\\%
\L(.49*.67) $\phi_Y$\\%
\L(.064*.37) $\,\gamma_1(0)$\\%
\L(.315*.83) $\gamma_3(0)$\\%
\L(.37*.03) $\gamma_2(0)$\\%
\L(.515*.37) $\gamma'_1(0)$\\%
\L(.74*.76) $\,h_{\hat{\epsilon}}$\\%
\L(.81*.025) $\gamma'_2(0)$\\%
\L(.21*.49) $a_1$\\%
\L(.28*.17) $a_2$\\%
\L(.32*.48) $a_3$\\
\endSetLabels
\AffixLabels{%
\centerline{%
\includegraphics[height=5cm,width=12cm]{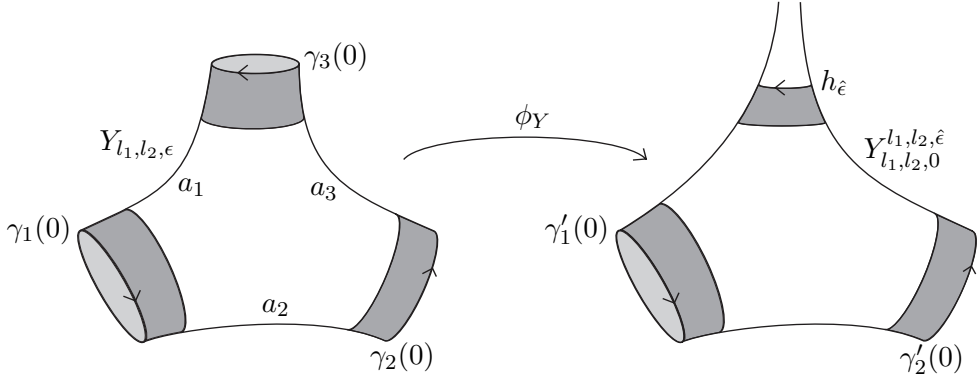}}}
\caption{$Y_{l_1,l_2,\epsilon}$ is quasi-conformally embedded into the Y-piece $Y_{l_1,l_2,0}$ with a cusp. The image of $\gamma_3$ is the horocycle $h_{\hat{\epsilon}}$ of length $\hat{\epsilon}$.}
\label{fig:quasiY}
\end{figure}

With $\phi$ thus extended we have the following variant of Theorem 5.1 in \cite{bmms14}:

\begin{thm}\label{thm:quasiY} Let $0 \leq l_1,l_2$, $0 < \epsilon \leq \frac{1}{2}$, and define $\hat{\epsilon}$ as in \beqref{eq:epsilonhat}. Then there exists a symmetric boundary coherent quasi-conformal homeomorphism

 \begin{equation*}%
 \phi_Y: Y_{l_1,l_2,\epsilon} \rightarrow Y_{l_1,l_2,0}^{l_1,l_2,\hat{\epsilon}}
 \end{equation*}%
with dilatation $q_{\phi_Y} \leq 1+ 2\epsilon^2$ and the property that the restriction of $\phi_Y$ to $\widehat{\mathcal{C}}_i$ is an isometry for $i=1,2$, and a conformal mapping for $i=3$.
\end{thm}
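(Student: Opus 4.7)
The strategy is to invoke Theorem 5.1 of \cite{bmms14} on the reduced pieces and then extend the resulting map by the explicit isometric and conformal collar maps described just before the statement. From \cite{bmms14} we obtain a symmetric, boundary coherent, quasi-conformal homeomorphism $\phi_0\colon\widehat{Y}_{l_1,l_2,\epsilon}\to\widehat{Y}_{l_1,l_2,0}$ of dilatation $\leq 1+2\epsilon^2$. Since for $i=1,2$ the reduced collars $\widehat{\mathcal{C}}_i\subset Y_{l_1,l_2,\epsilon}$ and the corresponding reduced collars in $Y_{l_1,l_2,0}$ are isometric (they depend only on $l_i$ through the metric tensor \eqref{eq:tensorcollar}), I would extend $\phi_0$ by the identity in Fermi coordinates on each of $\widehat{\mathcal{C}}_1$ and $\widehat{\mathcal{C}}_2$. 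Boundary coherence of $\phi_0$ at the two inner equidistant circles $\partial\widehat{\mathcal{C}}_i\cap\widehat{Y}_{l_1,l_2,\epsilon}$ guarantees continuous matching.

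For $\widehat{\mathcal{C}}_3$ I would take the conformal composition $\psi_q^{-1}\circ\tau\circ\psi_{\gamma_3}$ as outlined in the text, where $\tau\colon(x,t)\mapsto(x+\hat{M}+1,t)$ is the translation carrying the image $[-\hat{M},0]\times\Sp_1$ of $\psi_{\gamma_3}$ onto $[1,\hat{M}+1]\times\Sp_1$. Because each factor is a conformal diffeomorphism that preserves the $t$-coordinate, the composition is itself conformal; by \eqref{eq:psiq} and \eqref{eq:Lhat} its image is exactly $V_1^{\hat{\epsilon}}$ with $\hat{\epsilon}=1/(\hat{M}+1)$, which is \eqref{eq:epsilonhat}. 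In particular, the boundary curve $\gamma_3(t)$ lands on the horocycle $h_{\hat{\epsilon}}$ in standard parametrization, since the composition tracks the parameter $t$ step by step.

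With the three extensions glued to $\phi_0$ along the inner boundary circles of $\widehat{\mathcal{C}}_1,\widehat{\mathcal{C}}_2,\widehat{\mathcal{C}}_3$ one obtains the candidate $\phi_Y\colon Y_{l_1,l_2,\epsilon}\to Y_{l_1,l_2,0}^{l_1,l_2,\hat{\epsilon}}$. It remains to verify three items: boundary coherence, already built in for $\gamma_1,\gamma_2$ via the isometric extension and verified for $\gamma_3$ above; symmetry, since $\sigma_Y$ acts as the reflection $t\mapsto -t$ in the Fermi and cusp coordinates we use, and $\psi_{\gamma_3}$, $\tau$, $\psi_q^{-1}$ all commute with this reflection; and the dilatation bound, which is $1$ on the collar extensions and is inherited from $\phi_0$ on the reduced Y-piece, giving $q_{\phi_Y}\leq 1+2\epsilon^2$. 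The only nontrivial analytic input is Theorem 5.1 of \cite{bmms14}; the main potential obstacle here is purely bookkeeping, namely checking that parametrizations, orientations and shift constants are chosen so that the three extensions glue into a well-defined homeomorphism with no gaps and so that the image is exactly the restricted Y-piece $Y_{l_1,l_2,0}^{l_1,l_2,\hat{\epsilon}}$. The small inequality in \eqref{eq:Lhat} needed to identify $\hat{\epsilon}$ asymptotically follows by monotonicity of $\arcsin$ on $[0,1]$ under the restriction $0<\epsilon\leq\tfrac{1}{2}$.
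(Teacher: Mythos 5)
Your proposal matches the paper's argument essentially verbatim: the paper constructs $\phi_Y$ exactly by taking the quasi-conformal map from \cite[Theorem 5.1]{bmms14} on the reduced Y-piece, extending it isometrically on $\widehat{\mathcal{C}}_1,\widehat{\mathcal{C}}_2$, and extending it on $\widehat{\mathcal{C}}_3$ by the conformal composition $\psi_q^{-1}\circ(\text{shift by }\hat{M}+1)\circ\psi_{\gamma_3}$, identifying the image as $V_1^{\hat{\epsilon}}$ via \eqref{eq:Lhat} and \eqref{eq:epsilonhat}. The bookkeeping checks you flag (boundary coherence of the glued pieces, symmetry via the $t\mapsto -t$ reflection which all three coordinate maps respect, and the dilatation bound carried over from $\phi_0$) are exactly the verifications implicit in the paper's discussion preceding the theorem.
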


The theorem is illustrated in \bbref{Fig.}{fig:quasiY}. Note that the bound is independent of $l_1$ and $l_2$. Applying the theorem twice we also have the following:

\begin{thm}\label{thm:quasiYY} Let $0 \leq l_1$, $0 < \epsilon_2, \epsilon_3 \leq \frac{1}{2}$, and define $\hat{\epsilon}_2$, $\hat{\epsilon}_3$ as in \beqref{eq:epsilonhat}. Then there exists a symmetric boundary coherent quasi-conformal homeomorphism

 \begin{equation*}%
   \psi_Y: Y_{l_1,\epsilon_2,\epsilon_3} \rightarrow Y_{l_1,0,0}^{l_1,\hat{\epsilon}_2,\hat{\epsilon}_3}
 \end{equation*}%
with dilatation $q_{\psi_Y} \leq (1+ 2\epsilon_2^2)(1+ 2\epsilon_3^2)$ and the property that the restriction of $\psi_Y$ to $\widehat{\mathcal{C}}_i$ is an isometry for $i=1$, and a conformal mapping for $i=2,3$.
\end{thm}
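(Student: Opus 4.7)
The strategy is a direct two-step application of Theorem \ref{thm:quasiY}, pinching one small boundary geodesic after the other. Label the two small boundaries separately and note that since the mapping of Theorem \ref{thm:quasiY} acts by isometry on the two non-shrinking collars, the construction can be iterated cleanly without disturbing what has already been pinched.

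First step. Apply Theorem \ref{thm:quasiY} to $Y_{l_1,\epsilon_2,\epsilon_3}$, with the shrinking boundary being $\gamma_3$ of length $\epsilon_3 \le \tfrac12$. This produces a symmetric, boundary coherent quasi-conformal homeomorphism
\[
 \phi_1 : Y_{l_1,\epsilon_2,\epsilon_3} \to Y_{l_1,\epsilon_2,0}^{\,l_1,\epsilon_2,\hat{\epsilon}_3}
\]
of dilatation $q_{\phi_1} \le 1+2\epsilon_3^2$, acting isometrically on $\widehat{\mathcal{C}}_1,\widehat{\mathcal{C}}_2$ and conformally on $\widehat{\mathcal{C}}_3$.

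Second step. Now consider the full (unrestricted) Y-piece $Y_{l_1,\epsilon_2,0}$, of which the target of $\phi_1$ is a subdomain. Apply Theorem \ref{thm:quasiY} once more, this time interpreting the shrinking coordinate as the second one (the statement of Theorem \ref{thm:quasiY} is formally set up with the shrinking boundary in the third slot, but the natural $S_3$-symmetry of a Y-piece permuting its three boundaries allows the same construction to be run on any chosen boundary; equivalently, one relabels $(l_1,\epsilon_2,0)\to(0,l_1,\epsilon_2)$ before invoking the theorem and relabels back after). This yields a symmetric, boundary coherent quasi-conformal homeomorphism
\[
 \phi_2 : Y_{l_1,\epsilon_2,0} \to Y_{l_1,0,0}^{\,l_1,\hat{\epsilon}_2,0}
\]
of dilatation $q_{\phi_2} \le 1+2\epsilon_2^2$, acting isometrically on $\widehat{\mathcal{C}}_1$ and on the cusp collar $\widehat{\mathcal{C}}_3$, and conformally on $\widehat{\mathcal{C}}_2$.

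Composition and verification. Define $\psi_Y = \phi_2 \circ \phi_1$, restricted to the image of $\phi_1$. Because $\phi_2$ acts isometrically on $\widehat{\mathcal{C}}_3$ (a cusp of $Y_{l_1,\epsilon_2,0}$), it preserves the truncation of that cusp along the horocycle $h_{\hat{\epsilon}_3}$, so the image is exactly $Y_{l_1,0,0}^{\,l_1,\hat{\epsilon}_2,\hat{\epsilon}_3}$. The dilatation of a composition is at most the product, giving $q_{\psi_Y} \le (1+2\epsilon_2^2)(1+2\epsilon_3^2)$. Boundary coherence and symmetry are each closed under composition, so they transfer from $\phi_1,\phi_2$ to $\psi_Y$. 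Finally the behaviour on the reduced collars is read off: on $\widehat{\mathcal{C}}_1$ both maps are isometries, hence $\psi_Y$ is an isometry; on $\widehat{\mathcal{C}}_2$, $\phi_1$ is an isometry and $\phi_2$ is conformal, so $\psi_Y$ is conformal; on $\widehat{\mathcal{C}}_3$, $\phi_1$ is conformal and $\phi_2$ is an isometry on $\phi_1(\widehat{\mathcal{C}}_3)\subset \widehat{\mathcal{C}}_3$ of $Y_{l_1,\epsilon_2,0}$, so $\psi_Y$ is again conformal.

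The only delicate point is the second step: one must be sure that Theorem \ref{thm:quasiY} may be invoked with the shrinking boundary adjacent to a cusp rather than to a geodesic boundary, which is legitimate since that theorem is already stated with $l_1,l_2\ge 0$, and that the second map can be applied to a \emph{restricted} Y-piece without changing its properties, which holds because the restriction $\phi_1(Y_{l_1,\epsilon_2,\epsilon_3})$ differs from $Y_{l_1,\epsilon_2,0}$ only inside $\widehat{\mathcal{C}}_3$, where $\phi_2$ is an isometry and hence respects the horocyclic truncation.
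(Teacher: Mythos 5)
Your proof is correct and follows exactly the approach the paper has in mind: the paper itself introduces Theorem \ref{thm:quasiYY} with the one-line remark ``Applying the theorem twice we also have the following,'' and your argument is precisely that double application of Theorem \ref{thm:quasiY}, with the details of relabeling, composition, dilatation bound, and collar behaviour carefully verified.
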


\subsection{Mappings between $S^{\sF}$ and $S^{\sG}$}\label{sec:MpSFSG}
We extend the mappings of the preceding paragraph to quasi conformal mappings between $S^{\sF}$ and $S^{\sG}$. For future reference the setting is slightly more general. The term \emph{marking preserving}  in the next theorem is explained in the proof.
\begin{thm}\label{thm:psRGRF} Let $R$ be a hyperbolic surface of signature $(g,m;n)$ and assume that for some positive integer $l \leq m$ and some $\epsilon \in (0,\frac{1}{2}]$, the boundary geodesics $\gamma_1, \dots, \gamma_l$ satisfy

\begin{equation*}
\ell(\gamma_1), \dots, \ell(\gamma_l) \leq \epsilon.
\end{equation*}
Let $R^{\sG}$ be obtained by attaching flat cylinders $[0,\infty) \times \gamma_i$ to $\gamma_i$ for $i=1,\dots,l$, and let $R^{\sF}$ be obtained by choosing a partition of $R$ and shrinking $\gamma_1, \dots, \gamma_l$ to zero while keeping the remaining Fenchel-Nielsen parameters fixed. Then there exists a marking preserving quasi-conformal homeomorphism

\begin{equation*}
\psi : R^{\sG} \to R^{\sF}
\end{equation*}
with dilatation $q_{\psi} \leq (1+ 2\epsilon^2)^2$ that, moreover, embeds the cylinders $ [0,\infty) \times \gamma_i$ conformally into $R^{\sF}$ for $i=1,\dots, l$.
\end{thm}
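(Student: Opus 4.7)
The plan is to build $\psi$ piecewise from the Y-piece maps of \bbref{Theorem}{thm:quasiY} and \bbref{Theorem}{thm:quasiYY}, glue them along the partitioning geodesics via boundary coherence, and then extend conformally over the half-infinite cylinders that were attached in $R^{\sG}$. I would start by fixing a Fenchel-Nielsen pair-of-pants decomposition of $R$; since $\gamma_1,\dots,\gamma_l\subset\partial R$, each $\gamma_i$ is automatically a boundary curve of exactly one Y-piece of the decomposition. Denote the Y-pieces by $Y^{(1)},\dots,Y^{(N)}$ and let $\widetilde{Y}^{(1)},\dots,\widetilde{Y}^{(N)}$ be the corresponding Y-pieces of $R^{\sF}$, in which those $\gamma_i$'s that were boundaries have been replaced by cusps while the remaining Fenchel-Nielsen data are kept fixed.

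For each $k$ let $n_k$ be the number of boundary components of $Y^{(k)}$ lying in $\{\gamma_1,\dots,\gamma_l\}$. If $n_k=0$ I take $\phi^{(k)}:Y^{(k)}\to\widetilde{Y}^{(k)}$ to be the canonical isometry. If $n_k=1$ or $n_k=2$ I apply \bbref{Theorem}{thm:quasiY} or \bbref{Theorem}{thm:quasiYY} respectively, obtaining a symmetric boundary coherent quasi-conformal homeomorphism of $Y^{(k)}$ onto the truncated cusped Y-piece $\widetilde{Y}^{(k)}\smallsetminus\bigcup_i V_{\hat{\epsilon}_i}$ with dilatation at most $(1+2\epsilon^2)^{n_k}\le(1+2\epsilon^2)^2$; on the non-shrinking collars of $Y^{(k)}$ these maps are isometries by construction. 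Because the $\phi^{(k)}$ are boundary coherent with respect to the standard parametrizations of the partition curves, they match on shared interior partition geodesics; because they are symmetric, they preserve orthogeodesics, hence the twist parameters. The pieces therefore assemble into a single homeomorphism
\[
\phi:R\longrightarrow R^{\sF}\smallsetminus\bigcup_{i=1}^{l}V_{\hat{\epsilon}_i}
\]
of dilatation bounded by $(1+2\epsilon^2)^2$.

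It remains to extend $\phi$ over the attached half-infinite cylinders of $R^{\sG}$. The cylinder $[0,\infty)\times\gamma_i\subset R^{\sG}$ is conformally $[0,\infty)\times\Sp_1$, and by \beqref{eq:psiq} the residual punctured disk $V_{\hat{\epsilon}_i}\subset R^{\sF}$ is conformally $[1/\hat{\epsilon}_i,\infty)\times\Sp_1$. A translation in the $x$-coordinate, suitably rescaled to match the circumference of $\gamma_i$, provides the desired conformal map between them, arranged so that the parametrization of $\{0\}\times\gamma_i$ agrees with the parametrization of $h_{\hat{\epsilon}_i}=\phi(\gamma_i)$ inherited from $\phi$. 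The combined homeomorphism $\psi:R^{\sG}\to R^{\sF}$ is then quasi-conformal with $q_\psi\le(1+2\epsilon^2)^2$, conformal on each attached cylinder, and \emph{marking preserving} in the sense that it is isotopic, relative to the partition skeleton and its twist parameters, to the natural identification between $R^{\sG}$ and $R^{\sF}$ induced by the fixed Fenchel-Nielsen coordinates.

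The main technical nuisance, where I expect to spend most of the bookkeeping, is ensuring that the two parametrizations to be matched across the boundary $\gamma_i$ -- the one used by the standard parametrization of the Y-piece $Y^{(k_i)}$ in \bbref{Theorem}{thm:quasiY} and \bbref{Theorem}{thm:quasiYY}, and the one used to attach the cylinder $[0,\infty)\times\gamma_i$ in $R^{\sG}$ -- come from a single basepoint convention on $\gamma_i$. Fixing $\gamma_i(0)$ to be the footpoint on $\gamma_i$ of the orthogeodesic of $Y^{(k_i)}$ makes the gluing unambiguous and continuous on both sides. In the degenerate case $n_k=3$, only possible for very small base surfaces, an extra iterative application of \bbref{Theorem}{thm:quasiY} would produce an additional factor $(1+2\epsilon^2)$; this case is implicitly excluded in the stated form of the theorem.
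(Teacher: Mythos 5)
Your proposal is correct and follows essentially the same route as the paper's own proof: build the map Y-piece by Y-piece using Theorems \ref{thm:quasiY} and \ref{thm:quasiYY}, glue by boundary coherence and matching twist parameters, then extend conformally over the attached half-infinite cylinders, with marking preservation coming from preservation of orthogeodesics. Your remark about the case where all three boundary components of a single Y-piece are among $\gamma_1,\dots,\gamma_l$ (giving a potential factor $(1+2\epsilon^2)^3$) is a fair observation; the paper's proof silently passes over it as well, and it does not arise in any of the paper's applications, where at most two of the shrinking geodesics ever lie on the same Y-piece.
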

\begin{proof}
By construction, we may label the Y-pieces of $R$ and $R^{\sF}$ and the geodesics along which they are pasted together in the form $Y_1, \dots, Y_q$ and $\gamma_{m+1}, \dots, \gamma_{m+p}$ respectively, $Y'_1, \dots, Y'_q$ and $\gamma'_{m+1}, \dots, \gamma'_{m+p}$ (with $p=3g-3+m+n$, $q=2g-2+m+n$) in such a way that, whenever $Y_i$ is pasted to $Y_j$ along $\gamma_k$ with some twist parameter $\vartheta_k$, then $Y'_i$ is pasted to $Y'_j$ along $\gamma'_k$ with twist parameter $\vartheta_k$. 

There exists then, in the same way as in \bbref{Section}{sec:Introd}, a marking homeomorphism $\phi^{\sF}: R \to R^{\sF}$ that sends the $Y_j$ to the corresponding $Y'_j$, $j=1,\dots,q$, and preserves the twist parameters. (This is also a marking homeomorphism in the sense of Teichm\"uller theory  with $R$ understood as base surface, e.g.\ \cite[Section 6.1]{Bu92}.) There is a similar marking homeomorphism $\phi^{\sG}: R \to R^{\sG}$ and we shall say that a homeomorphism $\varphi : R^{\sG} \to R^{\sF}$ is \emph{marking preserving} if the mappings $\varphi \circ \phi^{\sG}$ and $\phi^{\sF}$ are isotopic.

Now, if, for a given $i$,  the boundary geodesics of $Y_i$ are distinct from $\gamma_1, \dots, \gamma_l$, then there exists an isometry $\psi_{Y_i} : Y_i \to Y'_i$ that preserves the labelling of the boundary geodesics. If, on the other hand, some of the boundary geodesics of $Y_i$ are amongst $\gamma_1, \dots, \gamma_l$, then, by \bbref{Theorems}{thm:quasiY} and \bbref{}{thm:quasiYY}, there exists a symmetric boundary coherent quasi-conformal embedding $\psi_{Y_i} : Y_i \to Y'_i$ that preserves the labelling of the interior geodesics and has dilatation $q_{\psi_{Y_i}}\leq (1+ 2\epsilon^2)^2$. Since the mappings $\psi_{Y_i}$, for $i=1,\dots,q$, are boundary coherent, and since the twist parameters for the pastings in $R$ are the same as those for the corresponding pastings in $R^{\sF}$, it follows that the $\psi_{Y_i}$ match along the boundaries and together define a quasi-conformal embedding $\psi : R \to R^{\sF}$ with dilatation $q_{\psi}\leq (1+ 2\epsilon^2)^2$. 

The mapping $\psi$ sends $\gamma_1, \dots, \gamma_l$ to horocycles in the cusps of $R^{\sF}$ and, again by boundary coherence, match with the conformal mappings between the attached cylinders and the outer parts of the horocycles. We may thus extend $\psi$ to a $(1+ 2\epsilon^2)^2$-quasi conformal homeomorphism $\psi : R^{\sG} \to R^{\sF}$ that acts conformally on the cylinders $ [0,\infty) \times \gamma_i$. Furthermore the so extended $\psi$ sends the orthogeodesics of the pairs of pants of the partition of $R^{\sG}$ to the orthogeodesics of the pairs of pants of the partition of $R^{\sF}$ and is thus marking preserving. 
\end{proof}

\section{Harmonic forms on a  cylinder}\label{sec:HarmCy}%

In this section we give decay and dampening down estimates  for harmonic forms on the flat cylinder $Z = ({-}\infty,{+}\infty) \times \Sp_1$ (c.f. \beqref{eq:flatcylinder}), where $\Sp_1 = \R/\Z$.

We use $(x,y)$ as coordinates for points in $Z$ with $x \in ({-}\infty, {+}\infty)$ and $y \in \Sp_1$, where $y$ is treated as a real variable.
Any real harmonic function $h(x,y)$ on $Z$ may be written as a series
\begin{multline}\label{eq:hseries}
   h(x,y) = a_0 + b_0 x\\
+ \sum_{n=1}^{\infty} (a_n\cos(2\pi ny) + b_n\sin(2\pi ny)) e^{-2\pi nx}
+    \sum_{n=1}^{\infty} (c_n\cos(2\pi ny) + d_n \sin(2\pi ny))e^{2\pi nx}
\end{multline}
(e.g.\ \cite[ p. 253]{Ax86}) via the conformal mapping $(x,y) \to \exp(2\pi(x+iy))$). We decompose $h(x,y)$ into 
\begin{equation}\label{eq:nonlinear_flat}
 h(x,y) = a_0 + b_0 x + h^{\nlin}(x,y), \quad h^{\nlin}(x,y) = h^{-}(x,y) + h^{+}(x,y),
\end{equation}
where $h^{-}$ is the first sum in \beqref{eq:hseries} and $h^{+}$ the second.
We call $a_0 + b_0 x$ the \emph{linear part of $h$} and $h^{\nlin} = h^{-}\!+h^{+}$ the \emph{nonlinear part }. The differential of $h$ is given by 
 \begin{equation*}%
d h(x,y) = A(x,y) d x + B(x,y) d y
 \end{equation*}%
with
\begin{align*}
&A(x,y) =b_0 + 
2\pi \sum_{n=1}^{\infty} n(-a_n\cos(2\pi ny) -b_n \sin(2\pi ny))e^{-2\pi nx}\\
&\rule{20ex}{0pt}+  2\pi \sum_{n=1}^{\infty} n(c_n\cos(2\pi ny)+d_n \sin(2\pi ny))e^{2\pi nx}\\
&B(x,y)  = 2\pi \sum_{n=1}^{\infty}n(-a_n\sin(2\pi ny) + b_n \cos(2\pi ny))e^{-2\pi nx}\\
&\rule{20ex}{0pt}+ 2\pi \sum_{n=1}^{\infty}n( -c_n\sin(2\pi ny)+d_n\cos(2\pi ny)) e^{2\pi nx}.
\end{align*}
Its pointwise squared norm is
 \begin{equation*}
    \| d h(x,y) \|^2 = A^2(x,y) + B^2(x,y).
 \end{equation*}%
Integrating $\| d h(x,y) \|^2$  over $\Sp_1$ we obtain

 \begin{equation}\label{eq:energy_y}%
\begin{aligned}
\int \limits_0^1 \| d h(x,y) \|^2 \,d y &= b_0^2 + 4\pi^2 \sum_{n=1}^{\infty} n^2 (a_n^2  + b_n^2) e^{-4\pi nx} + 4\pi^2 \sum_{n=1}^{\infty} n^2(c_n^2 + d_n^2) e^{4\pi nx}\\
&= b_0^2 + 
\int \limits_0^1\| d h^{-}(x,y) \|^2 \,d y
+
\int \limits_0^1\| d h^{+}(x,y) \|^2 \,d y.
\end{aligned}
 \end{equation}%

We now prove the following pointwise and $L^2$-decay estimates that shall be used at various places. 
\begin{lem}\label{thm:lem_flat} Assume $l > 0$ and let $Z_l$ be the subset $Z_l = [-l,l] \times  \Sp_1$ of $Z$. For $\delta_{\lef}, \delta_{\rig}\geq 0$  with $\delta_{\lef} + \delta_{\rig} < 2l$, set
\begin{eqnarray*}
  D_{\lef} &=& \{(x,y) \in Z_l \mid  -l \leq x \leq -l+\delta_{\lef} \} \text{ \, \ } D_{\rig} = \{(x,y) \in Z_l \mid  l-\delta_{\rig} \leq x \leq l \} \text{ \ and \ } \\
  D_{int} &=& Z_l \smallsetminus \{ D_{\lef} \cup D_{\rig} \}    \text{ \ \ (see \bbref{Fig.}{fig:Euclidean_cyl}). \ \ }
\end{eqnarray*}
Then for any harmonic function $h$ on $Z_l$ with nonlinear part $h^{-}\!+h^{+}$ as in \beqref{eq:hseries}, \beqref{eq:nonlinear_flat} we have
\begin{enumerate}
\itemup{i)} $E_{D_{int}}(d h^{-}+d h^{+}) = E_{D_{int}}(d h^{-}) + E_{D_{int}}(d h^{+})$.
\itemup{ii)} $8\pi^2 \int_{D_{int}} | h^{-}|^2 \leq E_{D_{int}}(d h^{-}) \leq  e^{-4\pi \delta_{\lef}} E_{Z_l}(d h^{-})$.
\itemup{iii)}  $8\pi^2\int_{D_{int}} | h^{+}|^2 \leq E_{D_{int}}(d h^{+}) \leq  e^{-4\pi \delta_{\rig}} E_{Z_l}(d h^{+})$.
\itemup{iv)}  $4\pi^2 \int_{D_{int}} |h^{\nlin}|^2 \leq E_{D_{int}}(d h^{\nlin}) \leq  e^{-4\pi \delta} E_{Z_l}(d h^{\nlin})$,\; $\delta := \min\{\delta_{\lef},\delta_{\rig}\}$.
\itemup{v)} If $l \geq 1$ and $\delta_{\lef} \geq \frac{1}{2}$ then, for any $(x,y) \in D_{int}$,

 \begin{equation*}%
| h^{-}(x,y)|^2 < e^{-4\pi (x+l)}  E_{Z_l}(d h^{-}), \quad \| d h^{-}(x,y) \|^2 < 52 \pt e^{-4\pi(x+l)}  E_{Z_l}(d h^{-}).
 \end{equation*}%
\itemup{vi)} If $l \geq 1$ and $\delta_{\rig} \geq \frac{1}{2}$ then, for any $(x,y) \in D_{int}$,

 \begin{equation*}%
| h^{+}(x,y)|^2 < e^{-4\pi (l-x)}  E_{Z_l}(d h^{+}), \quad \| d h^{+}(x,y) \|^2 < 52 \pt e^{-4\pi (l-x)}  E_{Z_l}(d h^{+}).
 \end{equation*}%
\itemup{vii)} 
If $l \geq 1$ and $\delta:= \min\{\delta_{\lef}, \delta_{\rig}\} \geq \frac{1}{2}$, then for any $(x,y) \in D_{int}$,
 \begin{equation*}%
| h^{\nlin}(x,y)|^2 < 2\pt e^{-4\pi \delta}  E_{Z_l}(d h^{\nlin}), \quad \| d h^{\nlin}(x,y) \|^2 < 104\pt e^{-4\pi\delta}  E_{Z_l}(d h^{\nlin}). 
 \end{equation*}%

\end{enumerate}
\end{lem}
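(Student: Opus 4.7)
The lemma is a Fourier-analytic exercise based on the series \eqref{eq:hseries}, the decomposition \eqref{eq:nonlinear_flat}, and the slice-wise formula \eqref{eq:energy_y}. The structural observation that makes everything work is slice-wise orthogonality: for each fixed $x$, a direct computation from the explicit $A,B$ in the excerpt, together with $\sin^2+\cos^2=1$, shows that $\int_0^1 A^-A^+\,dy$ and $\int_0^1 B^-B^+\,dy$ evaluate to $-2\pi^2\sum n^2(a_nc_n+b_nd_n)$ and $+2\pi^2\sum n^2(a_nc_n+b_nd_n)$ respectively, and therefore cancel. Integrating in $x$ yields statement (i), and the same additivity holds over all of $Z_l$, a fact I shall use throughout.

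For (ii), (iii), (iv), Parseval in $y$ gives
\[
\int_0^1 |h^-|^2\,dy = \tfrac{1}{2}\sum_{n\geq 1}(a_n^2+b_n^2)e^{-4\pi n x},\qquad \int_0^1 \|d h^-\|^2\,dy = 4\pi^2\sum_{n\geq 1}n^2(a_n^2+b_n^2)e^{-4\pi nx},
\]
so $8\pi^2\int|h^-|^2\leq\int\|dh^-\|^2$ holds termwise because $n\geq 1$. Integrating in $x$ produces the closed form $E_{Z_l}(dh^-) = \pi\sum n(a_n^2+b_n^2)(e^{4\pi nl}-e^{-4\pi nl})$ and an analogous formula for $E_{D_{int}}(dh^-)$; a termwise comparison (again using $n\geq 1$) gives the exponential factor $e^{-4\pi\delta_{\lef}}$ of (ii). Statement (iii) is symmetric under $x\mapsto -x$. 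For (iv), (i) reduces the energy identity to the sum of the $\pm$-parts, and the only point of care is the drop of the Poincar\'e constant from $8\pi^2$ to $4\pi^2$, which is exactly the loss from the elementary bound $(h^-+h^+)^2\leq 2((h^-)^2+(h^+)^2)$.

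The pointwise bounds (v)--(vii) form the heart of the proof. I begin with $|h^-(x,y)|\leq\sum\sqrt{a_n^2+b_n^2}\,e^{-2\pi nx}$ and $\|dh^-(x,y)\|^2\leq 4\pi^2\bigl(\sum n\sqrt{a_n^2+b_n^2}\,e^{-2\pi nx}\bigr)^2$; the second bound follows after using $\sin^2+\cos^2=1$ to simplify $(A^-)^2+(B^-)^2$ to $4\pi^2[(\sum na_n e^{-2\pi nx})^2+(\sum nb_n e^{-2\pi nx})^2]$. Applying Cauchy--Schwarz with the weights $n(e^{4\pi nl}-e^{-4\pi nl})$ factors out $\pi^{-1}E_{Z_l}(dh^-)$ and leaves the tails $\sum_n e^{-4\pi nx}/[n(e^{4\pi nl}-e^{-4\pi nl})]$ and $\sum_n n e^{-4\pi nx}/[e^{4\pi nl}-e^{-4\pi nl}]$ respectively. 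Under $l\geq 1$ the denominators differ from $e^{4\pi nl}$ by the factor $(1-e^{-8\pi n})^{-1}$ which is essentially $1$, so the tails reduce to $-\ln(1-u)$ and $u/(1-u)^2$ with $u=e^{-4\pi(x+l)}$. The hypothesis $\delta_{\lef}\geq 1/2$ forces $u\leq e^{-2\pi}$, so each tail is bounded by $u=e^{-4\pi(x+l)}$ times a constant essentially equal to $1$, and the announced bounds for $|h^-|^2$ and $\|dh^-\|^2$ hold with ample slack. Part (vi) is symmetric, and (vii) follows from $|h^{\nlin}|^2\leq 2(|h^-|^2+|h^+|^2)$ together with the analogous derivative bound and slice-wise orthogonality, using that in $D_{int}$ both $x+l\geq\delta$ and $l-x\geq\delta$.

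The only real difficulty is arithmetic bookkeeping: after the Cauchy--Schwarz step one must verify that the numerical constants stay below the stated targets $1$, $52$, $2$, $104$. The chosen hypotheses $l\geq 1$ and $\delta\geq 1/2$ are precisely what is needed to make the tail series essentially single geometric terms, so once the termwise estimates are in place the constants hold with plenty of slack and no further conceptual work arises.
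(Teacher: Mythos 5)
Your proof is correct and follows essentially the same route as the paper's: slice-wise Parseval in $y$ for the energy identities and the $L^2$ bounds, and a Cauchy--Schwarz argument with weights $n(e^{4\pi nl}-e^{-4\pi nl})$ for the pointwise bounds (v)--(vii), with the hypotheses $l\geq 1$, $\delta\geq\tfrac12$ collapsing the tail series to essentially a single geometric term. The one minor variant worth noting: the paper estimates $|a_n|$ and $|b_n|$ separately and recombines via $(u+v)^2\leq 2(u^2+v^2)$, whereas you work directly with $\sqrt{a_n^2+b_n^2}$, which is cleaner and in fact produces slightly tighter constants for (v).

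One phrase needs correcting. You write that $\sin^2+\cos^2=1$ lets you \emph{simplify} $(A^-)^2+(B^-)^2$ to $4\pi^2\bigl[(\sum n a_n e^{-2\pi nx})^2 + (\sum n b_n e^{-2\pi nx})^2\bigr]$, but this identity does not hold pointwise; the cross terms between different modes $n\neq m$ do not cancel at a fixed $(x,y)$ --- they cancel only after integrating over $y$. What is true, and what your argument actually needs, is the vector triangle inequality: writing $d h^-=\sum_n (A^-_n\,dx+B^-_n\,dy)$, each summand has pointwise norm exactly $2\pi n\sqrt{a_n^2+b_n^2}\,e^{-2\pi n x}$ (this is where $\sin^2+\cos^2=1$ enters, mode by mode), and hence $\|d h^-(x,y)\|\leq 2\pi\sum_n n\sqrt{a_n^2+b_n^2}\,e^{-2\pi n x}$. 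Squaring gives precisely the $4\pi^2\bigl(\sum n\sqrt{a_n^2+b_n^2}\,e^{-2\pi nx}\bigr)^2$ bound you use, so the conclusion stands; only the justification should be rephrased from an equality to an inequality.
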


\begin{figure}[h!]
\SetLabels
\L(.255*.01) $\,\delta_{\lef}$\\
\L(.25*.78) $D_{\lef}$\\
\L(.48*.78) $D_{int}$\\
\L(.73*.78) $D_{\rig}$\\
\L(.715*.01) $\delta_{\rig}$\\
\endSetLabels
\AffixLabels{%
\centerline{%
\includegraphics[height=3.5cm,width=12cm]{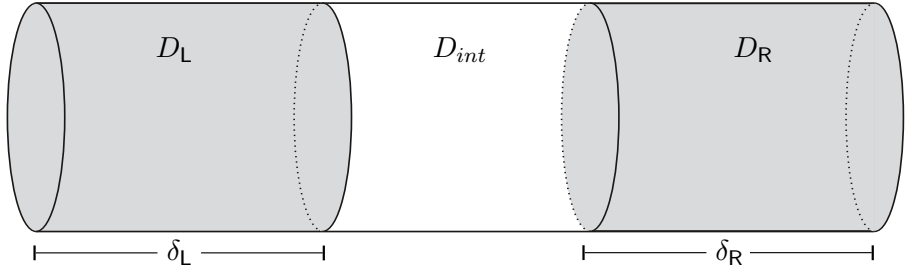}}}
\caption{The Euclidean cylinder $Z_l$ with ends $D_{\lef}$ and $D_{\rig}$.}
\label{fig:Euclidean_cyl}
\end{figure}

 \begin{proof}%
Statement i) is an immediate consequence of \beqref{eq:energy_y}. Furthermore, \beqref{eq:energy_y} applied to $h^{-}$ and integration over $[-l, l]$ yields

\begin{equation}\label{eq:Eh_sinh} 
  E_{Z_l}(d h^{-}) = \pi \sum_{n=1}^{\infty} n \left( a_n^2  + b_n^2  \right)  (e^{4\pi n l}-e^{-4\pi n l}).
\end{equation}
Similarly,
\begin{equation}
  E_{D_{int}}(d h^{-}) =   \pi \sum_{n=1}^{\infty} n \left( a_n^2  + b_n^2  \right)  (e^{4\pi n (l-\delta_{\lef})}-e^{-4\pi n (l-\delta_{\rig})}).
\label{eq:Eint_sinh}  
\end{equation}
From this and observing that
\begin{align*}
  e^{4\pi n(l-\delta_{\lef})}-e^{-4\pi n(l-\delta_{\rig})}\leq e^{4\pi n(l-\delta_{\lef})}-e^{-4\pi n l} \leq e^{-4\pi n \delta_{\lef}}(e^{4\pi n l}-e^{-4\pi n l})
\end{align*}

we get the second inequality in ii). Integrating $| h^{-}(x,y)|^2$  and $\| d h^{-}(x,y) \|^2$  over $\Sp_1$ we obtain in a similar fashion as in \beqref{eq:energy_y}:

\begin{equation*}
\int \limits_0^1 | h^{-}(x,y)|^2 \,d y =  \frac{1}{2} \sum_{n=1}^{\infty}  (a_n^2  + b_n^2) e^{-4\pi nx}, \quad
\int \limits_0^1 \| d h^{-}(x,y) \|^2 \,d y =  4\pi^2 \sum_{n=1}^{\infty}  n^2(a_n^2  + b_n^2) e^{-4\pi nx}
\end{equation*}
which implies the first. Item iii) is obtained in the same way. Combining ii), iii) and using that $(u+v)^2 \leq 2(u^2+v^2)$ we get iv). 
For item v) we first have

\begin{equation*}
   |h^{-}(x,y)| \leq \sum_{n=1}^{\infty} (|a_n| + |b_n| ) e^{-2\pi n x}   
\end{equation*}
and then estimate the sums for $a_n$ and $b_n$ individually, beginning with $a_n$. Let, with a glance at \beqref{eq:Eh_sinh},

 \begin{equation*}%
s_a =  \sum_{n=1}^{\infty} |a_n|  e^{-2\pi nx},
\quad
E_a= \pi \sum_{n=1}^{\infty} n  a_n^2  (e^{4\pi nl} -  e^{-4\pi nl}). 
 \end{equation*}%

With $\alpha_n = \vert a_n \vert e^{2\pi n l}(n(1-e^{-8\pi n l}))^{\frac{1}{2}}$,  the two terms may be written in the form

 \begin{equation*}%
s_a =  \sum_{n=1}^{\infty} \alpha_n (n(1-e^{-8\pi n l}))^{-\frac{1}{2}}  e^{-2\pi n(x+l)},
\quad
E_a= \pi \sum_{n=1}^{\infty} \alpha_n^2.
 \end{equation*}%
By the Cauchy-Schwarz inequality,

 \begin{equation*}%
s_a^2 \leq \sum_{n=1}^{\infty}\alpha_n^2 \cdot \sum_{n=1}^{\infty}\frac{1}{n(1-e^{-8\pi n l})}\, e^{-4\pi n(x+l)}.
 \end{equation*}%
The second sum is further bounded above by

 \begin{equation*}%
\frac{1}{1-e^{-8\pi l}}\sum_{n=1}^{\infty}\frac{1}{n}e^{-4\pi n(x+l)}=\frac{-1}{1-e^{-8\pi l}}\ln(1-e^{-4\pi (x+l)})<\frac{3}{2}e^{-4\pi (x+l)},
 \end{equation*}%
where the simplification ob the right uses that $l \geq 1$ and $4\pi(x+l) \geq 4\pi\delta_{\lef} \geq 2\pi$. Bringing everything together we get $s_a^2 < \frac{1}{2}E_a e^{-4\pi (x+l)}$. An analogous inequality holds for the  $b_n$. Using that $(u+v)^2 \leq 2(u^2 + v^2)$ and recalling \beqref{eq:Eh_sinh} we get the first inequality in v). The proof of the second inequality is following the same scheme beginning with 

\begin{equation*}
   \Big | \frac{\partial h^{-}(x,y)}{\partial x}\Big| \leq 2\pi \sum_{n=1}^{\infty} n(|a_n| + |b_n| ) e^{-2\pi n x},
\quad   
   \Big | \frac{\partial h^{-}(x,y)}{\partial y}\Big| \leq 2\pi \sum_{n=1}^{\infty} n(|a_n| + |b_n| ) e^{-2\pi n x}.
\end{equation*}
The sum $s_a$ is now replaced by $t_a = 2\pi \sum_{n=1}^{\infty}n \vert a_n \vert e^{-2\pi n x}$ , and instead of the above sum $\sum_{n=1}^{\infty}\frac{1}{n}e^{-4\pi n(x+l)}$ we now deal with $\sum_{n=1}^{\infty}n e^{-4\pi n(x+l)} = e^{4\pi (x+l)}(e^{4\pi (x+l)}-1)^{-2}$.  The inequality $s_a^2 < \frac{1}{2}E_a e^{-4\pi (x+l)}$ is taken over by $t_a^2 < 13 E_a e^{-4\pi (x+l)}$ with its analog for the $b_n$, and the result is

 \begin{equation*}%
\left \vert \frac{\partial h^{-}(x,y)}{\partial x} \right\vert^2 < 26\pt e^{-4\pi (x+l)}  E_{Z_l}(d h^{-}),
\quad
\left \vert \frac{\partial h^{-}(x,y)}{\partial y} \right\vert^2 < 26\pt e^{-4\pi (x+l)}  E_{Z_l}(d h^{+}).
 \end{equation*}%
Hence, the second inequality. The proof of vi) is the same. For the proof of vii) one uses v) and vi) and the fact that $E_{Z_l}(d h^{\nlin}) = E_{Z_l}(d h^{-}) + E_{Z_l}(d h^{+})$.
\end{proof}%

We also need the following orthogonality lemma, where $Z' = [x_0,x_1] \times \Sp_1$. The proof is similar to that of the first part of the preceding lemma and is omitted. The term ``sufficiently strongly'' means that the series and the term by term differentiated series are absolutely uniformly convergent.

\begin{lem}\label{thm:lemorthogonal}
(Orthogonality lemma) Let $H, \tilde{H} : Z' \to \R$ be sufficiently strongly convergent series

 \begin{gather*}
H(x,y) = \sum_{n=1}^{\infty}\varphi_n(x) \cos(2\pi n y) + \sum_{n=1}^{\infty}\psi_n(x) \sin(2\pi n y),\\
\tilde{H}(x,y) = \sum_{n=1}^{\infty}\tilde{\varphi}_n(x) \cos(2\pi n y) + \sum_{n=1}^{\infty}\tilde{\psi}_n(x) \sin(2\pi n y),
 \end{gather*}%
and $\omega$, $\tilde{\omega}$ on $Z'$ the forms

 \begin{gather*}
\omega = b(x) d x + c\, d y + d H,\\
\tilde{\omega} = \tilde{b}(x) d x + \tilde{c}\, d y + d \tilde{H},
 \end{gather*}%
where $b, \tilde{b} : [x_0,x_1] \to \R$ are continuous functions and $c, \tilde{c} \in \R$ are constants. Then

 \begin{equation*}
\int_{Z'} \omega \wedge \star \tilde{\omega} = \int\limits_{x_0}^{x_1}(c \tilde{c} + b(x) \tilde{b}(x)) d x + \int_{Z'} d H \wedge \star d \tilde{H}.
 \end{equation*}%
 \end{lem}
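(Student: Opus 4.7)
The approach is a direct calculation in coordinates. I would first observe that on $Z'$ equipped with the Euclidean metric $dx^2+dy^2$, the Hodge star on 1-forms acts by $\star dx = dy$, $\star dy = -dx$, so that for any 1-forms $\alpha = u\,dx + v\,dy$ and $\beta = \tilde u\,dx+\tilde v\,dy$ one has $\alpha \wedge \star \beta = (u\tilde u + v\tilde v)\,dx\wedge dy$. Thus computing $\omega \wedge \star \tilde\omega$ reduces to computing the pointwise inner product of the coordinate representations.

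Next I would write $dH = H_x\,dx + H_y\,dy$ with
\begin{align*}
H_x(x,y) &= \sum_{n=1}^{\infty}\varphi_n'(x)\cos(2\pi n y) + \sum_{n=1}^{\infty}\psi_n'(x)\sin(2\pi n y),\\
H_y(x,y) &= \sum_{n=1}^{\infty}2\pi n\bigl(-\varphi_n(x)\sin(2\pi n y)+\psi_n(x)\cos(2\pi n y)\bigr),
\end{align*}
and similarly for $d\tilde H$. Then $\omega = (b+H_x)\,dx + (c+H_y)\,dy$ and likewise for $\tilde\omega$, so expanding gives
\begin{equation*}
\omega\wedge\star\tilde\omega = \bigl[b\tilde b + c\tilde c + (b\tilde H_x + \tilde b H_x) + (c\tilde H_y + \tilde c H_y) + H_x\tilde H_x + H_y\tilde H_y\bigr]dx\wedge dy.
\end{equation*}

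Now I would integrate over $Z' = [x_0,x_1]\times \Sp_1$ term by term, using the hypothesis of sufficiently strong convergence to exchange sum and integral. The constant term contributes $\int_{x_0}^{x_1}(b\tilde b + c\tilde c)\,dx$, since $b,\tilde b,c,\tilde c$ are independent of $y$ and $\int_0^1 dy = 1$. The mixed terms vanish: for instance $\int_0^1 \tilde H_x(x,y)\,dy = \sum \tilde\varphi_n'(x)\int_0^1\cos(2\pi n y)\,dy + \sum\tilde\psi_n'(x)\int_0^1\sin(2\pi n y)\,dy = 0$ for $n\ge 1$, so $\int_{Z'} b\tilde H_x\,dx\wedge dy = 0$, and analogously for $\tilde b H_x$, $c\tilde H_y$, $\tilde c H_y$. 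Finally the remaining two terms recombine as $\int_{Z'}(H_x\tilde H_x + H_y\tilde H_y)\,dx\wedge dy = \int_{Z'} dH\wedge \star d\tilde H$, which yields the claimed identity.

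The only subtlety, and hence the main (mild) obstacle, is justifying the termwise integration of the mixed products such as $b(x)\tilde H_x(x,y)$ and the quadratic product $H_x\tilde H_x + H_y\tilde H_y$; this is exactly the role played by the assumption that the series for $H$ and $\tilde H$, together with their $x$-differentiated versions, converge absolutely and uniformly. Under that assumption dominated convergence (on the compact cylinder $Z'$) and Fubini apply without further effort, and the orthogonality relations $\int_0^1\cos(2\pi m y)\cos(2\pi n y)\,dy = \tfrac12\delta_{mn}$, $\int_0^1\sin(2\pi m y)\sin(2\pi n y)\,dy = \tfrac12\delta_{mn}$, $\int_0^1\cos(2\pi m y)\sin(2\pi n y)\,dy = 0$ guarantee that all cross frequencies drop out, completing the proof.
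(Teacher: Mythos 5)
Your proof is correct and matches what the paper had in mind: the authors omit the proof, remarking only that it is "similar to that of the first part of the preceding lemma," which is exactly the computation you carry out — write everything in $x,y$-coordinates, integrate over $y\in\Sp_1$, and let the orthogonality of the trigonometric modes kill the cross terms. Nothing to add.
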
%
\begin{lem}\label{lem:dampening}
(First dampening down lemma) Let $Z_l = [-l,l] \times \Sp_1$ with subsets $D_{\lef}$, $D_{int}$, $D_{\rig}$ be as in \bbref{Lemma}{thm:lem_flat}, $h : Z_l \to \R$ with nonlinear part $h^{\nlin}=h^{-}+h^{+}$ a harmonic function as in \beqref{eq:nonlinear_flat} and $\omega$ the harmonic form 

 \begin{equation*}
\omega = b_0 d x + c_0 d y + d h^{\nlin},
 \end{equation*}%
where $b_0, c_0 \in \R$ are constants. Set $\delta := \min\{\delta_{\lef}, \delta_{\rig}  \}$ and $w = 2l -(\delta_{\lef}+\delta_{\rig})$. In  $D_{int}$ we dampen down $\omega$ to

 \begin{equation*}
\omega_{\chi} = b_0 d x + c_0 d y + d (\chi \cdot h^{\nlin}),
 \end{equation*}
where $\chi : Z_l \to \R$ is the cut off function with the property that $\chi = 1$ on $D_{\lef}$, $\chi = 0$ on $D_{\rig}$ and $\chi$ is a linear function of $x$ on $D_{int}$. Then for the three functions $h^{\texttt{\#}}= h^{-}, h^{+}, h^{\nlin}$ we have 
\begin{enumerate}
\itemup{i)} %
$E_{D_{int}}(d(\chi \cdot h^{\texttt{\#}})) 
\leq  E_{D_{int}}(d h^{\texttt{\#}}) + \left(1+\frac{1}{2 \pi^2 w^2}\right)e^{-4\pi \delta_{\texttt{\#}}}E_{Z_l}(d h^{\texttt{\#}})$,
\end{enumerate}
where, respectively, $\delta_{\texttt{\#}} = \delta_{\lef}, \delta_{\rig}, \delta$. The dampened down form $\omega_{\chi}$ satisfies

\begin{enumerate}
\itemup{ii)} $E_{D_{int}}(\omega_{\chi}) = E_{D_{int}}(b_0 d x + c_0 d y) + E_{D_{int}}(d(\chi \cdot h^{\nlin}))$,
\itemup{iii)} $E_{Z_l}(\omega_{\chi}) \leq E_{Z_l}(\omega) + \left(1+\frac{1}{2 \pi^2 w^2}\right)e^{-4\pi \delta}E_{Z_l}(\omega)$.
\end{enumerate}
\end{lem}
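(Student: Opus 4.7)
The plan is to reduce all three statements to the expansion $d(\chi h^{\texttt{\#}}) = \chi\, dh^{\texttt{\#}} + h^{\texttt{\#}} d\chi$ on $D_{int}$, exploiting two structural features of $\chi$ there: it depends only on $x$, so $d\chi = -\tfrac{1}{w}\, dx$ and hence $\|d\chi\|^2 = 1/w^2$, and $\chi^2 \leq 1$. The only analytic inputs I will need are the Poincar\'e/decay estimates \bbref{Lemma}{thm:lem_flat}\,(ii)--(iv) and the orthogonality \bbref{Lemma}{thm:lemorthogonal}.

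For (i), the pointwise inequality $\|A+B\|^2 \leq 2\|A\|^2 + 2\|B\|^2$ applied with $A = \chi\, dh^{\texttt{\#}}$ and $B = h^{\texttt{\#}} d\chi$, integrated over $D_{int}$ and simplified by $\chi^2 \leq 1$ and $\|d\chi\|^2 = 1/w^2$, gives
\[
 E_{D_{int}}(d(\chi h^{\texttt{\#}})) \;\leq\; 2\, E_{D_{int}}(dh^{\texttt{\#}}) + \frac{2}{w^2}\int_{D_{int}} (h^{\texttt{\#}})^2.
\]
I split $2\, E_{D_{int}}(dh^{\texttt{\#}}) = E_{D_{int}}(dh^{\texttt{\#}}) + E_{D_{int}}(dh^{\texttt{\#}})$: the first copy is left as the first term on the right-hand side of (i), while the second is bounded by $e^{-4\pi \delta_{\texttt{\#}}} E_{Z_l}(dh^{\texttt{\#}})$ via \bbref{Lemma}{thm:lem_flat}. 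The same lemma also bounds $\int_{D_{int}}(h^{\texttt{\#}})^2 \leq \tfrac{1}{c\pi^2}\, e^{-4\pi \delta_{\texttt{\#}}} E_{Z_l}(dh^{\texttt{\#}})$ with $c=8$ for $h^{\texttt{\#}} \in \{h^-, h^+\}$ and $c=4$ for $h^{\texttt{\#}} = h^{\nlin}$. Adding everything, the coefficient of $e^{-4\pi\delta_{\texttt{\#}}} E_{Z_l}(dh^{\texttt{\#}})$ is $1 + \tfrac{2}{c\pi^2 w^2}$, which equals $1 + \tfrac{1}{2\pi^2 w^2}$ for $h^{\nlin}$ (where the stated bound is sharp) and $1 + \tfrac{1}{4\pi^2 w^2}$ for $h^{\pm}$ (strictly stronger than what is stated). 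This yields (i) in all three cases.

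Part (ii) is a direct application of \bbref{Lemma}{thm:lemorthogonal}. Since $\chi$ depends only on $x$ and $h^{\nlin}$ has vanishing $y$-average, the product $H := \chi h^{\nlin}$ still has the pure-nonzero-frequency Fourier form required by the orthogonality lemma. Writing $\omega_\chi = b_0\, dx + c_0\, dy + dH$ and applying that lemma with $\tilde\omega = \omega_\chi$ immediately produces the orthogonal decomposition of (ii).

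For (iii), I decompose $Z_l = D_{\lef} \cup D_{int} \cup D_{\rig}$. On $D_{\lef}$, $\chi \equiv 1$ so $\omega_\chi = \omega$; on $D_{\rig}$, $\chi \equiv 0$ so $\omega_\chi = b_0 dx + c_0 dy$, and \bbref{Lemma}{thm:lemorthogonal} gives $E_{D_{\rig}}(\omega_\chi) = E_{D_{\rig}}(\omega) - E_{D_{\rig}}(dh^{\nlin})$. On $D_{int}$, combining (ii) with the orthogonal splitting $E_{D_{int}}(\omega) = E_{D_{int}}(b_0 dx + c_0 dy) + E_{D_{int}}(dh^{\nlin})$ and then applying (i) to $h^{\nlin}$ yields $E_{D_{int}}(\omega_\chi) - E_{D_{int}}(\omega) \leq (1+\tfrac{1}{2\pi^2 w^2}) e^{-4\pi\delta} E_{Z_l}(dh^{\nlin})$. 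Summing the three contributions, dropping the non-positive term $-E_{D_{\rig}}(dh^{\nlin})$, and using once more that $E_{Z_l}(dh^{\nlin}) \leq E_{Z_l}(\omega)$ by orthogonality on all of $Z_l$, I recover (iii). The main obstacle is really the constant calibration in (i); once that is done, parts (ii) and (iii) are clean orthogonality bookkeeping.
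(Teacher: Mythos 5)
Your proposal is correct and follows essentially the same route as the paper: expand $d(\chi h^{\texttt{\#}}) = \chi\,dh^{\texttt{\#}} + h^{\texttt{\#}}d\chi$, apply $(u+v)^2 \le 2(u^2+v^2)$ pointwise, integrate, and invoke the decay and Poincar\'e-type bounds of the cylinder lemma together with the orthogonality lemma; you even recover the paper's parenthetical remark that the constant improves to $1+\tfrac{1}{4\pi^2w^2}$ for $h^{\pm}$. The only differences are cosmetic (the paper passes through the triangle inequality before squaring) and that you spell out the bookkeeping for part (iii), which the paper leaves implicit.
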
%

 \begin{proof}%
Statement ii) is an instance of \bbref{Lemma}{thm:lemorthogonal}. For the inequality in i) we first have, pointwise, %
\begin{equation}\label{eq:triangle}
\| d(\chi h^{\texttt{\#}})\| = \| \chi d h^{\texttt{\#}} + h^{\texttt{\#}} d \chi \| \leq  \| \chi d h^{\texttt{\#}}\| + \|h^{\texttt{\#}} d \chi \| \leq  \|d h^{\texttt{\#}}\| + \frac{1}{w} |h^{\texttt{\#}} |.
\end{equation}
Using that $(u+v)^2 \leq 2(u^2 + v^2)$ we further get

 \begin{equation*}%
 \| d(\chi h^{\texttt{\#}})\|^2  \leq 2( \|d h^{\texttt{\#}}\|^2 + \frac{1}{w^2} |h^{\texttt{\#}} |^2).
 \end{equation*}%

Integrating over $D_{int}$ we conclude the proof of i) using \bbref{Lemma}{thm:lem_flat}. (For $h^{\texttt{\#}} = h^{-}, h^{+}$ we can actually replace $(1+ \frac{1}{2 \pi^2 w^2})$ with $(1+ \frac{1}{4 \pi^2 w^2})$, but this will not improve our later results). Statement iii) is a consequence of the preceding ones.
\end{proof}%

The choice of $\frac{1}{100}$ for $w$ in the next lemma has been made so as to keep the increase of energy in the dampening down process small.

\begin{lem}\label{lem:dampening2}
(Second dampening down lemma) Let $Z_l = [-l,l] \times \Sp_1$ and $h : Z_l \to \R$ be as in 
the first lemma, but now assume that $l \geq 1$, $\delta_{\lef} = \frac{1}{2}$ and $w = 2l - (\delta_{\lef} + \delta_{\rig}) =\frac{1}{100}$. In  $D_{int}$ we partially dampen down $\omega$ to

 \begin{equation*}
\omega_{\chi}^{+} = b_0 d x + c_0 d y + d h^{-}\! + d (\chi \cdot h^{+}),
 \end{equation*}
where $\chi : Z_l \to \R$ is the same cut off function as in \bbref{Lemma}{lem:dampening}. Then
\begin{enumerate}
\itemup{i)} $E_{Z_l}(\omega_{\chi}^{+}) = E_{Z_l}(b_0 d x + c_0 d y) + E_{Z_l}(d h^{-} + d(\chi \cdot h^{+}))$,
\itemup{ii)} $E_{Z_l}(d h^{-} + d(\chi \cdot h^{+})) \leq E_{Z_l}(d h^{-} + d h^{+}) +  e^{-8\pi l +5}E_{Z_l}(d h^{-})$.
\end{enumerate}
\end{lem}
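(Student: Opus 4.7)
Statement (i) is immediate from Lemma \ref{thm:lemorthogonal}: writing $\omega_\chi^+ = b_0\,d x + c_0\,d y + d H$ with $H := h^- + \chi h^+$, the Fourier series of $H$ in $y$ contains no constant-in-$y$ mode (since $h^-$ and $h^+$ both lack one, and $\chi$ depends only on $x$), so the orthogonality lemma with $\omega=\tilde\omega=\omega_\chi^+$, $b=\tilde b=b_0$, $c=\tilde c=c_0$ gives (i) at once.

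For (ii) I would work mode by mode in $y$. A direct computation (cross terms cancel term by term in the Fourier series) shows that $dh^-$ and $dh^+$ are orthogonal on every substrip $[a,b]\times\Sp_1$, so $E_{Z_l}(dh^-+dh^+)=\tfrac12\sum_{n\ge1}\int_{-l}^l[(\tilde F_n')^2+(2\pi n)^2\tilde F_n^2]\,d x+(\sin\text{-part})$ with $\tilde F_n(x):=a_ne^{-2\pi nx}+c_ne^{2\pi nx}$. The same decomposition applies to $E_{Z_l}(dh^-+d(\chi h^+))$ upon replacing $\tilde F_n$ by $F_n(x):=a_ne^{-2\pi nx}+\chi(x)c_ne^{2\pi nx}$. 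Setting $w_n:=F_n-\tilde F_n=(\chi-1)c_ne^{2\pi nx}$ and using $\tilde F_n''=(2\pi n)^2\tilde F_n$, a single integration by parts yields the $n$-th cosine contribution to the difference,
\[
\Delta_n^{\cos}\;=\;[\tilde F_n'\,w_n]_{-l}^l\;+\;\tfrac12\!\int_{-l}^l\!\bigl[(w_n')^2+(2\pi n)^2 w_n^2\bigr]\,d x.
\]

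The boundary term at $-l$ vanishes ($\chi(-l)=1$) and at $l$ equals $2\pi n\,a_nc_n-2\pi n\,c_n^2e^{4\pi nl}$. The $L^2$-integral is zero on $D_{\lef}$; on $D_{\rig}$ (where $\chi-1\equiv -1$) it equals $\pi n\,c_n^2(e^{4\pi nl}-e^{-4\pi nl+2.04\pi n})$; on the narrow strip $D_{int}$ of width $1/100$, $|w_n|$ and $|w_n'|$ are bounded by a polynomial-in-$n$ multiple of $|c_n|\,e^{-2\pi nl+1.02\pi n}$, so that part contributes $O(c_n^2\,e^{-4\pi nl+2.04\pi n})$, negligible compared to $D_{\rig}$ for $l\ge 1$. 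Combining these, $\Delta_n^{\cos}\le 2\pi n\,a_nc_n-\pi n\,c_n^2 e^{4\pi nl}(1-\varepsilon_n)$ with $\varepsilon_n$ exponentially small; as a downward-opening quadratic in $c_n$ this is maximized with value $\pi n\,a_n^2\,e^{-4\pi nl}/(1-\varepsilon_n)$, and the analogous bound with $b_n^2$ holds for the sine contribution.

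Summing and invoking \eqref{eq:Eh_sinh} together with the elementary bound $1-e^{-8\pi nl}\ge\tfrac12$ (valid for $l\ge 1$, $n\ge 1$) gives $\pi n(a_n^2+b_n^2)\le 2E_n(dh^-)\,e^{-4\pi nl}$, where $E_n(dh^-)$ denotes the $n$-th Fourier-mode contribution to $E_{Z_l}(dh^-)$; hence
\[
\sum_{n\ge1}(\Delta_n^{\cos}+\Delta_n^{\sin})\;\le\;2e^{-8\pi l}\,(1+O(\varepsilon))\!\sum_{n\ge1}\!E_n(dh^-)\;<\;e^{-8\pi l+5}\,E_{Z_l}(dh^-),
\]
which proves (ii). The main technical care is ensuring the $D_{int}$ and lower-order $D_{\rig}$ corrections do not spoil the concavity of the per-mode quadratic in $c_n$; the hypothesis $l\ge 1$ and the specific width $w=1/100$ make the dominant $-\pi n\,e^{4\pi nl}$ coefficient from $D_{\rig}$ overwhelm those perturbations, while the generous constant $e^5$ comfortably absorbs the leftover numerical factor $2$.
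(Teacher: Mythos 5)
Your proposal is correct, and it proves the estimate by a genuinely different route than the paper. The paper avoids mode-by-mode analysis: it invokes the pointwise decay bounds from \bbref{Lemma}{thm:lem_flat}\,v), vi) (valid here since $l\ge 1$, $\delta_{\lef}=\tfrac12$, $\delta_{\rig}\ge\tfrac12$) to bound $h^{-}$, $h^{+}$ and their gradients uniformly on the narrow strip $D_{int}$, applies the triangle inequality to $\Vert d(h^{-}+\chi h^{+})\Vert$, squares and integrates, and combines this with the near-identity $E_{Z_l\setminus D_{\lef}}(dh^{+})\ge\kappa E_{Z_l}(dh^{+})$ ($\kappa$ close to $1$) to obtain a scalar quadratic in $\sqrt{E^{+}}$, which it then maximizes by completing the square. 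You instead keep everything at the level of individual Fourier coefficients: the difference of energies is computed exactly per mode via the integration by parts $\int\tilde F_n'w_n'=[\tilde F_n'w_n]-\int\tilde F_n''w_n$ with $\tilde F_n''=(2\pi n)^2\tilde F_n$, the boundary term and the $D_{\rig}$ contribution produce the dominant concave quadratic in $c_n$ with leading coefficient $-\pi n e^{4\pi n l}(1-\varepsilon_n)$, and per-mode maximization eliminates the unknown $c_n,d_n$. Both correctly isolate $e^{-8\pi l}E_{Z_l}(dh^{-})$ as the error size; your route is slightly tighter in principle (per-mode maximization versus a single macroscopic quadratic), at the cost of having to verify uniformly in $n$ that the $D_{int}$ correction $\varepsilon_n$ stays below the comfortable slack left by the generous $e^5$ — a point you flag correctly, and which the specific choices $l\ge 1$ and $w=\tfrac1{100}$ do guarantee. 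One small clarification: the phrase ``cross terms cancel term by term'' is unnecessary for the formula you state — $E_{Z_l}(dh^{-}+dh^{+})$ decomposes over modes simply because $h^{-}+h^{+}$ has mode coefficient $\tilde F_n$, no orthogonality of $dh^{-}$ and $dh^{+}$ is required (and indeed $dh^{-}$, $d(\chi h^{+})$ are \emph{not} pointwise orthogonal on $D_{int}$, which your per-mode bookkeeping handles correctly anyway).
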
%
 \begin{proof}%
i) is again an instance of \bbref{Lemma}{thm:lemorthogonal}. For the proof of ii) we abbreviate

 \begin{equation*}%
\hat{h} = h^{-}\! + \chi \cdot h^{+},
\quad
E^{-} = E_{Z_l}(d h^{-}),
\quad
E^{+} = E_{Z_l}(d h^{+}).
 \end{equation*}%
By \bbref{Lemma}{thm:lem_flat} v) and vi) (applicable since $l \geq 1$, $\delta_{\lef}=\frac{1}{2}$) we have the following bounds for $(x,y) \in D_{int}$:

\begin{alignat}{2}
&| h^{-}(x,y)| < e^{-2\pi \delta_{\lef}}  \sqrt{E^{-}}, \quad& &\| d h^{-}(x,y) \| < 8e^{-2\pi\delta_{\lef}}  \sqrt{E^{-}},%
\label{eq:honDint1}\\
&| h^{+}(x,y)| < e^{-2\pi \delta_{\rig}}  \sqrt{E^{+}}, \quad& &\| d h^{+}(x,y) \| < 8e^{-2\pi \delta_{\rig}}  \sqrt{E^{+}}.%
\label{eq:honDint2}
\end{alignat}
Furthermore, since $\| d \chi(x,y) \| = \frac{1}{w}$ for $(x,y) \in D_{int}$ and $\vert \chi(x)\vert \leq 1$, the triangle inequality yields

 \begin{align*}%
\Vert d \hat{h}(x,y)\Vert &\leq \Vert d h^{-}(x,y)\Vert + \frac{1}{w} \vert h^{+}(x,y) \vert + \Vert d h^{+}(x,y) \Vert 
\\
&\leq \Vert d h^{-}(x,y)\Vert + (\frac{1}{w}+8) e^{-2\pi \delta_{\rig}} \sqrt{E^{+}}.
 \end{align*}%
Taking the squares and integrating over $D_{int}$ we get, using \beqref{eq:honDint1},

 \begin{equation*}%
E_{D_{int}}(d \hat{h}) \leq E_{D_{int}}(d h^{-})+ \alpha \sqrt{E^{+}} + \beta E^{+}
 \end{equation*}%
with

 \begin{equation}\label{eq:honDint3}%
 \alpha = 16(\frac{1}{w}+8)e^{-2\pi (\delta_{\lef}+\delta_{\rig})}\sqrt{E^{-}} \area(D_{int}),
 \quad
  \beta = (\frac{1}{w}+8)^2 e^{-4\pi \delta_{\rig}}\area(D_{int}).
 \end{equation}%
Since $E_{D_{int}}(d h^{-}) = E_{D_{int}}(d h^{\nlin}) - E_{D_{int}}(d h^{+})$ and $E_{D_{\rig}}(d h^{\nlin}) = E_{D_{\rig}}(d h^{-})+E_{D_{\rig}}(d h^{+})$ we get

 \begin{equation*}%
E_{Z_l}(d \hat{h}) \leq E_{Z_l}(d h^{\nlin}) + \alpha \sqrt{E^{+}} + \beta E^{+} - E_{Z_l \smallsetminus D_{\lef}}(d h^{+}).
 \end{equation*}%
Now $E_{Z_l \smallsetminus D_{\lef}}(d h^{+})$ is close to $E_{Z_l }(d h^{+})$ : By \beqref{eq:energy_y} 

 \begin{equation*}%
E_{Z_l\smallsetminus D_{\lef} }(d h^{+}) = 4\pi^2 \int\limits_{-l+\delta_{\lef}}^l \sum_{n=1}^{\infty}n^2(c_n^2 + d_n^2)  e^{4\pi n x} d x. 
 \end{equation*}%
Introducing the lower bound $\kappa$ in the following elementary inequality:

 \begin{equation*}%
\frac{\int_{-l+\delta_{\lef}}^l e^{4\pi n x} d x}{\int_{-l}^l e^{4\pi n x} d x}
\geq
\frac{\int_{-l+\delta_{\lef}}^l e^{4\pi x} d x}{\int_{-l}^l e^{4\pi x} d x} \overset{\textup{def}}{=} \kappa 
 \end{equation*}%
we get $E_{Z_l \smallsetminus D_{\lef}}(d h^{+}) \geq \kappa E_{Z_l }(d h^{+})$ and therefore
$E_{Z_l}(d \hat{h}) \leq E_{Z_l}(d h^{\nlin}) + \alpha \sqrt{E^{+}} + (\beta-\kappa) E^{+}$.
We are now using that $l \geq 1$, $\delta_{\lef} = \frac{1}{2}$, and $w =\frac{1}{100}$. For these values, it is easily checked that $\beta$ is close to 0 and $\kappa$ is close to 1, and we have the rough but sufficient estimate $\beta-\kappa < -\frac{3}{4}$. We get
 \begin{equation*}%
E_{Z_l}(d \hat{h}) \leq E_{Z_l}(d h^{\nlin}) + \alpha \sqrt{E^{+}} -\frac{3}{4} E^{+}. \end{equation*}%

Finally, for $t \in [0,\infty)$ the function $\alpha \sqrt{t} - \frac{3}{4}t$ has the upper bound $\frac{1}{3}\alpha^2$, and plugging in the values of $w$, $\delta_{\lef}$, $\delta_{\lef}$ into $\alpha$ (see \beqref{eq:honDint3}) we conclude the proof of ii) by elementary simplification.
 \end{proof}%

\section{Separating case }\label{sec:SepCas}
The setting for this section is as follows. $S$ is a compact hyperbolic Riemann surface of genus $g \geq 2$ that contains a simple closed geodesic $\gamma$ of length

 \begin{equation}\label{eq:lgam12}%
\ell(\gamma) \leq \frac{1}{2}
 \end{equation}%
separating $S$ into two surfaces $S_1$ and $S_2$  of signatures $(g_1,1)$ and $(g_2,1)$, respectively (see \bbref{Fig.}{fig:gamma_small}). We let $\left( {\alpha _i } \right)_{i = 1,\dots,2g}$ be a canonical homology basis of $S$, such that $(\alpha_1,..,\alpha_{2g_1}) \subset S_1$ and $(\alpha_{2g_1 +1},..,\alpha_{2(g_1+g_2)}) \subset S_2$  and let $P_S$ be the Gram period matrix with respect to this basis. In the first part of this section we look at the energy distribution of harmonic forms that have zero periods on $S_2$ respectively, $S_1$. In the second part we make use of this to prove \bbref{Theorem}{thm:small_scg}.

\begin{figure}[h!]
\SetLabels
\L(.514*.61) $\gamma$\\
\L(.26*.855) $S_1$\\
\L(.71*.87) $S_2$\\
\L(.40*.79) $Y_1$\\
\L(.625*.815) $Y_2$\\
\L(.475*.34) $C=C(\gamma)$\\
\L(.12*.475) $\alpha_1$\\
\L(.843*.245) $\alpha_{2g}$\\
\endSetLabels
\AffixLabels{%
\centerline{%
\includegraphics[height=4.2cm]{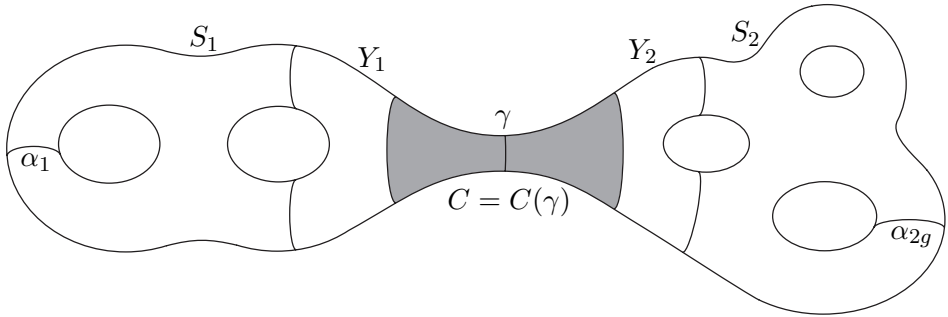}}}
\caption{A Riemann surface $S$ with a short separating geodesic and adjacent Y-pieces.}
\label{fig:gamma_small}
\end{figure}

\subsection{Concentration of the energies of harmonic forms}\label{sec:ConcEn}
In the first part of the next theorem we prove point a) of \bbref{Theorem}{thm:energy_distribution}. We recall $\mu(\gamma)$ from \beqref{eq:mugamma_intro}, for convenience,

 \begin{equation}\label{eq:mugamma}%
 \mu (\gamma) = \exp\left(-2\pi^2\left(\frac{1}{\ell(\gamma)}-\frac{1}{2}\right)\right).
 \end{equation}%

\begin{thm}\label{thm:small_blocks}
Let $\sigma \in H^1(S,\R)$ be a real harmonic 1-form, such that $\int_{[\alpha]} \sigma=0$ for all $[\alpha] \in H_1(S_2,\Z)$.  Then 

 \begin{equation*}%
E_{S_2}(\sigma) \leq \mu(\gamma)E_S(\sigma). \tag{i}
 \end{equation*}%
Let, furthermore, $\tau \in H^1(S,\R)$ be a real harmonic 1-form  such that $\int_{[\beta]} \tau=0$ for all $[\beta] \in H_1(S_1,\Z)$. Then  

 \begin{equation*}%
 \big\vert  \int_S \sigma \wedge \star \tau \, \big\vert \leq  \mu(\gamma) \sqrt{ E_S(\sigma) E_S(\tau)}.  \tag{ii}
 \end{equation*}%
\end{thm}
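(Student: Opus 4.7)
My plan is to work in the flat Fermi coordinates on the standard collar $C(\gamma)$ provided by the conformal map $\psi_{\gamma}: C(\gamma) \to ({-}M,M) \times \Sp_1$ of \bbref{Section}{sec:ConfCC}, convert the integrals $E_{S_2}(\sigma)$ and $\int_S \sigma \wedge \star\tau$ to boundary integrals on $\gamma$ via Stokes' theorem, and evaluate them by the Fourier expansion \beqref{eq:hseries}. The key quantitative input is the lower bound $M \geq \frac{\pi}{2\ell(\gamma)} - \frac{1}{2}$ of \beqref{eq:Lleta}, which yields $e^{-4\pi M} \leq e^{-2\pi^2/\ell(\gamma)+2\pi}$; since $\mu(\gamma) = e^{-2\pi^2/\ell(\gamma)+\pi^2}$, the ratio $e^{-4\pi M}/\mu(\gamma) \leq e^{-(\pi^2 - 2\pi)} < 0.03$ leaves enough slack to absorb every constant in the estimates below. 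As a preliminary I would show that on the cylinder $\sigma$ simplifies to $dh^{\nlin}$ with no linear $b_0\,dx + c_0\,dy$ part: $c_0 = 0$ because $\sigma|_{S_2} = dh$ for some harmonic $h$ on $S_2$ and hence $\int_\gamma \sigma = 0$, and $b_0 = 0$ because harmonicity gives $d\star\sigma = 0$, so Stokes on $S_1$ forces $\int_\gamma \star\sigma = 0$. After adjusting the additive constant, $h = h^{\nlin}$ on the $S_2$-side; the analogous preliminary for $\tau$ gives $\tau|_{S_1} = dg = dg^{\nlin}_\tau$ with Fourier coefficients $a_n^\tau, b_n^\tau, c_n^\tau, d_n^\tau$.

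For (i), Stokes with $d\star\sigma = 0$ and $\partial S_2 = -\gamma$ as oriented boundary gives
\begin{equation*}
E_{S_2}(\sigma) = \int_{S_2} dh \wedge \star\sigma = -\int_\gamma h\,\star\sigma.
\end{equation*}
At $x = 0$, Fourier orthogonality collapses the right-hand side to $\pi \sum_n n(a_n^2 + b_n^2 - c_n^2 - d_n^2)$, which is bounded above by $\pi \sum_n n(a_n^2 + b_n^2)$. By \beqref{eq:Eh_sinh} this last sum is controlled by the energy of $dh^{-}$ on the $S_1$-side:
\begin{equation*}
\pi \sum_n n(a_n^2 + b_n^2) \leq \frac{E_{[-M,0]}(dh^{-})}{e^{4\pi M} - 1} \leq \frac{E_S(\sigma)}{e^{4\pi M} - 1} \leq \mu(\gamma)\, E_S(\sigma),
\end{equation*}
the final inequality using the exponential gap above.

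For (ii), splitting $\int_S \sigma \wedge \star\tau$ over $S_1$ and $S_2$ and applying Stokes on each side produces
\begin{equation*}
\int_S \sigma \wedge \star\tau = \int_\gamma g\,\star\sigma - \int_\gamma h\,\star\tau.
\end{equation*}
The Fourier reduction, after the symmetric contributions cancel, turns this into the skew expression $2\pi \sum_n n \bigl[(a_n^\tau c_n - a_n c_n^\tau) + (b_n^\tau d_n - b_n d_n^\tau)\bigr]$. Cauchy--Schwarz combined with four one-sided bounds of the shape $\pi \sum_n n(a_n^\tau)^2 \leq E_S(\tau)/(e^{4\pi M}-1)$ (and the analogues for $a_n^2, c_n^2, (c_n^\tau)^2$, each bounded via \beqref{eq:Eh_sinh} by the energy on the appropriate half of the collar) gives
\begin{equation*}
\Bigl|\int_S \sigma \wedge \star\tau\Bigr| \leq \frac{8}{e^{4\pi M}-1}\sqrt{E_S(\sigma) E_S(\tau)},
\end{equation*}
and the same exponential gap collapses this to $\mu(\gamma)\sqrt{E_S(\sigma) E_S(\tau)}$. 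The hard part will be getting the orientation conventions $\partial S_1 = \gamma = -\partial S_2$ right and doing the Fourier arithmetic carefully enough that the cross-terms actually cancel to the skew form; once they do, the numerical comparison with $\mu(\gamma)$ is routine.
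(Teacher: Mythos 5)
Your proof is correct and takes a genuinely different route from the paper. The paper proves (i) and (ii) by the variational method: it dampens $\sigma$ (and, for (ii), also $\tau$) down to a test form $s$ that vanishes on $S_2$ using the cutoff machinery of \bbref{Lemma}{lem:dampening}, and then invokes the energy-minimizing property of harmonic forms ($E_S(\sigma)\leq E_S(s)$) together with an orthogonality argument for (ii). You instead convert $E_{S_2}(\sigma)$ and $\int_S\sigma\wedge\star\tau$ directly into boundary integrals over $\gamma$ via Stokes (using that $\sigma$, $\tau$ have global primitives on $S_2$, $S_1$ respectively) and then evaluate them exactly by Fourier orthogonality, obtaining closed-form expressions like $E_{S_2}(\sigma)=\pi\sum_n n\bigl[(a_n^2+b_n^2)-(c_n^2+d_n^2)\bigr]$ and the skew sum for (ii); the key bound $\pi\sum_n n(a_n^2+b_n^2)\leq E_{[-M,0]}(dh^-)/(e^{4\pi M}-1)$ then follows from \beqref{eq:Eh_sinh} and the observation $e^{4\pi nM}-1\geq e^{4\pi M}-1$. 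The preliminary observation that both the $dx$- and $dy$-components of the linear part of $\sigma$ on the collar vanish (from $\int_\gamma\sigma=0$ and $\int_\gamma\star\sigma=0$) is not made explicitly in the paper, since the test-form construction tolerates a nonzero $b_0$; in your approach it is what makes the boundary integral identify exactly with the Fourier sums. The trade-offs are as you would expect: the paper's cutoff technique is reused many times elsewhere (\bbref{Theorem}{thm:EngInC}, \bbref{Section}{sec:sigT1}, etc.), so it pays to set it up once, whereas your Stokes-and-Fourier argument is more direct and self-contained for this particular theorem, gives an exact formula for $E_{S_2}(\sigma)$, and in fact yields a slightly sharper constant (the factor $1/(e^{4\pi M}-1)$ with a small slack of about $0.03$, or $0.24$ for the cross term). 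Your arithmetic checks out: I verified that the diagonal terms in (ii) cancel leaving only the skew expression, and that the exponential margin $e^{-4\pi M}/\mu(\gamma)\leq e^{2\pi-\pi^2}<0.03$ absorbs all the constants.
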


 \begin{proof}%
We look at the decay of $\sigma$ in the standard collar  $C=C(\gamma)$ of $\gamma$ (shaded  area in \bbref{Fig.}{fig:gamma_small}, see \beqref{eq:stacol}, \beqref{eq:clfctn} for the definition). To make use of the preceding section we shall map $C(\gamma)$ conformally to a flat cylinder using the mapping $\psi_{\gamma}$ as in \beqref{eq:psigamma} which we repeat here, for convenience.
 \begin{equation*}%
   \psi_{\gamma}: C=C(\gamma) \rightarrow Z_M, \text{ \ where \ } Z_M = ({-}M,M) \times \Sp_1 \end{equation*}%
and by \beqref{eq:Lleta}

 \begin{equation}\label{eq:Llgamma}
M=M(\ell(\gamma)) = \frac{1}{\ell(\gamma)} \left\{\frac{\pi}{2} - \arcsin\left(\tanh\left(\frac{\ell(\gamma)}{2}\right)\right)\right\} \geq \frac{\pi}{2\ell(\gamma)} -\frac{1}{2}  > 2.
 \end{equation}%
We denote the inverse of $\psi_{\gamma}$ by $\phi_C$:

 \begin{equation*}%
\phi_C = (\psi_{\gamma})^{-1} : Z_M \rightarrow C.
 \end{equation*}%
On $C$ the form $\sigma$ has only zero periods and can therefore be written as $\sigma = d F$, with a harmonic function $F:C \to \R$.
Let $h = F \circ \phi_C$ be the corresponding harmonic function on $Z_M$. We decompose it into a sum

 \begin{equation}\label{eq:htildeh}%
h(x,y) =a_0 + b_0 x + h^{\nlin}(x,y), 
 \end{equation}%
as in the preceding section, where $h^{\nlin}$ is the nonlinear part of $h$. The constant $a_0$ may be arbitrarily chosen, we take $a_0 = 0$. With $w = \frac{1}{8}$  (a constant that has proved to be practical) we set $\delta = M-w$ and dampen down $h$ to $H$ setting

\begin{equation}\label{eq:H_separating}  
H(x,y) =
\begin{cases}
    b_0 x + \chi(x)  h^{\nlin}(x,y), & \text{for $x \in ({-}M,0]$}\\
    0 & \text{for $x \in [0,M),$}
    \end{cases}  
\end{equation}
where $\chi$ is the cut off function as in \bbref{Lemma}{lem:dampening} that goes linearly from 1 to 0 in the interval $[-w,0]$. Setting  $\widetilde{D} = [-w,0] \times \Sp_1$ we obtain, by that lemma, clause i), with $l=M$, $\delta_{\lef} = M-w$, $\delta_{\rig} = M$, $\delta_{\texttt{\#}} = \min\{\delta_{\lef}, \delta_{\rig}  \} =: \delta$ and $h^{\texttt{\#}} = h^{\nlin}$,
 \begin{equation*}%
E_{\widetilde{D}}(d H) \leq E_{\widetilde{D}}(d h) + (1+\frac{1}{2\pi^2 w^2})e^{-4\pi \delta}E_{Z_M}(d h).
 \end{equation*}%
In addition, we have used that by the \bbref{Orthogonality Lemma}{thm:lemorthogonal} we have $E_{\widetilde{D}}(dh^{\nlin}) \leq E_{\widetilde{D}}(dh)$ and $E_{Z_M}(dh^{\nlin}) \leq E_{Z_M}(dh)$. An elementary calculation using that $M(\ell(\gamma)) \geq \frac{\pi}{2\ell(\gamma)}-\frac{1}{2}$ (see \beqref{eq:Llgamma}) yields 
 \begin{equation*}
E_{\widetilde{D}}(d H) \leq E_{\widetilde{D}}(d h) +e^{-2\pi^2\left(\frac{1}{\ell(\gamma)}-\frac{1}{2}\right)} E_{Z_M}(d h) = E_{\widetilde{D}}(d h) + \mu(\gamma)E_{Z_M}(d h).
 \end{equation*}%
Now we set
\begin{equation}\label{eq:damped}
s =
\begin{cases}
\sigma & \text{on $S_1 \smallsetminus C$}\\
d(H \circ \psi_{\gamma}) & \text{on $C$}\\
0 & \text{on $S_2 \smallsetminus C$}.
\end{cases}
\end{equation}
Since all periods of $\sigma$ over cycles in $S_2$ vanish the two forms are in the same cohomology class and so we have $E_S(\sigma) \leq E_S(s)$ by the minimizing property of harmonic forms. On the other hand, using that $\phi_C$ is conformal,

 \begin{equation}\label{eq:boundES}%
E_S(s) = E_{S_1 \smallsetminus \phi_C(\widetilde{D})}(\sigma) +E_{\widetilde{D}}(d H) \leq E_{S_1}(\sigma) + \mu(\gamma) E_{Z_M}(d h).
\end{equation}%
Furthermore, $E_{Z_M}(d h) \leq E_S(\sigma)$. Hence, altogether

 \begin{equation}\label{eq:boundES2}%
E_{S_2}(\sigma) \leq \mu(\gamma)E_S(\sigma).
 \end{equation}%
This proves part (i) of \bbref{Theorem}{thm:small_blocks}. For part (ii) we first remark that $s = \sigma + d G_{\sigma}$ for some piecewise smooth function $G_{\sigma} : S \to \R$, given that $\sigma$ and $s$ are in the same cohomology class. By orthogonality and by Equation \beqref{eq:boundES} this function satisfies

 \begin{equation*}%
E_S(\sigma) + E_S(d G_{\sigma}) = E_S(s) \leq E_S(\sigma) + \mu(\gamma)E_S(\sigma).
 \end{equation*}%
Hence, $E_S(d G_{\sigma}) \leq \mu(\gamma)E_S(\sigma)$. Now consider $\tau$. Repeating the procedure with the roles of $S_1$, $S_2$ reversed we get a dampened down form $t$ that vanishes on $S_1$ and can be written as $t = \tau + d G_{\tau}$ with some function $G_{\tau} : S \to \R$ satisfying $E_S(d G_{\tau}) \leq \mu(\gamma)E_S(\tau)$. Since $t$ vanishes on $S_1$ and $s$ vanishes on $S_2$ we have, again by orthogonality,

 \begin{equation*}%
0 = \int_S s \wedge \star t = \int_S \sigma \wedge \star \tau + \int_S d G_{\sigma} \wedge \star d G_{\tau}.
 \end{equation*}%
By Cauchy-Schwarz, $\vert \int_S d G_{\sigma} \wedge \star d G_{\tau}  \vert^2 \leq E_S(d G_{\sigma})E_S(d G_{\tau}) \leq \mu(\gamma)^2 E_S(\sigma)E_S(\tau)$, and (ii) follows.
 \end{proof}%
 
It is interesting to observe that for $\sigma$ as in \bbref{Theorem}{thm:small_blocks} almost all of the energy decay in the direction of $S_2$ takes place in the collar $C(\gamma)$: with $\mu(\gamma)$ as in \beqref{eq:mugamma} we have the following which is part b) of \bbref{Theorem}{thm:energy_distribution}
 \begin{thm}\label{thm:EngInC}%
Let $S$ with separating geodesic $\gamma$ and $\sigma \in H^1(S,\R)$ with all periods over cycles in $S_2$ vanishing be as in \bbref{Theorem}{thm:small_blocks}. Then

 \begin{equation*}%
E_{S_2 \smallsetminus C(\gamma)}(\sigma) \leq \mu(\gamma)^2 E_{C(\gamma)}(\sigma).
 \end{equation*}%
 \end{thm}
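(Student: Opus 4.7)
The proof refines the strategy of \bbref{Theorem}{thm:small_blocks}~(i) by combining two successive dampenings that exploit a crucial consequence of the hypothesis. Since all periods of $\sigma$ on cycles of $S_2$ vanish and $\gamma$ is null-homologous in $S_2$, the restriction $\sigma|_{S_2}$ is exact; write $\sigma|_{S_2} = dF_2$ with $F_2$ harmonic. Exactness yields $\int_\gamma \sigma = 0$, and Stokes combined with $d\star dF_2 = 0$ gives $\int_\gamma \star\sigma = \int_{S_2} d\star dF_2 = 0$. Pulling back to the flat cylinder $Z_M = \psi_\gamma(C)$, these two vanishing periods force both coefficients $b_0$ and $c_0$ in $\sigma|_C = b_0\,dx + c_0\,dy + dh^{\nlin}$ to be zero, so $\sigma|_C = dh^- + dh^+$.

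I will then construct a test form $s$, cohomologous to $\sigma$, with $s = \sigma$ on $S_1 \smallsetminus C$ and $s = 0$ on $S_2 \smallsetminus C$. Inside $C$ I use two cutoffs: take $\chi$ as in \bbref{Lemma}{lem:dampening2} (equal to $1$ on $[-M, -M + \tfrac{1}{2}]$, $0$ on $[-M + \tfrac{51}{100}, M]$, linear transition of width $\tfrac{1}{100}$), which removes the $h^+$-component from almost all of the cylinder; and take $\eta$ with $\eta = 1$ on $[-M, M - w]$ (where $w = \tfrac{1}{8}$) and $\eta = 0$ at $\{x = M\}$, linear on $[M - w, M]$, which eliminates the remaining $h^-$-component in a thin strip at the outer boundary. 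Set $s|_C = d(\eta h^-) + d(\chi h^+)$. The cohomology check: the potential $G$ defined by $s = \sigma + dG$ equals $(\eta - 1) h^- + (\chi - 1) h^+$ on $C$, and extends continuously across $\{x = -M\}$ (where $\eta = \chi = 1$, so $G|_C = 0$, matching $G = 0$ on $S_1 \smallsetminus C$) and across $\{x = M\}$ (where $\eta = \chi = 0$, so $G|_C = -(h - a_0)$, matching $G|_{S_2 \smallsetminus C} = -F_2 + a_0 = -(h - a_0)$). The vanishing of $b_0, c_0$ is essential here, since otherwise the linear and constant terms would themselves need dampening but carry nontrivial period obstructions over $\gamma$ that cannot be cancelled by an exact form.

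For the energy bound, \bbref{Lemma}{lem:dampening2} applied with $l = M$ yields $E_{Z_M}\bigl(dh^- + d(\chi h^+)\bigr) \leq E_C(\sigma) + e^{-8\pi M + 5}\,E_{Z_M}(dh^-)$. The subsequent replacement $h^- \mapsto \eta h^-$ is confined to $[M - w, M]$; \bbref{Lemma}{lem:dampening}~i), applied with $h^{\texttt{\#}} = h^-$ and $\delta_{\lef} = 2M - w$, bounds the additional cost by $\bigl(1 + \tfrac{1}{2\pi^2 w^2}\bigr)\, e^{-4\pi(2M - w)}\, E_{Z_M}(dh^-)$. Both corrections are of the form $C \cdot e^{-8\pi M}\,E_{Z_M}(dh^-)$ with an explicit constant $C$, and since $E_{Z_M}(dh^-) \leq E_C(\sigma)$ we obtain $E_C(s|_C) - E_C(\sigma) \leq C\, e^{-8\pi M}\,E_C(\sigma)$.

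Minimality of $\sigma$ in its cohomology class gives $E_S(\sigma) \leq E_S(s) = E_{S_1 \smallsetminus C}(\sigma) + E_C(s|_C)$, which rearranges to $E_{S_2 \smallsetminus C}(\sigma) \leq E_C(s|_C) - E_C(\sigma) \leq C\,e^{-8\pi M}\,E_C(\sigma)$. Using the lower bound $M \geq \tfrac{\pi}{2\ell(\gamma)} - \tfrac{1}{2}$ from \beqref{eq:Llgamma}, an elementary computation confirms $C\, e^{-8\pi M} \leq \mu(\gamma)^2$, with comfortable slack since $2\pi^2 - 4\pi \approx 7.17$ dominates $\log C$. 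The main obstacle will be the cohomology verification and bookkeeping of constants in the combined dampening; crucially, the entire construction rests on the extra rigidity $b_0 = c_0 = 0$ provided by the exactness of $\sigma$ on $S_2$, which was not used in the proof of \bbref{Theorem}{thm:small_blocks}.
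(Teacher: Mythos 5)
Your proof is correct and follows essentially the same two-step dampening strategy as the paper (Lemma \ref{lem:dampening2} with transition width $\tfrac{1}{100}$ for $h^+$, then Lemma \ref{lem:dampening} with width $\tfrac{1}{8}$ for $h^-$, followed by the energy-minimality comparison). The one genuine addition is your observation that $b_0 = c_0 = 0$: indeed since $\gamma$ is separating it is null-homologous in $S$, and both $\sigma$ and $\star\sigma$ are closed, so $\int_\gamma \sigma = \int_\gamma \star\sigma = 0$, killing the linear part outright. The paper instead leaves $b_0$ free, chooses $a_0$ so that the linear part vanishes at $x=-M$, and obtains $E_{Z_M}(dh^{\lin}) + E_{S_2\smallsetminus C(\gamma)}(\sigma) \leq e^{-8\pi M+6}E_{Z_M}(dh^-)$ as in \beqref{eq:linsmall}, from which the small-linear-part conclusion drops out as a byproduct rather than an input. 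Both treatments are valid; yours is a slightly cleaner bookkeeping of the same argument. One small caveat: your remark that "the vanishing of $b_0, c_0$ is essential" for the cohomology check overstates the point — the paper's choice $a_0 = b_0 M$ shows the test form can be made cohomologous to $\sigma$ even if $b_0$ were nonzero — but since $b_0=0$ is in fact true, this does not affect the correctness of your argument.
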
%
In particular, the upper bound for the energy of $\sigma$ on $S_2 \smallsetminus C(\gamma)$ is much smaller than that for $\sigma$ on $S_2$
 \begin{proof}%
We take over the setting of the proof of \bbref{Theorem}{thm:small_blocks}. However, this time the constant $a_0$ for the function $h$ in \beqref{eq:htildeh} is chosen such that $a_0 - b_0 M  = 0$, i.e.\ such that the linear part $h^{\lin}(x) = a_0 + b_0 x$ vanishes at the left end of the cylinder $Z_M$. 

In a first step we take  $\delta'_{\lef}=\frac{1}{2}$, $w' = \frac{1}{100}$ and partially dampen down $d h$ as in \bbref{Lemma}{lem:dampening2} setting

 \begin{equation*}
H' =  h^{-}\! + \chi' \cdot h^{+},
 \end{equation*}
where $h^{-} + h^{+} = h^{\nlin}$ is the decomposition of the nonlinear part of $h$ as in 
\beqref{eq:hseries}, \beqref{eq:nonlinear_flat}, \bbref{Lemma}{lem:dampening2}, and $\chi'$ is the cut off function that goes linearly from 1 at $x=-M+\delta'_{\lef}$ to 0 at $x=-M+\delta'_{\lef} + w'$. By \bbref{Lemma}{lem:dampening2} (applicable since by \beqref{eq:Llgamma} $M > 1$),

 \begin{equation*}%
E_{Z_M}(d H') \leq E_{Z_M}(d h^{-}\!+d h^{+}) + e^{-8\pi M+5}E_{Z_M}(d h^{-}).
 \end{equation*}%
In a second step we use \bbref{Lemma}{lem:dampening} again, taking $w'' = \frac{1}{8}$, $\delta''_{\lef} = 2M -w''$, and further dampen down $d H'$ setting 

 \begin{equation*}%
H'' =  \chi'' \cdot h^{-}\! + \chi' \cdot h^{+},
 \end{equation*}%
where $\chi''$ is the cut off function that goes linearly from 1 at $x=M-w''$ to 0 at $x=M$. By \bbref{Lemma}{lem:dampening}, applied to $h^{\texttt{\#}} = h^{-}$,

 \begin{equation*}%
E_{Z_M}(d H'') \leq E_{Z_M}(d H') + (1+\frac{1}{2 \pi^2 w''^2})e^{-4\pi (2M-w'')}E_{Z_M}(d h^{-})
\leq
E_{Z_M}(d H') + e^{-8\pi M + 4}E_{Z_M}(d h^{-}).
 \end{equation*}%
We now go back to $S$ letting $s''$ be the test form that coincides with $\sigma$ on 
$S_1 \smallsetminus C(\gamma)$, is the pull-back of $d H''$ on $C(\gamma)$ via $\psi_{\gamma}$ and vanishes on $S_2 \smallsetminus C(\gamma)$. Then $E_S(s'') = E_{S_1 \smallsetminus C(\gamma)}(\sigma) + E_{Z_M}(d H'') \leq E_{S_1 \smallsetminus C(\gamma)}(\sigma)  + E_{Z_M}(d h^{-}\!+d h^{+}) +e^{-8\pi M+6}E_{Z_M}(d h^{-})$ (simplifying the numerical constants). Now 
 \begin{equation*}%
E_S(s'') \geq E(\sigma) = E_{S_1 \smallsetminus C(\gamma)}(\sigma) + E_{Z_M}(d h^{\lin})  + E_{Z_M}(d h^{-}+d h^{+})+ E_{S_2 \smallsetminus C(\gamma)}(\sigma). 
 \end{equation*}%
This yields, altogether,
 \begin{equation}\label{eq:linsmall}%
E_{Z_M}(d h^{\lin}) + E_{S_2 \smallsetminus C(\gamma)}(\sigma) \leq e^{-8\pi M+6}E_{Z_M}(d h^{-}),
 \end{equation}%
where $M$ is from \beqref{eq:Llgamma}. The theorem now follows by elementary simplification.  
\end{proof}%

We remark that, by \beqref{eq:linsmall}, the linear part of $\sigma$ contributes very little to the total energy of $\sigma$ in $C(\gamma)$. 

For later use we also note that, by the inequalities preceding \beqref{eq:linsmall} (and using that by the \bbref{Orthogonality Lemma}{thm:lemorthogonal} $E_{Z_M}(d h^{-}+d h^{+}) \leq E_{Z_M}(d h)$) the test form $s''$ satisfies, in particular, %
 \begin{equation}\label{eq:Engspp}%
E_S(s'') - E_S(\sigma) \leq e^{-8\pi M+6}E_S(\sigma).
 \end{equation}%
 \begin{figure}[b!]
 \vspace{0pt}
 \begin{center}
 \leavevmode
 \SetLabels
 \L(.07*.82) $C(\gamma)$\\
\L(.04*.40) $S_1$\\
\L(.28*.63) $Y_1$\\
\L(.13*.075) $\alpha_1$\\
\L(.225*.23) $\alpha_2$\\
\L(.21*.91) $\gamma$\\
\L(.365*.40) $S_1^{\sF}$\\
\L(.61*.63) $Y_1^{0}$\\
\L(.525*.99) $q$\\
\L(.41*.82) $V(q)$\\
\L(.458*.07) $\alpha_1^{\sF}$\\
\L(.555*.23) $\alpha_2^{\sF}$\\
\L(.685*.40) $\widebar{1.5}{S_1^{\sF}}$\\
\L(.782*.07) $\alpha_1^{\sF}$\\
\L(.879*.23) $\alpha_2^{\sF}$\\
\L(.845*.67) $q$\\
 \endSetLabels
 \AffixLabels{
 \includegraphics{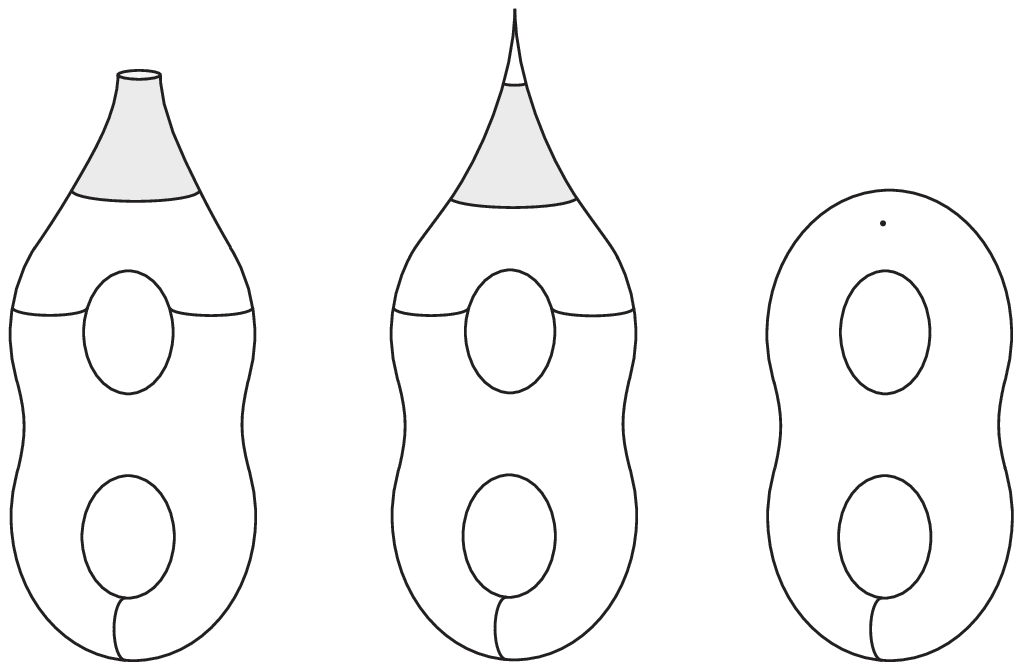} }
 \end{center}
 \vspace{-0pt}
 \caption{\label{fig:gamma_deg} The bordered Riemann surface $S_1$, its Fenchel-Nielsen limit and the one point compactification viewed as a conformal surface.}
 \end{figure}

\subsection{Convergence of the Jacobians for separating geodesics }
\label{sec:harmonic_energy}
We now compare the Jacobian of $S$ with the Jacobian of the limit surface on $\partial \mathcal{M}_g$ obtained by either the Fenchel-Nielsen or the grafting construction. \bbref{Fig.}{fig:gamma_deg} illustrates the first case for the part $S_1$ with Fenchel-Nielsen limit $S_1^{\sF}$. The pair of pants, or Y-piece, $Y_1\subset S_1$ with boundary geodesic $\gamma$ and half collar $C(\gamma)$ is quasi conformally embedded into the degenerated Y-piece $Y_1^0$ with ideal boundary point $q$ and cusp neighborhood $V(q)$, similarly to the case illustrated in \bbref{Fig.}{fig:quasiY}. The remaining parts of $S_1$ and $S_1^{\sF}$ are isometric. The punctured surface $S_1^{\sF}$ and its compactification $\widebar{1.5}{S_1^{\sF}}$, understood as a Riemann surface in the conformal sense, have the same $L^2$ harmonic forms. We denote by $P_{S_1^{\sF}}$ the Gram period matrix of $\widebar{1.5}{S_1^{\sF}}$ with respect to the homology basis $\alpha_1, \dots, \alpha_{2g_1}$; the matrices $P_{S_2^{\sF}}$, $P_{S_1^{\sG}}$, $P_{S_2^{\sG}}$ are defined in the same way. We now prove \bbref{Theorem}{thm:small_scg_intro} which we reproduce for convenience. The entries of the blocks $R_k^{\FG}$ are named $r_{ij}$, those of $\Omega$ and its transpose $\Omega^{\rm T}$
are named $\omega_{ij}$, and the numbering, disregarding the blocks, is $i,j=1,\dots, 2g$.

\begin{thm}\label{thm:small_scg}
Assume that the separating geodesic $\gamma$ has length $\ell(\gamma) \leq \frac{1}{2}$. Then

 \begin{equation*}%
P_S=
\left( {\begin{array}{*{2}c}
   P_{S_1^{\FG}} & 0  \\
   0 & P_{S_2^{\FG}}  \\
\end{array}} \right) + \left( {\begin{array}{*{20}c}
   R_1^{\FG} & \Omega  \rule{0pt}{14pt}\\
   \Omega^{\rm T} & R_2^{\FG}  \rule{0pt}{14pt}
\end{array}} \right),  \end{equation*}%

where $\{\}^{\FG}$ stands for either $\{\}^{\sF}$ or $\{\}^{\sG}$ and the entries of the remainder matrices have the following bounds,
\begin{enumerate}
\itemup{i)} $\vert \omega_{ij} \vert \leq e^{-2\pi^2(\frac{1}{\ell(\gamma)}-\frac{1}{2})} \sqrt{p_{ii}p_{jj}\mathstrut}$,
\itemup{ii)} $\vert r_{ij}^{\sG}\vert  \leq  e^{-4\pi^2(\frac{1}{\ell(\gamma)}-\frac{1}{2})} \sqrt{p_{ii}p_{jj}\mathstrut}$,
\itemup{iii)} $\vert r_{ij}^{\sF}\vert  \leq  6 \ell(\gamma)^2 \sqrt{p_{ii}p_{jj}\mathstrut}$.
\end{enumerate}
\end{thm}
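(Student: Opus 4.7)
For an entry $\omega_{ij}$ with $i \leq 2g_1 < j$, the duality $\int_{\alpha_k}\sigma_i = \delta_{ik}$ forces all periods of $\sigma_i$ over cycles in $S_2$ to vanish and symmetrically for $\sigma_j$ over $S_1$. Part~(ii) of \bbref{Theorem}{thm:small_blocks} applies directly and gives $|\omega_{ij}| = |\langle\sigma_i,\sigma_j\rangle| \leq \mu(\gamma)\sqrt{p_{ii}p_{jj}}$, which is bound~(i). The block $\Omega$ is common to both the $\sF$ and $\sG$ decompositions, so this handles both simultaneously.

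\textbf{Grafting diagonal blocks.} For $i,j\leq 2g_1$ one must compare $p_{ij}$ with the corresponding entry $p_{ij}^{\sG}$ of $P_{S_1^{\sG}}$. The key geometric point is that, after removing an inner subcollar $C'(\gamma)$ of $\gamma$, the main part $\mathfrak{M}_1 := S_1 \setminus C'(\gamma)$ is conformally isometric to a subsurface of $\widebar{1.5}{S_1^{\sG}}$; the two ambient surfaces $S$ and $\widebar{1.5}{S_1^{\sG}}$ differ only in what is glued across the inner boundary of $\mathfrak{M}_1$. The plan is to construct mutually comparable test forms in the cohomology classes of $\sigma_i$ and $\sigma_i^{\sG}$ by restricting one form to $\mathfrak{M}_1$, dampening its nonlinear Fourier tail across the flat cylinder $\psi_\gamma(C(\gamma))$ via Lemmas~\bbref{}{lem:dampening} and~\bbref{}{lem:dampening2} while preserving the correct period on $\gamma$, and then extending by zero (respectively harmonically across the semi-infinite cylinder) to the rest of the target surface. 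The minimizing property of harmonic forms combined with the $e^{-8\pi M + 5}$ dampening factor from \bbref{Lemma}{lem:dampening2}, together with the numerical estimate $-8\pi M + 5 \leq -4\pi^2/\ell(\gamma) + 4\pi + 5 \leq -4\pi^2(1/\ell(\gamma)-1/2) = \log\mu(\gamma)^2$ (since $2\pi^2 > 4\pi + 5$) via \eqref{eq:Llgamma}, bounds both energy defects by $\mu(\gamma)^2$ times the respective energies. Polarization through $2\langle\sigma_i,\sigma_j\rangle = E(\sigma_i+\sigma_j) - E(\sigma_i) - E(\sigma_j)$ then extends the diagonal bound to off-diagonal entries of the block, yielding~(ii).

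\textbf{Fenchel-Nielsen diagonal blocks.} \bbref{Theorem}{thm:psRGRF} supplies a marking-preserving quasi-conformal homeomorphism $\psi : S_1^{\sG} \to S_1^{\sF}$ of dilatation $K \leq (1+2\ell(\gamma)^2)^2$. The standard quasi-conformal energy inequality $E(\psi^*\omega) \leq K\cdot E(\omega)$, applied in both directions to the dual-basis forms and combined with the minimizing property plus polarization, gives $|p_{ij}^{\sF} - p_{ij}^{\sG}| \leq (K-1)\sqrt{p_{ii}p_{jj}}$. Since $(1+2\ell(\gamma)^2)^2 - 1 \leq 5\ell(\gamma)^2$ for $\ell(\gamma) \leq \tfrac12$ and the grafting remainder from~(ii) is of order $\mu(\gamma)^2$, which is astronomically smaller than $\ell(\gamma)^2$, the triangle inequality delivers (iii) with the stated constant~$6$. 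The freedom to choose $p_{ii}, p_{jj}$ from $P_S$ or from the limit matrix follows from specializing these same bounds to $i=j$.

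\textbf{Main obstacle.} The delicate step is the grafting bound~(ii): squeezing out the squared factor $\mu(\gamma)^2$ rather than the single $\mu(\gamma)$ forces dampening on both sides of the cylindrical neck, in the spirit of the proof of \bbref{Theorem}{thm:EngInC}, together with careful bookkeeping of the linear versus nonlinear parts of the harmonic potentials along $\psi_\gamma(C(\gamma))$ so that the period data match after each cut-and-paste.
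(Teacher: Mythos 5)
Your proposal follows the same route as the paper: off-diagonal terms from \bbref{Theorem}{thm:small_blocks}(ii); grafting diagonal blocks by comparing energies through dampened-down test forms supported on the common part $S_1 \cup C(\gamma)$, using the two-step dampening as in \bbref{Theorem}{thm:EngInC} (the paper wraps the polarization step you describe into \bbref{Lemma}{L:LinAlgIneq}); and the Fenchel--Nielsen bound by composing with the $(1+2\ell(\gamma)^2)^2$-quasiconformal map from \bbref{Theorem}{thm:psRGRF} via \bbref{Lemma}{L:qcIneq} and then the triangle inequality with (ii). The only cosmetic discrepancies are the numerical constant in the dampening exponent (the paper's combined bound is $e^{-8\pi M+6}$, not $+5$, which still beats $\mu(\gamma)^2$ since $2\pi^2 > 4\pi + 6$) and the description of the extension of the dampened form into the semi-infinite cylinder (it is extended by zero, not harmonically), but neither affects the argument.
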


 \begin{proof}%
Statement i) is an instance of \bbref{Theorem}{thm:small_blocks} (ii). For the proof of ii) we take the part concerning $P_{S_1^{\sG}}$. For simplicity, we shall identify the collar $C(\gamma)$ with the conformally equivalent $Z_M = ({-}M,M) \times \Sp_1$ ($M$ is from \beqref{eq:Llgamma}). 

We use that the surface $S' = \text{closure of } S_1 \cup C(\gamma)$, seen as a conformal surface, may be understood as a subset of the grafted surface $S_1^{\sG}$ in a natural way: $S'$ is $S_1$ with a copy of the flat cylinder $[0,M]\times \Sp_1$ attached along the boundary, and $S_1^{\sG}$ is $S_1$ with a copy of $[0,\infty)\times \Sp_1$ attached. For simplicity we write $S_1^{\sG}=\tilde{S}$.

Consider now a harmonic form $\sigma$ on $S$ with vanishing periods over the cycles of $S_2$ and let $\tilde{\sigma}$ on $\tilde{S}$ be the harmonic form that has the same cycles over $\alpha_1, \dots \alpha_{2g_1}$ as $\sigma$. For later reference we write $\tilde{\sigma} = \Psi(\sigma)$.

We shall dampen down $\sigma$ and $\tilde{\sigma}$ to test forms $s$ and $\tilde{s}$ with support on $S'$ and compare the energies. To obtain $s$ we apply to $\sigma$ the two step dampening down procedure used in   the proof of \bbref{Theorem}{thm:EngInC}. Thus, $s$ is $s''$ as in \beqref{eq:Engspp}. By \beqref{eq:Engspp} it satisfies 

 \begin{equation}\label{eq:PrfTsep1}%
E_S(s) \leq (1+e^{-8\pi M+6})E_S(\sigma).
 \end{equation}%
To obtain $\tilde{s}$ we remark that $\tilde{\sigma}$ on $Z_M$ has a representation $ \tilde{\sigma}= d h$, where we have $h(x,y) = \sum_{n=1}^{\infty} (a_n\cos(ny) + b_n\sin(ny)) e^{-nx}$, i.e. $d h$ has vanishing linear and $d  h^{+}$ parts (owing to the fact that on the infinite cylinder $\tilde{\sigma}$ has finite energy). We now let $\tilde{s}$ be the test form on $\tilde{S}$ that coincides with $\tilde{\sigma}$ on $S_1\smallsetminus C(\gamma)$, is represented on $Z_M$ by  $d \chi \cdot h$ and vanishes on the rest. For $\chi$ we take the cut off function that goes linearly from 1 at $x=M-w$ to 0 at $x=M$, with $w = \frac{1}{8}$. By \bbref{Lemma}{lem:dampening}(i) applied to $h^{\texttt{\#}} = h = h^{-}$ with $\delta^{\texttt{\#}} = \delta_{\lef}=2M-w$ %

 \begin{equation}\label{eq:PrfTsep2}%
E_{\tilde{S}}(\tilde{s})  \leq (1+e^{-8\pi M+4})E_{\tilde{S}}(\tilde{\sigma}).
 \end{equation}%
Now $s$, having its support on $S'$, may be extended by zero to $\tilde{S}$ and thus also be seen as a test form for $\tilde{\sigma}$ on $\tilde{S}$. Similarly, $\tilde{s}$ may be seen as a test form for $\sigma$ on $S$. Hence, by \beqref{eq:PrfTsep1} and \beqref{eq:PrfTsep2}
 \begin{gather*}
E_{\tilde{S}}(\tilde{\sigma}) \leq E_{\tilde{S}}(s)=E_S(s) \leq (1+e^{-8\pi M+6})E_S(\sigma),\\
E_{S}(\sigma) \leq E_{S}(\tilde{s})=E_{\tilde{S}}(\tilde{s}) \leq (1+e^{-8\pi M+4})E_{\tilde{S}}(\tilde{\sigma}).
 \end{gather*}%
Now, the above mapping $\Psi$ that sends any harmonic $\sigma$ on $S$ with vanishing periods over the cycles of $S_2$ to $\tilde{\sigma} = \Psi(\sigma)$ is a linear isomorphism of vector spaces. Therefore, by \bbref{Lemma}{L:LinAlgIneq} below, for any $i,j = 1, \dots, 2g_1$,
 \begin{equation*}%
\vert \scp{\sigma_i}{\sigma_j} - \scp{\tilde{\sigma}_i}{\tilde{\sigma}_j} \vert \leq 
e^{-8\pi M + 6} \sqrt{p_{ii}p_{jj}}
 \end{equation*}%
and ii) follows from \beqref{eq:Llgamma} by elementary simplification.

For statement iii) we use that by \bbref{Theorem}{thm:psRGRF} there exists a quasiconformal homeomorphism $\phi : S_1^{\sF} \to S_1^{\sG}$ of dilatation $q_{\phi} \leq (1+2\ell(\gamma)^2)^2$ that extends conformally to the compactified surfaces (with the notation of \bbref{Theorem}{thm:psRGRF} our $\phi$ here is $\psi^{-1}$).  Using that $\ell(\gamma) \leq \frac{1}{2}$ we simplify the bound on the dilatation to $q_{\phi} \leq 1+5 \ell(\gamma)^2$. By \bbref{Lemma}{L:qcIneq} below we have  the following inequality for the harmonic forms $\sigma'_i$, $\sigma'_j$ in the cohomology classes of $\phi^{*}\sigma_i$, $\phi^{*}\sigma_j$,

 \begin{equation*}%
\vert \scp{\sigma'_i}{\sigma'_j} - \scp{\tilde{\sigma}_i}{\tilde{\sigma}_j} \vert \leq 5 \ell(\gamma)^2 \sqrt{E_{S_1^{\sG}}(\tilde{\sigma}_i) E_{S_1^{\sG}}(\tilde{\sigma}_j)}.
 \end{equation*}%
Now $(\sigma'_1, \dots, \sigma'_{2g_1})$ is our chosen dual basis of harmonic forms on $S_1^{\sF}$ whose scalar products $\scp{\sigma'_i}{\sigma'_j}$ are the entries of the Gram period matrix $P_{S_1^{\sF}}$, and $(\tilde{\sigma}_1, \dots, \tilde{\sigma}_{2g_1})$ is the dual basis on $S_1^{\sG}$ with $\scp{\tilde{\sigma}_i}{\tilde{\sigma}_j}$ being the entries of $P_{S_1^{\sG}}$.

Together with ii) this yields iii) by elementary simplification. 
\end{proof}%

 \begin{lem}\label{L:LinAlgIneq}
Let $U$, $V$ be Euclidean vector spaces whose norms and scalar products we denote by $\Vert \phantom{.} \Vert$ and $\scp{\phantom{.}}{\phantom{.}}$, let $\phi :U \to V$ be a linear mapping and let $\varepsilon >0$. If $(1-\varepsilon)\Vert u \Vert^2 \leq \Vert \phi(u) \Vert^2 \leq (1+\varepsilon) \Vert u \Vert^2$, for all $u \in U$, then %
 \begin{equation*}%
\big\vert \scp{\phi(u)}{\phi(v)} - \scp{u}{v} \big\vert \leq \varepsilon \Vert u \Vert \pt \Vert v \Vert, \quad \forall u,v, \in U.
 \end{equation*}%
 \end{lem}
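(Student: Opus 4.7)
The plan is to reduce the statement to the polarization identity, which in any Euclidean space reads
\[
\langle x, y \rangle = \tfrac{1}{4}\bigl(\|x+y\|^2 - \|x-y\|^2\bigr).
\]
Applying this once in $U$ to the pair $u,v$ and once in $V$ to the pair $\phi(u),\phi(v)$, and exploiting the linearity of $\phi$ to write $\phi(u)\pm\phi(v)=\phi(u\pm v)$, I would subtract the two identities to obtain
\[
4\bigl(\langle \phi(u),\phi(v)\rangle - \langle u,v\rangle\bigr)
= \bigl(\|\phi(u+v)\|^2 - \|u+v\|^2\bigr) - \bigl(\|\phi(u-v)\|^2 - \|u-v\|^2\bigr).
\]

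The hypothesis on $\phi$ says precisely that $\bigl|\|\phi(w)\|^2-\|w\|^2\bigr|\le \varepsilon\|w\|^2$ for every $w\in U$. Applying this to $w=u+v$ and $w=u-v$ and invoking the parallelogram law $\|u+v\|^2+\|u-v\|^2=2\|u\|^2+2\|v\|^2$, I get the intermediate bound
\[
\bigl|\langle \phi(u),\phi(v)\rangle - \langle u,v\rangle\bigr| \le \tfrac{\varepsilon}{2}\bigl(\|u\|^2+\|v\|^2\bigr).
\]

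This is almost what is asserted, but the right-hand side is $\tfrac{\varepsilon}{2}(\|u\|^2+\|v\|^2)$ rather than $\varepsilon\|u\|\|v\|$; since $\tfrac{1}{2}(\|u\|^2+\|v\|^2)\ge \|u\|\|v\|$ the wrong way, an additional rescaling is needed. I would use the homogeneity of the inequality: for any $t>0$ the pair $(tu, t^{-1}v)$ has the same scalar product $\langle tu, t^{-1}v\rangle = \langle u,v\rangle$, and likewise $\langle \phi(tu),\phi(t^{-1}v)\rangle = \langle \phi(u),\phi(v)\rangle$, so the left-hand side is invariant while the intermediate bound becomes $\tfrac{\varepsilon}{2}(t^{2}\|u\|^2+t^{-2}\|v\|^2)$. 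Optimising over $t>0$ (choose $t^2=\|v\|/\|u\|$, assuming both are nonzero; the case $u=0$ or $v=0$ being trivial) produces exactly $\varepsilon\|u\|\|v\|$, which is the claimed bound.

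No step looks like a genuine obstacle: the polarization/parallelogram calculation is automatic, and the only subtle point is recognising that the symmetric bound $\tfrac{\varepsilon}{2}(\|u\|^2+\|v\|^2)$ must be sharpened via the scaling $u\mapsto tu$, $v\mapsto t^{-1}v$ to obtain the geometric-mean form $\varepsilon\|u\|\pt\|v\|$ used in the applications to $P_{S_1^{\sF}}$ and $P_{S_1^{\sG}}$ earlier in the section.
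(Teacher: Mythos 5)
Your proof is correct and uses essentially the same approach as the paper: the polarization identity combined with the scaling invariance of the inequality. The paper simply normalizes $\|u\|=\|v\|=1$ at the outset before applying polarization, whereas you apply polarization first, obtain the arithmetic-mean bound $\tfrac{\varepsilon}{2}(\|u\|^2+\|v\|^2)$, and then optimize the scaling $u\mapsto tu$, $v\mapsto t^{-1}v$; these are merely two orderings of the same two ideas.
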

 \begin{proof}%
We sketch the proof of this know fact, for convenience. The hypothesis and the conclusion are both scaling invariant, we may therefore assume that $\Vert u \Vert = \Vert v \Vert =1$. Writing $\phi(u) = u'$ and $\phi(v) = v'$ we get, using the polarization identity,
 \begin{align*}%
\scp{u'}{v'} &= \frac{1}{4}\big(\Vert u'+v'\Vert^2-\Vert{u'-v'}\Vert^2\big)\\
&\leq
\frac{1}{4}\big(\Vert u+v\Vert^2-\Vert{u-v}\Vert^2\big) + \frac{1}{4}\varepsilon^2\big(\Vert u+v\Vert^2+\Vert{u-v}\Vert^2\big)
=
\scp{u}{v} + \varepsilon.
 \end{align*}%
In the same way one shows that $\scp{u'}{v'} \geq \scp{u}{v} - \varepsilon$.
 \end{proof}%

As a corollary one has

 \begin{lem}\label{L:qcIneq}%
Let $\phi : R' \to R$ be a quasi conformal homeomorphism of compact Riemann surfaces, $\omega, \eta \in H^1(R,\R)$ and $\omega', \eta' \in H^1(R',\R)$ the harmonic forms in the cohomology classes of the induced forms $\phi^{*}\omega$, $\phi^{*}\eta$ on $R'$. If $\phi$ has dilatation $1 + \varepsilon$, then

 \begin{equation}\label{eq:qcIneq1}
\vert \scp{\omega'}{\eta'} - \scp{\omega}{\eta} \vert \leq \varepsilon \sqrt{E_R(\omega) E_R(\eta)}.
 \end{equation}%
 \end{lem}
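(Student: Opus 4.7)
The plan is to deduce the lemma from Lemma~\ref{L:LinAlgIneq} applied to the linear map $\Psi : H^1(R,\R) \to H^1(R',\R)$ that sends each harmonic form $\omega$ to $\omega' = \Psi(\omega)$, the harmonic representative in the cohomology class of $\phi^{*}\omega$. Equipping both cohomology spaces with the scalar product \eqref{eq:scalpr}, so that $\Vert \omega \Vert^2 = E_R(\omega)$ and $\Vert \omega' \Vert^2 = E_{R'}(\omega')$, it suffices to verify the two-sided comparison
\[
(1-\varepsilon)\, E_R(\omega) \;\leq\; E_{R'}(\omega') \;\leq\; (1+\varepsilon)\, E_R(\omega), \quad \omega \in H^1(R,\R),
\]
because the conclusion of Lemma~\ref{L:LinAlgIneq} then reads exactly as \eqref{eq:qcIneq1}.

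For the upper bound I will invoke the classical quasiconformal energy inequality $E_{R'}(\phi^{*}\omega) \leq K\, E_R(\omega)$ for smooth 1-forms $\omega$ on $R$, where $K = 1+\varepsilon$ is the dilatation of $\phi$; this follows pointwise in isothermal coordinates from the definition of dilatation, using that the Dirichlet integrand $\omega \wedge \star \omega$ is conformally invariant. Since $\omega'$ is the energy minimiser within the cohomology class of $\phi^{*}\omega$ on the compact surface $R'$, this gives
\[
E_{R'}(\omega') \;\leq\; E_{R'}(\phi^{*}\omega) \;\leq\; (1+\varepsilon)\, E_R(\omega).
\]

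For the lower bound I will run the same argument backwards, using that $\phi^{-1} : R \to R'$ is again $K$-quasiconformal. The pull-back $(\phi^{-1})^{*}\omega'$ is a 1-form on $R$ cohomologous to $\omega$, since $\omega'$ and $\phi^{*}\omega$ are cohomologous on $R'$. The minimising property of $\omega$ then yields
\[
E_R(\omega) \;\leq\; E_R((\phi^{-1})^{*}\omega') \;\leq\; (1+\varepsilon)\, E_{R'}(\omega'),
\]
and the elementary inequality $\tfrac{1}{1+\varepsilon} \geq 1-\varepsilon$ supplies $E_{R'}(\omega') \geq (1-\varepsilon)\, E_R(\omega)$. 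With both bounds in hand, Lemma~\ref{L:LinAlgIneq} (applied with $u = \omega$, $v = \eta$, $\phi = \Psi$) closes the argument. The only step that needs genuine care is the quasiconformal energy inequality itself, but this is a standard local computation and is the sole technical ingredient beyond the preceding lemma.
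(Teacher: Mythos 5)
Your proof is correct and is essentially the same as the paper's: both establish the two-sided energy estimate $(1-\varepsilon)E_R(\omega) \leq E_{R'}(\omega') \leq (1+\varepsilon)E_R(\omega)$ via the pointwise quasiconformal inequality, the minimizing property of harmonic representatives, and the reverse argument applied to $\phi^{-1}$, then invoke Lemma~\ref{L:LinAlgIneq}. The paper additionally cites \cite{Mi74} for the analytic details of the quasiconformal energy inequality, which you note is the one technical ingredient, but the structure of the argument is identical.
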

 \begin{proof}%
Again we sketch the argument; for the analytic details we refer, e.g. to \cite[proof of Theorem 2]{Mi74}. Let first $\tau$ be any closed $1$-form on $R$. By the bound on the dilatation we have, pointwise,

 \begin{equation}\label{eq:qcIneq2}
\Vert \phi^{*}\tau \Vert^2\,  d\vol(R') \leq (1+\varepsilon) \Vert \tau \Vert^2\,  d\vol(R),
 \end{equation}%
where $\Vert \ \Vert$ and $d \vol$ are the pointwise norms and volume elements on the respective surfaces. Integration yields $E_{R'}(\phi^{*} \tau) \leq (1+\varepsilon)E_R(\tau)$. If now $\tau$ is harmonic, and $\tau'$ the harmonic form in the cohomology class of $\phi^{*} \tau$ then, by the minimizing property of harmonic forms, $E_{R'}(\tau') \leq (1+\varepsilon)E_R(\tau)$. Repeating these arguments for $\phi^{-1}:R \to R'$ we get altogether (for harmonic $\tau$ and $\tau'$),
 \begin{equation}\label{eq:qcIneq31}
(1 - \varepsilon)E_R(\tau)\leq \frac{1}{(1 + \varepsilon)}E_R(\tau) \leq E_{R'}(\tau') \leq (1 + \varepsilon)E_R(\tau).
 \end{equation}%
The inequality for the scalar product now follows from \bbref{Lemma}{L:LinAlgIneq} \end{proof}%

\section{The forms $\pmb{\sigma_1}$ and $\pmb{\mT_1}$}\label{sec:sigT1}

Here begins the second part of the paper. In all that follows $S$ is a compact hyperbolic Riemann surface of genus $g \geq 2$ and $\gamma$ is a \emph{nonseparating} simple closed geodesic on $S$. In addition, a geodesic canonical homology basis ${\rm A} = (\alpha_1, \alpha_{2},\ldots,\alpha_{2g-1},\alpha_{2g})$ is given, with the intersection convention that, as in \bbref{Section}{sec:Introd}, $\alpha_{2k-1}$ intersects $\alpha_{2k}$, $k=1,\dots,g$. Furthermore, $\alpha_2$ coincides with $\gamma$.

When, in a limit process, $\gamma$ becomes short, then the first two members $\sigma_1$, $\sigma_2$ of the dual basis of harmonic forms become more and more singular, while the other members are hardly affected. In this section we analyse $\sigma_1$, beginning with energy bounds in terms of a certain conformal capacity which too varies only little when $\ell(\gamma) \to 0$.

\subsection{The capacity of $\pmb{S^{\times}}$}\label{sec:capS}

We denote by $S^{\times}$  the surface $S$ cut open along the nonseparating geodesic $\gamma$. The \emph{capacity} of $S^{\times}$, more generally the capacity $\capa(M)$ of any connected surface $M$ with two disjoint boundary components $\partial_1$, $\partial_2$ is defined as the infimum
 \begin{equation}\label{eq:capaM}%
\capa(M) = \inf\{ E_M(d f) \mid  f\vert_{\partial_1}=0, \,  f\vert_{\partial_2}=1\},
 \end{equation}%
where the competing functions $f$ are piecewise smooth. We shall estimate $\capa(S^{\times})$ when $\gamma$ becomes small while all other Fenchel-Nielsen parameters of $S$ are kept fixed. For this we look at a suitable subset $\frak{M}$  of $S^{\times}$ whose capacity remains bounded under such a deformation. We shall define it as follows (see also \bbref{Section}{sec:Introd}, \bbref{Fig.}{fig:curvesSG}).

Let $\gamma_1, \gamma_2$ be the two boundary geodesics of $S^{\times}$. For $i=1,2$ we have the half collars $C(\gamma_i)$ consisting of the points at distance $< \cl(\ell(\gamma))$ from $\gamma_i$ (see \beqref{eq:clfctn}). The larger boundary, $b_i$, of $C(\gamma_i)$ is a parallel curve of length $\geq 2$, and we let $c_i$ in $C(\gamma_i)$ be the parallel curve that has length $\ell(c_i) = 1$. It splits $C(\gamma_i)$ into two ring domains: $C_i$ from $\gamma_i$ to $c_i$ and $B_i$ from $c_i$ to $b_i$. In what follows we shall understand $C_i$ and $B_i$ to be the closures of these domains. The subsurface $\frak{M}$ is obtained by cutting a way the parts $C_1, C_2$ and taking the closure:

 \begin{equation}\label{eq:kernelK}%
\frak{M} = S^{\times} \setminus (C_1\cup C_2) \cup c_1 \cup c_2.
 \end{equation}%
In \bbref{Section}{sec:TwistInf} we shall define $\frak{M}$ in a similar way for a Riemann surface with a pair of cusps. We set
 \begin{equation}\label{eq:inversecap}%
\Gamma = \frac{1}{\capa(\frak{M})}.
 \end{equation}%
Then $[0,\Gamma] \times \Sp_1$ is the cylinder that has the same capacity as $\frak{M}$. We shall call $\frak{M}$ the \emph{main part} of $S$.

To compare this with the capacity of $S^{\times}$ we look at the lengths of the half collars. The conformal mapping $\psi_{\gamma}$ as in
\beqref{eq:psigamma},   with $F_{\gamma}$ as in  \beqref{eq:Fgamma} sends the closure of $C(\gamma_1)$ onto the flat cylinder $[0,F_{\gamma}(\cl(\ell(\gamma)))]\times \Sp_1$. The parts $C_1, B_1$ go to the parts
\begin{equation*}
 [0, \frac{1}{2}L_{\gamma}]\times \Sp_1 \text{ \ and \ } [\frac{1}{2}L_{\gamma},\frac{1}{2}L_{\gamma}+d_{\gamma}]\times \Sp_1,
\end{equation*}
respectively. Observe the notation: the quantity $\frac{1}{2}L_{\gamma}+d_{\gamma}$ we are using here is the same as $M(\ell(\gamma))$ in \beqref{eq:Lleta}. From this equation it follows by elementary computation that
 \begin{equation}\label{eq:lengcyl}%
L_{\gamma} = \frac{\pi}{\ell(\gamma)}-\frac{2\arcsin \ell(\gamma)}{\ell(\gamma)}>4, \quad d_{\gamma}\geq \frac{1}{2}. 
 \end{equation}%
and the inequality holds under the assumption that $\ell(\gamma) \leq \frac{1}{2}$. For $C_2,B_2$ we have the same properties. Since $B_1, B_2$ are disjoint subsets of $\frak{M}$ that are conformally equivalent to $[0,d_{\gamma}]\times \Sp_1$ it follows immediately that
 \begin{equation}\label{eq:Gambg1}%
 \capa(\frak{M}) \leq 1, \quad \Gamma \geq 1. 
 \end{equation}%
However, the lower bounds of $\capa(\frak{M})$ (e.g.\ \bbref{Section}{sec:RelaxDual}) are more important.

From a conformal point of view, $S^{\times}$ is $\frak{M}$ with two copies of $[0,L_{\gamma}/2]\times \Sp_1$ attached along $c_1, c_2$. In the following, $S_L^{\times}$ is $\frak{M}$ with two copies $Z_1, Z_2$ of $[0,L/2]\times \Sp_1$ attached, for some arbitrary $L > 0$.

 \begin{lem}\label{lem:capStildaL}%
There exists a universal constant $\frak{z} < 2.3$ such that for any $L>0$
 \begin{equation*}%
\frac{1}{L + \frak{z} \Gamma} \leq \capa(S_L^{\times}) \leq \frac{1}{L+\Gamma}.
 \end{equation*}%
 \end{lem}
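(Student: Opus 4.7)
For the upper bound $\capa(S_L^\times)\le 1/(L+\Gamma)$ I would construct an explicit test function. Let $\phi_\frak{M}$ denote the capacity minimizer on $\frak{M}$, harmonic with $\phi_\frak{M}=0$ on $c_1$ and $\phi_\frak{M}=1$ on $c_2$, so that $E_\frak{M}(d\phi_\frak{M})=1/\Gamma$. Consider the three-parameter family of admissible functions on $S_L^{\times}$ which are linear in the Euclidean coordinate $x$ on each attached cylinder $Z_i\simeq[0,L/2]\times\Sp_1$ (interpolating from $0$ or $1$ at the outer boundary $\partial_i$ to constant values $a$ on $c_1$, $b$ on $c_2$) and of the form $a+(b-a)\phi_\frak{M}$ on $\frak{M}$. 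The resulting Dirichlet energy is $2a^2/L+(b-a)^2/\Gamma+2(1-b)^2/L$; a Lagrange multiplier computation gives optimal values $a=1-b=L/(2(L+\Gamma))$ with minimum energy exactly $1/(L+\Gamma)$, proving the upper bound.

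For the lower bound $\capa(S_L^\times)\ge 1/(L+\frak{z}\,\Gamma)$, take any admissible $f$ and split $E(df)=E_{Z_1}(df)+E_\frak{M}(df)+E_{Z_2}(df)$. Writing $\bar g_i:=\int_{c_i}f\,dy$ for the mean of $f$ on $c_i$, the Fourier analysis of \bbref{Lemma}{thm:lem_flat} (specifically the identity \beqref{eq:energy_y}) applied to each flat cylinder $Z_i$ with Dirichlet zero at $\partial_i$ yields the Poincar\'e type bounds $E_{Z_1}(df)\ge 2\bar g_1^2/L$ and $E_{Z_2}(df)\ge 2(1-\bar g_2)^2/L$. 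If in addition one can show the key estimate $E_\frak{M}(df)\ge(\bar g_2-\bar g_1)^2/(\frak{z}\,\Gamma)$ for some universal constant $\frak{z}<2.3$, then combining the three inequalities and minimizing over $\bar g_1,\bar g_2$ reduces to the same three-resistors-in-series calculation as in the upper bound, giving $E(df)\ge 1/(L+\frak{z}\,\Gamma)$.

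The main obstacle is precisely this bound on $E_\frak{M}(df)$. Decompose $f|_\frak{M}=u_0+\tilde u$, where $u_0$ is the harmonic extension of the constant boundary data $(\bar g_1,\bar g_2)$ and $\tilde u$ is the harmonic extension of the mean-zero fluctuations $g_i-\bar g_i$. Green's identity gives
\begin{equation*}
E_\frak{M}(du)=\frac{(\bar g_2-\bar g_1)^2}{\Gamma}+2(\bar g_2-\bar g_1)\,\Phi(\tilde u)+E_\frak{M}(d\tilde u),
\end{equation*}
where $\Phi(\tilde u):=\int_{c_2}\star d\tilde u$ is the flux of $\tilde u$ across $c_2$. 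The naive choice $\frak{z}=1$ fails because the cross term can be negative, and the coarse bound $|\Phi(\tilde u)|\le\sqrt{E_\frak{M}(d\tilde u)/\Gamma}$ coming from Cauchy--Schwarz against $d\phi_\frak{M}$ is then insufficient. To beat it I would exploit the cylindrical buffer zones $B_i\subset\frak{M}$ of conformal width $d_\gamma\ge 1/2$ from \beqref{eq:lengcyl}: representing $\Phi(\tilde u)=\int_{c_1}\tilde g_1\,\partial_\nu\phi_\frak{M}+\int_{c_2}\tilde g_2\,\partial_\nu\phi_\frak{M}$ by Green's identity and Fourier expanding $\phi_\frak{M}$ on $B_i$ as in \bbref{Section}{sec:HarmCy}, the fluctuating part of $\partial_\nu\phi_\frak{M}|_{c_i}$ inherits a damping factor $\sinh(2\pi n d_\gamma)^{-1}\le\sinh(\pi n)^{-1}$ from the Poisson kernel of the buffer. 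This yields a quantitative refinement $|\Phi(\tilde u)|^2\le\kappa\,E_\frak{M}(d\tilde u)/\Gamma$ with an explicit $\kappa<1$, and a short computation shows that $E_\frak{M}(du)\ge(1-\kappa)(\bar g_2-\bar g_1)^2/\Gamma$, so one may take $\frak{z}=1/(1-\kappa)$. The numerical bookkeeping needed to certify $\frak{z}<2.3$ is of the ``elementary simplification'' variety for which, as the introduction notes, a computer algebra system is helpful.
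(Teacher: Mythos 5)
Your upper-bound construction is identical to the paper's: the piecewise-defined test function which is linear in $x$ on the attached cylinders and equals the rescaled capacity minimizer $a+(b-a)\phi_\frak{M}$ on $\frak{M}$, optimized over $a,b$, gives exactly $1/(L+\Gamma)$. No difference there.

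For the lower bound you and the paper both reduce, via the three-piece split $E(df)=E_{Z_1}+E_\frak{M}+E_{Z_2}$ and the Fourier lower bound $E_{Z_i}(df)\ge 2\bar g_i^2/L$ (paper: \bbref{Lemma}{thm:lemorthogonal} applied to the linear-plus-nonlinear decomposition), to the same intermediate inequality $E_\frak{M}(df)\ge (\bar g_2-\bar g_1)^2/(\frak{z}\Gamma)$, after which the two-variable minimization over $\bar g_1,\bar g_2$ (equivalently the slopes $a,b$) reproduces the series-resistance answer. Where you diverge is in how that intermediate inequality is proved. The paper modifies $f$: it dampens the nonlinear boundary data to zero inside the buffer rings $B_1, B_2$, obtaining $f_\chi$ with constant data on $c_1,c_2$; the \bbref{First dampening down lemma}{lem:dampening} (clause i with $\delta=0$, $w=d_\gamma\ge\tfrac12$) gives $E_{B_i}(df_\chi)\le\frak{z}E_{B_i}(df)$ with $\frak{z}=2+2/\pi^2<2.3$, and the capacity definition then gives $(\bar g_2-\bar g_1)^2/\Gamma\le E_\frak{M}(df_\chi)\le\frak{z}E_\frak{M}(df)$. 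You instead decompose $f|_\frak{M}=u_0+\tilde u$, make the loss explicit as a cross term $2(\bar g_2-\bar g_1)\Phi(\tilde u)$, and bound $\Phi(\tilde u)$ by Cauchy--Schwarz on each $c_i$ after observing that the oscillating part of $\partial_\nu\phi_\frak{M}|_{c_i}$ carries $\sinh$-damping from the buffer. This is a genuinely different mechanism: the paper perturbs the competitor and pays a multiplicative factor coming from a ready-made lemma, while you leave $f$ alone and estimate a bilinear flux term. Your route is sound; in fact a Fourier computation on $B_i$ with $d_\gamma\ge\tfrac12$ gives $\|\tilde g_i\|^2_{L^2(c_i)}\le \tfrac{1}{\pi(1-e^{-2\pi})}E_{B_i}(d\tilde u)$ and $\|\widetilde{\partial_\nu\phi_\frak{M}}\|^2_{L^2(c_i)}\le \tfrac{4\pi}{\sinh(2\pi)}E_{B_i}(d\phi_\frak{M})$, so $|\Phi(\tilde u)|^2\le\kappa E_\frak{M}(d\tilde u)/\Gamma$ with $\kappa$ around $0.03$, and hence $\frak{z}=1/(1-\kappa)$ is comfortably below $2.3$ (indeed below the paper's $2+2/\pi^2$). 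So your approach, carried through, would actually yield a sharper constant. The price is that the two Poisson-kernel trace/decay estimates you need are new work, whereas the paper's argument is ``plug and play'' given Lemma 3.3, which is needed elsewhere anyway. One small note: describing the cross-term bound as ``elementary simplification'' undersells it — those two spectral inequalities on the buffers have to be derived, and unlike the paper's dampening route they are not stated anywhere in the paper; but the idea and the structure of the reduction are correct.
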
%
 \begin{proof}%
The upper bound is obtained out of the test function $F$ that grows linearly from 0 to $\frac{L/2}{L+\Gamma}$ on $Z_1$, linearly from $1-\frac{L/2}{L+\Gamma}$ to 1 in $Z_2$, and is harmonic with boundary values $\frac{L/2}{L+\Gamma}, 1-\frac{L/2}{L+\Gamma}$ on $\frak{M}$.

For the lower bound we let $f$ be the harmonic function that is constant equal to 0 on the left boundary of $S_L^{\times}$ and constant equal to 1 on the right boundary. For the harmonic 1-form $\omega = d f$ we then have
 \begin{equation}\label{eq:capSL1}%
E(\omega) = \capa(S_L^{\times}).
 \end{equation}%
With constants $a, b \in \R$ (to be dealt with at the end) we have the decomposition of $f$ into its linear and nonlinear parts
 \begin{equation}\label{eq:capSL2}%
f(x,y) = a x + h_1(x,y)
 \end{equation}%
on $Z_1$ and
 \begin{equation}\label{eq:capSL3}%
f(x,y) = b x + 1-b L/2 + h_2(x,y)
 \end{equation}%
on $Z_2$. (For either cylinder we adopt the convention that $x \in [0, L/2]$ and the constant terms are deduced from the fact that the nonlinear parts have mean values $\int_0^1 h_i(x,y)d y =0$.). We remark that \beqref{eq:capSL2} and \beqref{eq:capSL3} also hold on the adjacent ring domains  $\psi_{\gamma}(B_1)=[L/2,L/2+d_{\gamma}]\times \Sp_1$ and $\psi_{\gamma}(B_2) = [-d_{\gamma},0]\times \Sp_1$. Based on this we construct a form that serves as test form on $\frak{M}$ by dampening down the nonlinear parts of $f$ as follows. First on $Z_1 \cup B_1$: using the function $\chi$ that is $0$ on $[0,L/2]\times \Sp_1$ and then grows linearly from $0$ to $1$ on $[L/2, L/2 + d_{\gamma}] \times \Sp_1$ we set
 \begin{equation*}%
f_{\chi}(x,y)= a x + \chi(x,y)h_1(x,y)
 \end{equation*}%
on $Z_1 \cup B_1$. We proceed similarly on $B_2 \cup Z_2$ and complete the definition setting $f_{\chi}=f$ on $\frak{M} \smallsetminus (B_1 \cup B_2)$. Now $f_{\chi}$ is a test function on $\frak{M}$ that assumes the constant value $aL/2$ on the boundary component $c_1$ of $\frak{M}$ and $1-b L/2$ on $c_2$. The energy of the harmonic function that is constant $r$ on one boundary and constant $r+s$ on the other boundary of $\frak{M}$ can be obtained by rescaling the function that satisfies the capacity problem and is equal to $s^2 \cdot \capa( \frak{M})$. For $\omega_{\chi}=d f_{\chi}$ we have therefore
 \begin{equation}\label{eq:capSL4}%
E_\frak{M}(\omega_{\chi}) \geq (1-\frac{a+b}{2}L)^2 \capa(\frak{M}).
 \end{equation}%
By the dampening down \bbref{Lemma}{lem:dampening}\;i) (with $\delta = 0$ and $D_{int} = Z_l= [0, d_{\gamma}] \times \Sp_1$ conformally equivalent to $B_i$) we get $E_{B_i}(\omega_{\chi})\leq \frak{z}E_{B_i}(\omega)$, $i=1,2$, with $\frak{z} = 2 + 2 \pi^{-2}$.  Hence,
 \begin{equation}\label{eq:capSL5}%
E_\frak{M}(\omega_{\chi}) \leq \frak{z}E_\frak{M}(\omega).
 \end{equation}%
By the \bbref{Orthogonality lemma}{thm:lemorthogonal} we further have
 \begin{equation}\label{eq:capSL6}%
E(\omega) \geq \frac{a^2L}{2} + \frac{b^2L}{2} + E_\frak{M}(\omega).
 \end{equation}%
Bringing \beqref{eq:capSL4}--\beqref{eq:capSL6} together we obtain
 \begin{equation}\label{eq:capSL7}%
E(\omega) \geq \frac{a^2 + b^2}{2}L + \frac{1}{\frak{z} \Gamma}(1-\frac{a+b}{2}L)^2.
 \end{equation}%
Now, $a$ and $b$ are not known. We therefore replace the right hand side of \beqref{eq:capSL7} by the infimum over all real values of $a, b$. This infimum is readily seen to be achieved for $a = b = 1/(L+ \frak{z}\Gamma)$, and the corresponding value of the right hand side of \beqref{eq:capSL7}, somewhat accidentally, also equals $1/(L+ \frak{z}\Gamma)$. This completes the proof.
\end{proof}%

If we paste the two boundary geodesics of $S_L^{\times}$ together, with respect to some arbitrary twisting parameter, then for the resulting surface $S_L$ we have an analogous result. To make this precise we assume that $\alpha_1, \dots, \alpha_{2g}$ and $\sigma_1, \dots, \sigma_{2g}$ have the same meanings for $S_L$ as for $S$, and that $\alpha_2$ is the curve into which the two boundary curves of $S^{\times}$ have been pasted together. We shall, however, write $\sigma_1 = \sigma_{1,L}$ to indicate that the form is defined on $S_L$.

 \begin{lem}\label{lem:capStildaL2}%
There exists a universal constant $\frak{z} < 2.3$ such that for any $L>0$
 \begin{equation*}%
\frac{1}{L + \frak{z} \Gamma} \leq E(\sigma_{1,L}) \leq \frac{1}{L+\Gamma}.
 \end{equation*}%
 \end{lem}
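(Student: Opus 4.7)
The proof will follow the same scheme as \bbref{Lemma}{lem:capStildaL}, passing to the cut surface $S_L^{\times}$ and exploiting the fact that $\sigma_{1,L}$ becomes exact there. The periods of $\sigma_{1,L}$ over $\alpha_3,\dots,\alpha_{2g}$ vanish by duality, and its period over each boundary component of $S_L^{\times}$ equals $\int_{\alpha_2}\sigma_{1,L}=0$, since both boundaries are homologous in $S_L$ to $\gamma=\alpha_2$. Hence $\sigma_{1,L}|_{S_L^{\times}}=df$ for a single-valued harmonic function $f$ on $S_L^{\times}$, and the duality condition $\int_{\alpha_1}\sigma_{1,L}=1$ forces $f$ to jump by $1$ across the identification of the two boundaries.

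For the upper bound I will reuse the test function $F$ from the proof of \bbref{Lemma}{lem:capStildaL}: since $F$ takes constant values $0$ and $1$ on the two boundaries, $dF$ has vanishing tangential component there, and its normal components agree across the paste by construction of the linear pieces on $Z_1, Z_2$. Thus $dF$ extends to a closed $1$-form on $S_L$. A period check shows $\int_{\alpha_1}dF=1$ (from the jump), $\int_{\alpha_2}dF=0$, and $\int_{\alpha_j}dF=0$ for $j\geq 3$ (since $F$ is single-valued on $S_L^{\times}$), so $dF$ lies in the cohomology class of $\sigma_{1,L}$. The minimum-energy characterisation of harmonic forms then yields $E(\sigma_{1,L})\leq E(dF)=1/(L+\Gamma)$.

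For the lower bound I will transcribe the argument of \bbref{Lemma}{lem:capStildaL} applied to $f$. On each cylinder $Z_i=[0,L/2]\times\Sp_1$ (parametrised with $\partial_i$ at $x=0$ and $c_i$ at $x=L/2$) write $f(x,y)=a_ix+c_i+h_i(x,y)$; integrating the jump condition over $\Sp_1$ gives $c_2-c_1=1$. Extending this decomposition into $B_i\subset\frak{M}$ and applying the same cutoff $\chi$ as before produces a test form $f_\chi$ which is constant on each $c_i$ with value $a_iL/2+c_i$, so that
\[
E_\frak{M}(df_\chi)\;\geq\;\frac{(1+(a_2-a_1)L/2)^2}{\Gamma}
\]
by the definition of $\capa(\frak{M})$. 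The dampening estimate \bbref{Lemma}{lem:dampening}(i) gives $E_\frak{M}(df_\chi)\leq\frak{z}E_\frak{M}(df)$ with the same constant $\frak{z}<2.3$, while orthogonality of $a_i\,dx$ with $dh_i$ on $Z_i$ yields $E_{Z_i}(df)\geq a_i^2L/2$. Summing the three contributions and minimising the resulting quadratic in $(a_1,a_2)\in\R^2$ produces exactly $1/(L+\frak{z}\Gamma)$; under the substitution $(a,b)\mapsto(a_1,-a_2)$ the minimisation is formally identical to the one in the earlier proof. The only genuinely new ingredient, and therefore the only potential pitfall, is the correct identification of the admissible class of $f$ on $S_L^{\times}$ (single-valued with constant jump $1$) together with the verification that $dF$ extends smoothly across the paste; once these are in place, the dampening and minimisation steps carry over with only cosmetic adjustments.
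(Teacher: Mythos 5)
Your proposal is correct and follows exactly the same route as the paper: pass to $S_L^{\times}$, observe that $\sigma_{1,L}$ has a single-valued primitive $f_L$ there whose values on the two boundary copies of $\gamma$ differ by the constant $1$, and then literally rerun the upper- and lower-bound arguments of Lemma \ref{lem:capStildaL}. The paper disposes of this in one line (``the proof is identical to the preceding one, owing to the fact that $\sigma_{1,L}$ has a primitive $f_L$ on $S_L^{\times}$''); you have merely unpacked the period check for the test form $dF$ and the bookkeeping $(a,b)\mapsto(a_1,-a_2)$, both of which are sound.
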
%
 \begin{proof}%
The proof is identical to the preceding one, owing to the fact that $\sigma_{1,L}$ has a primitive $f_L$ on $S_L^{\times}$. We fix its additive constant such that $f_L$ has mean value 0 on the left boundary of $S_L^{\times}$ and mean value 1 on the right. The splitting into linear and nonlinear parts is then again $f_L(x,y) = a x + h_1(x,y)$ on $Z_1 \cup B_1$, $f_L(x,y) = b x + 1-b L/2 + h_2(x,y)$ on $B_2 \cup Z_2$ and the earlier arguments go through.
\end{proof}
\subsection{The form $\pmb{\mT_1}$}\label{sec:sigma1Q}

When $\ell(\gamma) \to 0$ or when $L\to \infty$ the energy of $\sigma_1$ goes to 0, and $\sigma_1$ disappears in the limit. We introduce therefore a renormalised form that ``survives''. To this end we let $\mT_1$ be $\sigma_1$ multiplied with a constant factor such that $\mT_1$ on the collar $C(\gamma)$ has a representation

 \begin{equation}\label{eq:EsjQ4}%
\mT_1 =  d x + d h_1^{\nlin},
 \end{equation}%
i.e. $\mT_1$ is normalized such that the linear part of its representations on the flat cylinders $Z_1 \cup B_1$ and $Z_2 \cup B_2$ is equal to $d x$. 

The form $\mT_1$ is defined similarly on any $S_L$ and is then written $\mT_{1,L}$. We observe that, by \bbref{Theorem}{L:sigma1} below, $\mT_{1,L}$ depends only on the conformal class of $S_L$ and not the particular description we are using here.

We shall show in \bbref{Section}{sec:HvsR}, \bbref{Theorem}{thm:Converge}  that as $L \to \infty$, $\mT_{1,L}$ converges, locally uniformly, to the unique harmonic form $\mT_1^{\sG}$ on $S^{\sG}$ that has zero periods and poles with linear parts $d x$ in the cusps.

As $L\to \infty$ the energy of $\mT_{1,L}$ and the absolute value of its period over $\alpha_1$ go to infinity. This is caused by the linear part in the cylinder. We introduce therefore  an \emph{essential energy} $\Ess(\mT_{1,L})$ and an \emph{essential period} $\mathscr{P}_1(\mT_{1,L})$ setting
\begin{equation}\label{eq:Essigma2}%
 \begin{aligned}
\Ess(\mT_{1,L}) &= E_\frak{M}(\mT_{1,L}) + E_C(\mT_{1,L}^{\nlin}),\\
\mathscr{P}_1(\mT_{1,L}) &= \int_{\alpha_1 \cap \pt\frak{M}} \mT_{1,L} + \int_{\alpha_1\cap \pt C} \mT_{1,L}^{\nlin},
 \end{aligned}
\end{equation}%
where $C$ is the inserted cylinder conformally equivalent to and identified with $[0,L] \times \Sp_1$ and for any harmonic form $\omega$ on $C$, $\omega^{\nlin}$ denotes its nonlinear part.

We first make the following observation concerning $\sigma_1$. %

 \begin{thm}\label{L:sigma1}%
For $\sigma_{1,L}$ on $S_L$ the linear part in the cylinders is $\alpha d x$ with $\alpha = E_{S_L}(\sigma_{1,L})$.
 \end{thm}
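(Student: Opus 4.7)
My plan is to combine the classical Riemann bilinear identity for closed 1-forms with a direct evaluation of a single period of $\star\sigma_{1,L}$ in the cylinder. The first observation is that on the inserted flat cylinder, the representation of $\sigma_{1,L}$ has the general form $b_0\,dx + c_0\,dy + dh^{\nlin}$, where $c_0$ equals the period of $\sigma_{1,L}$ around the $\Sp_1$-factor. Since this generator is homologous to $\alpha_2$ and $\int_{\alpha_2}\sigma_{1,L}=0$ by the duality relations $\int_{\alpha_j}\sigma_{1,L}=\delta_{1j}$, we have $c_0=0$. So the linear part is indeed a pure $\alpha\,dx$ (with $\alpha := b_0$), and the task is to identify this $\alpha$ with the total energy.

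Next I would invoke the bilinear identity for closed 1-forms $\omega,\eta$ on the compact surface $S_L$ with canonical basis $(\alpha_1,\dots,\alpha_{2g})$ (using the paper's convention $\alpha_{2k-1}\!\cdot\!\alpha_{2k}=+1$):
\begin{equation*}
\int_{S_L}\omega\wedge \eta \;=\; \sum_{k=1}^{g}\Bigl(\int_{\alpha_{2k-1}}\!\omega\cdot\int_{\alpha_{2k}}\!\eta \;-\; \int_{\alpha_{2k}}\!\omega\cdot\int_{\alpha_{2k-1}}\!\eta\Bigr).
\end{equation*}
Since $\sigma_{1,L}$ is harmonic, the form $\star\sigma_{1,L}$ is closed, so the identity applies with $\omega=\sigma_{1,L}$, $\eta=\star\sigma_{1,L}$. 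The relations $\int_{\alpha_j}\sigma_{1,L}=\delta_{1j}$ annihilate every term except $k=1$, and only the first summand of that term survives, giving
\begin{equation*}
E_{S_L}(\sigma_{1,L}) \;=\; \int_{S_L}\sigma_{1,L}\wedge\star\sigma_{1,L} \;=\; \int_{\alpha_2}\star\sigma_{1,L}.
\end{equation*}

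Finally I would evaluate the right-hand side in the cylinder. Choosing the representative $\alpha_2=\{x_0\}\times\Sp_1$ and using the Euclidean Hodge star $\star dx=dy$, $\star dy=-dx$ (which coincides with the star on $S_L$ by conformal invariance of $\star$ on $1$-forms), the decomposition $\sigma_{1,L}=\alpha\,dx+dh^{\nlin}$ gives $\star\sigma_{1,L}=\alpha\,dy+\star dh^{\nlin}$. The first term contributes $\alpha\int_0^1 dy = \alpha$. For the second, parametrising $\alpha_2$ by $y$, we get $\int_0^1\partial_x h^{\nlin}(x_0,y)\,dy$, which vanishes because the Fourier expansion of $h^{\nlin}$ in \beqref{eq:hseries}, and hence of $\partial_x h^{\nlin}$, contains no $y$-constant term. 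This yields $\alpha = E_{S_L}(\sigma_{1,L})$.

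No serious obstacle is anticipated; the only points demanding care are (i) matching the sign convention in the bilinear identity to the paper's intersection convention, and (ii) justifying that the $\star$ computed in the flat Euclidean coordinates is the correct one, which follows from conformal invariance of the Hodge star on 1-forms.
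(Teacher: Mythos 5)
Your proof is correct and follows essentially the same route as the paper: the Riemann bilinear identity reduces the energy to the period $\int_{\alpha_2}\star\sigma_{1,L}$, which is then read off as the coefficient of the linear $d y$-part of $\star\sigma_{1,L}$ on the cylinder. You simply supply details the paper leaves implicit — that the $c_0\,dy$ term of $\sigma_{1,L}$ vanishes because the cylinder core circle is homologous to $\alpha_2$ and $\int_{\alpha_2}\sigma_{1,L}=0$, that the nonlinear part contributes nothing to $\int_{\alpha_2}\star\sigma_{1,L}$, and that the Euclidean Hodge star suffices by conformal invariance; all of these are correct and worth spelling out.
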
%

 \begin{proof}%
By a well known formula for the period matrix (e.g.\ \cite[III.2.4]{FK92}) we have
 \begin{equation*}%
\int_{S_L} \sigma_{1,L}\wedge \star \sigma_{1,L} = \int_{\alpha_2}\star \sigma_{1,L}.
 \end{equation*}%
The linear part of $\star \sigma_{1,L}$ is $\alpha d  y$. Hence, the integral on the right hand side equals $\alpha$ and the Theorem follows.
 \end{proof}%
Since $\mT_{1,L}=\frac{1}{\alpha}\sigma_{1,L}$ an immediate consequence is that 
 \begin{equation}\label{eq:EnSigma1}%
E(\mT_{1,L}) = \frac{1}{\alpha^2}E(\sigma_{1,L}) = \frac{1}{\alpha}=\frac{1}{E(\sigma_{1,L})}
 \end{equation}%
By additivity the period of $\mT_{1,L}$ over $\alpha_1$ is the sum of $\mathscr{P}_1(\mT_{1,L})$ and $L$. On the other hand, $\mT_{1,L}$ has period $\frac{1}{\alpha}$. Hence,
 \begin{equation}\label{eq:eq:kappa1TL}%
\mathscr{P}_1(\mT_{1,L}) =\frac{1}{\alpha}-L.
 \end{equation}%
By the Orthogonality lemma 
\begin{equation}\label{eq:EssTL}%
\Ess(\mT_{1,L}) = E(\mT_{1,L}) -L = \frac{1}{E(\sigma_{1,L})}-L = \frac{1}{\alpha}-L
 \end{equation}%
from which it follows that

\ref{eq:Esskap}
 \begin{equation}\label{eq:Esskap}%
\Ess(\mT_{1,L}) = \mathscr{P}_1(\mT_{1,L}).
 \end{equation}%
This allows us, among other things, to write
 \begin{equation}\label{eq:sigmaT1}%
\sigma_{1,L}=\frac{1}{\mathscr{P}_1(\mT_{1,L})+L}\, \mT_{1,L},
\quad
E(\sigma_{1,L}) = \frac{1}{ \mathscr{P}_1(\mT_{1,L})+L}.
 \end{equation}%
Combining \beqref{eq:EssTL} with \bbref{Lemma}{lem:capStildaL2} we get the following.

\begin{lem}\label{L:sigma1ess}%
The essential energy of $\mT_{1,L}$ on $S_L$ has the following bounds, where $\Gamma = 1/\capa(\frak{M})$ and $\frak{z} \leq 2.3$ is the constant as in \bbref{Lemmata}{lem:capStildaL}, \bbref{}{lem:capStildaL2}
 \begin{equation*}%
\Gamma \leq \Ess(\mT_{1,L}) = \mathscr{P}_1(\mT_{1,L}) \leq \frak{z} \Gamma. \end{equation*}%
\par\vspace{-24pt}
\hfill \qedsymbol
 \end{lem}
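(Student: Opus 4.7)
The plan is to observe that this lemma is essentially a packaging of identities and inequalities that have already been established in the preceding paragraphs. First I would recall from \beqref{eq:EssTL} the identity
\[
\Ess(\mT_{1,L}) = E(\mT_{1,L}) - L = \frac{1}{E(\sigma_{1,L})} - L,
\]
which converts the essential energy into a quantity directly expressible in terms of $E(\sigma_{1,L})$. The equality $\Ess(\mT_{1,L}) = \mathscr{P}_1(\mT_{1,L})$ is then just equation \beqref{eq:Esskap}, so no further work is needed to establish that half of the statement.

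Next I would apply \bbref{Lemma}{lem:capStildaL2}, which gives the two-sided bound
\[
\frac{1}{L + \frak{z}\Gamma} \leq E(\sigma_{1,L}) \leq \frac{1}{L+\Gamma}.
\]
Taking reciprocals reverses the inequalities:
\[
L + \Gamma \leq \frac{1}{E(\sigma_{1,L})} \leq L + \frak{z}\Gamma,
\]
and subtracting $L$ from all three sides yields exactly
\[
\Gamma \leq \Ess(\mT_{1,L}) \leq \frak{z}\Gamma,
\]
as required. Combined with the identity $\Ess(\mT_{1,L}) = \mathscr{P}_1(\mT_{1,L})$ this closes the proof.

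There is no real obstacle here: all the substantive work was done in \bbref{Lemma}{lem:capStildaL2} (where the capacity bounds for $S_L^{\times}$ were converted to energy bounds for $\sigma_{1,L}$) and in the derivation of \beqref{eq:EssTL} and \beqref{eq:Esskap} (where the linear-part normalisation of $\mT_{1,L}$ was used together with the period-matrix formula from \bbref{Theorem}{L:sigma1} and the \bbref{Orthogonality lemma}{thm:lemorthogonal}). The present lemma is a one- or two-line corollary, and the QED symbol already appearing in the statement confirms that the authors regard it as immediate.
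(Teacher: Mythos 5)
Your proof is correct and follows exactly the route the paper intends: the identity from \beqref{eq:EssTL} and \beqref{eq:Esskap} reduces the claim to bounding $1/E(\sigma_{1,L}) - L$, which is precisely what \bbref{Lemma}{lem:capStildaL2} delivers after taking reciprocals and subtracting $L$. The paper itself states "Combining \beqref{eq:EssTL} with \bbref{Lemma}{lem:capStildaL2} we get the following" and regards the lemma as immediate, which your write-up faithfully reproduces.
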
%
In \bbref{Section}{sec:TwoCusps} we shall compare $\Ess(\mT_{1,L})$ with the corresponding essential energy of the harmonic form $\mT_1^{\sG}$ on the limit surface $S^{\sG}$. For this we prove here the following preparatory lemma.
 \begin{lem}\label{L.kappa1}%
Assume $L \geq 1$. Then, for any $\tilde{L} \geq L$ we have
 \begin{equation*}%
\vert \Ess(\mT_{1,L})-\Ess(\mT_{1,\tilde{L}})\vert < e^{-2\pi L} \min\{ \Ess(\mT_{1,L}), \Ess(\mT_{1,\tilde{L}}) \}.
 \end{equation*}%
 \end{lem}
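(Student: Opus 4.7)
The plan is to argue by a two-sided test-form comparison. For the upper bound $\Ess(\mT_{1,\tilde L}) - \Ess(\mT_{1,L}) \lesssim e^{-2\pi L}\,\Ess(\mT_{1,\tilde L})$, I will build a closed $1$-form $\omega$ on $S_L$ out of $\mT_{1,\tilde L}$ by \emph{amputating the middle} of the longer cylinder. Concretely: set $\omega = \mT_{1,\tilde L}|_{\frak{M}}$ on $\frak{M}$; on $[0,L/2]\times\Sp_1 \subset C_L$ copy $\mT_{1,\tilde L}$ from the left piece $[0,L/2]\times\Sp_1 \subset C_{\tilde L}$; on $[L/2,L]\times\Sp_1 \subset C_L$ copy $\mT_{1,\tilde L}$ from the right piece $[\tilde L - L/2, \tilde L]\times\Sp_1 \subset C_{\tilde L}$ after the natural translation $x \mapsto x + (\tilde L - L)$. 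By construction the two pieces agree with $\mT_{1,\tilde L}|_{\frak{M}}$ at $c_1$ and $c_2$, but at the new seam $x=L/2$ their nonlinear parts disagree; I glue them continuously via the cutoff recipe of \bbref{Lemma}{lem:dampening} over a small window $[L/2-w/2, L/2+w/2]$, dampening both nonlinear parts down to zero and back up.

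Two ingredients drive the energy estimate. \bbref{Lemma}{thm:lem_flat}\;iv) gives the decay $E_{[L/2,\,\tilde L-L/2]}(dh^{\nlin}_{\tilde L}) \leq e^{-2\pi L}\,E_{C_{\tilde L}}(dh^{\nlin}_{\tilde L})$ on the discarded middle, and \bbref{Lemma}{lem:dampening}\;i) bounds the extra cost of gluing by $O(e^{-2\pi L})\,E_{C_{\tilde L}}(dh^{\nlin}_{\tilde L})$. Using that the nonlinear-cylinder energy is bounded by $\Ess(\mT_{1,\tilde L})$ (this is essentially the definition $\Ess = E_{\frak{M}} + E^{\nlin}$), these combine to $E(\omega) \leq \Ess(\mT_{1,\tilde L}) + L + \delta$ with $\delta = O(e^{-2\pi L})\,\Ess(\mT_{1,\tilde L})$. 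A direct telescoping computation — in which the cutoff boundary terms cancel pairwise — together with \beqref{eq:Esskap} yields $\int_{\alpha_1}\omega = \Ess(\mT_{1,\tilde L}) + L$ and $\int_{\alpha_j}\omega = 0$ for $j\geq 2$ (the latter because $\alpha_2$ integrates periodic functions in $y$ and $\alpha_3,\dots,\alpha_{2g}$ lie in $\frak{M}$). Thus $\omega/(\Ess(\mT_{1,\tilde L})+L)$ is a competitor for $\sigma_{1,L}$, and the minimizing property gives
$E(\sigma_{1,L}) \leq E(\omega)/(\Ess(\mT_{1,\tilde L})+L)^2$. Inverting, using \beqref{eq:EssTL} in the form $\Ess(\mT_{1,L}) = 1/E(\sigma_{1,L}) - L$, and simplifying yields $\Ess(\mT_{1,L}) \geq \Ess(\mT_{1,\tilde L}) - \delta$.

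I will then run a mirror construction: build a test form on $S_{\tilde L}$ from $\mT_{1,L}$ by inserting a \emph{pure cylinder} piece $[L/2,\tilde L-L/2]\times\Sp_1$ carrying only the linear part $dx$, with the same cutoff dampening of the nonlinear parts of $\mT_{1,L}$ at each of the two new seams. By the parallel analysis this gives $\Ess(\mT_{1,\tilde L}) \geq \Ess(\mT_{1,L}) - \delta'$ with $\delta' = O(e^{-2\pi L})\,\Ess(\mT_{1,L})$. The two inequalities then force $\Ess(\mT_{1,L})$ and $\Ess(\mT_{1,\tilde L})$ to be mutually comparable up to the factor $1 \pm O(e^{-2\pi L})$; consequently the absolute difference is bounded by $O(e^{-2\pi L})$ times either one, hence in particular by that factor times the smaller.

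The main technical obstacle will be tracking the numerical constant in the error, so as to keep it strictly below $1$ and thereby obtain the stated bound with the clean factor $e^{-2\pi L}$ rather than $c\,e^{-2\pi L}$. This will require a judicious choice of the cutoff width $w$ in \bbref{Lemma}{lem:dampening} and appealing directly to the sharper pointwise and energy decay estimates of \bbref{Lemma}{thm:lem_flat}\;ii)--iv) rather than to the cruder consequence packaged inside \bbref{Lemma}{lem:dampening}. Because the hypothesis $L \geq 1$ already forces $e^{-2\pi L} \leq e^{-2\pi} \approx 0.002$, there is ample room for a modest prefactor to be absorbed; the verification that the \emph{strict} inequality holds with prefactor exactly $1$ is the delicate final step.
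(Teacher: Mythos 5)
Your proposal is correct and follows essentially the same route as the paper's own proof: construct competitors in both directions by cutting out (resp.\ inserting) a middle segment of the cylinder that carries only the linear $dx$-part after a cutoff dampening of the nonlinear tails near the seam, invoke the energy-minimising property of $\sigma_{1,L}$ and $\sigma_{1,\tilde L}$, and translate the resulting two-sided bound on $E(\sigma_1)$ into the $\Ess$-statement via \beqref{eq:EssTL}. The only cosmetic differences are that the paper works with the period-normalised $\sigma_1$ (with linear part $\alpha\,dx$) rather than with $\mT_1$ directly, and that it borrows a half-unit of conformal length from each ring $B_i$ (working on a cylinder of length $L+1$), which lets the hypothesis $L\geq1$ feed cleanly into the decay estimates; both reorganisations are equivalent to yours.
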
%
 \begin{proof}%
The technique is to begin with the forms $\sigma_{1,L}$ on $S_L$ and $\sigma_{1,\tilde{L}}$ on $S_{\tilde{L}}$ which both have period 1 over the first base cycle, then dampen their nonlinear parts down so as to obtain comparison test forms and finally translate the result to $\mT_{1,L}$ and $\mT_{1,\tilde{L}}$ using \beqref{eq:EssTL}.

In the flat cylinder $Z = [-\frac{1}{2}(L+1),\frac{1}{2}(L+1)] \times \Sp_1$ the form $\sigma_{1,L}$ has a representation $\sigma_{1,L} = d H = \alpha d x + d h$, where $h$ is the nonlinear part of $H$. With a constant $v \leq \frac{1}{2}(L+1)$ to be determined later (we shall take $v=\frac{1}{8}$) we set  
 \begin{equation*}%
d = \tfrac{1}{2}(L+1)-v, \quad D_{\text{int}} = [-v,v] \times \Sp_1= [-\tfrac{1}{2}(L+1)+d,\tfrac{1}{2}(L+1)-d]\times \Sp_1
 \end{equation*}%
and let $\chi$ be the cut off function in $Z$ that is equal to 1 outside of the part $D_{\text{int}}$, goes linearly down from 1 to 0 on $[-v,0] \times \Sp_1$ and then linearly up again from 0 to 1 on $[0,v] \times \Sp_1$. We let $\sigma_{1,\chi}$ be the form that coincides with $\sigma_{1,L}$ outside of $D_{\text{int}}$ and inside has the representation $\sigma_{1,\chi} = \alpha d x + d (\chi h)$. Its period over the first base cycle is again equal to 1. By \bbref{Lemma}{lem:dampening},iii) applied twice (with $w = v$ and $\delta = d$) its energy satisfies %
 \begin{equation*}%
E(\sigma_{1,\chi}) \leq E(\sigma_{1,L}) + 2(1+\frac{1}{2 \pi^2 v^2}) e^{-4\pi d}(E(\sigma_{1,L})-\alpha^2(L+1)).
 \end{equation*}%
Here $E(\sigma_{1,L})-\alpha^2(L+1)$ is an upper bound for the energy of the nonlinear part of $\sigma_{1,L}$ in $Z$. Abbreviating
 \begin{equation*}%
\mu =2(1+\frac{1}{2 \pi^2 v^2}) e^{-4\pi d}
 \end{equation*}%
and recalling that $E(\sigma_{1,L})=\alpha$ we rewrite the inequality in the form 
 \begin{equation*}%
E(\sigma_{1,\chi}) \leq  (1+m)\alpha \quad \text{with} \quad m = \mu \cdot (1-\alpha(L+1)).
 \end{equation*}%
In a similar way, on $\tilde{Z} = [-\frac{1}{2}(\tilde{L}+1),\frac{1}{2}(\tilde{L}+1)] \times \Sp_1$ the form $\sigma_{1,\tilde{L}}$ has a representation $\sigma_{1,\tilde{L}} = d \tilde{H} = \tilde{\alpha} d x + d \tilde{h}$, where $\tilde{h}$ is the nonlinear part of $\tilde{H}$. Here we set 
 \begin{equation*}%
\tilde{D}_{\text{int}} = [-\tfrac{1}{2}(\tilde{L}+1)+ d, \tfrac{1}{2}(\tilde{L}+1) - d]
 \end{equation*}%
and let $\tilde{\chi}$ be the cut off function on $\tilde{Z}$ that is equal to 1 outside of $\tilde{D}_{\text{int}}$, goes linearly down from 1 to 0 for $x \in [-\tfrac{1}{2}(\tilde{L}+1)+ d, -\tfrac{1}{2}(\tilde{L}+1)+ d+v]$, remains 0 until $x= \tfrac{1}{2}(\tilde{L}+1)- d -v$
and then goes linearly up again from 0 to 1 for $x \in [\tfrac{1}{2}(\tilde{L}+1)- d -v,\tfrac{1}{2}(\tilde{L}+1)- d]$. We let $\sigma_{1,\tilde{\chi}}$ be the form that coincides with $\sigma_{1,\tilde{L}}$ outside of $\tilde{D}_{\text{int}}$ and inside has the representation $\sigma_{1,\tilde{\chi}} = \tilde{\alpha}d x + d(\tilde{\chi} \tilde{h})$. Its period over the first base cycle is 1. In the same way as before we have the inequality
%
 \begin{equation*}%
E(\sigma_{1,\tilde{\chi}}) \leq (1+\tilde{m})\tilde{\alpha} \quad \text{with} \quad \tilde{m} = \mu \cdot (1-\tilde{\alpha}(\tilde{L}+1)).
 \end{equation*}%
On the part $D=[-\frac{1}{2}(\tilde{L}-L),\frac{1}{2}(\tilde{L}-L)] \times\Sp_1$ of $\tilde{Z}$ the form $\sigma_{1,\tilde{\chi}}$ coincides with $\tilde{\alpha} d x$. Therefore, if $\sigma_{1,\tilde{\chi}}^{\scriptscriptstyle\leftarrow}$ is the restriction of $\sigma_{1,\tilde{\chi}}$ to $S_{\tilde{L}} \setminus D$ and if we identify $S_{\tilde{L}} \setminus D$ with $S_L$ (by gluing the ends of $S_{\tilde{L}} \setminus D$ together with the same twist), then $\sigma_{1,\tilde{\chi}}^{\scriptscriptstyle\leftarrow}$ becomes a test form on $S_L$ with period over the first base cycle equal to $1-\tilde{\alpha}(\tilde{L}-L)$  and energy
 \begin{equation*}%
E(\sigma_{1,\tilde{\chi}}^{\scriptscriptstyle\leftarrow}) = E(\sigma_{1,\tilde{\chi}}) - \tilde{\alpha}^2(\tilde{L}-L).
 \end{equation*}%
In a converse way we may cut open $Z$ in the middle, insert $D$ and extend $\sigma_{1,\chi}$ to a test form $\sigma_{1,\chi}^{\scriptscriptstyle\rightarrow}$ on $S_{\tilde{L}}$ by defining it to be equal to $\alpha d x$ on $D$. Then $\sigma_{1,\chi}^{\scriptscriptstyle\rightarrow}$ has period $1 + \alpha(\tilde{L}-L)$ over the first base cycle and its energy is
 \begin{equation*}%
E(\sigma_{1,\chi}^{\scriptscriptstyle\rightarrow}) = E(\sigma_{1,\chi}) +\alpha^2(\tilde{L}-L). 
 \end{equation*}%
To get comparison forms with periods 1 we set, abbreviating $\bar{L}=\tilde{L}-L$,
 \begin{equation*}%
s_{1,\tilde{\chi}}^{\scriptscriptstyle\leftarrow} = \tfrac{1}{1-\tilde{\alpha}\bar{L}}\sigma_{1,\tilde{\chi}}^{\scriptscriptstyle\leftarrow},
\quad
s_{1,\chi}^{\scriptscriptstyle\rightarrow} = \tfrac{1}{1+\alpha \bar{L}}\sigma_{1,\chi}^{\scriptscriptstyle\rightarrow}.%
\end{equation*}%

We now have $\alpha = E(\sigma_{1,L}) \leq E(s_{1,\tilde{\chi}}^{\scriptscriptstyle\leftarrow}) =(1-\tilde{\alpha}\bar{L})^{-2}E(\sigma_{1,\tilde{\chi}}^{\scriptscriptstyle\leftarrow})$ and $\tilde{\alpha} = E(\sigma_{1,\tilde{L}}) \leq E(s_{1,\chi}^{\scriptscriptstyle\rightarrow}) = (1+\alpha \bar{L})^{-2}E(\sigma_{1,\chi}^{\scriptscriptstyle\rightarrow})$.
From the earlier relations we get
 \begin{equation*}%
\alpha \leq \frac{\tilde{\alpha}}{(1-\tilde{\alpha}\bar{L})^2}(1+\tilde{m}-\tilde{\alpha}\bar{L}),
\quad
\tilde{\alpha} \leq \frac{\alpha}{(1+\alpha \bar{L})^2}(1+m+ \alpha\bar{L}).
 \end{equation*}%
We are ready to estimate the difference $\Delta = \Ess(\mT_{1,\tilde{L}}) - \Ess(\mT_{1,L})$. By \beqref{eq:EssTL}, $\Delta =\frac{1}{\tilde{\alpha}}-\frac{1}{\alpha}-\bar{L}$, and the above inequalities yield
 \begin{equation*}%
 -\frac{m}{\alpha}\leq-\frac{m(1+\alpha \bar{L})}{\alpha(1+m+\alpha\bar{L})}\leq\Delta \leq \frac{\tilde{m}(1-\tilde{\alpha}\bar{L})}{\tilde{\alpha}(1+\tilde{m}-\tilde{\alpha} \bar{L})} \leq \frac{\tilde{m}}{\tilde{\alpha}}.
 \end{equation*}%
By \beqref{eq:EssTL} and the definition of $m$ and $\tilde{m}$ we have $\frac{m}{\alpha} \leq \frac{\mu}{\alpha}(1 - \alpha L) = \mu \Ess(\mT_{1,L})$, $\frac{\tilde{m}}{\tilde{\alpha}} \leq \frac{\mu}{\tilde{\alpha}}(1 - \tilde{\alpha} \tilde{L}) = \mu \Ess(\mT_{1,\tilde{L}})$. This yields, anticipating on the right hand side that $\mu < 1$,
 \begin{equation*}%
\vert \Ess(\mT_{1,L}) -  \Ess(\mT_{1,\tilde{L}})\vert \leq \mu \, \max\{ \Ess(\mT_{1,L}), \Ess(\mT_{1,\tilde{L}}) \} \leq \frac{\mu}{1-\mu}\min\{ \Ess(\mT_{1,L}), \Ess(\mT_{1,\tilde{L}}) \}.
 \end{equation*}%
The value of $\mu$ depends on the choice of $v$ and is close to its minimum for $v = 1/8$ for which case we have the upper bound 
$\mu  \leq \exp(-2\pi L-2)$ and the lemma follows.
 \end{proof}%

In the next lemma we estimate the variation of the essential energy of $\mT_1$ under quasiconformal mappings. It will be used in \bbref{Section}{sec:TwoCusps} to compare $\Ess(\mT_1^{\sG})$ with $\Ess(\mT_1^{\sF})$. We prove the lemma for the following situation. $\phi : R' \to R$ is a  $(1+\varepsilon)$-quasi conformal homeomorphism between compact hyperbolic surfaces of genus $g$. On $R'$ and $R$ nonseparating simple closed geodesics $\gamma'$, $\gamma$ of lengths $\ell(\gamma') = \ell(\gamma)$ are given and $\phi$ maps the standard collar $C(\gamma')$ conformally (and, hence, isometrically) to $C(\gamma)$. We denote by $\sigma_1$, $\sigma'_1$ the first elements of the dual bases corresponding to homology bases on $R$, $R'$ that have as second member $\alpha_2 = \gamma$, $\alpha_2' = \gamma'$, and $\mT_1$, $\mT'_1$ are the renormalisations of $\sigma_1$, $\sigma_1'$ whose linear parts in the collars are equal to $d x$.

 \begin{lem}\label{L.kapqcf}%
Under the above conditions for the compact Riemann surfaces $R$, $R'$ we have%
 \begin{equation*}
\vert \Ess(\mT_1)-  \Ess(\mT'_1) \vert  \leq\frac{\varepsilon}{1-\varepsilon}\min\{ \Ess(\mT_1),\Ess(\mT'_1)\}.
 \end{equation*}%
 \end{lem}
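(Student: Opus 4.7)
The plan is to reduce the comparison of essential energies to that of the much smaller quantities $E(\sigma_1)$ and $E(\sigma_1')$ via the identity $\Ess(\mT_1)=1/E(\sigma_1)-L$ from \eqref{eq:EssTL} (with $L$ the common conformal cylinder length, identical for $R$ and $R'$ since $\ell(\gamma)=\ell(\gamma')$), and then to exploit the energy-minimizing property of harmonic representatives combined with conformality of $\phi$ on the collar.

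\textbf{Step 1 (pullback and minimality).} Assuming that $\phi$ is marking preserving, as will be the case in the intended applications (see \bbref{Theorem}{thm:psRGRF}), the closed form $\phi^{*}\sigma_1$ on $R'$ has the same periods as $\sigma_1'$, hence lies in its cohomology class. The energy-minimizing property of harmonic forms then gives
\[
E(\sigma_1') \leq E_{R'}(\phi^{*}\sigma_1).
\]

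\textbf{Step 2 (splitting the domain).} Writing $R'=\frak{M}' \cup C'$, conformality of $\phi$ on the collar yields the exact equality $E_{C'}(\phi^{*}\sigma_1)=E_{C}(\sigma_1)$, while the pointwise quasi-conformal inequality \eqref{eq:qcIneq2} gives $E_{\frak{M}'}(\phi^{*}\sigma_1) \leq (1+\varepsilon)\,E_{\frak{M}}(\sigma_1)$. Summing,
\[
E(\sigma_1') \leq E(\sigma_1) + \varepsilon\, E_{\frak{M}}(\sigma_1). \qquad (\ast)
\]
Applying the same argument to $\phi^{-1}$, which is also $(1+\varepsilon)$-quasi-conformal and conformal on the collar, gives the symmetric inequality
\[
E(\sigma_1) \leq E(\sigma_1') + \varepsilon\, E_{\frak{M}'}(\sigma_1'). \qquad (\ast\ast)
\]

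\textbf{Step 3 (passing to essential energy).} Using $\sigma_1 = E(\sigma_1)\,\mT_1$ from \eqref{eq:EnSigma1}, one has $E_{\frak{M}}(\sigma_1) = E(\sigma_1)^{2}\,E_{\frak{M}}(\mT_1) \leq E(\sigma_1)^{2}\,\Ess(\mT_1)$, and similarly on $R'$. Suppose $E(\sigma_1) \leq E(\sigma_1')$; equivalently, $\Ess(\mT_1) \geq \Ess(\mT_1')$, so $\min\{\Ess(\mT_1),\Ess(\mT_1')\}=\Ess(\mT_1')$. Then from $(\ast)$, together with $E(\sigma_1)/E(\sigma_1') \leq 1$,
\[
\Ess(\mT_1) - \Ess(\mT_1') = \frac{E(\sigma_1')-E(\sigma_1)}{E(\sigma_1)\,E(\sigma_1')} \leq \frac{\varepsilon\,E(\sigma_1)^{2}\,\Ess(\mT_1)}{E(\sigma_1)\,E(\sigma_1')} \leq \varepsilon\,\Ess(\mT_1).
\]
Rearranging $\Ess(\mT_1)(1-\varepsilon) \leq \Ess(\mT_1')$ yields
\[
|\Ess(\mT_1)-\Ess(\mT_1')| \leq \varepsilon\,\Ess(\mT_1) \leq \frac{\varepsilon}{1-\varepsilon}\,\Ess(\mT_1') = \frac{\varepsilon}{1-\varepsilon}\,\min\{\Ess(\mT_1),\Ess(\mT_1')\}.
\]
The reverse case $E(\sigma_1) \geq E(\sigma_1')$ is handled identically, starting from $(\ast\ast)$.

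\textbf{Main obstacle.} A direct quasi-conformal comparison of the total energies of $\mT_1$ and $\mT_1'$ would yield at best $|E(\mT_1)-E(\mT_1')| \leq \varepsilon E(\mT_1) = \varepsilon(\Ess(\mT_1)+L)$, an estimate dominated by the $\varepsilon L$ term and useless when $L$ is large (i.e.\ when $\ell(\gamma)$ is small). Working with $\sigma_1$ removes the $L$-contribution, since $E(\sigma_1)$ is of order $1/L$; the quadratic scaling $E_{\frak{M}}(\sigma_1) = E(\sigma_1)^{2}\,E_{\frak{M}}(\mT_1)$ then supplies exactly the factor needed to cancel the denominator $E(\sigma_1)E(\sigma_1')$ when one translates back to $\Ess$, producing the clean bound proportional to $\varepsilon\,\Ess(\mT_1)$.
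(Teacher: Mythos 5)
Your proof is correct and follows essentially the same approach as the paper: both reduce to the quantity $\alpha = E(\sigma_1)$ via the minimizing property of harmonic forms, exploit that $\phi$ is conformal on the collar so the only energy increase comes from the main part, rewrite $E_{\frak{M}}(\sigma_1)$ in terms of $\alpha^2 \Ess(\mT_1)$ (you do this via $\sigma_1 = E(\sigma_1)\mT_1$, the paper via the Orthogonality lemma and $E_{\frak{M}}(\sigma_1) \leq \alpha - \alpha^2 L$, which is the same quantity), and then translate the one-sided bound on $\alpha'$ into $(1-\varepsilon)\Ess(\mT_1) \leq \Ess(\mT'_1)$ and symmetrize. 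The only cosmetic difference is that you split into two cases to control $\alpha/\alpha' \leq 1$, whereas the paper derives the one-sided inequality without a case distinction by bounding $\frac{\varepsilon(1-\alpha L)}{1+\varepsilon(1-\alpha L)} \leq \varepsilon(1-\alpha L)$; the conclusion is the same.
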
%
 \begin{proof}%
We let $\frak{M} \subset R$, $\frak{M}' \subset R'$ be the main parts with respect to the cylinders $C(\gamma)$, $C(\gamma')$ as in \beqref{eq:kernelK}. By \bbref{Lemma}{L:sigma1}  the linear parts of $\sigma_1$ and $\sigma'_1$ in the collars are $\alpha d x$ and $\alpha' d x$ respectively, with $\alpha = E_R(\sigma_1)$ and $\alpha' = E_{R'}(\sigma'_1)$. Now $\phi^{*} \sigma_1$ is in the cohomology class of $\sigma'_1$ and therefore $\alpha' \leq E_{R'}(\phi^{*} \sigma_1)$.  Since $\phi$ is $(1+\varepsilon)$-quasi conformal on $\frak{M}'$ and conformal on $R' \setminus \frak{M}'$ we get
 \begin{equation*}%
\alpha' \leq (1+\varepsilon)E_{\frak{M}}(\sigma_1) + E_{R \setminus \frak{M}}(\sigma_1) = \varepsilon E_\frak{M}(\sigma_1) + \alpha   
 \end{equation*}%
(using that pointwise, as in \beqref{eq:qcIneq2}, $\Vert (\phi^{*} \sigma_1)(p)\Vert^2 \, d \text{vol}(R') \leq (1+\varepsilon)\Vert \sigma_1(\phi(p)) \Vert ^2\, d \text{vol}(R)$) for $p \in \frak{M}'$). By the Orthogonality lemma
 \begin{equation*}%
E_\frak{M}(\sigma_1) \leq E_R(\sigma_1) - \alpha^2 L = \alpha - \alpha^2 L.
 \end{equation*}%
It follows that
 \begin{equation*}%
\alpha' \leq \alpha + \varepsilon(\alpha - \alpha^2 L).
 \end{equation*}%
Using that the essential energies are given by $\Ess(\mT_1) = \frac{1}{\alpha} -L$ and $\Ess(\mT'_1) = \frac{1}{\alpha'} -L$ \beqref{eq:EssTL} we get
 \begin{equation*}%
\Ess(\mT_1) - \Ess(\mT'_1) = \frac{1}{\alpha}-\frac{1}{\alpha'}\leq \frac{1}{\alpha}\cdot \frac{\varepsilon(1-\alpha L)}{1+\varepsilon(1-\alpha L)}\leq \frac{\varepsilon(1-\alpha L)}{\alpha} = \varepsilon \Ess(\mT_1).
 \end{equation*}%
Hence, $(1-\varepsilon) \Ess(\mT_1) \leq \Ess(\mT'_1)$. Applying this result to $\phi^{-1}$ we have, in the same way, $(1-\varepsilon) \Ess(\mT'_1) \leq \Ess(\mT_1)$. The two inequalities together yield \bbref{Lemma}{L.kapqcf}.
 \end{proof}%

\subsection{Bounds for cap($\frak{M}$) and $\pmb{\Gamma}$}\label{sec:bcapM}

The estimates in \bbref{Lemmata}{lem:capStildaL2}, \bbref{}{L:sigma1ess}, \bbref{}{L.kappa1} and \bbref{}{L.kapqcf} are in terms of $\Gamma = 1/\capa(\frak{M})$ which is an analytic constant. We further estimate $\capa(\frak{M})$ geometrically using the following quantities: $d_{\frak{M}}$ is the distance between the two boundary components and $\rho_{\frak{M}} := \frac{1}{4}\min\{1, \sys(\frak{M})\}$, where $\sys(\frak{M})$ is the \emph{systole}, i.e.\ the length of the shortest closed geodesic on $\frak{M}$. With these constants we have the following rough upper and lower bounds for the capacity:
 \begin{thm}\label{eq.bdscapaM}%
 \begin{equation*}%
\frac{\rho_{\frak{M}}}{d_{\frak{M}}} < \capa(\frak{M}) = \frac{1}{\Gamma} <  \min\{ \frac{4\pi(g-1)}{d_{\frak{M}}^2},1 \}.
 \end{equation*}%
 \end{thm}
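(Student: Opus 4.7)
The strategy is to treat the upper and lower bounds separately using, respectively, a Lipschitz test function plus a series-decomposition argument, and a Fermi tube argument around a shortest boundary-to-boundary geodesic.

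\emph{Upper bound.} For $\capa(\frak{M}) < 4\pi(g-1)/d_{\frak{M}}^2$ I would insert the $1$-Lipschitz admissible function $f(p) = \min\{1,\dist(p,\partial_1)/d_{\frak{M}}\}$, which satisfies $\Vert df\Vert \leq 1/d_{\frak{M}}$ almost everywhere, so that $\capa(\frak{M}) \leq \area(\frak{M})/d_{\frak{M}}^2$; by Gauss--Bonnet $\area(S) = 4\pi(g-1)$, and the inequality is strict because the removed collars $C_1, C_2$ carry positive area. For the companion bound $\capa(\frak{M}) < 1$, I would decompose $\frak{M}$ as $B_1\cup(\text{middle})\cup B_2$ in series between $\partial_1$ and $\partial_2$. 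Each $B_i$ is conformally $[0,d_\gamma]\times\Sp_1$ with capacity $1/d_\gamma$ where $d_\gamma \geq \tfrac{1}{2}$ by \beqref{eq:lengcyl}, and the middle piece has finite positive capacity since for $g\geq 2$ it is a nondegenerate compact surface with two boundary curves. The classical series inequality for capacities,
\[
\frac{1}{\capa(\frak{M})} \;\geq\; \frac{1}{\capa(B_1)}+\frac{1}{\capa(\text{middle})}+\frac{1}{\capa(B_2)}\;=\;2d_\gamma+\frac{1}{\capa(\text{middle})}\;>\;1,
\]
then closes the argument.

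\emph{Lower bound.} For $\capa(\frak{M}) > \rho_{\frak{M}}/d_{\frak{M}}$, let $c:[0,d_{\frak{M}}]\to\frak{M}$ be a shortest geodesic arc from $\partial_1$ to $\partial_2$, meeting both boundaries perpendicularly at its endpoints. Install Fermi coordinates $\Phi(t,s)=\exp_{c(t)}(s\nu(t))$ on the rectangle $[0,d_{\frak{M}}]\times(-\rho_{\frak{M}},\rho_{\frak{M}})$. In constant curvature $-1$ the metric is explicitly $\cosh^2(s)\,dt^2+ds^2$, there are no focal points in the normal direction, and the restriction $\rho_{\frak{M}} \leq \tfrac{1}{4}\sys(\frak{M})$ prevents two Fermi coordinate points from being identified via a nontrivial deck transformation, so $\Phi$ is an embedding. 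For each $s\in(-\rho_{\frak{M}},\rho_{\frak{M}})$ the horizontal slice $\tilde c_s:t\mapsto\Phi(t,s)$ has length $d_{\frak{M}}\cosh(s)\leq d_{\frak{M}}\cosh(\rho_{\frak{M}})$ and, by the transversality of $c$ at the boundaries, can be truncated to an arc $c_s$ inside $\frak{M}$ joining $\partial_1$ to $\partial_2$ of length at most $d_{\frak{M}}\cosh(\rho_{\frak{M}})$. For any admissible $f$ the fundamental theorem of calculus along $c_s$ and Cauchy--Schwarz give
\[
\int_{c_s}\Vert df\Vert^2\,d\ell \;\geq\;\frac{1}{\ell(c_s)}\;\geq\;\frac{1}{d_{\frak{M}}\cosh(\rho_{\frak{M}})}.
\]
Since the hyperbolic volume element in Fermi coordinates factorizes as $dA=\cosh(s)\,dt\,ds$ and matches the arclength element $\cosh(s)\,dt$ on the slices $\tilde c_s$, Fubini yields
\[
E_{\frak{M}}(df)\;\geq\;\int_{-\rho_{\frak{M}}}^{\rho_{\frak{M}}}\int_{c_s}\Vert df\Vert^2\,d\ell\,ds\;\geq\;\frac{2\rho_{\frak{M}}}{d_{\frak{M}}\cosh(\rho_{\frak{M}})}.
\]
Because $\rho_{\frak{M}}\leq\tfrac{1}{4}$ forces $\cosh(\rho_{\frak{M}})<2$, taking the infimum over admissible $f$ yields $\capa(\frak{M})>\rho_{\frak{M}}/d_{\frak{M}}$.

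\emph{Main obstacle.} The delicate step is the Fermi tube picture near the boundary: one has to verify that for every admissible $s$ the slice $\tilde c_s$ genuinely exits $\frak{M}$ through both $\partial_1$ and $\partial_2$, so that the truncation $c_s$ is well defined and carries almost the full length. This follows from the perpendicular incidence of $c$ at $c(0),c(d_{\frak{M}})$ combined with smoothness of the parallel curves $c_1,c_2$: an implicit function argument shows that in the Fermi chart each $\partial_i$ is the graph $t=\phi_i(s)$ with $\phi_i(0)=\phi_i'(0)=0$ and $\phi_i(s)=O(s^2)$, so the length loss in the truncation is of lower order and does not disturb the estimate.
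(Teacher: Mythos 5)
Your strategy is the same as the paper's on all three inequalities: Lipschitz distance function plus an area bound for the $4\pi(g-1)/d_{\frak{M}}^2$ estimate, a series decomposition through $B_1,B_2$ for $\capa(\frak{M})<1$ (essentially the argument behind \beqref{eq:Gambg1}), and a Fermi ribbon around the shortest orthogeodesic $A$ with a transversal Cauchy--Schwarz/Fubini estimate for the lower bound. The only structural difference in the lower bound is that you stay in hyperbolic Fermi coordinates and absorb the metric distortion at the end via $\cosh(\rho_{\frak{M}})<2$, whereas the paper first maps the ribbon conformally to a Euclidean strip $W''$ so that the Dirichlet energy is already flat; both routes land on the same bound.

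There is, however, a concrete error in the ``main obstacle'' paragraph, and it is in the direction that hurts rather than helps. The boundary curves $c_1,c_2$ of $\frak{M}$ are equidistant curves of the short geodesics $\gamma_1,\gamma_2$, so their geodesic curvature vectors point \emph{towards} $\gamma_i$, i.e.\ \emph{out of} $\frak{M}$. In the Fermi chart based on $A$, with $\frak{M}$ on the side $t>0$ near $c_1$, this means $\phi_1(s)=\tfrac12\phi_1''(0)s^2+O(s^3)$ with $\phi_1''(0)<0$, hence $\phi_1(s)<0$ for $s\neq0$; likewise for $c_2$. Consequently the Fermi rectangle $[0,d_{\frak{M}}]\times(-\rho_{\frak{M}},\rho_{\frak{M}})$ lies in the interior of $\frak{M}$, and the slices $\tilde c_s$ must be \emph{extended}, not truncated, to reach $\partial_1$ and $\partial_2$. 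So the arc $c_s$ you integrate over has length $\geq d_{\frak{M}}\cosh(s)$, and the stated bound ``$\ell(c_s)\leq d_{\frak{M}}\cosh(\rho_{\frak{M}})$'' is false, as is the remark that ``the length loss in the truncation is of lower order.'' The argument can still be rescued: the excess length is of order $\kappa_{c_i}\rho_{\frak{M}}^2\leq\rho_{\frak{M}}^2\leq1/16$, while $\cosh(\rho_{\frak{M}})\leq\cosh(1/4)\approx1.03$ leaves a generous gap to the threshold $2$, and $d_{\frak{M}}$ is bounded below by roughly $2\ln 2$ in this setting (any path from $c_1$ to $c_2$ must traverse $B_1$ and $B_2$). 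But none of this is in the proposal; as written, the key length estimate is wrong. For what it is worth, the paper's proof has the same boundary issue and resolves it only by declaring the discrepancy ``negligible,'' so your version is not substantially less rigorous — but your explanation of \emph{why} it is negligible is incorrect and would need to be redone with the signs straightened out.
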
%
 \begin{proof}%
The inequality $\frac{1}{\Gamma} \leq 1$ is given in \eqref{eq:Gambg1}. For the second inequality we follow the lines of \cite{Mu17} where a stronger result is proved.

For the lower bound we consider the harmonic function $h$ on $\frak{M}$ that realises the boundary conditions $h\vert_{c_1} = 0$, $h\vert_{c_2} = 1$ and restrict it to the ribbon $W=\{p \in \frak{M} \mid \dist(p,A) \leq \rho_{\frak{M}}$, where $A$  is the shortest geodesic from $c_1$ to $c_2$. This geodesic has length $d_{\frak{M}}$ and is orthogonal to the boundary at the endpoints. By the size of $\rho_{\frak{M}}$ the geodesic arcs of length $\rho_{\frak{M}}$ emanating orthogonally from $A$ are pairwise disjoint. This follows by the lines of the proof of the main theorem in \cite{Ch77}, where a corresponding result is shown for the diameter of a Riemann surface. Introducing Fermi coordinates for $W$ based on $A$ we get therefore a coordinate description of $W$ that makes it isometric to the set 
\begin{equation*}
W' = \{(x,y) \in \R^2 \mid 0 \leq x \leq d_{\frak{M}}, \;-\rho_{\frak{M}} \leq y \leq \rho_{\frak{M}} \}
\end{equation*}
endowed with the Riemannian metric $d s^2 = \cosh^2(y)d x^2 + d y^2$. This is true except for a negligible difference at the left and right end of $W'$ which will not affect our rough estimates. Now $W'$ in turn is conformally equivalent to the flat strip (with the standard Euclidean metric) $W'' = \{(x,y) \in \R^2 \mid 0 \leq x \leq d_{\frak{M}}, -\rho'' \leq y \leq \rho'' \}$, where $\rho'' = 2\arctan \tanh \frac{1}{2} \rho_{\frak{M}}$ (use \beqref{eq:Fgamma} with $\ell(\gamma)$ replaced by 1). Identifying $W$ with $W''$ we can now write
 \begin{align*}%
\capa(\frak{M}) &> \int_{W''} \Vert d h \Vert^2 = 
 \int_{-\rho''}^{\rho''}\int_0^{d_{\frak{M}}} \left( \frac{\partial h}{\partial x}\right)^2+\left( \frac{\partial h}{\partial y}\right)^2 d x d y\\
&\geq \int_{-\rho''}^{\rho''}\int_0^{d_{\frak{M}}} \left( \frac{\partial h}{\partial x}\right)^2 \geq \int_{-\rho''}^{\rho''}\frac{1}{d_{\frak{M}}}d y =  \frac{2\rho''}{d_{\frak{M}}} > \frac{\rho_{\frak{M}}}{d_{\frak{M}}}.
 \end{align*}%
For the upper bound we take the test function $f = \frac{1}{d_{\frak{M}}}F$ on $\frak{M}$, where $F$ is defined by the condition $F(p) = \dist(p,c_1)$ if $d(p,c_1) \leq \frac{1}{2}d_{\frak{M}}$, $F(p) = d_{\frak{M}}-\dist(p,c_2)$ if $d(p,c_2) \leq \frac{1}{2}d_{\frak{M}}$ and $F(p) = \frac{1}{2}d_{\frak{M}}$ in the remaining cases. Now $\Vert d F\Vert  \leq 1$, $\Vert d f \Vert \leq \frac{1}{d_{\frak{M}}}$ and $\capa(\frak{M}) \leq \int_{\frak{M}}\Vert d f \Vert^2 \leq \area(\frak{M})d_{\frak{M}}^{-2}=4\pi(g-1)d_{\frak{M}}^{-2}$.
\end{proof}%

\section{Relaxed dual basis}\label{sec:RelaxDual}

As pointed out in \bbref{Section}{sec:Introd} two difficulties arise in the nonseparating case: the homology changes in the limit and the normalisation condition $\int_{\alpha_1}\sigma_j = 0$, for $j=2,\dots,2g$ disappears. In this section we deal with the second problem by introducing a weaker concept of dual basis of harmonic forms $\sigma_1, \tau_2, \tau_3, \dots, \tau_{2g}$,  where the $\tau_j$ have the same periods as the $\sigma_j$. However, the condition $\int_{\alpha_1} \tau_j=0$ is dropped and replaced by the condition that the energy of these forms is minimal. This allows us to compare the energy of elements in a dual basis on $S$ with the energy of elements of the relaxed basis on the limit surface. The harmonic forms of this basis have  some nice properties that are interesting by themselves. In \bbref{Section}{sec:TwoCusps} we use them to prove \bbref{Theorems}{thm:nonsepGIntro} and \bbref{}{thm:nonsepFIntro}.

\subsection{The forms $\pmb{\tau_j}$ for $j=2, \dots, 2g.$}
\label{sec:sigmajQ}

Let again $S$ be a compact Riemann surface of genus $g\geq 2$, $\gamma$ a  nonseparating simple closed geodesic on $S$, $\alpha_1, \dots, \alpha_{2g}$ a canonical homology basis with $\alpha_2 = \gamma$ enumerated such that $\alpha_{2j-1}$ intersects $\alpha_{2j}$, $j=1,\dots,g$, and $\sigma_1, \dots, \sigma_{2g}$  the dual basis of harmonic 1-forms. As earlier we extend these notions to any surface in the family $\big(S_L)_{L>0}$, write, however, most of the time $\sigma_i$ instead of $\sigma_{i,L}$, $i=1,\dots, 2g$, and similarly for $\mT_1$ and the forms $\tau_j$ introduced next.

We introduce the modified basis $\sigma_1, \tau_2, \tau_3, \dots, \tau_{2g}$ on $S_L$ by the following stipulations, for $j = 2, \dots, 2g$:
\begin{enumerate}
\item[(a)] $\int\limits_{\alpha_i} \tau_j = \delta_{ij}$, \quad $i = 2, \dots, 2g$;
\item[(b)] $\tau_j$ is the harmonic form with the minimal energy that satisfies (a).
\end{enumerate}
We call $(\sigma_1, \tau_2, \dots, \tau_{2g})$ the \emph{relaxed dual basis}. (An alternative is to view $(\mT_1, \tau_2, \dots, \tau_{2g})$ as the relaxed dual basis despite the fact that  $\mT_1$ does not have period 1.)

It is easy to see that $\tau_j$ is also an energy minimizer among \emph{all} closed 1-forms that satisfy (a). We introduce the periods
\begin{equation}\label{eq:kappaj}
\kappa_j=\kappa_{j,L} = -\int\limits_{\alpha_1} \tau_j, \quad j= 2,\dots, 2g.
\end{equation}
Again, if it is clear from the context we will use $\kappa_i$ instead of $\kappa_{j,L}$, $j=2,\dots, 2g$. By (a) and because $\sigma_1, \dots, \sigma_{2g}$ is a dual basis we then have
\begin{equation}\label{eq:sigmakapj}
\sigma_j = \tau_j + \kappa_j \sigma_1, \quad j= 2,\dots, 2g.
\end{equation}

 \begin{lem}\label{lem:sigma1ortho}%
For $j=2,\dots,2g$ we have 
 \begin{equation*}%
\scp{\tau_j}{\sigma_1} = 0.
 \end{equation*}%
%
 \end{lem}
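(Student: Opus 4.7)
The plan is to exploit the variational (energy-minimizing) characterization of $\tau_j$. The constraint defining $\tau_j$ fixes only the periods over $\alpha_2, \dots, \alpha_{2g}$, leaving the period over $\alpha_1$ free. So I would look for a one-parameter family of admissible perturbations of $\tau_j$ that preserve the constrained periods and then differentiate the energy.

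The key observation is that $\sigma_1$ is, by definition of the dual basis, a harmonic form whose periods over $\alpha_2,\dots,\alpha_{2g}$ all vanish. Therefore, for every $t\in\mathbb R$, the form $\tau_j+t\sigma_1$ is harmonic and still satisfies the constraints $\int_{\alpha_i}(\tau_j+t\sigma_1) = \delta_{ij}$ for $i=2,\dots,2g$. Hence it is an admissible competitor in the minimization problem defining $\tau_j$.

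Expanding the energy using bilinearity of the scalar product,
\begin{equation*}
E(\tau_j+t\sigma_1) = E(\tau_j) + 2t\,\scp{\tau_j}{\sigma_1} + t^2 E(\sigma_1).
\end{equation*}
Since $\tau_j$ minimizes the energy on the affine set of admissible forms, the real-valued function $t\mapsto E(\tau_j+t\sigma_1)$ attains its minimum at $t=0$. Differentiating at $t=0$ forces the linear term to vanish, i.e.\ $\scp{\tau_j}{\sigma_1}=0$.

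I do not anticipate any obstacle; the argument is a one-line first-variation computation once one recognizes that $\sigma_1$ lies in the kernel of all the period constraints that define $\tau_j$. The only thing worth a brief comment is that the minimizer exists and is unique (so that the above variational identity is meaningful), but this is standard: the energy is a strictly convex quadratic functional on the finite-dimensional affine subspace of harmonic forms with prescribed periods over $\alpha_2,\dots,\alpha_{2g}$, so a unique minimum exists.
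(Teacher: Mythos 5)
Your proposal is correct and takes essentially the same approach as the paper: both exploit that $\sigma_1$ spans the free direction in the affine constraint space and that the quadratic energy functional has vanishing first variation at the minimizer $\tau_j$. The paper merely parametrizes admissible forms as $\sigma_j+t\sigma_1$ and solves for the minimizing $t_0$ explicitly, which is the same computation in slightly different clothing.
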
%
 \begin{proof}%
If $\omega$ is a harmonic 1-form on $S$ that, for given $j$, satisfies the condition $\int_{\alpha_i} \omega = \delta_{ij}$, for $i = 2, \dots, 2g$, then $\omega = \sigma_j + t \sigma_1$, for some $t \in \R$. The function $f(t) = E(\sigma_j + t \sigma_1) = E(\sigma_j) + 2t \scp{\sigma_j}{\sigma_1} + t^2 E(\sigma_1)$ has a unique minimum at $t_0 = -\scp{\sigma_j}{\sigma_1}/E(\sigma_1)$. Hence,
 \begin{equation*}%
\tau_j =\sigma_j + t_0 \sigma_1 = \sigma_j -\frac{\scp{\sigma_j}{\sigma_1}}{E(\sigma_1)}\sigma_1,
 \end{equation*}%
and the lemma follows. 
 \end{proof}%
Taking in \beqref{eq:sigmakapj} the scalar product with $\sigma_1$ we now get

 \begin{equation}\label{eq:Fkapp1}%
 \kappa_j = \frac{\scp{\sigma_1}{\sigma_j}}{\scp{\sigma_1}{\sigma_1}}, \quad j= 2,\dots, 2g.\end{equation}%

For the next lemma we consider an annular neighbourhood $\mathscr{C}$ of $\gamma$ ($\mathscr{C}$ need not be a distance neighbourhood) and a conformal mapping $\psi$ that sends $\mathscr{C}$ onto a flat cylinder $Z = ({-}m_1,m_2) \times \Sp_1$ for some $m_1,m_2 > 0$. For the cylinder we use the notation as in \bbref{Section}{sec:HarmCy}. In particular, $x$ is the variable that runs through $[-m_1,m_2]$ and $y$ runs through $\Sp_1 = \R/ \Z$. The mapping $\psi$ is assumed to be such that the image of $\gamma$ is the circle $\{0\} \times \Sp_1$. The couple $(Z, \psi)$ serves as a coordinate system for $\mathscr{C}$, and by abuse of notation we write $f = f\circ \psi^{-1}$ for the representation of functions on $\mathscr{C}$, and $\omega = (\psi^{-1})^{*} \omega$ for 1-forms on $\mathscr{C}$.
 \begin{lem}\label{lem:nolinear}%
On $\mathscr{C}$ the $\tau_j$ have vanishing linear $d x$-terms, $j=2, \dots, 2g$.
 \end{lem}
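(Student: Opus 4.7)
The plan is to identify the linear $dx$-coefficient of $\tau_j$ on the cylinder with an intrinsic period, then invoke the orthogonality from \bbref{Lemma}{lem:sigma1ortho}. Using the Fourier decomposition set up in \bbref{Section}{sec:HarmCy}, write the restriction of $\tau_j$ to the cylinder in the form $\tau_j = a_j\, d x + b_j\, d y + d h_j^{\nlin}$, where $h_j^{\nlin}$ is its nonlinear part. The $dy$-coefficient is determined by the period $b_j = \int_\gamma \tau_j = \delta_{2j}$, and the aim is to show $a_j = 0$.

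The first step would be the identity $a_j = \int_{\alpha_2} \star \tau_j$. This is a direct computation on the flat cylinder: since $\star\, d x = d y$ and $\star\, d y = -d x$, one has
\[
\star \tau_j = a_j\, d y - b_j\, d x + \star d h_j^{\nlin}.
\]
Integrating along $\gamma = \{0\} \times \Sp_1$, parametrized by $y \in [0,1]$, the $d x$-piece drops out, the term $a_j\, d y$ contributes $a_j$, and the contribution from $\star d h_j^{\nlin}$ vanishes because $\partial h_j^{\nlin}/\partial x\, (0, \cdot)$ has zero $y$-mean (no $n=0$ Fourier mode in the expansion of $h_j^{\nlin}$).

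The second step is the Riemann bilinear period relation. The form $\star \tau_j$ is closed because $\tau_j$ is harmonic, so one may pair it with the closed form $\sigma_1$. Since the only nonvanishing period of $\sigma_1$ on the canonical basis is $\int_{\alpha_1}\sigma_1 = 1$, the bilinear formula collapses to a single surviving term, giving $\int_S \sigma_1 \wedge \star \tau_j = \int_{\alpha_2} \star \tau_j$. By \bbref{Lemma}{lem:sigma1ortho} the left-hand side equals $\scp{\sigma_1}{\tau_j} = 0$. Combining with the first step yields $a_j = 0$, which is the claim.

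The argument is essentially algebraic once the two ingredients — the cylinder Hodge-star computation and the bilinear period relation — are lined up, so I do not expect any real obstacle. The only points requiring care are the sign convention for $\star$ on the flat cylinder and the normalization of $\gamma$ as $\{0\}\times \Sp_1$ with $\Sp_1 = \R/\Z$; both are already fixed by the conventions of \bbref{Sections}{sec:ConfCC} and \bbref{}{sec:HarmCy}.
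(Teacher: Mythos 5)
Your proof is correct, and it takes a genuinely different route from the paper. The paper's argument is variational: it builds a test form $t_j = \tau_j - \omega_F$ by cutting off the $a\,dx$ piece inside the cylinder, observes that $t_j$ competes with $\tau_j$ in the minimization defining $\tau_j$, and then extracts $a = 0$ from the Orthogonality Lemma applied to $E_D(\tau_j) = E_D(t_j) + a^2 E_D(dx)$. Your argument is algebraic: you identify the $dx$-coefficient with the period $a_j = \int_{\alpha_2}\star\tau_j$ (a direct computation on the flat cylinder, valid because the nonlinear Fourier modes integrate to zero), then invoke the bilinear relation \beqref{eq:FK92III23} with $\theta = \sigma_1$, $\eta = \star\tau_j$ — legitimate since $\star\tau_j$ is closed because $\tau_j$ is harmonic — and finally use $\scp{\sigma_1}{\tau_j} = 0$ from \bbref{Lemma}{lem:sigma1ortho}. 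Note a small sign discrepancy: with the conventions of \beqref{eq:FK92III23} the $k=1$ term yields $\int_S\sigma_1\wedge\star\tau_j = -\int_{\alpha_2}\star\tau_j$, not $+\int_{\alpha_2}\star\tau_j$ as you wrote; since the conclusion is $a_j = 0$, the sign is harmless, but you should get it straight if you want the identity itself. Your route is arguably cleaner than the paper's because it reuses \bbref{Lemma}{lem:sigma1ortho} (itself a one-line minimization of a quadratic) rather than building a second dampening-down construction, and it makes transparent exactly which property of $\tau_j$ is being used — namely that $\tau_j$ is orthogonal to $\sigma_1$, not the full energy-minimizing definition.
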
%
 \begin{proof}%
Without loss of generality we assume that $m_1=m_2 = m$. We first consider the case $j \geq 3$. Then $\tau_j$ on $\mathscr{C}$ has a representation
 \begin{equation}\label{eq:EsjQ1}%
\tau_j = a d x + d h^{\nlin},
 \end{equation}%
where $d h^{\nlin}$ is the nonlinear part with a harmonic function $h^{\nlin}$. We have to show that $a=0$. We shall achieve this by dampening down, using the function
 \begin{equation*}%
F(x,y) =
\begin{cases}
0, \quad & \text{for } x \leq 0\\
a x \quad & \text{for } 0 \leq x \leq m/2\\
a m/2 \quad & \text{for }  x \geq m/2.\\
\end{cases}
 \end{equation*}%
Interpreting $F$ as a function on $\mathscr{C}$ we define the closed form $\omega_F = dF$ on $\mathscr{C}$ and $\omega_F = 0$ on $S\smallsetminus \mathscr{C}$. The difference $t_j = \tau_j - \omega_F$ has periods $\int_{\alpha_i} t_j = \delta_{ij}$, for $i=2,\dots, 2g$. By the energy minimizing property of $\tau_j$ we have 
 \begin{equation}\label{eq:EsjQ2}%
E(t_j) \geq E(\tau_j).
 \end{equation}%
Outside of the part $D = [0, \frac{m}{2}] \times \Sp_1$ the forms $\tau_j$ and $t_j$ coincide. On $D$ we may write
 \begin{equation*}%
\tau_j = t_j + dF = t_j + a d x,
 \end{equation*}%
where by \beqref{eq:EsjQ1}  $t_j = d h^{\nlin}$.   By the Orthogonality lemma, 
 \begin{equation*}%
E_D(\tau_j) = E_D(t_j) + a^2 E_D(d x).
 \end{equation*}%
Since $\tau_j$ and $t_j$ coincide outside of $D$ we also have
 \begin{equation}\label{eq:EsjQ3}%
E(\tau_j) = E(t_j) + a^2 E_D(d x).
 \end{equation}%
Now \beqref{eq:EsjQ2} and \beqref{eq:EsjQ3} imply $a =0$. The proof for the case $j=2$ is the same except that instead of \beqref{eq:EsjQ1} we have $\tau_2 = a d x +  d y + dh^{\nlin}$, and on $D$ we have $t_2 = dh^{\nlin} +  d y$, where we recall that $d x$ and $d y$ are orthogonal on $D$.
 \end{proof}%
 \begin{rem}\label{rem:T1t2star}%
As a consequence of \bbref{Lemma}{lem:nolinear} $\mT_1$ can be written as a linear combination
 \begin{equation*}%
\mT_1 = \Hstar \tau_2 + \sum_{k=3}^{2g} c_k \tau_k.
 \end{equation*}%
For the proof we note that, by the lemma, $\mT_1 - \Hstar \tau_2$ and $\tau_3, \dots, \tau_{2g}$ has vanishing linear parts, while $\sigma_1$ has linear part $c d x$ with $c \neq 0$ and $\tau_2$ has linear part $d y$. Hence, in the basis representation  $\mT_1 - \Hstar \tau_2 = c_1 \sigma_1 + c_2 \tau_2 + \sum_{k=3}^{2g}c_k \tau_{k}$ we must have $c_1 = c_2 = 0$.
 \end{rem}%

\subsection{Essential energy and the periods $\kappa_j$}\label{sec:kappaj}

In this section we investigate the convergence of the periods $\kappa_j$ (see \beqref{eq:kappaj})
as $\ell(\gamma) \to 0$ and also under the grafting construction. In order to avoid repetitions we prove the main technical lemma (\bbref{Lemma}{lem:capabreve}) in a slightly more general context. Furthermore, in order to remain in the realm of compact surfaces we carry out the comparison with compact surfaces that have a thin handle and cover the cusp case by a limiting argument in \bbref{Section}{sec:TwoCusps}.

The setting for the lemma is as follows. $S_L^{\times}$ is the surface as in the preceding section consisting of the main part $\frak{M}$ \beqref{eq:kernelK} to which two flat cylinders $[0,L/2] \times \Sp_1$ are attached, and $S_L$ results from $S_L^{\times}$ by pasting together the two boundaries. These surfaces shall be compared with a second pair, $\tilde{S}^{\times}_{\tilde{L}}$, $\tilde{S}_{\tilde{L}}$, defined in a similar way with some main  part $\tilde{\frak{M}}$ to which cylinders of lengths $\tilde{L}/2$ are attached. Furthermore, it is assumed that
%
 \begin{equation}\label{eq:cond4L}%
\tilde{L} \geq L > 4
 \end{equation}%
and that a marking preserving $q_{\phi}$-quasiconformal boundary coherent homeomorphism $\phi : \frak{M} \to \tilde{\frak{M}}$ is given that acts conformally near the boundary and thus extends to an embedding of $S_L^{\times}$ into $\tilde{S}_L^{\times}$ that acts isometrically on the flat cylinders. Finally, we assume that the pastings are such that the twist parameters of $S_L$ and $\tilde{S}_{\tilde{L}}$ are the same. By this we mean that, as in the grafting construction,  if line $[0,L] \times \{y_0\}$ of the inserted cylinder $[0,L] \times \Sp_1$ in $S_L$ is pasted to the boundary points $p_1$, $p_2$ of $\frak{M}$, then line $[0,\tilde{L}] \times \{y_0\}$ of the inserted cylinder $[0,\tilde{L}] \times \Sp_1$ in $\tilde{S}_{\tilde{L}}$ is pasted to the boundary points $\phi(p_1)$, $\phi(p_2)$  of $\tilde{\frak{M}} = \phi(\frak{M})$.

For the grafting construction we shall have $\tilde{\frak{M}} = \frak{M}$ with $\phi$ the identity mapping; in the Fenchel-Nielsen construction $\tilde{\frak{M}}$ is the main part of the surface $\tilde{S}$ with geodesic $\tilde{\gamma}$ that results from the construction and $\phi$ is the $q$-quasiconformal mapping as in \bbref{Theorem}{thm:psRGRF} with $q \leq (1+2\ell^2(\gamma))^2$, and $\tilde{L} = L_{\tilde{\gamma}}$. 

As for the bases, the quasiconformal mapping $\phi : \frak{M} \to \tilde{\frak{M}}$ extends to a topological mapping $\varphi : S_L \to \tilde{S}_{\tilde{L}}$ that operates on the cylinders by stretching them in the $x$-direction. We define $\tilde{\alpha}_i = \varphi(\alpha_i)$, for $i=1,\dots,2g$, and let $\tilde{\sigma}_1, \dots, \tilde{\sigma}_{2g}$ be the dual basis of harmonic forms for $\tilde{\alpha}_1, \dots, \tilde{\alpha}_{2g}$ on $\tilde{S}_{\tilde{L}}$. Finally, for $j=2,\dots, 2g$, $\tilde{\tau}_j$ on $\tilde{S}_{\tilde{L}}$ is the analog of $\tau_j$ on $S_L$. 

As $L$ becomes large, the energies of $\tau_3, \dots, \tau_{2g}$ remain bounded owing to the fact that their linear parts vanish in the cylinder $[0,L] \times \Sp_1$ (using \bbref{Lemma}{lem:nolinear}). This is not the case for $\tau_2$. For this reason we define for $\tau_2$ and, more generally, for any linear combination $\tau = a_2 \tau_2 + \dots + a_{2g}\tau_{2g}$ an \emph{essential energy} $\Ess(\tau)$ as in \beqref{eq:Essigma2} setting
 \begin{equation}\label{eq:EssEng}%
\Ess(\tau) = E_\frak{M}(\tau) + E_C(\tau{\nlin}),
 \end{equation}%
where $C$ is the cylinder $C = S_L \smallsetminus \frak{M}$ which we identify with the conformally equivalent $[0,L] \times \Sp_1$ and ${..}^{\nlin}$ denotes the nonlinear part. For later use we also define the scalar product 
\begin{equation}\label{eq:EssScalPeod}%
\scp{\omega}{\eta}_{\frak{M}}  \defeq \int_{\frak{M}}\omega \wedge \Hstar \eta + \int_{C(\gamma)} \omega^{\nlin} \wedge \Hstar \eta^{\nlin}.
 \end{equation}%
for $\omega$ and $\eta$ in the span of $(\tau_i)_{i=2,\ldots,2g}$.

By the Orthogonality lemma $\Ess(\tau)$ may also be defined as follows,
 \begin{equation}\label{eq:EssEnerg}%
\Ess(\tau) = E(\tau) - a_2^2 L.
 \end{equation}%
On $\tilde{S}_{\tilde{L}}$ we define $\Ess$ in an analog way.  For the comparison of these energies and also for the comparison of the periods $\kappa_j$ of $\tau_j$ over $\alpha_1$ and $\tilde{\kappa}_j$ of $\tilde{\tau}_j$ over $\tilde{\alpha_j}$, $j=2, \dots, 2g$ we introduce the following constants,
 \begin{equation}\label{eq:muL}%
\mu_L = e^{-2 \pi L}, \quad \nu_L = \frac{1}{12}\mu_L q_{\phi} + q_{\phi}-1, \quad \rho_L = \left[q_{\phi}(1+\frac{1}{12}\mu_L)\right]^2-1.
 \end{equation}%

 \begin{lem}\label{lem:capabreve}%
Under the condition of the above setting we have, for any $\tau = a_2 \tau_2 + \dots + a_{2g}\tau_{2g}$ on $S_L$ and the corresponding $\tilde{\tau} = a_2 \tilde{\tau}_2 + \dots + a_{2g}\tilde{\tau}_{2g}$ on $\tilde{S}_{\tilde{L}}$,
 \begin{equation}\label{eq:capabreve1}%
 \vert    \Ess(\tau) - \Ess(\tilde{\tau})\vert  < \nu_L\min\{ \Ess(\tau),  \Ess(\tilde{\tau})\}.\end{equation}%
For $\omega,\eta$ in the span of  $(\tau_i)_{i=2,\ldots,2g}$ and $\tilde{\omega},\tilde{\eta}$ in the span of  $(\tilde{\tau_i})_{i=2,\ldots,2g}$ we have
 \begin{equation}\label{eq:capabreve15}%
\big\vert\scp{\omega}{\eta}_{\frak{M}} - \scp{\tilde{\omega}}{\tilde{\eta}}_{\frak{M}} \big\vert \leq \nu_L \min\{ \Ess^{1/2}(\omega)\Ess^{1/2}(\eta), \Ess^{1/2}(\tilde{\omega})\Ess^{1/2}(\tilde{\eta}) \} .
 \end{equation}%

Furthermore, for  $j=2, \dots, 2g$, 
 \begin{equation}\label{eq:capabreve2}%
\vert \kappa_j - \tilde{\kappa}_j \vert^2 <  3 \rho_L(1+ \mathfrak{z}\Gamma) \min\{ \Ess(\tau_j), \Ess(\tilde{\tau}_j)\}.
 \end{equation}%
 \end{lem}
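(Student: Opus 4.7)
My plan is to construct ``dampen-and-transfer'' test forms on $\tilde S_{\tilde L}$ from $\tau$ on $S_L$ (and symmetrically), then exploit the energy-minimising property of the relaxed basis elements $\tilde\tau_j$. The key enabler is Lemma \ref{lem:nolinear}: in each cylinder $\tau = a_2\,dy + dh^{\nlin}$ has no linear $dx$ component, so the pure constant form $a_2\,dy$ is the natural ``linear core'' of $\tau$ and only the exponentially small nonlinear part needs dampening. I write the cylinder of $S_L$ as $C = [0,L]\times\Sp_1$, choose a thin central strip $D_{\mathrm{mid}}$ flanked by transition zones of width $w \approx \tfrac{1}{8}$, and use the Lemma \ref{lem:dampening} cutoff $\chi$ equal to $1$ outside the transition zones and $0$ on $D_{\mathrm{mid}}$, setting $\tau' := a_2\,dy + d(\chi h^{\nlin})$, which equals $\tau$ near the cylinder ends and the pure form $a_2\,dy$ on $D_{\mathrm{mid}}$. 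By Lemma \ref{lem:dampening}(iii) and elementary simplification using $L>4$, the excess energy is at most $\tfrac{1}{12}\mu_L E_C(dh^{\nlin})$---this is where the $\tfrac{1}{12}\mu_L q_{\phi}$ summand of $\nu_L$ enters. I then define $\hat\tau$ on $\tilde S_{\tilde L}$ piecewise: $(\phi^{-1})^{*}\tau$ on $\tilde{\frak M}$, isometric transfer of $\tau'|_{C \smallsetminus D_{\mathrm{mid}}}$ on the outer cylinder parts of $\tilde C$, and the pure form $a_2\,dy$ on the enlarged middle strip of length $\tilde L - L + |D_{\mathrm{mid}}|$. By construction $\hat\tau$ is continuous, closed, and has the same periods over $\tilde\alpha_2,\dots,\tilde\alpha_{2g}$ as $\tilde\tau$.

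\textbf{Proofs of \eqref{eq:capabreve1} and \eqref{eq:capabreve15}.} Energy minimality of $\tilde\tau$ gives $E(\tilde\tau) \leq E(\hat\tau)$. Summing the three pieces of $E(\hat\tau)$---using $E_{\tilde{\frak M}}(\hat\tau) \leq q_{\phi} E_{\frak M}(\tau)$ on $\tilde{\frak M}$ (as in the proof of Lemma \ref{L:qcIneq}), isometric transfer on the outer cylinder parts, and the pure $a_2^2(\tilde L - L + |D_{\mathrm{mid}}|)$ contribution on the middle---and subtracting the common linear term $a_2^2\tilde L$ via the Orthogonality Lemma \ref{thm:lemorthogonal}, I obtain $\Ess(\tilde\tau) \leq \Ess(\hat\tau) \leq q_{\phi}(1+\tfrac{1}{12}\mu_L)\Ess(\tau) = (1+\nu_L)\Ess(\tau)$. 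The symmetric construction using $\phi^{-1}$ gives $(1-\nu_L)\Ess(\tau) \leq \Ess(\tilde\tau)$, yielding \eqref{eq:capabreve1}. For \eqref{eq:capabreve15}, observe that $\scp{\cdot}{\cdot}_{\frak M}$ is a positive-definite inner product on the $(2g-1)$-dimensional span of $(\tau_i)_{i\geq 2}$ (a harmonic form vanishing on the open set $\frak M$ vanishes globally), with $\Ess$ as its squared norm. The linear isomorphism $\tau_i \mapsto \tilde\tau_i$ satisfies the hypothesis of Lemma \ref{L:LinAlgIneq} with $\varepsilon = \nu_L$ as an immediate consequence of \eqref{eq:capabreve1}, and the lemma then delivers \eqref{eq:capabreve15}.

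\textbf{Proof of \eqref{eq:capabreve2} and main obstacle.} The test form $\hat\tau_j$ is closed but not harmonic; by Lemma \ref{lem:sigma1ortho} and Hodge orthogonality its difference from $\tilde\tau_j$ decomposes uniquely as $c\,\tilde\sigma_1 + dF$, with $c = \tilde\kappa_j - \hat\kappa_j$ and $\hat\kappa_j := -\int_{\tilde\alpha_1}\hat\tau_j$. Choosing a representative of $\alpha_1$ that traverses $C$ along $y \equiv 0$, the discrepancy $\hat\kappa_j - \kappa_j$ reduces to $-\int_{\alpha_1 \cap D_{\mathrm{mid}}} dh_j^{\nlin}$, which by Lemma \ref{thm:lem_flat}(vii) is bounded by a constant multiple of $\sqrt{\mu_L\,\Ess(\tau_j)}$. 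For $c$ itself, Hodge orthogonality gives $E(\hat\tau_j) = E(\tilde\tau_j) + c^2 E(\tilde\sigma_1) + E(dF)$, hence $c^2 E(\tilde\sigma_1) \leq E(\hat\tau_j) - E(\tilde\tau_j) \leq \rho_L\Ess(\tau_j)$ by \eqref{eq:capabreve1} and elementary simplification. The hard part is that $E(\tilde\sigma_1)^{-1} = \mathscr{P}_1(\tilde{\mT}_1) + \tilde L$ naively introduces a factor of $\tilde L$ that is absent from the target $(1+\frak{z}\Gamma)$; resolving this requires a sharper accounting exploiting that $\hat\tau_j$ already nearly matches $\tilde\tau_j$'s harmonic normalisation throughout the long middle strip (both are essentially $a_2\,dy$ there), so the excess $E(\hat\tau_j) - E(\tilde\tau_j)$ in fact decays like $\tilde L^{-1}$ and cancels the $\tilde L$ coming from $E(\tilde\sigma_1)^{-1}$. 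Combining the $c$-bound with the discrepancy bound via the triangle inequality $|\kappa_j - \tilde\kappa_j|^2 \leq 2(\hat\kappa_j - \kappa_j)^2 + 2c^2$ then yields \eqref{eq:capabreve2}.
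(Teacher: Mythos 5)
Your proofs of \beqref{eq:capabreve1} and \beqref{eq:capabreve15} follow essentially the same route as the paper: dampen the nonlinear parts near the middle of the cylinders, transfer the resulting test form across $\phi$ (isometrically on the cylinders, quasiconformally on the main part), invoke energy minimality for the forms $\tilde\tau_j$, and then convert the two-sided bound on $\Ess$ into a scalar-product bound via \bbref{Lemma}{L:LinAlgIneq} applied to the linear map $\tau_i \mapsto \tilde\tau_i$. The minor differences (your cutoff lives symmetrically on a strip $D_{\mathrm{mid}}$ centered in the cylinder; the paper's lives at distance $L/2-w$ to $L/2$ from $\frak M$) are cosmetic. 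This part is sound.

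Your proof of \beqref{eq:capabreve2} is incomplete, and the difficulty you flag is real but not resolved by the argument you sketch. You correctly reduce to bounding $c^2 E(\tilde\sigma_1)$ by $E(\hat\tau_j)-E(\tilde\tau_j)$, and you correctly note that $E(\tilde\sigma_1)^{-1}\approx \mathscr P_1(\tilde{\mT}_1)+\tilde L$ naively introduces a factor of $\tilde L$. But your proposed resolution --- that the excess $E(\hat\tau_j)-E(\tilde\tau_j)$ in fact decays like $\tilde L^{-1}$ --- is not established and, I believe, is false as stated. That excess is controlled by the dampening cost and the quasiconformal distortion, which are of order $\rho_L\,\Ess(\tau_j)$ (with $\rho_L$ depending only on $L$, not $\tilde L$); nothing in the construction produces a $\tilde L^{-1}$ factor. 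In the intended application $\tilde L$ is arbitrarily large (eventually $\tilde L\to\infty$), so the uncontrolled factor $\tilde L$ is a genuine obstruction. The paper's resolution requires an additional idea that you do not have: starting from the difference $\tilde\tau_j^{\scriptscriptstyle\leftarrow}-\tau_j$ on $S_L$ (which has vanishing linear parts and still carries the period $\kappa_j-\tilde\kappa_j$ over $\alpha_1$), one performs a \emph{second} dampening within distance $\frac12$ of $\frak M$ to obtain a form $\theta_j$ supported near $\frak M$ plus thin cylinder pieces, at the cost of only a constant factor (the $3$ in the final bound). This $\theta_j$ can then be reinterpreted as a test form on the \emph{fixed} short surface $S_1$ (main part $\frak M$ with cylinders of length $\frac12$ attached), whose $\sigma_{1,1}$ satisfies $E(\sigma_{1,1})\ge \frac{1}{1+\frak z\Gamma}$ by \bbref{Lemma}{lem:capStildaL2} --- a bound that does not involve $L$ or $\tilde L$. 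The $\tilde L$-independence of \beqref{eq:capabreve2} comes entirely from this move to $S_1$; without it your estimate does not close.

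Separately, your bound on the discrepancy $\hat\kappa_j-\kappa_j$ via \bbref{Lemma}{thm:lem_flat}(vii) is of order $\sqrt{\mu_L\,\Ess(\tau_j)}$, i.e.\ roughly $e^{-\pi L}\sqrt{\Ess(\tau_j)}$, while the target \beqref{eq:capabreve2} needs $|\kappa_j-\tilde\kappa_j|^2\lesssim \rho_L(1+\frak z\Gamma)\Ess(\tau_j)$, which is also of order $e^{-2\pi L}$ when $q_\phi=1$; so that piece of your argument is acceptable in rough order, but the $c$-piece is the one that fails.
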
%
 \begin{proof}%
By \bbref{Lemma}{lem:nolinear}, $\tau$ and $\tilde{\tau}$ have vanishing linear $d x$ terms. Furthermore, the $d y$-terms of $\tau_j$ and $\tilde{\tau}_j$ are $\delta_{2j} d y$ because $\int_{\alpha_2}\tau_j = \int_{\tilde{\alpha}_2}\tilde{\tau_j}=\delta_{2j}$, $j=2,\dots,2g$. Hence, $\tau$ and $\tilde{\tau}$ have the same linear term $a_2 d y$ on the flat cylinders. Taking $w = \frac{1}{8}$  we dampen down the nonlinear terms of $\tau$ and $\tilde{\tau}$ using the same cut off function $\chi$ that equals 1 up to distance $L/2-w$ from $\frak{M}$, respectively $\tilde{\frak{M}}$, then goes down linearly to 0 from distance $L/2-w$ to distance $L/2$ and remains 0 on the rest (distances with respect to the flat metric of $Z = [-L/2,L/2] \times \Sp_1$ and $\tilde{Z} = [-\tilde{L}/2,\tilde{L}/2] \times \Sp_1$).

In the following we think of $S_L$ and $S_L^{\times}$ as ``sitting on the left hand side,'' while $\tilde{S}_{\tilde{L}}$ and $\tilde{S}^{\times}_{\tilde{L}}$ ``sit on the right hand side''. The mapping $\phi$ then goes from the left to the right.

The dampened down form $\tilde{\tau}_{\chi}$ may  be restricted to the subset $\tilde{S}^{\times}_L$ of $\tilde{S}^{\times}_{\tilde{L}}$,  then be pushed in the left direction to $S_L^{\times}$ via the quasiconformal mapping $\phi^{-1}$ and, upon arrival, be extended to $S_L$. The so extended form shall be denoted by $\tilde{\tau}^{\scriptscriptstyle\leftarrow}$. As a result of the dampening down procedure $\tilde{\tau}^{\scriptscriptstyle\leftarrow}$ is a test form in the cohomology class of $\tau$. 
In a converse way, we push in the right direction the dampened down form  $\tau_{\chi}$ to $\tilde{S}^{\times}_L$ and extend it by $a_2 d y$ to a test form $\tau^{\scriptscriptstyle\rightarrow}$ on $\tilde{S}_{\tilde{L}}$ in the cohomology class of $\tilde{\tau}$. It follows, using the \bbref{Orthogonality lemma}{thm:lemorthogonal} that
 \begin{equation}\label{eq:capabreve3}%
\Ess(\tau) \leq \Ess(\tilde{\tau}^{\scriptscriptstyle\leftarrow}), \quad
\Ess(\tilde{\tau}) \leq \Ess(\tau^{\scriptscriptstyle\rightarrow}).
 \end{equation}%
Observe that the dampening down process takes place in a subset of the respective cylinders $[-d_{\gamma}-L/2,L/2 +d_{\gamma}]\times \Sp_1$ and $[-d_{\tilde{\gamma}}-\tilde{L}/2,\tilde{L}/2 +d_{\tilde{\gamma}}]\times \Sp_1$, where by \beqref{eq:lengcyl} $d_{\gamma}, d_{\tilde{\gamma}} > \frac{1}{2}$. Hence, \bbref{Lemma}{lem:dampening} (with $l = L/2 + \frac{1}{2}$, $dh^{\texttt{\#}}$ the nonlinear term of $\tilde{\tau}$, respectively $\tilde{\tau}$, $\delta_{\texttt{\#}} = l-w$, and $w = \frac{1}{8}$) yields that $\Ess(\tau_{\chi}) < (1+\tfrac{1}{12}\mu_L)\Ess(\tau)$ and $ \Ess(\tilde{\tau}_{\chi}) < (1+\tfrac{1}{12}\mu_L)\Ess(\tilde{\tau})$. Since $\phi$ and $\phi^{-1}$ act isometrically on the cylinders and are $q_{\phi}$-quasiconformal on $\frak{M}$ and $\tilde{\frak{M}}$, it follows from \cite{Mi74} (or \bbref{Lemma}{L:qcIneq}) that the essential energy increases at most by a factor $q_{\phi}$. This yields
 \begin{equation}\label{eq:capabreve4}%
\Ess(\tilde{\tau}^{\scriptscriptstyle\leftarrow}) \leq q_{\phi}(1+\tfrac{1}{12}\mu_L)\Ess(\tilde{\tau}), 
\quad
\Ess(\tau^{\scriptscriptstyle\rightarrow}) < q_{\phi}(1+\tfrac{1}{12}\mu_L)\Ess(\tau).
 \end{equation}%
Now \beqref{eq:capabreve3} and \beqref{eq:capabreve4} imply \beqref{eq:capabreve1}.\\
The operator that associates to any $\omega = c_2 \tau_{2} + \dots + c_{2g} \tau_{2g} \in H^1(S_L,\R)$ the form $\tilde{\omega} = c_2 \tilde{\tau}_2 + \dots + c_{2g} \tilde{\tau}_{2g} \in  H^1(\tilde{S}_{\tilde{L}},\R)$ is linear. Hence, using \beqref{eq:capabreve1} in \bbref{Lemma}{L:LinAlgIneq} inequality \beqref{eq:capabreve15} follows.

We turn to the proof of \beqref{eq:capabreve2} and apply the preceding notations and results to the forms $\tau = \tau_j$, $\tilde{\tau} = \tilde{\tau}_j$. Since the dampening down process on $S_L$ and $\tilde{S}_{\tilde{L}}$ acts as the identity in the neighbourhoods of the boundaries of the collars, and since $\phi$ preserves line integrals, the period $\tilde{\kappa}_j$ is not affected  and, hence,

 \begin{equation*}%
\int_{\alpha_1} (\tilde{\tau}_j^{\scriptscriptstyle\leftarrow} - \tau_j) = -\tilde{\kappa}_j + \kappa_j.
 \end{equation*}%
Since the periods of $\tilde{\tau}_j^{\scriptscriptstyle\leftarrow} - \tau_j$ over the remaining base cycles vanish we have therefore on $S_L$

 \begin{equation*}%
\tilde{\tau}_j^{\scriptscriptstyle\leftarrow} = \tau_j - (\tilde{\kappa}_j - \kappa_j)\sigma_{1,L} + d f_j
 \end{equation*}%
for some function $f_j$. By \bbref{Lemma}{lem:sigma1ortho} the forms on the right hand side are mutually orthogonal so that
 \begin{equation*}%
E(\tilde{\tau}_j^{\scriptscriptstyle\leftarrow}) = E(\tau_j) + (\tilde{\kappa}_j - \kappa_j)^2 E(\sigma_{1,L}) + E(d f_j).
 \end{equation*}%
Since $E(\tilde{\tau}_j^{\scriptscriptstyle\leftarrow}- \tau_j) = (\tilde{\kappa}_j - \kappa_j)^2 E(\sigma_{1,L}) + E(d f_j)$ and since $\tilde{\tau}_j^{\scriptscriptstyle\leftarrow}$ and $\tau_j$ have the same linear part ($d y$ if $j = 2$ and 0 if $j \geq 3$) we get
 \begin{equation*}%
E(\tilde{\tau}_j^{\scriptscriptstyle\leftarrow}- \tau_j) = E(\tilde{\tau}_j^{\scriptscriptstyle\leftarrow})-E(\tau_j) =\Ess(\tilde{\tau}_j^{\scriptscriptstyle\leftarrow})-\Ess(\tau_j).
 \end{equation*}%
Now if $\Ess(\tilde{\tau}_j) \leq \Ess(\tau_j)$, then by the first inequality in \beqref{eq:capabreve4}
 \begin{equation*}%
\Ess(\tilde{\tau}_j^{\scriptscriptstyle\leftarrow})- \Ess(\tau_j) \leq \Ess(\tilde{\tau}_j^{\scriptscriptstyle\leftarrow}) - \Ess(\tilde{\tau}_j) \leq (q_{\phi}(1+\tfrac{1}{12}\mu_L)-1)\Ess(\tilde{\tau}_j).
 \end{equation*}%
If, on the other hand, $\Ess(\tilde{\tau}_j) > \Ess(\tau_j)$, then by the first inequality in \beqref{eq:capabreve4}, the second inequality in \beqref{eq:capabreve3} and the second inequality in \beqref{eq:capabreve4}
 \begin{equation*}%
\Ess(\tilde{\tau}_j^{\scriptscriptstyle\leftarrow})- \Ess(\tau_j) \leq q_{\phi}(1+\tfrac{1}{12}\mu_L)\Ess(\tilde{\tau}_j) - \Ess(\tau_j)\leq \big(\left[q_{\phi}(1+\tfrac{1}{12}\mu_L)\right]^2-1\big)\Ess(\tau_j).
 \end{equation*}%
Altogether
 \begin{equation*}%
E(\tilde{\tau}_j^{\scriptscriptstyle\leftarrow}- \tau_j)\leq \rho_L \min\{ \Ess(\tau_j), \Ess(\tilde{\tau}_j)\}.
 \end{equation*}%
Now $\tilde{\tau}_j^{\scriptscriptstyle\leftarrow}$ and $\tau_j$ have vanishing linear $d x$-part. We dampen down $(\tilde{\tau}_j^{\scriptscriptstyle\leftarrow}- \tau_j)$ using the cut off function $\chi'$ that goes linearly from $1$ to $0$ within distance $\frac{1}{2}$ from $\frak{M}$ and then remains zero. The resulting form is denoted by $\theta_j$. Its period over $\alpha_1$ is still $-(\tilde{\kappa}_j - \kappa_j)$. By \bbref{Lemma}{lem:dampening} applied twice (with $l = \frac{1}{4}$, $\delta_{\texttt{\#}}=\delta = 0$ and $w=\frac{1}{2}$) the energy of $\theta_j$ has the upper bound
 \begin{equation*}%
E(\theta_j) \leq 3 E(\tilde{\tau}_j^{\scriptscriptstyle\leftarrow}- \tau_j) \leq 3 \rho_L \min\{ \Ess(\tau_j), \Ess(\tilde{\tau}_j)\}.
 \end{equation*}%
That \bbref{Lemma}{lem:dampening} is indeed applicable follows from the fact that $\phi$ acts isometrically on the cylinders, whose lengths, by \beqref{eq:cond4L}, are $\geq L/2 > 2$ and, hence, $\theta_j$ is harmonic in the domain where the dampening down process takes place. Now $\theta_j$ may also be interpreted as a test form on the surface $S_1$ that is obtained by attaching cylinders $[0,\frac{1}{2}] \times \Sp_1$ to $\frak{M}$ and then pasting the boundaries together with the same twist parameter as $S_L$. This means that if points $p \in c_1$, $q \in c_2$ on the boundary of $\frak{M}$ are connected by the straight line  $[0,L] \times \{y_0\}$ in $[0,L] \times \Sp_1 \subset S_L$ then they are connected by the straight line $[0,1] \times \{y_0\}$ in $[0,1] \times \Sp_1 \subset S_1$. Only this time the cutting and pasting takes place at $L/2 \times \Sp_1$ and $\frac{1}{2} \times \Sp_1$, respectively. On $S_1$ we have the following decomposition (where $\sigma_{1,1}$ is $\sigma_{1,L}$ with $L=1$): $\theta_j = -(\tilde{\kappa}_j - \kappa_j)\sigma_{1,1} + dF_j$ for some function $F_j$. By \bbref{Lemma}{lem:capStildaL2}, applied to the case $L=1$, we have $E(\sigma_{1,1}) \geq \frac{1}{1 + \frak{z} \Gamma}$. Altogether we have
 \begin{equation*}%
 \vert \kappa_j - \tilde{\kappa}_j \vert^2 \frac{1}{1 + \frak{z} \Gamma} \leq 3 \rho_L \min\{ \Ess(\tau_j), \Ess(\tilde{\tau}_j)\}
 \end{equation*}%
and \beqref{eq:capabreve2} follows.
 \end{proof}

\subsection{Bounds for $\pmb{\kappa_j}$.}\label{sec:Bkappa}
 \begin{lem}\label{L.Bkappa}%
For $j= 3,\dots,2g$ we have on $S_L$\
 \begin{equation*}%
\vert \kappa_j \vert \leq \sqrt{\Ess(\tau_2)\, E(\tau_j)} < \sqrt{\Ess(\tau_2)\, E(\sigma_j)}.
 \end{equation*}%
 \end{lem}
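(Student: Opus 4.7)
The plan is to represent $\kappa_j$ as $-\int_S \tau_j \wedge \tau_2$ using Riemann's bilinear relation for periods, then split the integral over the main part $\frak{M}$ and the attached cylinder $C = S_L \smallsetminus \frak{M}$. The key observation is that on $C$ the form $\tau_j$ has no linear part at all (its $dx$-coefficient vanishes by \bbref{Lemma}{lem:nolinear} and its $d y$-coefficient equals $\int_{\alpha_2}\tau_j = 0$ for $j\geq 3$), so the linear $d y$-term of $\tau_2$ contributes nothing to the cylinder integral and only the essential part of $\tau_2$ remains in the bound.

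For the first step, the periods are $\int_{\alpha_i}\tau_2 = \delta_{i2}$ and $\int_{\alpha_i}\tau_j = \delta_{ij}$ for $i = 2,\dots,2g$, with $\int_{\alpha_1}\tau_2 = -\kappa_2$ and $\int_{\alpha_1}\tau_j = -\kappa_j$. Substituting into
\begin{equation*}
\int_S \tau_j \wedge \tau_2 \;=\; \sum_{k=1}^{g}\Bigl(\int_{\alpha_{2k-1}}\!\tau_j\int_{\alpha_{2k}}\!\tau_2 \;-\; \int_{\alpha_{2k}}\!\tau_j\int_{\alpha_{2k-1}}\!\tau_2\Bigr),
\end{equation*}
every contribution with $k\geq 2$ vanishes because both $\int_{\alpha_{2k-1}}\tau_2$ and $\int_{\alpha_{2k}}\tau_2$ are zero there, leaving only the $k=1$ term, $\int_S \tau_j \wedge \tau_2 = -\kappa_j$.

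On $C$, writing $\tau_j = \tau_j^{\nlin}$ and $\tau_2 = d y + \tau_2^{\nlin}$, a direct $y$-averaging shows $\int_C \tau_j^{\nlin}\wedge d y = 0$, since the nonlinear part of $\tau_j$ has zero $y$-mean on each circle. Hence
\begin{equation*}
-\kappa_j \;=\; \int_{\frak{M}}\tau_j \wedge \tau_2 \;+\; \int_C \tau_j^{\nlin}\wedge \tau_2^{\nlin}.
\end{equation*}
The pointwise inequality $|\omega\wedge\eta|\leq \|\omega\|\|\eta\|\,d\vol$ together with $L^2$ Cauchy-Schwarz on each piece gives $|\kappa_j|\leq \sqrt{E_{\frak{M}}(\tau_j)E_{\frak{M}}(\tau_2)} + \sqrt{E_C(\tau_j)E_C(\tau_2^{\nlin})}$, and a final numerical Cauchy-Schwarz $ab+cd\leq \sqrt{(a^2+c^2)(b^2+d^2)}$ combined with $E(\tau_j) = E_{\frak{M}}(\tau_j)+E_C(\tau_j)$ and $\Ess(\tau_2) = E_{\frak{M}}(\tau_2)+E_C(\tau_2^{\nlin})$ produces $|\kappa_j|\leq \sqrt{\Ess(\tau_2)E(\tau_j)}$. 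The second inequality follows from $\sigma_j = \tau_j + \kappa_j\sigma_1$ and the orthogonality $\scp{\tau_j}{\sigma_1} = 0$ of \bbref{Lemma}{lem:sigma1ortho}, which give $E(\sigma_j) = E(\tau_j) + \kappa_j^2 E(\sigma_1)\geq E(\tau_j)$, strict unless $\kappa_j = 0$. The only delicate bookkeeping is the sign convention in the Riemann bilinear relation; the rest is a standard Cauchy-Schwarz estimate exploiting the cylinder decomposition.
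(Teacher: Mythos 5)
Your proof is correct and takes essentially the same approach as the paper: both start from the Riemann bilinear relation to identify $\kappa_j$ (up to sign) with $\int_{S_L}\tau_2\wedge\tau_j$, exploit the vanishing of the linear part of $\tau_j$ on the cylinder to strip the $d y$-term of $\tau_2$ out of the cylinder integral, and finish with Cauchy--Schwarz to obtain $\Ess(\tau_2)E(\tau_j)$, followed by the orthogonal decomposition $E(\sigma_j) = E(\tau_j)+\kappa_j^2E(\sigma_1)$. The only cosmetic difference is that the paper introduces the global comparison form $\hat{\tau}_2$ (equal to $\tau_2^{\nlin}$ on $C$ and $\tau_2$ elsewhere) and applies Cauchy--Schwarz once over all of $S_L$, whereas you apply it piecewise on $\frak{M}$ and $C$ and recombine with the discrete inequality $ab+cd\leq\sqrt{(a^2+c^2)(b^2+d^2)}$ --- both yield exactly the same bound, and your remark that the second inequality is only strict when $\kappa_j\neq 0$ is a fair observation about the statement as written.
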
%
 \begin{proof}%
We use the following identity for closed differentials $\theta$, $\eta$,
 \begin{equation}\label{eq:FK92III23}%
\int_{S_L} \theta \wedge  \eta = \sum_{k=1}^g \Big[
\int_{\alpha_{2k}}\!\!\theta \int_{\alpha_{2k-1}}\!\!\eta\;\;   -  \int_{\alpha_{2k-1}}\!\!\theta \int_{\alpha_{2k}}\!\!\eta\, \Big]
 \end{equation}%
(e.g.\ \cite[III.2.3]{FK92}). Taking $\theta = \tau_2$, $\eta = \tau_j$, ($j \geq 3$), we get $\int_{S_L} \tau_2 \wedge \tau_j = \int_{\alpha_1}\tau_j = -\kappa_j$. Hence,
 \begin{equation}\label{eq:Bkapj2}%
\kappa_j = \scp{ \tau_2}{\star\tau_j}, \quad j=3, \dots, 2g.
 \end{equation}%
By \bbref{Lemma}{lem:nolinear} $\tau_j$ has vanishing linear part in the cylinder $C = S_L \smallsetminus \frak{M}$ (which we identify with the conformally equivalent $[0,L] \times \Sp_1$). Using the Orthogonality lemma we may thus write
 \begin{equation*}%
\int_C \tau_2 \wedge \tau_j = \int_C \tau_2^{\nlin} \wedge \tau_j,
\end{equation*}%
where $(\phantom{.})^{\nlin}$ is the non linear part. Setting
 \begin{equation*}%
\hat{\tau_2} =
\begin{cases}
\tau_2^{\nlin} & \text{on $C$}\\
\tau_2 & \text{on $S_L \smallsetminus C$}
\end{cases}
 \end{equation*}%
we get
 \begin{equation*}
\int_{S_L} \tau_2 \wedge \tau_j = \int_{S_L} \hat{\tau_2} \wedge \tau_j.
 \end{equation*}%
(The fact that $\hat{\tau}_2$ is discontinuous at the boundary of $C$ is not a problem here because we are only looking at integrals.) We now use the Cauchy-Schwarz inequality:
 \begin{equation*}
\vert \int_{S_L} \hat{\tau}_2 \wedge \tau_j \vert^2 \leq E(\hat{\tau}_2) E (\tau_j). 
 \end{equation*}%
On the right hand side $E(\hat{\tau_2}) = \Ess(\tau_2)$. This yields%
 \begin{equation*}%
\vert \scp{\tau_2}{\star \tau_j} \vert \leq \sqrt{\Ess(\tau_2)\, E(\tau_j)}.
 \end{equation*}%
The first inequality of the lemma follows together with \beqref{eq:Bkapj2}; the second inequality follows from \beqref{eq:sigmakapj} and \bbref{Lemma}{lem:sigma1ortho} (which also holds on any $S_L$): $E(\sigma_j) = E(\tau_j + \kappa_j \sigma_1) = E(\tau_j) + \kappa_j^2 E(\sigma_1)$.
\end{proof}%
 \begin{rem}\label{R.Bkapj1}%
For $j = 2$ relation \beqref{eq:FK92III23} is not conclusive. Indeed, by replacing $\alpha_1$ with $\alpha_1 + k \alpha_2$, $k \in \Z$, one may arbitrarily increase $\vert \kappa_2\vert$ without affecting $\Ess(\tau_2), E(\tau_j), E(\sigma_j)$, $j=3,\dots, 2g$. Hence, there is no a priori bound for $\vert \kappa_2\vert$ in terms of these quantities.

However, if in \beqref{eq:FK92III23} we take $\theta = \star \mT_1$ and $\eta = \tau_2$, we get using \bbref{Lemma}{lem:sigma1ortho} and observing that $\int_{\alpha_2}\star \mT_1 = 1$ (because $\mT_1$ has linear part $d x$ in $C$)
 \begin{equation}\label{eq:relkappa2}%
\kappa_2 = \int_{\alpha_1}\star \mT_1.
 \end{equation}%
The relation shows that if in the choice of the canonical homology basis on $S$ one begins with $\alpha_2\, (= \gamma)$ and $\alpha_1$, then $\kappa_2$ is independent of the choice of $\alpha_3, \dots, \alpha_{2g}$.

A rough estimate of $\kappa_2$ in a different form shall be given in \bbref{Lemma}{lem:Bdkappa2}.
\end{rem}%
\subsection{Bounds for the dual basis.}\label{sec:Bdualb}
The estimates in \bbref{Lemmata}{lem:capabreve} and \bbref{}{L.Bkappa} are in terms of the harmonic quantities $\Ess(\mT_1)$, $\Ess(\tau_2)$ and $E(\sigma_3), \dots, E(\sigma_{2g})$. In \bbref{Section}{sec:sigT1} a geometric upper bound for $\Ess(\mT_1)$ is obtainable via  \bbref{Lemma}{L:sigma1ess} and \bbref{Theorem}{eq.bdscapaM}. We conclude this section with geometric bounds for the remaining quantities.

The first lemma provides bounds for the energies of $\tau_k$ on $S_L$ and for the matrix entries $p_{kk}(S_L)$, $k \geq 3$, in terms of the lengths of the geodesics $\alpha_3, \ldots,\alpha_{2g-1},\alpha_{2g}$ of the canonical homology basis on the original surface $S$. In view of the intersection convention for this basis (\bbref{Section}{sec:Introd}) we set, for $k=1,\dots,2g$
 \begin{equation*}%
\hat{k} =
\begin{cases}
k-1& \text{if $k$ is even,}\\
k+1& \text{if $k$ is odd.}
\end{cases}
 \end{equation*}%
In the following we write $\tau_{k,L}$, $\kappa_{k,L}$ for $\tau_k$ and $\kappa_k$ on $S_L$, $k=2, \dots, 2g$.
 \begin{lem}\label{lem.Bdpkk}%
For $k = 3, \dots, 2g$ we have 
\begin{equation*}%
E(\tau_{k,L}) =p_{kk}(S_L)-\kappa_{k,L}^2 p_{11}(S_L)
, \quad p_{kk}(S_L) \leq \frac{\ell(\alpha_{\hat{k}})}{\pi-2\arcsin \tanh \frac{1}{2}\ell(\alpha_{\hat{k}})}.
\end{equation*}%
\end{lem}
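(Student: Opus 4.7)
The first identity is immediate from the decomposition $\sigma_k = \tau_k + \kappa_k\sigma_1$ of \beqref{eq:sigmakapj} combined with the orthogonality $\scp{\tau_k}{\sigma_1} = 0$ of \bbref{Lemma}{lem:sigma1ortho}: expanding the squared norm yields $E(\sigma_k) = E(\tau_k) + \kappa_k^2 E(\sigma_1)$, and the claimed equation follows since $p_{kk}(S_L) = E(\sigma_k)$ and $p_{11}(S_L) = E(\sigma_1)$.

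For the inequality the strategy is to exploit the minimizing property of $\sigma_k$ in its cohomology class by producing an explicit closed test form, of small energy, supported in the collar of $\alpha_{\hat{k}}$. Since $\alpha_{\hat{k}}$ and $\gamma = \alpha_2$ are disjoint simple closed geodesics on $S$ (recall that $\hat{k} \in \{3,\dots,2g\}$), the Collar theorem gives $C(\alpha_{\hat{k}}) \cap C(\gamma) = \emptyset$, so $C(\alpha_{\hat{k}}) \subset \frak{M}$. Because $\frak{M}$ embeds conformally into $S_L$ through the grafting construction, $C(\alpha_{\hat{k}})$ gives rise to a conformal annular region in $S_L$ which, via the mapping $\psi_{\alpha_{\hat{k}}}$ of \beqref{eq:psigamma}, is identified with the flat cylinder $Z = (-M,M) \times \Sp_1$, where $M = M(\ell(\alpha_{\hat{k}}))$ is given by \beqref{eq:Lleta}.

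On $Z$ set $\omega = \frac{1}{2M}\, d x$ and extend by zero to all of $S_L$; the resulting form is in $L^2$ and closed distributionally, because its tangential ($d y$) component vanishes on the boundary circles $\{\pm M\} \times \Sp_1$, so for any smooth $\phi$ on $S_L$ Fubini gives
\begin{equation*}
\int_{S_L} d\phi \wedge \omega = -\frac{1}{2M}\int_Z \partial_y\phi\, d x\, d y = 0.
\end{equation*}
Furthermore $\omega$ acts as the Poincar\'e dual of $[\alpha_{\hat{k}}]$: for any closed curve $\alpha$ transverse to $\partial Z$ and $\alpha_{\hat{k}}$, each arc of $\alpha \cap Z$ contributes $(2M)^{-1}$ times its net $x$-displacement, the total being the signed intersection number of $[\alpha]$ with $[\alpha_{\hat{k}}]$. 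By the intersection convention for the canonical basis this gives $\int_{\alpha_j}\omega = \delta_{jk}$ for all $j=1,\dots,2g$, so $\omega$ lies in the cohomology class of $\sigma_k$. Using the conformal invariance of the energy of a 1-form in dimension two, $E(\omega)$ computed on $S_L$ equals its value in the flat metric on $Z$, namely $\int_Z \frac{1}{4M^2}\, d x\, d y = \frac{1}{2M}$. The minimizing property of $\sigma_k$ now gives $p_{kk}(S_L) \leq 1/(2M)$, and substituting $2M = \frac{1}{\ell(\alpha_{\hat{k}})}\bigl(\pi - 2\arcsin\tanh\tfrac{1}{2}\ell(\alpha_{\hat{k}})\bigr)$ from \beqref{eq:Lleta} delivers the stated bound. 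The main delicate point is the non-smoothness of $\omega$ at $\partial Z$, handled by the distributional computation above; alternatively one may mollify $\omega$ to obtain smooth closed forms in the same class with energies tending to $1/(2M)$.
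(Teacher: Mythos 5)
Your proposal is correct and follows the same strategy as the paper: the first identity via \eqref{eq:sigmakapj} and Lemma~\ref{lem:sigma1ortho}, and the inequality by inserting as test form for $\sigma_k$ the constant $dx$-form $\frac{1}{2M}\,dx$ on the flat model $[-M,M]\times\Sp_1$ of the collar $C(\alpha_{\hat k})\subset\frak M\subset S_L$, extended by zero, with $M=M(\ell(\alpha_{\hat k}))$ from \eqref{eq:Lleta}. The paper presents the form as $\psi^*df$ for the affine potential $f(x,y)=\tfrac{1}{2M}(x+M)$, which is the same thing; your extra remarks on distributional closedness and the Poincar\'e-duality computation of the periods merely make explicit what the paper leaves implicit.
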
%
 \begin{proof}%
The expression for $E(\tau_{k,L}) = \scp{\tau_{k,L}}{\tau_{k,L}}$ follows from \beqref{eq:sigmakapj} and \bbref{Lemma}{lem:sigma1ortho}. To prove the inequality we construct a test form for $\sigma_k$ on $S_L$ using the hyperbolic metric on $\frak{M}$. The standard collar $C(\alpha_{\hat{k}})$ on $S$ is contained in $\frak{M}$, by the Collar lemma. It may therefore also be seen as a subset of $S_L$. There is a conformal homeomorphism $\psi$ from the closure of $C(\alpha_{\hat{k}})$ to $Z = [-M,M] \times \Sp_1$ with $M$ as in \beqref{eq:Lleta}. Now let $f$ on $Z$ be defined by  $f(x,y) = \frac{1}{2M}(x+M)$, for $x\in [-M,M]$, $y \in \Sp_1$. Then $d f$ and its pullback $\eta = \psi^{*}d f$ on $C(\alpha_{\hat{k}})$ have energy $\frac{1}{2M}$. Setting $\psi = 0$ outside $C(\alpha_{\hat{k}})$ we get an admissible test form for $\sigma_k$, and the upper bound follows.
 \end{proof}%
We add that for $\ell(\alpha_{\hat{k}}) \to \infty$ the upper bound grows asymptotically 
like $\frac{1}{4} \ell(\alpha_{\hat{k}}) \exp(\frac{1}{2}\ell(\alpha_{\hat{k}}))$.

\medskip

In the second lemma we prove a geometric upper bound for $\Ess(\tau_{2,L})$. It is more elaborate since it involves the entire homology basis ${\rm A} = (\alpha_1, \dots, \alpha_{2g})$. The geometric quantity we use is
 \begin{equation}\label{eq:widthA}%
w_{\rm A} = \min\Big\{\frac{1}{4}, \frac{1}{4}\sys(\frak{M}), \cl(\ell(\alpha_3)), \dots, \cl(\ell(\alpha_{2g}))\Big\}
 \end{equation}%
with the systole of $\frak{M}$ as in \bbref{Section}{sec:bcapM} and the widths $\cl$ of the standard collars  as \beqref{eq:clfctn}.
 \begin{lem}\label{lem:BdEsstau2}%
For any $L > 0$, we have 
 \begin{equation*}%
\Ess(\tau_{2,L}) =E(\sigma_{2,L}) - L -\kappa_{2,L}^2 p_11(S_L), \quad
E(\sigma_{2,L}) < L+\frac{\pi\pt g}{w_{\rm A}^2}.
 \end{equation*}%
\end{lem}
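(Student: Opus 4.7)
The first identity is immediate. From \eqref{eq:sigmakapj} we have $\sigma_{2,L} = \tau_{2,L} + \kappa_{2,L}\,\sigma_{1,L}$, and since $\langle \tau_{2,L},\sigma_{1,L}\rangle = 0$ by Lemma \ref{lem:sigma1ortho} (which carries over verbatim to each $S_L$), Pythagoras gives
$$E(\sigma_{2,L}) = E(\tau_{2,L}) + \kappa_{2,L}^{2}\,E(\sigma_{1,L}) = E(\tau_{2,L}) + \kappa_{2,L}^{2}\,p_{11}(S_L).$$
Since $\tau_{2,L}$ has linear part $dy$ in the cylinder (so the $a_2$ of \eqref{eq:EssEnerg} equals $1$), we get $\Ess(\tau_{2,L}) = E(\tau_{2,L}) - L$, and subtracting yields the stated formula.

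For the upper bound on $E(\sigma_{2,L})$ the plan is to exhibit a closed test $1$-form $\omega$ on $S_L$ in the cohomology class of $\sigma_{2,L}$ (periods $\delta_{i2}$ on the basis $(\alpha_i)_{i=1,\ldots,2g}$) and then conclude by the energy-minimising property $E(\sigma_{2,L}) \le E(\omega)$. On the inserted flat cylinder $C=[0,L]\times \Sp_1$ of circumference $1$, set $\omega = dy$, contributing $E_C(\omega) = L$. On the main part $\frak{M}$, set $\omega = dF$ where $F:\frak{M}\to \Sp_1$ is an $S^1$-valued smooth map satisfying $F|_{c_i}(y) = y$ for $i=1,2$ and having winding number $0$ on each $\alpha_k$, $k = 3,\ldots,2g$. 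The tangential match of $dF$ with $dy$ at $c_1,c_2$ ensures $\omega$ extends to a globally closed $1$-form on $S_L$; the specified winding data together with $\omega = dy$ on $C$ ensure $[\omega] = [\sigma_{2,L}]$.

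Granted a pointwise bound $\|dF\|\le 1/(2w_{\rm A})$ on $\frak{M}$, the estimate follows immediately:
$$E_{\frak{M}}(\omega) = \int_{\frak{M}} \|dF\|^{2} \le \frac{\area(\frak{M})}{4w_{\rm A}^{2}} \le \frac{4\pi(g-1)}{4w_{\rm A}^{2}} < \frac{\pi g}{w_{\rm A}^{2}},$$
where we used Gauss--Bonnet $\area(\frak{M}) < \area(S) = 4\pi(g-1)$. Adding the cylinder contribution gives $E(\omega) < L + \pi g/w_{\rm A}^{2}$, which by minimality is an upper bound for $E(\sigma_{2,L})$.

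\textbf{Main obstacle.} The crux is the construction of a Lipschitz $F:\frak{M}\to\Sp_1$ with constant $1/(2w_{\rm A})$. The boundary data $F|_{c_i}=y$ is itself $1$-Lipschitz, and $1\le 1/(2w_{\rm A})$ because $w_{\rm A}\le 1/4$. The remaining ingredients of $w_{\rm A}$ are precisely what makes the extension work: the bound $w_{\rm A}\le \sys(\frak{M})/4$ guarantees that tubular neighbourhoods of essential simple closed curves in $\frak{M}$ of width $2w_{\rm A}$ are embedded, and $w_{\rm A}\le \cl(\ell(\alpha_k))$ for $k\ge 3$ ensures each standard collar $C(\alpha_k)\subset \frak{M}$ contains such a neighbourhood. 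Using Fermi coordinates based on $c_1,c_2$ one extends $F$ as the pure $y$-coordinate on the collars of width $w_{\rm A}$ of $c_1, c_2$; inside each collar of an $\alpha_k$ ($k\ge 3$) one extends transversally with vanishing winding around $\alpha_k$; and on the remaining thick part one applies a McShane-type Lipschitz extension, working with an $\mathbb{R}$-valued lift on the universal cover of $\frak{M}$ (the lifts of $c_i$ are mutually separated by distance $\geq 2w_{\rm A}$, which keeps the $\R$-valued boundary data $1/(2w_{\rm A})$-Lipschitz in the cover). An exact correction adjusts the period of $\omega$ around $\alpha_1$ without affecting the cohomology class or the pointwise bound.
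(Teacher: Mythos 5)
Your handling of the first identity matches the paper's: decompose $\sigma_{2,L}=\tau_{2,L}+\kappa_{2,L}\sigma_{1,L}$, invoke the orthogonality of Lemma \ref{lem:sigma1ortho}, and use \eqref{eq:EssEnerg} with $a_2=1$. That part is fine.

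For the energy bound your route is genuinely different from the paper's. The paper does \emph{not} construct a globally defined circle-valued map $F$ on $\frak{M}$; it instead takes the shortest geodesic $\mathcal{A}$ in $\frak{M}\smallsetminus(\alpha_3\cup\dots\cup\alpha_{2g})$ joining $c_1$ to $c_2$, sweeps out the embedded ribbon $\mathcal{W}$ of half-width $w_{\rm A}$ around $\mathcal{A}$, sets $f(\rho,t)=\rho/(2w_{\rm A})$ in Fermi coordinates on $\mathcal{W}$, interpolates near $B_1,B_2$ to match $dy$ on the cylinder, and declares the test form to be zero on the rest of $\frak{M}$. Because the ribbon stays at distance $\ge w_{\rm A}$ from every $\alpha_k$, $k\ge3$, the vanishing of those periods is automatic and no extension lemma is ever needed; the area bound $\area(\mathcal{W})<\area(S)=4\pi(g-1)$ does the rest. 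Your approach replaces this by a global Lipschitz $\Sp_1$-valued extension of the boundary data $F|_{c_i}=y$ with prescribed winding zero on the $\alpha_k$, which is a strictly harder construction: you must control the Lipschitz constant of the boundary data \emph{as a partial function on} $\frak{M}$ (including the cross-boundary pairs and the different lifts in the universal cover), handle the circle-valuedness via an equivariant McShane argument, and separately force the winding numbers. You flag this as the main obstacle and sketch it plausibly, but the ribbon construction buys the same bound at much lower cost — it is local, it never sees the thick part of $\frak{M}$, and it reduces the interpolation cost to the small additive $\frak{p}<2$ that the paper absorbs into the constant.

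One genuine flaw: your closing sentence, ``An exact correction adjusts the period of $\omega$ around $\alpha_1$ without affecting the cohomology class or the pointwise bound,'' is incoherent as written. Adding an exact form changes no period at all, and changing the period over $\alpha_1$ would by definition change the cohomology class. You have in fact constructed a closed form whose period over $\alpha_1$ is uncontrolled (a real number depending on the twist and on the homotopy class of $F$, not something you can zero out by an exact perturbation). What actually saves the day, here and in the paper, is that the test form need only agree with $\sigma_{2,L}$ on the periods over $\alpha_2,\dots,\alpha_{2g}$ in order for the minimization to give $E(\tau_{2,L})\le E(\omega)$; that bounds $\Ess(\tau_{2,L})=E(\tau_{2,L})-L$, which is what the lemma is used for downstream. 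You should replace the ``exact correction'' remark by this observation (i.e.\ compare against $\tau_{2,L}$ rather than $\sigma_{2,L}$), or else explain concretely how you force the $\alpha_1$-period to vanish.
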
%
 \begin{proof}%
The first statement is as in \bbref{Lemma}{lem.Bdpkk}. For the second statement we construct a test form $s_{2,L}$ for $\sigma_{2,L}$ on $S_L$ whose representation in the cylinder $Z_L = \{(x,y) \mid x \in [0,L], y \in \Sp_1 \}$ attached to $\frak{M}$ along $c_1$, $c_2$ coincides with $d y$.

For this we take the shortest geodesic $\mathcal{A}$ in $\frak{M}' = \frak{M} \smallsetminus (\alpha_3 \cup \dots \cup \alpha_{2g})$ that connects the boundary curves $c_1$, $c_2$ with each other. Such a geodesic exists because the boundary of $\frak{M}'$ has interior angles $<\pi$ at all of its vertices; if there are several such geodesics we take any of them. By the Collar theorem $\mathcal{A}$ does not intersect any of the collars $C(\alpha_k)$ and thus has distance $\dist(\mathcal{A},\alpha_k) \geq w_{\rm A}$, for $k=3, \dots, 2g$. As in the proof of \bbref{Theorem}{sec:bcapM} the geodesic segments of length $w_{\rm A}$ emanating orthogonally from $\mathcal{A}$ (in both directions) are pairwise disjoint and sweep out a ribbon $\mathcal{W}$. In Fermi coordinates based on $\mathcal{A}$ the ribbon becomes 
\begin{equation*}
\{(\rho,t) \mid  -w_{\rm A} \leq \rho \leq w_{\rm A}, 0 \leq t \leq \ell(\mathcal{A}) \},
\end{equation*}
with the metric tensor $d s^2 = d \rho^2 + \cosh^2(\rho)d t^2$. We let $f$ be the function on $\mathcal{W}$ that is defined in these coordinates by $f(\rho,t) = \frac{\rho}{2 w_{\rm A}}$. Its differential has norm $\Vert d f(\rho,t) \Vert = \frac{1}{2 w_{\rm A}}$ and as $\mathcal{W} \subset S$ its energy has the upper bound
 \begin{equation}\label{eq:engribf}%
E_{\mathcal{W}}(df) = \frac{1}{4 w_{\rm A}^2}\area(\mathcal{W})< \frac{1}{4 w_{\rm A}^2}\area(S) =  \frac{1}{4 w_{\rm A}^2}4\pi(g-1).
 \end{equation}%

We now interpolate $f$ in the annular regions $B_1, B_2 \subset \frak{M}$ (adjacent to the boundary of $\frak{M}$, see \bbref{Section}{sec:capS}, prior to \beqref{eq:kernelK}) so that its differential matches with the differential $d y$ on the attached cylinder $Z_L$. We complete the construction of $s_{2,L}$ by setting it to be $d y$ on $Z_L$, $df$ on the domain in $\frak{M}$ where $f$ (together with its interpolation) is defined, and  setting $s_{2,L} = 0$ on the remaining part of $\frak{M}$. The form is in the cohomology class of $\sigma_{2,L}$ and has the energy bound 
 \begin{equation*}
E(s_{2,L}) \leq L + \frac{1}{4 w_{\rm A}^2}4\pi(g-1) + \frak{p},
 \end{equation*}%
where $\frak{p}$ is the energy cost of the interpolation. A rough but somewhat tedious argument which we omit shows that the interpolation can be carried out with $\frak{p} < 2$, and the Lemma follows.
\end{proof}
In the next lemma we write $\kappa_{2,L} = \kappa_2(S_L)$, for clarity.
 \begin{lem}\label{lem:Bdkappa2}%
There exists an effective constant $K$ depending on $g$, on a lower bound of $\sys(\frak{M})$ and on an upper bound of $\ell(\alpha_1)$ on $S$ such that for any $S_L$, $L \geq L_{\gamma}$ one has $\vert \kappa_2(S_L) \vert \leq K$.
 \end{lem}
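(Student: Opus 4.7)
The plan is to anchor the bound at the reference surface $S = S_{L_\gamma}$ and then propagate it uniformly to all $L \geq L_\gamma$ via the grafting comparison of \bbref{Lemma}{lem:capabreve}. The direct energy estimate on $S_L$ by itself yields a bound that grows linearly in $L$, so this anchoring step is essential.

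First I would derive the identity
\[
p_{22}(S_L) = \Ess(\tau_{2,L}) + L + \kappa_{2,L}^2\, p_{11}(S_L),
\]
from $\sigma_2 = \tau_2 + \kappa_2 \sigma_1$ in \beqref{eq:sigmakapj}, the orthogonality $\scp{\sigma_1}{\tau_2} = 0$ of \bbref{Lemma}{lem:sigma1ortho}, and the fact (\bbref{Lemma}{lem:nolinear}) that $\tau_2$ has linear part $dy$ on the cylinder, which contributes $L$ to its cylinder energy. Combined with the upper bound $p_{22}(S_L) < L + \pi g / w_{\rm A}^2$ of \bbref{Lemma}{lem:BdEsstau2} and the lower bound $p_{11}(S_L) \geq 1/(L+\mathfrak{z}\Gamma)$ of \bbref{Lemma}{lem:capStildaL2}, taken at $L = L_\gamma$, this yields the anchoring estimate
\[
\kappa_{2,L_\gamma}^2 \;<\; \frac{\pi g\,(L_\gamma + \mathfrak{z}\Gamma)}{w_{\rm A}^2}.
\]

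Next I would eliminate $L_\gamma$ from the right hand side by the collar lemma. Since $\alpha_1$ crosses $\gamma = \alpha_2$ once and transversally, it must traverse the standard collar $C(\gamma)$ end to end, so $\ell(\alpha_1) \geq 2\cl(\ell(\gamma))$; combined with $\cl(s) > \ln(4/s)$ of \beqref{eq:clfctn} and $L_\gamma < \pi/\ell(\gamma)$ of \beqref{eq:lengcyl}, this gives $L_\gamma < \tfrac{\pi}{4}\,e^{\ell(\alpha_1)/2}$. \bbref{Theorem}{eq.bdscapaM} bounds $\Gamma$ in terms of $\sys(\mathfrak{M})$ and the diameter of $\mathfrak{M}$, the latter being controlled by $g$ and $\sys(\mathfrak{M})$ since the area $4\pi(g-1)$ is fixed; together with the definition \beqref{eq:widthA} of $w_{\rm A}$, this turns the anchoring estimate into a bound $|\kappa_{2,L_\gamma}| \leq K_0$ in the permitted data.

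Finally, for arbitrary $L \geq L_\gamma$ I would apply \bbref{Lemma}{lem:capabreve} to the grafting pair $(S_{L_\gamma}, S_L)$. Because the grafting homeomorphism $\phi: \mathfrak{M} \to \mathfrak{M}$ is the identity, $q_\phi = 1$ and the constant $\rho_{L_\gamma} = (1 + \mu_{L_\gamma}/12)^2 - 1$ from \beqref{eq:muL} is uniformly small; estimate \beqref{eq:capabreve2} with $j = 2$ gives
\[
|\kappa_{2,L} - \kappa_{2,L_\gamma}|^2 \;<\; 3\rho_{L_\gamma}(1+\mathfrak{z}\Gamma)\,\Ess(\tau_{2,L_\gamma}) \;\leq\; \frac{3\rho_{L_\gamma}(1+\mathfrak{z}\Gamma)\,\pi g}{w_{\rm A}^2},
\]
a quantity independent of $L$. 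The triangle inequality against $|\kappa_{2,L_\gamma}| \leq K_0$ closes the argument. The main obstacle is the second step: the direct energy inequality on $S$ still carries an offending factor $L_\gamma$, and converting this to a bound involving only $\ell(\alpha_1)$ relies on the forced crossing of $\alpha_1$ through $C(\gamma)$. Once this conversion is in place, the grafting comparison completes the argument effectively for free, because $\phi = \mathrm{id}$ trivialises the $q_\phi$-dependence of $\rho_L$.
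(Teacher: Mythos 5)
Your overall architecture — anchoring the bound at $S = S_{L_\gamma}$, then propagating to all $L \geq L_\gamma$ via \bbref{Lemma}{lem:capabreve} with the trivial identity mapping $\phi = \mathrm{id}$, $q_\phi = 1$ — matches the paper's. Your explicit derivation of the inequality $L_\gamma < \tfrac{\pi}{4}e^{\ell(\alpha_1)/2}$ from the fact that $\alpha_1$ must traverse the collar $C(\gamma)$ is a genuine contribution: the paper's own proof uses the factor $L_\gamma + \frak{z}\Gamma$ but leaves the conversion of $L_\gamma$ into a function of $\ell(\alpha_1)$ entirely tacit, so you are filling a real gap in the exposition.

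However, the anchoring step of your proof has a genuine gap with respect to the stated dependencies. You bound $E(\sigma_{2,L_\gamma})$ by invoking the estimate $E(\sigma_{2,L}) < L + \pi g/w_{\rm A}^2$ of \bbref{Lemma}{lem:BdEsstau2}. But by \beqref{eq:widthA} the constant $w_{\rm A}$ involves the collar widths $\cl(\ell(\alpha_3)), \dots, \cl(\ell(\alpha_{2g}))$, and since $\cl$ is decreasing to zero, $w_{\rm A}$ — hence your bound $\kappa_{2,L_\gamma}^2 < \pi g(L_\gamma + \frak{z}\Gamma)/w_{\rm A}^2$ — blows up if any $\ell(\alpha_j)$, $j \geq 3$, is large, regardless of $\ell(\alpha_1)$ and $\sys(\frak{M})$. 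The lemma you are proving claims the constant $K$ depends \emph{only} on $g$, a lower bound of $\sys(\frak{M})$, and an upper bound of $\ell(\alpha_1)$; your argument as written proves a strictly weaker statement. The paper avoids this by \emph{not} relying on the $w_{\rm A}$ estimate to control $E(\sigma_2)$; instead it constructs a dedicated closed test form $s_2 = df_2$ supported on an embedded ribbon around $\alpha_1$, whose half-width $w$ is bounded below purely in terms of $\ell(\alpha_1)$ through the right-angled-hexagon relation $\sinh(2w)\sinh(\tfrac{1}{2}\ell(\alpha_1)) > 1$, giving $E(\sigma_2) \leq E(s_2) \leq \pi(g-1)/w^2$. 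The Cauchy--Schwarz step $|\kappa_2(S)|^2 \leq E(\sigma_2)/E(\sigma_1)$ from \beqref{eq:Fkapp1}, together with $1/E(\sigma_1) \leq L_\gamma + \frak{z}\Gamma$, then produces a bound in the permitted data. If you replace your use of \bbref{Lemma}{lem:BdEsstau2}'s energy estimate with this ribbon construction (keeping your explicit $L_\gamma$-to-$\ell(\alpha_1)$ conversion), the proof becomes both correct and sharper than the paper's write-up.
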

\begin{proof}
By \beqref{eq:Fkapp1} using Cauchy-Schwarz we have on $S = S_{L_{\gamma}}$
 \begin{equation*}
\vert \kappa_2(S) \vert^2 \leq \frac{\scp{\sigma_2}{\sigma_2}}{\scp{\sigma_1}{\sigma_1}},
 \end{equation*}%
where by \bbref{Lemma}{lem:capStildaL2} $ \frac{1}{\smscp{\sigma_1}{\sigma_1}} \leq L_{\gamma}+ \frak{z} \Gamma$ and by \bbref{Theorem}{eq.bdscapaM} $\Gamma$ is bounded above in terms of the systole of $\frak{M}$ and $g$. Furthermore, by \beqref{eq:capabreve2}%
 \begin{equation*}
\vert \kappa_2(S) - \kappa_2(S_L) \vert^2 <  3 \rho_{L_{\gamma}}(1+ \mathfrak{z}\Gamma) \Ess(\tau_2(S)),
 \end{equation*}%
where $\Ess(\tau_2(S))$ has a bound in terms of $g$ $\sys(\frak{M})$ and $\ell(\alpha_1)$ on $S$ by \bbref{Lemma}{lem:BdEsstau2}. It suffices therefore to prove an upper bound of $E(\sigma_2)$ on $S$. For this we need an embedded ribbon around $\alpha_1$.

Let $w$ be the supremum of all $\delta$ such that geodesic segments of length $\delta$ emanating orthogonally from $\alpha_1$ are pairwise disjoint. Then there exists a segment $\eta$ of length $2w$ on $S$ that meets $\alpha_1$ orthogonally on both endpoints. Lift $\alpha_1$ to the universal covering of $S$. On this lift there is an arc $a$ of length $\ell(\alpha_1)$ having two lifts $\eta'$, $\eta''$ of $\eta$ at its endpoints. On their other endpoints there are further lifts $\alpha_1'$, $\alpha_1''$ of $\alpha_1$. Since $\alpha_1$ is a simple geodesic the latter do not intersect each other. Hence a right angled geodesic hexagon with three consecutive sides of lengths $2w$, $\ell(\alpha_1)$, $2w$. This hexagon consists of two isometric pentagons each having a pair of consecutive sides $2w$, $\frac{1}{2}\ell(\alpha_1)$. By trigonometry $\sinh(2w) \sinh(\frac{1}{2}\ell(\alpha_1)) > 1$. Hence, a lower bound of $w$ in terms of $\ell(\alpha_1)$.

Taking the function $f_2$ on the ribbon of width $w$ that grows linearly along the orthogonal geodesics from 0 on one boundary to 1 on the other and setting $s_2 = df_2$ on the ribbon and $s_2 = 0$ outside we get a test function for $\sigma_2$ on $S$ with the required energy bound.
\end{proof}

\section{Period matrix for a surface with two cusps}\label{sec:TwoCusps}

If we pinch a nonseparating simple closed geodesic then the limit surface has two cusps. A cusp neighbourhood is conformally equivalent to a flat cylinder $Z_1 = ({-}\infty,0]\times \Sp_1$  of infinite length. In this section we first look at the $L^2$ harmonic forms on this surface to which we add the harmonic forms whose linear part is $dx$ and $dy$, respectively in the cylinder and show how these can be seen as the limits of forms in $(S_L)_L$. Then we look at the compacified limit surface that has genus $g-1$ instead of $g$. Two difficulties arise: the rank of the homology and the dimension of the Gram period matrix are not the same as for $S$ and, in addition, the condition $\int_{\alpha_1}\sigma_j = 0$ for the members $\sigma_j$ of the dual basis of harmonic forms disappears.

As announced in the Introduction (\bbref{Section}{sec:Introd}) we overcome the first difficulty by introducing an array of invariants attributed to the limit surface as an ersatz for the missing Gram period matrix. The array may also be translated into a parametrized family of matrices that constitutes a variant of this ersatz (\bbref{Section}{sec:Pmatpar}). With the help of these invariants we then prove the main theorems \bbref{Theorem}{thm:nonsepGIntro} and \bbref{Theorem}{thm:nonsepFIntro} from the Introduction.

\subsection{The space $\Hv{\sR}$}\label{sec:HvsR}
In the grafting construction the limit surface $S^{\sG}$ is represented in the form $S^{\sG} = Z_1 \cup \frak{M} \cup Z_2$, where $\frak{M}$ is the main part of $S$ and $Z_1$, $Z_2$ are the attached infinite cylinders which we normalise here in the form
 \begin{equation*}%
Z_1 = ({-}\infty,0]\times \Sp_1, \quad Z_2 = [0,\infty)\times \Sp_1
 \end{equation*}%
With respect to the complete hyperbolic metric in its conformal class (albeit unknown explicitly) the surface has also a representation \mbox{$S^{\sG} = Z_1 \cup \frak{M}^{\sG} \cup  Z_2$}, where $\frak{M}^{\sG}$ is the main part of $S^{\sG}$ obtained by cutting away the cusps along the horocycles of length 1. 

To cover both representations we consider in this section, more generally, an arbitrary conformal Riemann surface $\sR$ of signature $(g-1,0;2)$, represented in the form 
 \begin{equation}\label{eq:RKZZ}%
\sR = Z_1 \cup \frak{K} \cup Z_2
 \end{equation}%
where $\frak{K}$ is a conformal Riemann surface of signature $(g-1,2;0)$ and the cylinders $Z_1$, $Z_2$, are attached along the two boundaries of $\frak{K}$  with respect to some fixed pasting rules. We denote by $\widebar{2}{\sR}$ the two point compactification of $\sR$ obtained by adding ideal points at infinity $v_1$, $v_2$ to $Z_1$, $Z_2$, respectively.

The space of $L^2$ harmonic forms on $\sR$ is naturally identified with the space $H^1(\widebar{2}{\sR},\R)$ of harmonic 1-forms on $\widebar{2}{\sR}$ and is denoted by $H^1(\sR,\R)$.

In addition to these we consider harmonic forms with \emph{logarithmic poles}. By the existence theorem of real harmonic functions with logarithmic poles (\cite[Theorem II.4.3]{FK92}) there exists on $\sR$, up to a change of sign, a unique exact harmonic 1-form $\mT_1$ with logarithmic poles at $v_1$, $v_2$. With the appropriate choice of sign this form has the following representation in the cylinders $Z_1$, $Z_2$,
 \begin{equation}\label{eq:TinZ}%
\mT_1 = d x + \mT_1^{\nlin},
 \end{equation}%
where the nonlinear part $\mT_1^{\nlin}$ is exponentially decaying for $x \to -\infty$ in $Z_1$ and for $x \to +\infty$ in $Z_2$. We now define the space
 \begin{equation}\label{eq:HvsR}%
\Hv{\sR} = {\rm span}\{ H^1(\sR,\R), \mT_1, \Hstar \mT_1 \}.
 \end{equation}%
Observe that in $Z_1 \cup Z_2$ any $\omega \in \Hv{\sR}$ has a representation 
 \begin{equation*}%
\omega = a d x + b d  y + \omega^{\nlin}
 \end{equation*}%
%
with the same $a, b$ in $Z_1$ as in $Z_2$, and that $a=b=0$ iff $\omega \in H^1(\sR,\R)$. Analogously to \beqref{eq:EssScalPeod} we define the finite scalar product for $\omega, \eta \in \Hv{\sR}$,
 \begin{equation}\label{eq:Kproduct}%
\scp{\omega}{\eta}_{\frak{K}} \defeq \int_{\frak{K}}\omega \wedge \Hstar \eta + \int_{Z_1 \cup Z_2} \omega^{\nlin} \wedge \Hstar \eta^{\nlin}.
 \end{equation}%
By the exponential decay of the nonlinear parts the integral on the right is well defined. Since any harmonic form that vanishes in $\frak{K}$ vanishes everywhere the product defined by \beqref{eq:Kproduct} is positive definite and, hence, a scalar product. It is not intrinsic but shall serve its purpose. 

Since $\Hstar(\omega^{\nlin})=(\Hstar \omega)^{\nlin}$ we have
 \begin{equation}\label{eq:starKp}%
\scp{\Hstar\omega}{\Hstar\eta}_{\frak{K}} = \scp{\omega}{\eta}_{\frak{K}}.
 \end{equation}%
We also introduce the \emph{essential energy} with respect to this product, respectively $\frak{K}$,%
 \begin{equation}\label{eq:EssK}%
\Esss{\frak{K}}(\omega) \defeq \scp{\omega}{\omega}_{\frak{K}}.
 \end{equation}%
We now prove two properties of $\mT_1$. For the proof we introduce the following notation which will also be used later on. For any $L>0$ we consider the finite cylinders $Z_{1,L} = [-\frac{1}{2}L,0] \times \Sp_1$, $Z_{2,L} = [0,\frac{1}{2}L] \times \Sp_1$, and let 
 \begin{equation*}%
\sR_L^{\times} = Z_{1,L} \cup \frak{K} \cup Z_{2,L}
 \end{equation*}%
be the subsurface of $\sR$ obtained by attaching the two cylinders to $\frak{K}$ with the same pasting rules as for $Z_1$, $Z_2$ in the definition of $\sR$.
 \begin{lem}\label{lem:T1ortw}%
\textup{i)}\; If $\omega \in {\rm span}\{ H^1(\sR,\R), \Hstar \mT_1 \}$ then $\scp{\mT_1}{\omega}_{\frak{K}} = 0$.

\textup{ii)}\; If $\omega \in H^1(\sR,\R)$ then $\scp{\Hstar \mT_1}{\omega}_{\frak{K}}=0$.
 \end{lem}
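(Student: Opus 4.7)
The plan is to reduce both statements to applications of Stokes' theorem on the truncated surface $\sR_L^{\times}$, exploiting the fact that $\mT_1 = df$ is exact (with $f$ harmonic with logarithmic poles at $v_1, v_2$) and that the nonlinear parts of all forms involved, together with any form in $H^1(\sR,\R)$ restricted to the cylinders, decay exponentially in $|x|$. Throughout I will use the pointwise identity $\star\alpha \wedge \star\beta = \alpha \wedge \beta$ for $1$-forms on a $2$-dimensional Riemannian manifold.

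First I would deal with the trivial piece of (i). For $\omega = \Hstar \mT_1$ one has
\[
\mT_1 \wedge \star(\Hstar \mT_1) = -\mT_1 \wedge \mT_1 = 0,
\]
and the same identity holds for $\mT_1^{\nlin}$ in place of $\mT_1$, so both terms in \beqref{eq:Kproduct} vanish identically. It thus suffices to verify (i) and (ii) for $\omega \in H^1(\sR,\R)$. For such an $\omega$, being $L^2$ on the infinite cylinders $Z_1, Z_2$ forces both linear coefficients to vanish, i.e.\ $\omega = \omega^{\nlin}$ on $Z_1 \cup Z_2$.

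Next I would handle (i) with $\omega \in H^1(\sR,\R)$. Using $\mT_1^{\nlin} = \mT_1 - dx$ on the cylinders and writing the $\frak{K}$-product for $L$ large as an integral on $\sR_L^{\times}$ plus exponentially small tails,
\[
\scp{\mT_1}{\omega}_{\frak{K}} = \lim_{L \to \infty}\left( \int_{\sR_L^{\times}} \mT_1 \wedge \star \omega - \int_{Z_{1,L} \cup Z_{2,L}} dx \wedge \star \omega \right).
\]
The second integral vanishes: in coordinates $\omega = A\,dx + B\,dy$ gives $dx \wedge \star \omega = A\,dx\wedge dy$, and $\int_0^1 A(x,y)\,dy$ is the $dx$-linear coefficient of $\omega$, which is zero. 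For the first integral, using $\mT_1 = df$ and $d(\star\omega) = 0$ (harmonicity of $\omega$) yields $\mT_1 \wedge \star\omega = d(f \star \omega)$, so by Stokes
\[
\int_{\sR_L^{\times}} \mT_1 \wedge \star\omega = \int_{\partial \sR_L^{\times}} f \star \omega.
\]
Since $f(x,y) \sim x$ in the cylinders while $\star\omega$ decays exponentially in $|x|$, this boundary integral tends to $0$ as $L \to \infty$.

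The argument for (ii) is parallel. With $(\Hstar \mT_1)^{\nlin} = \star \mT_1^{\nlin}$ and the same truncation, using $\star\mT_1^{\nlin} = \star\mT_1 - dy$ on the cylinders gives
\[
\scp{\Hstar \mT_1}{\omega}_{\frak{K}} = \lim_{L \to \infty}\left( \int_{\sR_L^{\times}} \star\mT_1 \wedge \star \omega - \int_{Z_{1,L} \cup Z_{2,L}} dy \wedge \star \omega \right).
\]
Again the second integral vanishes because its $y$-average reduces to the $dy$-linear coefficient of $\omega$, which is zero. For the first, $\star\mT_1 \wedge \star\omega = \mT_1 \wedge \omega = df \wedge \omega = d(f\omega)$ (using $d\omega = 0$), so Stokes gives $\int_{\partial \sR_L^{\times}} f \omega$, which again vanishes in the limit.

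The main obstacle, and really the only one, is bookkeeping: one must carefully justify interchanging the limit and the $\frak{K}$-product, making sure the tails $\int_{Z_i \setminus Z_{i,L}}$ tend to $0$ (which follows from the exponential decay of $\mT_1^{\nlin}$ and of elements of $H^1(\sR,\R)$ in the cylinders) and that $f\omega$, $f\star\omega$ decay on the circles $\{\pm L/2\}\times \Sp_1$ despite the logarithmic growth of $f$.
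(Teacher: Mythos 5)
Your argument is correct and matches the paper's core approach: reduce the $\frak{K}$-product to a limit of integrals over the truncated surfaces $\sR_L^{\times}$, apply Stokes' theorem with $\mT_1 = dF$, and exploit that $F$ grows linearly while the other factor decays exponentially in the cylinders so the boundary terms vanish. The only notable differences are cosmetic: you handle the cylinder bookkeeping by subtracting $\int dx\wedge\star\omega$ (resp.\ $\int dy\wedge\star\omega$) and checking it vanishes rather than invoking the Orthogonality lemma, and you prove (ii) by a parallel Stokes computation on $\star\mT_1\wedge\star\omega = d(f\omega)$, whereas the paper deduces (ii) from (i) using that $\star$ is an isometry of the $\frak{K}$-product \eqref{eq:starKp} and maps $H^1(\sR,\R)$ to itself.
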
%
 \begin{proof}%
By virtue of the Orthogonality lemma we have
 \begin{equation*}%
\scp{\mT_1}{\omega}_{\frak{K}} =\lim_{L \to \infty}\Big(\int_{\frak{K}}\mT_1 \wedge \Hstar \omega + \int_{Z_{1,L}\cup Z_{2,L}} \mT_1^{\nlin} \wedge \Hstar \omega^{\nlin}\Big) = \lim_{L \to \infty} \int_{\sR_L^{\times}}\mT_1 \wedge \Hstar \omega.
 \end{equation*}%
$\mT_1$ being exact it is of the form $\mT_1 = d F$ with a harmonic function $F$ on $\sR$ whose nonlinear part in $Z_1$, $Z_2$ is exponentially decaying for $x \to -\infty$ respectively, $x \to + \infty$. By Stokes' theorem and because $\Hstar \omega$ is of the form $\Hstar \omega = a d x + \Hstar \omega^{\nlin}$ for some constant $a$ we have
 \begin{equation*}%
\int_{\sR_L^{\times}}\mT_1 \wedge \Hstar \omega = \int_{\sR_L^{\times}}d F \wedge \Hstar \omega = \int_{\sR_L^{\times}} d(F \Hstar \omega)=\int_{\partial \sR_L^{\times}}F \Hstar \omega = \int_{\partial \sR_L^{\times}}F \Hstar \omega^{\nlin}.
 \end{equation*}%
Since $\vert F \vert$ has at most linear growth while $\Vert \Hstar \omega^{\nlin} \Vert$ is exponentially decaying for $x \to -\infty$, respectively $x \to +\infty$, the right hand side goes to zero as $L\to \infty$ and point i) of the lemma follows. Point ii) follows from i) and \beqref{eq:starKp} since for $\omega \in H^1(\sR,\R)$ we also have $\Hstar \omega \in H^1(\sR,\R)$.
 \end{proof}%
 \begin{lem}\label{lem:EssT}%
 \begin{equation*}%
\Esss{\frak{K}}(\mT_1)=\lim_{L\to \infty}E_{\sR_L^{\times}}(\mT_1)-L.
 \end{equation*}%
 \end{lem}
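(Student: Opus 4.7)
The plan is to expand $E_{\sR_L^\times}(\mT_1)$ into a piece supported on $\frak{K}$ and pieces on the truncated cylinders $Z_{1,L}$, $Z_{2,L}$, then use the product decomposition of $\mT_1$ in each cylinder together with the Orthogonality Lemma to peel off the linear contribution.

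Concretely, I would first write
\begin{equation*}
E_{\sR_L^{\times}}(\mT_1) = \int_{\frak{K}}\Vert \mT_1\Vert^2 + \int_{Z_{1,L}\cup Z_{2,L}} \Vert \mT_1\Vert^2.
\end{equation*}
On each cylinder $Z_{i,L}$, the representation \eqref{eq:TinZ} reads $\mT_1 = dx + \mT_1^{\nlin}$, where $\mT_1^{\nlin} = d h^{\nlin}$ for a harmonic function $h^{\nlin}$ whose constant and linear $x$-terms vanish (by definition of the nonlinear part). Applying the \bbref{Orthogonality Lemma}{thm:lemorthogonal} to the decomposition $b(x)=1$, $c=0$, $H = h^{\nlin}$, one obtains
\begin{equation*}
\int_{Z_{i,L}} \Vert \mT_1\Vert^2 = \int_{Z_{i,L}} \Vert dx\Vert^2 + \int_{Z_{i,L}} \Vert \mT_1^{\nlin}\Vert^2 = \tfrac{L}{2} + \int_{Z_{i,L}} \Vert \mT_1^{\nlin}\Vert^2,
\end{equation*}
since $Z_{i,L}$ has Euclidean area $L/2$. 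Summing over $i=1,2$ and subtracting $L$ gives
\begin{equation*}
E_{\sR_L^{\times}}(\mT_1) - L = \int_{\frak{K}}\Vert \mT_1\Vert^2 + \int_{Z_{1,L}\cup Z_{2,L}} \Vert \mT_1^{\nlin}\Vert^2.
\end{equation*}

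The last step is to let $L\to\infty$. Since $\mT_1$ has only logarithmic poles at $v_1, v_2$, the form $\mT_1^{\nlin}$ on each $Z_i$ decays exponentially as $|x|\to\infty$ (see \eqref{eq:hseries}–\eqref{eq:energy_y} applied to the harmonic function $h^{\nlin}$, whose Fourier series on $Z_i$ contains only the modes $n\ge 1$ decaying towards the cusp). Consequently the integrals $\int_{Z_{i,L}} \Vert \mT_1^{\nlin}\Vert^2$ are monotone in $L$ and bounded above, hence converge to $\int_{Z_i} \Vert \mT_1^{\nlin}\Vert^2<\infty$. Combining with the definition \eqref{eq:Kproduct}–\eqref{eq:EssK} yields
\begin{equation*}
\lim_{L\to\infty}\bigl(E_{\sR_L^{\times}}(\mT_1) - L\bigr) = \int_{\frak{K}}\Vert \mT_1\Vert^2 + \int_{Z_{1}\cup Z_{2}} \Vert \mT_1^{\nlin}\Vert^2 = \Esss{\frak{K}}(\mT_1),
\end{equation*}
which is the claim.

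There is no real obstacle here: the only subtlety is verifying that $\mT_1^{\nlin}$ has no linear $x$-term in either cylinder, which must be true because $\mT_1$ already absorbs the full linear part $dx$ by the normalisation \eqref{eq:TinZ}, and because in $Z_1$ there can be no $x$-term decaying like $-x$ towards $v_1$ without producing a second logarithmic pole — ruled out by the uniqueness of the harmonic primitive $F$ with poles at exactly $v_1$ and $v_2$.
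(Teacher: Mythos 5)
Your proof is correct and follows essentially the same approach as the paper: both decompose $E_{\sR_L^{\times}}(\mT_1)$ into the $\frak{K}$-part and the cylinder parts, apply the Orthogonality Lemma to peel off the linear contribution $L$, and then let $L\to\infty$ using the exponential decay of $\mT_1^{\nlin}$. The paper simply presents the same chain of equalities starting from the definition of $\Esss{\frak{K}}(\mT_1)$ rather than from $E_{\sR_L^{\times}}(\mT_1)$.
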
%
 \begin{proof}%
This is straightforward, using the Orthogonality lemma:
 \begin{align*}%
\Esss{\frak{K}}(\mT_1) = E_{\frak{K}}(\mT_1)+\lim_{L\to\infty}E_{Z_{1,L}\cup Z_{2,L}}(\mT_1^{\nlin})&=E_{\frak{K}}(\mT_1)+\lim_{L\to\infty}\big(E_{Z_{1,L}\cup Z_{2,L}}(\mT_1)-L\big)\\
&=
\lim_{L\to \infty}\big(E_{\sR_L^{\times}}(\mT_1)-L\big).
 \end{align*}%
\par\vspace{-24pt}
 \end{proof}
For the convergence of the period matrices we need the convergence of harmonic forms. To cover all cases we consider the grafting in terms of $\sR$. To this end we define $\sR_L$ to be the compact Riemann surface obtained from $\sR_L^{\times}$ by pasting together the two boundary curves by the rule that any point $(\frac{1}{2}L,y)$ on the boundary of $Z_{2,L}$ is identified with the point $(-\frac{1}{2}L,y)$ on the boundary of $Z_{1,L}$. In $\sR_L$ the pasted cylinders form the subset 
 \begin{equation*}%
Z_L \defeq Z_{2,L}\cup_{\rm \tiny pg} Z_{1,L} \subset \sR_L,
 \end{equation*}%
where $\cup_{\rm \tiny pg}$ means disjoint union modulo pasting. By abuse of notation we write $Z_L = [0,L]\times \Sp_1$. As in the case of $S$, for any $L > 0$ there is a marking homeomorphism $\phi_L : \sR_1 \to \sR_L$ that acts as the identity from $\frak{K} \subset \sR_1$ to $\frak{K} \subset \sR_L$ and sends $[0,1]\times \Sp_1$ to $[0,L]\times \Sp_1$ by the rule $\phi_L(x,y) = (L x, y)$.

On $\sR_1$ we choose a canonical homology basis $(\alpha_1,\dots,\alpha_{2g})$ such that $\alpha_2$ is the boundary curve $c_1$ of $\frak{K}$. Setting $\alpha_{i,L} = \phi_L(\alpha_i)$, $i=1,\dots,2g$, we get a corresponding homology basis on $\sR_L$, and we let $(\sigma_{1,L}, \dots, \sigma_{2g,L})$ be the dual basis of harmonic forms on $\sR_L$, and $(\mT_{1,L}, \tau_{2,L}, \dots, \tau_{2g,L})$ the modified basis with  $\mT_{1,L}$ as in \bbref{Section}{sec:sigma1Q} and the $\tau_{j,L}$ as in \bbref{Section}{sec:sigmajQ}. We recall from \bbref{Remark}{rem:T1t2star} that $\mT_{1,L}$ can be written as linear combination $\mT_{1,L} = \Hstar \tau_{2,L} + \sum_{k=3}^{2g}c_{k,L}\tau_{k,L}$ and thus has linear part $d x$ in $Z_L$.

As $L \to \infty$ these forms converge. To bring this statement into correct form we associate to any harmonic form $\omega$ on $\sR_L$ a form $\omega^{\times}$ on $\sR$ in the following way: first we let $\omega^{\times}$ be the restriction of $\omega$ to $\sR_L^{\times}$, then we interpret $\sR_L^{\times}$ as subset of $\sR$ and finally we extend (discontinuously) $\omega^{\times}$ to all of $\sR$ defining it to be zero outside $\sR_L$. The statement is now
 \begin{thm}[Convergence theorem]\label{thm:Converge}%
For $L \to \infty$ we have
\begin{enumerate}
\itemup{i)} $\mT_{1,L}^{\times}$ converges uniformly on compact sets on $\sR$ to $\mT_1$;
\itemup{ii)} $\tau_{j,L}^{\times}$ converges uniformly on compact sets to a harmonic form $\tau_j \in \Hv{\sR}$, $j=2,\dots,2g$.
\end{enumerate}
 \end{thm}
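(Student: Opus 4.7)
My plan is to prove both statements by showing that the sequences are $L^2$-Cauchy on every compact subsurface and then deducing locally uniform convergence via interior regularity. Fix a compact $K \subset \sR$ and choose $L_0$ so large that $K$ lies in $\sR_{L_0}^{\times}$ at distance at least $1$ from the boundary curves $c_1,c_2$ of $\frak{K}$. Standard interior estimates for harmonic forms -- the mean value inequality on the hyperbolic part $\frak K$, and the pointwise cylinder estimates of \bbref{Lemma}{thm:lem_flat}\,v)--vii) on the flat parts -- reduce uniform convergence on $K$ of a bounded sequence of harmonic forms to its $L^2$-convergence on a slightly larger compact subsurface of $\sR$.

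For i), take $L_0 \leq L \leq \tilde L$ and compare $\mT_{1,L}$ with $\mT_{1,\tilde L}$ on $\sR_L^{\times}$. Both have linear part $d x$ on the cylinders by construction, and \bbref{Lemma}{L:sigma1ess} gives the uniform bound $\Gamma \leq \Ess(\mT_{1,L}) \leq \frak{z}\Gamma$. The proof of \bbref{Lemma}{L.kappa1} in fact already produces the needed comparison: by dampening down the nonlinear part of $\mT_{1,\tilde L}$ near the middle of the cylinder inside $\sR_L^{\times}$ and rescaling, one manufactures a test form on $\sR_L$ with the same periods as $\mT_{1,L}$; the energy-minimising property of $\sigma_{1,L}$ (together with the analogous construction in the opposite direction) then yields $E_{\sR_L^{\times}}(\mT_{1,L}^{\times}-\mT_{1,\tilde L}^{\times}) = \mathcal{O}(e^{-2\pi L})$. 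Hence $(\mT_{1,L}^{\times})$ is $L^2$-Cauchy on every compact subset of $\sR$, and the limit $\widetilde{\mT}_1$ is harmonic on $\sR$ with linear part $d x$ on both cylinders and with nonlinear part of finite $L^2$-norm away from the ideal points $v_1,v_2$. Since each $\mT_{1,L}$ is exact, the limit is exact on $\frak K$, hence on all of $\sR$, and it has the same logarithmic poles as $\mT_1$; by the uniqueness part of \cite[II.4.3]{FK92} we conclude $\widetilde{\mT}_1 = \mT_1$.

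For ii), I apply the same scheme with \bbref{Lemma}{L.kappa1} replaced by \bbref{Lemma}{lem:capabreve} in the grafting setting ($\tilde{\frak M}=\frak M$, $\phi$ the identity, $q_{\phi}=1$), so that $\nu_L,\rho_L=\mathcal{O}(e^{-2\pi L})$. The essential energies $\Ess(\tau_{j,L})$ are bounded uniformly in $L$ by \bbref{Lemmas}{lem.Bdpkk} and \bbref{}{lem:BdEsstau2}, and the periods $\kappa_{j,L}$ by \bbref{Lemma}{L.Bkappa} for $j\geq 3$ and by \bbref{Lemma}{lem:Bdkappa2} for $j=2$. The test-form construction inside the proof of \bbref{Lemma}{lem:capabreve} produces a bound of the form $E(\tilde{\tau}_j^{\scriptscriptstyle\leftarrow}-\tau_{j,L})=\mathcal{O}(e^{-2\pi L})$, yielding the same $L^2$-Cauchy property on compact subsets of $\sR$ as in i). By \bbref{Lemma}{lem:nolinear} the linear part on the cylinders of every $\tau_{j,L}$ is the constant form $\delta_{2j}\,d y$, so the limit $\tau_j$ inherits this linear part, has finite essential energy by the uniform bound, and therefore lies in $\Hv{\sR}$ by its defining equation \beqref{eq:HvsR}.

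The hard part is the $L^2$-comparison of two harmonic forms that live on two distinct Riemann surfaces $\sR_L$ and $\sR_{\tilde L}$; this is precisely what the dampening-down machinery of \bbref{Section}{sec:HarmCy} was designed for, and was exploited in \bbref{Lemmas}{L.kappa1} and \bbref{}{lem:capabreve}. The content of the Convergence theorem is simply that those estimates are uniform enough in $L$ to be passed to the limit, after which the passage from $L^2$- to locally uniform convergence is routine interior regularity.
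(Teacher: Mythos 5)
Your proof is correct, but it takes a genuinely different route from the paper's. The paper argues by soft compactness: uniform essential-energy bounds give locally uniform $C^0$-bounds via Harnack, Arzel\`a--Ascoli extracts a convergent subsequence, the limit is pinned down by its periods and linear part, and a second application of Arzel\`a--Ascoli upgrades subsequential convergence to convergence of the whole family; crucially, the paper proves ii) \emph{first} for $\tau_{2,L},\dots,\tau_{2g,L}$ and then deduces i) by expanding $\mT_{1,L}=\Hstar\tau_{2,L}+\sum c_{k,L}\tau_{k,L}$ as in Remark~\ref{rem:T1t2star}. You instead extract a quantitative $L^2$-Cauchy estimate directly from the test-form constructions inside Lemmas~\ref{L.kappa1} and~\ref{lem:capabreve}, pass from $L^2$ to locally uniform convergence by interior regularity, and identify the limit by uniqueness; you handle $\mT_{1,L}$ directly rather than via the $\tau$'s. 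Both are valid. Your route avoids the appeal to the outside references~\cite{DMP12},~\cite{Do12} and yields an explicit $\mathcal{O}(e^{-2\pi L})$ convergence rate as a byproduct, while the paper's compactness argument is shorter and sidesteps the bookkeeping. One thing you gloss over slightly: Lemma~\ref{L.kappa1} as stated controls only $|\Ess(\mT_{1,L})-\Ess(\mT_{1,\tilde L})|$, not $E(\mT_{1,L}^\times-\mT_{1,\tilde L}^\times)$; getting the latter does work, via the Pythagorean identity $E(s)=E(\sigma_{1,L})+E(s-\sigma_{1,L})$ for the constructed test form $s$ and the observation that the rescaling factor $1/\alpha\sim L$ is exactly compensated by the $\mathcal{O}(e^{-2\pi L}/L^2)$ energy gap, but this cancellation deserves to be spelled out rather than asserted.
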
%
 \begin{proof}%
This is an adaption of \cite{DMP12}, \cite{Do12}, where a more general result is proved. We begin with the $\tau_{2,L}$. To get suitable test forms for them we fix a smooth closed 1-form $\theta_2$ on $\frak{K}$ that has vanishing periods over $\alpha_3, \dots, \alpha_{2g}$ and matches with $d y$ along the boundary of the inserted cylinder $Z_L = \{(x,y) \mid x \in [0,L], y \in \Sp_1\}$. Setting $\theta_{2,L} = \theta_2$ on $\frak{K}$ and $\theta_{2,L} = d y$ on $Z_L$ we get a test form with energy $E(\theta_{2,L}) = \vartheta +L$ where $\vartheta = E_{\frak{K}}(\theta_2)$. On the other hand, $E(\tau_{2,L}) \geq L$ by the Orthogonality lemma. Thus,
 \begin{equation*}%
L \leq E(\tau_{2,L}) \leq \vartheta + L.
 \end{equation*}%
It follows that the energies of all $\tau_{2,L}^{\times}$ on $\frak{K}$ and also the energies of their nonlinear parts in the infinite cylinders $Z_1$, $Z_2$ are uniformly bounded by $\vartheta$. Since the forms are harmonic one can now, using Harnack's inequality, apply the Arzel{\`a}-Ascoli diagonal argument to get a sequence of lengths $\{L_n\}_{n=1}^{\infty}$ with $L_n \to \infty$ such that $\tau_{2,L_n}^{\times}$ converges uniformly on compact sets to a harmonic form $\tau_2$ on $\sR$ satisfying $\Esss{\frak{K}}(\tau_2) \leq \vartheta$ and $\int_{\alpha_2}\tau_2 = 1$, $\int_{\alpha_k} \tau_2 = 0$, $k=3,\dots,2g$. Now $\tau_2$ is uniquely determined by these properties and the Arzel{\`a}-Ascoli argument applied a second time implies that the entire family $\tau_{2,L}^{\times}$ converges to $\tau_2$. Since all $\tau_{2,L}$ have linear part $dy$ (by \bbref{Lemma}{lem:nolinear}) the same is true for $\tau_2$. Finally, since $\Esss{\frak{K}}(\tau_2)$ is finite it follows that $\tau_2$ belongs to $\Hv{\sR}$. 

For $\tau_{k,L}$, $k=3,\dots, 2g$, the proof is the same except that the linear parts vanish. Here the energy of the limit forms $\tau_k$ are finite and so, by the lifting of isolated singularities property of $L^2$ harmonic differentials we have
 \begin{equation}\label{eq:taukH1}%
\tau_k \in H^1(\sR,\R), \quad k=3,\dots, 2g.
 \end{equation}%
For the proof of statement i) we write $\mT_{1,L} = \Hstar \tau_{2,L} + \sum_{k=3}^{2g}c_{k,L}\tau_{k,L}$ (\bbref{Remark}{rem:T1t2star}), where $c_{k,L} = -\int_{\alpha_k}\Hstar \tau_{2,L}$. By the convergence of the $\tau_{k,L}$, $k=3,\dots,2g$, and since they have vanishing linear parts it follows that $\Esss{\frak{K}}(\mT_{1,L})$ is bounded above by some constant that does not depend on $L$. We can therefore apply the same argument for $\mT_{1,L}$ as for $\tau_{2,L}$ plus the uniqueness property of $\mT_1$ to conclude the proof.
 \end{proof}%

\subsection{Twist at infinity}\label{sec:TwistInf}

Let $\sR$ be as before, but this time looked at with respect to its complete hyperbolic metric. We represent it in the form $\sR = V_1 \cup \frak{M}^{\ssR}  \cup V_2$ where $V_1$, $V_2$  are the cusp neighbourhoods of the ideal points at infinity $v_1$, $v_2$, cut off along the horocycles of length 1 
\begin{equation}\label{eq:cusphyp}%
\begin{aligned}
V_1 &= \{(x,y) \mid x \in ({-}\infty,0], y \in \Sp_1 \},\; \text{with the hyperbolic metric }  ds^2 = \frac{d x^2 + d y^2}{(x-1)^2},\\
V_2 &= \{(x,y) \mid  x \in [0, {+}\infty), y \in \Sp_1\},\; \text{with the hyperbolic metric }  ds^2 = \frac{d x^2 + d y^2}{(x+1)^2}.
\end{aligned}
\end{equation}
The curves $y = {\rm constant}$ in $V_1$, $V_2$ are geodesics. For convenient reference we shall call them \emph{cusp geodesics}.

The part $\frak{M}^{\ssR}$ is the closure of $\sR \setminus \{V_1 \cup V_2\}$ and shall be called the \emph{main part} of $\sR$ with respect to the hyperbolic metric. In the cases $\sR = S^{\sG}$ and $\sR = S^{\sF}$ we shall, respectively, use the notation $\frak{M}^{\sG}$ and $\frak{M}^{\sF}$ for it.

When $\sR$ arises e.g.\ as the Fenchel-Nielsen limit of a family $\big(S_t\big)$ in which $\alpha_{2,t}$ shrinks to zero then, as mentioned earlier, two problems arise: a degree of freedom is lost since the twist parameter along $\alpha_{2,t}$ disappears, and secondly, the homology of $\alpha_{1,t}$ disappears. We eliminate both problems by introducing classes of curves from $v_1$ to $v_2$ on $\sR$ that play the role of ``twist parameters at infinity''.

We shall say that a parametrised curve $\alpha : \R \to \sR$ is an \emph{admissible curve} if on $\widebar{2}{\sR}$ one has \mbox{$\displaystyle \lim_{s \to -\infty}\alpha(s) = v_1$}, $\displaystyle \lim_{s \to +\infty}\alpha(s) = v_2$ and if, furthermore, in either cusp $\alpha$ is asymptotic to a cusp geodesic. \emph{Asymptotic} in $V_1$ (and similarly in $V_2$) means that if the part of $\alpha$ in $V_1$ with respect to the above coordinates is written  in the form $\alpha(s) = (x(s), y(s))$, then $y(s)$ converges to some $y_0$ for $s \to - \infty$.
 \begin{rem}\label{rem:existintalpha}%
From the asymptotic property it follows that for any admissible curve $\alpha$ from $v_1$ to $v_2$ in $\sR$ and any harmonic form $\omega \in {\rm span}\{ H^1(\sR,\R), \Hstar \mT_1 \}$ the integral $\int_{\alpha} \omega$ is well defined and finite.
 \end{rem}%
A homotopy between admissible curves from $v_1$ to $v_2$ on $\sR$ shall be called an \emph{admissible homotopy} if in either cusp the curves remain asymptotic to their respective cusp geodesics. Such a homotopy is, of course, only possible if in either cusp the initial and the end curve is asymptotic to the same geodesic. 

It turns out that the concept of admissible homotopy is too restrictive and we shall slightly weaken it. Before doing so we prove the following, where we return to the representation of $\sR$ used in \bbref{Section}{sec:HvsR}
 \begin{lem}\label{lem:coolinasymp}
Assume $\sR$ is represented in the form $\sR = Z_1 \cup \frak{K}  \cup Z_2$ as in \beqref{eq:RKZZ}. Then the straight lines $y = {\rm const}$ in the cylinders are asymptotic to cusp geodesics.
 \end{lem}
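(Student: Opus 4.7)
The plan is to use the conformal uniqueness of a cusp neighbourhood. For concreteness I would treat $Z_2$ and $V_2$; the argument for $Z_1$ and $V_1$ is identical.

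First I would invoke the fact that the conformal structure on $\sR$ produced from the decomposition $\sR = Z_1 \cup \frak{K} \cup Z_2$ agrees with the conformal structure underlying the complete hyperbolic metric. Both sides of this identification produce an end at $v_2$, and in both cases adding $v_2$ gives a conformal surface which, in a neighbourhood of $v_2$, is a disk with marked centre. More precisely, the map $w' = e^{-2\pi(x'+iy')}$ sends a neighbourhood of infinity in $Z_2$ biholomorphically onto a punctured disk $\Delta^* = \{0 < |w'| < \varepsilon'\}$, and extends through the puncture with $v_2 \mapsto 0$. In the same way the map $w = e^{-2\pi(x+iy)}$ in the hyperbolic cusp coordinates \beqref{eq:cusphyp} sends a neighbourhood of infinity in $V_2$ biholomorphically onto a punctured disk $\{0 < |w| < \varepsilon\}$ and extends with $v_2 \mapsto 0$.

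Next I would combine these two uniformizations. Since the two conformal structures coincide on the overlap, the transition between the $w'$-chart and the $w$-chart is a biholomorphism of punctured disks near $0$. By the Riemann removable singularities theorem it extends to a biholomorphism $f$ of neighbourhoods of $0$ in $\C$ with $f(0) = 0$, and hence has a Taylor expansion
\begin{equation*}
w' = f(w) = a w + O(w^2), \qquad a \in \C^{*}.
\end{equation*}
Writing $\log a = A + iB$ and taking the principal branch of $\log$, this becomes, after pulling back through the exponentials,
\begin{equation*}
x' + i y' \;=\; (x + i y) \;-\; \frac{A + iB}{2\pi} \;+\; O(e^{-2\pi x}) \qquad \text{as } x \to +\infty,
\end{equation*}
where $y, y'$ are understood modulo $1$ (absorbing the ambiguity of $\log a$ into the $\Sp_1$-periodicity).

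Finally I would read off the conclusion. A straight line $y' = y'_0$ in $Z_2$ corresponds, under the above change of coordinates, to the curve
\begin{equation*}
y \;=\; y'_0 + \frac{B}{2\pi} + O(e^{-2\pi x}) \qquad (x \to +\infty)
\end{equation*}
in the hyperbolic cusp coordinates on $V_2$. As $x \to +\infty$ this curve tends to the cusp geodesic $y \equiv y'_0 + B/(2\pi)$, and the exponential decay of the error term is strong enough to give the required asymptoticity. The main technical point, and the only place where one has to be careful, is the extension of the transition map across the puncture and the verification that the resulting $O(e^{-2\pi x})$ error is compatible with the definition of \textit{asymptotic to a cusp geodesic} given just before the lemma; everything else is a direct computation.
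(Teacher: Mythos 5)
Your proof is correct, but it takes a route that is genuinely different from the paper's. The paper explicitly acknowledges your approach in its opening sentence — ``This can be shown by using that passing from $V_i$ to $Z_i$ is a change of conformal coordinate systems in a neighbourhood of $v_i \in \widebar{2}{\sR}$, and that for such changes the overlap map is biholomorphic'' — and then deliberately sets it aside, instead measuring the angle between the $Z_2$-coordinate lines and the cusp geodesics via the harmonic form $\mT_1$. Concretely, the paper writes $\mT_1 = dh$ with $h(x,y) = x + h^{\nlin}(x,y)$ in both the $Z_2$-coordinates and the $V_2$-coordinates, uses \bbref{Lemma}{thm:lem_flat} to get exponential decay of $\vert h^{\nlin}\vert$ and $\Vert dh^{\nlin}\Vert$, shows that the gradient of $h$ makes an exponentially small angle with the horizontal direction in each chart, and combines these two angle bounds with a shift estimate $x \geq \tilde{x} + c_3$ to deduce that a $Z_2$-coordinate line and a cusp geodesic through the same point form an exponentially small angle. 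What each route buys: yours is the textbook argument — removable singularity, Taylor expansion $w' = aw + O(w^2)$, then pull back through the two exponential charts — and gives a clean closed-form description of the asymptotics, $y = y'_0 + B/(2\pi) + O(e^{-2\pi x})$, with no reference to $\mT_1$. The paper's argument is intrinsic to the machinery of the section: it stays entirely within the analysis of $\mT_1$ on flat cylinders that the rest of the paper already relies on, which is presumably why the authors preferred it even though they do not track the constants. One small presentational point in your write-up: before taking $\log$ of the transition map you should note that $f'(0) = a \neq 0$ (injectivity, or the open-mapping theorem) so that $\log a$ makes sense, and that the ambiguity of the branch lands exactly in the $2\pi i\Z$ that the $\Sp_1$-periodicity of $y$ absorbs — you say this but it is worth making explicit, since that is precisely why the statement is about curves on $\Sp_1 = \R/\Z$ and not on $\R$.
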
%

We note that this lemma is evident for the limit surface $S^{\sG} = V_1 \cup \frak{M}^{\sG} \cup V_2$, represented with respect to the complete hyperbolic metric that arises from the Uniformization theorem, where $V_1$, $V_2$ are the hyperbolic cusps as in \ref{eq:cusphyp} and 
$\frak{M}^{\sG}$ is the main part with respect to the hyperbolic metric of $S^{\sG}$.  However, we also need this statement for the representation $S^{\sG} = Z_1 \cup \frak{M} \cup Z_2$, where $\frak{M}$ is the main part of $S$ and the coordinate lines $y =  \text{const.}$ in the cylinders are not geodesics with respect to the hyperbolic metric of $S^{\sG}$.
 \begin{proof}%
This can be shown by using that passing from $V_i$ to $Z_i$ is a change of conformal coordinate systems in a neighbourhood of $v_i \in \widebar{2}{\sR}$ (i=1,2), and that for such changes the overlap map is biholomorphic. We use another approach that measures the angle between the coordinate lines and the geodesics in terms of $\mT_1$. We do, however, not estimate the constants.

We begin with $Z_2$. Since $\mT_1$ is exact and has linear part $d x$ it is of the form $\mT_1 = d h$ with a harmonic function $h(x,y) = x + h^{\nlin}(x,y)$, where by \bbref{Lemma}{thm:lem_flat} the nonlinear part satisfies $\vert h^{\nlin}(x,y) \vert \leq c_1 e^{-2 \pi x}$, $\Vert d h^{\nlin}(x,y)\Vert \leq c_2 e^{-2\pi x}$ for some constants $c_1$, $c_2$. For $x$ large enough the first property implies that $h$ grows asymptotically like $x$, and the second property implies that the gradient ${\rm grad}h(x,y)$ at $(x,y)$ and the ``horizontal'' coordinate line  $[0,+\infty) \times \{y\}$ through $(x,y)$ form an absolute angle $\leq 2c_2 e^{-2\pi x}$.

Now we turn to $V_2$, where we write the coordinates as $\tilde{x}$, $\tilde{y}$. Here $\mT_1$ is $d \tilde{h}$ with some harmonic function $\tilde{h}(\tilde{x},\tilde{y})$, and the horizontal coordinate lines are geodesics. We have the same angle estimates (with different constants). Furthermore, since $\tilde{h}(\tilde{x},\tilde{y})$ grows asymptotically like $\tilde{x}$ there exists a constant $c_3$ (possibly negative) such that 
if $(\tilde{x},\tilde{y})$ in $V_2$ and $(x,y)$ in $Z_2$ represent the same point $p$ of $\sR$, then $x \geq \tilde{x} + c_3$. 

The two angle estimates together with the inequality $x \geq \tilde{x} + c_3$ imply that for some positive constant $c_4$ and $\tilde{x}$ large enough the horizontal coordinate line (in the sense of $Z_2$) through $(\tilde{x},\tilde{y})$ and the cusp geodesic through $(\tilde{x},\tilde{y})$ form an angle $\angle(\tilde{x},\tilde{y}) \leq c_4 e^{-2\pi \tilde{x}}$. This is sufficient to prove that the coordinate line is asymptotic to one of the cusp geodesics. In $V_1$ the proof is the same.
\end{proof}%
We now slightly loosen the notion of admissible homotopy classes by allowing ``synchronised twists'' in the cusps. We define this in terms of an arbitrary description \mbox{$\sR = Z_1 \cup \frak{K}  \cup Z_2$} and then give an intrinsic characterisation. For this we use twist homeomorphisms $J_1, J_2 : \sR \to \sR$ defined as follows. Choose any constant $b \in \R$ and let $\beta : [0,1] \to \Sp_1$ be the path \mbox{$\beta(s) = b s \mod 1$} (e.g.\ if $b =2$ then $\beta$ goes twice around $\Sp_1$.) For any $x_0 \leq 0$ (respectively, $x_0 \geq 0$) we then have a path in $Z_1$ (respectively, $Z_2$) $s \mapsto (x_0,\beta(s))$, $s \in [0,1]$.

Next, choose any $a_1 \leq 0$, $a_2 \geq 0$ and define the cut off functions
 \begin{equation*}%
\chi_1(x) =
\begin{cases}
1&\text{if $x \leq a_1-1$,}\\
a_1-x&\text{if $a_1-1\leq x \leq a_1$,}\\
0&\text{if $a_1 \leq x \leq 0$,}
\end{cases}
\quad
\chi_2(x) =
\begin{cases}
0&\text{if $0 \leq x \leq a_2$,}\\
x-a_2&\text{if $a_2 \leq x \leq a_2+1$,}\\
1&\text{if $x \geq a_2+1$.}
\end{cases}
 \end{equation*}%
This allows us to define homeomorphisms $J_1, J_2 : \sR \to \sR$ setting
 \begin{equation*}%
\text{on $Z_1$}:\; J_1(x,y) = (x,y+\beta(\chi_1(x))),
\quad
\text{on $Z_2$}:\; J_2(x,y) = (x,y+\beta(\chi_2(x)))
 \end{equation*}%
and letting $J_1$ be the identity mapping on $\frak{K}\cup Z_2$, and $J_2$ the identity mapping on $Z_1 \cup \frak{K}$. Under the mappings $J_i$ the parts in the cylinders on the left of $a_1$ and on the right of $a_2$ are then both rotated by the angle $2\pi b$. We shall call these mappings \emph{twist homeomorphisms} with twist parameter $b$. 

The next lemma characterises $b$ in an intrinsic way. For the lemma we recall that by the remark subsequent to \beqref{eq:HvsR} any $\omega \in {\rm span}\{ H^1(\sR,\R), \Hstar \mT_1 \}$ has linear part $w d y$ in $Z_1$ and $Z_2$ for some $w \in \R$.

 \begin{lem}\label{lem:twistJb}%
Let $J_1, J_2:\sR \to \sR$ be twist homeomorphisms with twist parameter $b$ as above and $\omega \in {\rm span}\{ H^1(\sR,\R), \Hstar \mT_1 \}$ a harmonic form with linear part $w d y$ in $Z_1$, $Z_2$. Then for any admissible curve $\alpha$ from $v_1$ to $v_2$ we have
 \begin{equation*}%
\int_{J_1\circ\alpha}\omega = \int_{\alpha}\omega - wb,
\quad
\int_{J_2\circ\alpha}\omega = \int_{\alpha}\omega + wb.
 \end{equation*}%
 \end{lem}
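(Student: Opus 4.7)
By symmetry (interchanging the roles of $v_1, v_2$ and of $Z_1, Z_2$) it suffices to establish the formula for $J_2$; the formula for $J_1$ follows by the analogous argument on the other cusp, with a sign change coming from the fact that $\chi_1$ decreases (rather than increases) in the direction of the cusp.

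The plan is to exploit the fact that $J_2$ is the identity outside $Z_2$ and that in $Z_2$ the harmonic form $\omega$ has the particularly simple shape $\omega = w\,dy + \omega^{\nlin}$, where, by the Fourier decomposition in \bbref{Section}{sec:HarmCy}, the nonlinear part has vanishing period over the generating loop $\{x\}\times\Sp_1$, and is therefore exact on $Z_2$. Writing $\omega^{\nlin} = dh^{\nlin}$, the function $h^{\nlin}$ decays to $0$ as $x\to+\infty$ by \bbref{Lemma}{thm:lem_flat}. Thus, since $\alpha$ and $J_2\circ\alpha$ coincide outside $Z_2$, the quantity $\int_{J_2\circ\alpha}\omega - \int_\alpha \omega$ equals the same difference computed for the restrictions $\alpha_2$ and $J_2\circ\alpha_2$ to $Z_2$.

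Parameterising $\alpha_2(s)=(x(s),y(s))$, $s\ge s_0$, with $x(s_0)=0$ (so $\chi_2(x(s_0))=0$) and lifting $y$ to a real-valued $\tilde y$, admissibility of $\alpha$ gives $x(s)\to +\infty$ and $\tilde y(s)\to \tilde y_\infty$ for some $\tilde y_\infty\in\R$. The key computation then splits into two pieces: the linear part contributes
\[
\int_{\alpha_2} w\,dy = w(\tilde y_\infty - \tilde y(s_0)), \qquad \int_{J_2\circ\alpha_2} w\,dy = w(\tilde y_\infty + b - \tilde y(s_0)),
\]
the extra $wb$ coming from the $y$-shift $b\chi_2(x(s))$, which is $0$ at $s=s_0$ and tends to $b$ as $s\to\infty$. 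The nonlinear part contributes
\[
\int_{\alpha_2}dh^{\nlin} = -h^{\nlin}(\alpha(s_0)), \qquad \int_{J_2\circ\alpha_2}dh^{\nlin} = -h^{\nlin}(J_2\circ\alpha(s_0)),
\]
where both right-hand sides use that $h^{\nlin}\to 0$ at infinity. Since $\chi_2(0)=0$ we have $J_2\circ\alpha(s_0)=\alpha(s_0)$, so these two contributions cancel. Subtracting yields $\int_{J_2\circ\alpha}\omega-\int_\alpha\omega = wb$.

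The only subtle point — and the one that requires \bbref{Remark}{rem:existintalpha} and the asymptotic clause of admissibility — is the convergence of $\tilde y(s)$ to a finite limit: without it neither $\int_{\alpha_2}w\,dy$ nor the expression for $\int_{J_2\circ\alpha_2}w\,dy$ would be well defined. Once this is granted, the calculation is essentially bookkeeping. The other ingredient (that $\omega^{\nlin}$ is exact on $Z_2$ with primitive decaying at $v_2$) is immediate from the Fourier expansion in \beqref{eq:hseries}, since $\omega$ has no $dx$-term in its linear part and $\omega^{\nlin}$ has only exponentially decaying Fourier modes in the cusp direction.
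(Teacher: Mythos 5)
Your proof is correct and takes a genuinely different route from the paper's. The paper first establishes that the difference $d_2 := \int_{J_2\circ\alpha}\omega - \int_\alpha\omega$, regarded as a function of the parameter $a_2$ in the definition of $J_2$, is \emph{constant} --- varying $a_2$ produces a homotopy of $J_2\circ\alpha$ that is the identity outside a compact set, so closedness of $\omega$ gives invariance of the integral --- and then lets $a_2 \to +\infty$, pushing the support of the integrand far out into the cusp where the nonlinear part of $\omega$ is exponentially small; in the limit only the linear term $w\,dy$ survives, contributing $bw$. You instead compute the difference once and for all, via the decomposition $\omega = w\,dy + dh^{\nlin}$ on $Z_2$ with $h^{\nlin}\to 0$ at $v_2$: the nonlinear contributions cancel because $J_2$ fixes $\{x=0\}$ while $h^{\nlin}$ vanishes at infinity, and the linear part directly produces $bw$. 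Your version is more elementary (no deformation-to-the-limit is needed) and makes explicit where admissibility enters, namely in the existence of the finite limit $\tilde y_\infty$; the paper's version is shorter but encodes the same information indirectly through the constancy-plus-limit device.

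One technical remark: parameterising $\alpha_2(s)$, $s \geq s_0$, with $x(s_0)=0$, tacitly assumes that $\alpha$ enters $Z_2$ exactly once. In general an admissible curve may cross $\partial Z_2$ several times before settling into the cusp. This does not affect the conclusion: on each earlier bounded excursion into $Z_2$ both endpoints lie on $\{x=0\}$, where $\chi_2=0$ and $J_2$ is the identity, so the $dy$-contribution telescopes to zero via $\chi_2(x)$ and the $dh^{\nlin}$-contribution cancels outright; only the final ray contributes. But since you present the $Z_2$-portion of $\alpha$ as a single arc, this reduction should be stated.
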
%
 \begin{proof}%
Let $a_1$, $a_2$ be the constants in the definition of $J_1$, $J_2$ as above. If we replace $a_1$, $a_2$, by some other values $a_1' \leq -1$, $a_2' \geq 1$, and keep $b$ fixed, then the twist homeomorphisms $J_1'$, $J_2'$ based on these new constants have the property that $J_i\circ\alpha$ and $J_i'\circ\alpha$ are homotopic with a homotopy that is the identity outside some compact region and thus $\int_{J_i\circ\alpha} \omega = \int_{J_i'\circ\alpha} \omega$ ($i=1,2)$. This implies that for $i=1,2$, the difference $d_i := \int_{J_i\circ\alpha} \omega - \int_{\alpha}\omega$, seen as functions of $a_i$ is the constant function. Now, if $a_1 \to -\infty$ and $a_2 \to +\infty$, then, by the exponential decay of the nonlinear part of $\omega$ in the cylinders, $d_1$ converges to $-b w$ and $d_2$ converges to $b w$. 
 \end{proof}%

We now define two admissible curves $\alpha$, $\alpha'$ from $v_1$ to $v_2$ to be \emph{twist equivalent} if there exists an admissible homotopy of $\alpha$ into a curve $\alpha''$ and a pair of twist homeomorphisms $J_1$, $J_2$ with the \emph{same} twist parameter $b$ such that $\alpha' = J_1\circ J_2 \circ \alpha''$.

It is a routine exercise to show that this is an equivalence relation thus splitting the admissible curves from $v_1$ to $v_2$ into \emph{twist equivalence classes}. The next lemma shows, among other things, that this definition does not depend on the choice of the representation $\sR = Z_1 \cup \frak{K} \cup Z_2$.
 \begin{lem}\label{lem:twistequiv}%
\textup{(i)} If two admissible curves $\alpha$, $\alpha'$ from $v_1$ to $v_2$ on $\sR$ are twist equivalent then $\int_{\alpha} \omega = \int_{\alpha'} \omega$ for any $\omega \in {\rm span}\{ H^1(\sR,\R), \Hstar \mT_1 \}$.

\textup{(ii)} If two admissible curves $\alpha$, $\alpha'$ from $v_1$ to $v_2$ on $\sR$ are homotopic on $\widebar{2}{\sR}$ then they are twist equivalent if and only if $\int_{\alpha} \Hstar \mT_1 = \int_{\alpha'} \Hstar \mT_1$.
\end{lem}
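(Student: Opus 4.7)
The plan is to reduce Part (i) to two separate verifications and then to deduce Part (ii) from (i) together with one structural observation.

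First I would handle Part (i). By the definition of twist equivalence, it suffices to check separately that (a) an admissible homotopy preserves the integral of $\omega$, and (b) composition with a pair of twist homeomorphisms $J_1, J_2$ with the \emph{same} twist parameter $b$ preserves the integral. Step (b) is immediate from \bbref{Lemma}{lem:twistJb}: the two contributions $-wb$ and $+wb$ cancel. For (a), given an admissible homotopy $H : [0,1] \times \R \to \sR$ with $\gamma_s(t) = H(s,t)$, I would truncate to a rectangle $R_N = [0,1] \times [-N,N]$ and apply Stokes' theorem to the closed form $H^{*}\omega$ on $R_N$ to obtain
\begin{equation*}
\int_{\gamma_1|_{[-N,N]}}\omega - \int_{\gamma_0|_{[-N,N]}}\omega = \int_{s\mapsto H(s,N)}\omega - \int_{s\mapsto H(s,-N)}\omega.
\end{equation*}
Since $\omega$ has linear part $w\,dy$ in both cusps with an exponentially decaying nonlinear part (by the Fourier expansions in \bbref{Section}{sec:HarmCy}), and since admissibility forces the intermediate $y$-coordinates $y(s,\pm N)$ to cluster uniformly in $s$ around the common asymptotic values $y_1, y_2$ as $N \to \infty$, both boundary integrals on the right tend to zero. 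In the limit this gives $\int_{\gamma_0}\omega = \int_{\gamma_1}\omega$.

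Part (ii)$\Rightarrow$ is then just (i) applied to $\omega = \Hstar\mT_1$, which by construction lies in $\mathrm{span}\{H^1(\sR,\R),\Hstar\mT_1\}$. For the converse, the structural input I would use is that any two admissible curves $\alpha$, $\alpha'$ that are homotopic on $\widebar{2}{\sR}$ admit a factorization $\alpha' = J_1^{(b_1)} \circ J_2^{(b_2)} \circ \alpha''$ where $\alpha''$ is admissibly homotopic to $\alpha$ and $b_1,b_2 \in \R$ are (a priori independent) twist parameters. Granting this, \bbref{Lemma}{lem:twistJb} applied to $\Hstar\mT_1$ (whose linear part in both cusps is $dy$, so $w=1$) combined with (i) gives
\begin{equation*}
\int_{\alpha'}\Hstar\mT_1 - \int_{\alpha}\Hstar\mT_1 = b_2 - b_1,
\end{equation*}
so the hypothesis that these two periods agree forces $b_1=b_2$, i.e.\ twist equivalence.

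The hard part is the structural factorization used in Part (ii)$\Leftarrow$: showing that a free homotopy through $\widebar{2}{\sR}$ between two admissible curves can always be split into an admissible homotopy plus two independent cusp-twists. I would argue locally at each cusp, working in a representation $\sR = Z_1 \cup \frak{K} \cup Z_2$ and restricting attention to the tail of each curve inside $Z_i$. Once the asymptotic cusp geodesic $y=y_i$ is fixed for the end of $\alpha'$, the residual freedom in deforming the end of $\alpha$ into that of $\alpha'$ through $\widebar{2}{\sR}$ is exactly a real rotation parameter (an integer number of wraps around the horocycles together with a continuous rotation from one asymptotic $y$-value to the other), which is realized by a single $J_i^{(b_i)}$. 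Gluing the two local constructions to the compact middle piece yields the factorization, and \bbref{Lemma}{lem:twistJb} confirms that the resulting $b_1,b_2$ are well defined independently of the chosen representation of $\sR$.
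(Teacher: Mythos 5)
Your proof follows the paper's approach exactly: part (i) reduces to invariance under admissible homotopies (via the exponential decay of the nonlinear parts) and cancellation of the $\mp wb$ terms from Lemma \ref{lem:twistJb} for equal-parameter twists, and part (ii) uses the factorization of a $\widebar{2}{\sR\pt}$-homotopy into an admissible homotopy composed with two a priori independent cusp twists $J_1^{(b_1)}$, $J_2^{(b_2)}$, then applies Lemma \ref{lem:twistJb} to $\Hstar\mT_1$ (whose linear part is $dy$, so $w=1$) to conclude $b_1=b_2$. The truncated Stokes' argument you supply for step (a) and the local-at-each-cusp justification of the factorization are precisely the points the paper leaves terse, and both are sound.
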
%
 \begin{proof}%
(i). From the exponential decay of the nonlinear parts of $\omega$ in the cylinders it follows that if $\alpha$ and $\alpha''$ are homotopic by an admissible homotopy, then the integrals over $\alpha$ and $\alpha''$ are the same. By \bbref{Lemma}{lem:twistJb} the same is true for the integrals over $\alpha''$ and $J_1 \circ J_2 \circ \alpha''$ if $J_1$ and $J_2$ have the same twist parameter $b$.

(ii). Now assume that $\alpha$ and $\alpha'$ are homotopic on $\widebar{4}{\sR\pt}$ (meaning that after a parameter change the curves are extended to $v_1$, $v_2$ for endpoints and then the homotopy acts on the extended curves keeping $v_1$, $v_2$ fixed). This condition is equivalent to saying that there exists an admissible homotopy from $\alpha$ to some admissible curve $\alpha''$ and two twist homeomorphisms $J_1$, $J_2$ with some twist parameters $b_1$, $b_2$ (that do not coincide, in general) such that $\alpha' = J_1 \circ J_2 \circ \alpha''$. By (i) we have $\int_{\alpha}\Hstar \mT_1 = \int_{\alpha''}\Hstar \mT_1$, and by \bbref{Lemma}{lem:twistJb} $\int_{\alpha''}\Hstar \mT_1 = b_2-b_1 + \int_{J_1\circ J_2 \circ \alpha''} \Hstar \mT_1 = b_2-b_1 + \int_{\alpha'} \Hstar \mT_1$. If we now assume that $\int_{\alpha} \Hstar \mT_1 = \int_{\alpha'} \Hstar \mT_1$ then $b_1 = b_2$ and thus $\alpha$ and $\alpha'$ are twist equivalent. This proves (ii) in the ``{\ }if{\ }'' direction. The ``{\ }only if{\ }'' direction is a special case of (i). \end{proof} 
 \begin{figure}[h]
 \vspace{0pt}
 \begin{center}
 \leavevmode
 \SetLabels
(0.49*0.105) $\frak{M}^{\ssR}$\\
(0.17*0.59) $V_1$\\
(0.86*0.34) $V_2$\\
(0.40*0.685) $A_0$\\
(0.253*0.44) \raisebox{2pt}{$\scriptscriptstyle \frac{1}{2}$}$\vartheta$\\
(0.778*0.443) \raisebox{2pt}{$\scriptscriptstyle \frac{1}{2}$}$\vartheta$\\
(0.10*0.45) $A_1$\\
(0.91*0.61) $A_2$\\
(0.26*0.06) $c_1$\\
(0.75*0.06) $c_2$\\
 \endSetLabels
 \AffixLabels{
 \includegraphics{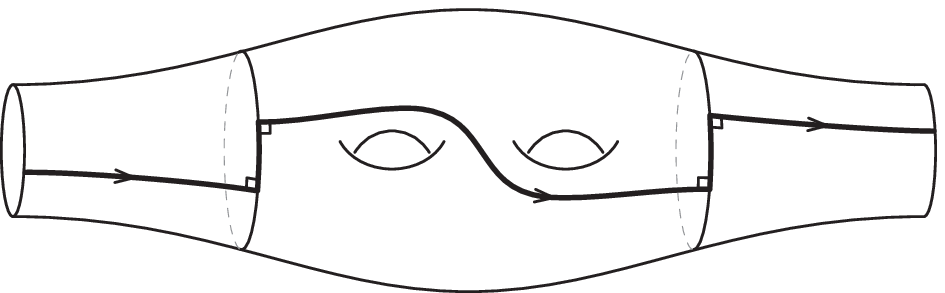} }
 \end{center}
 \vspace{-0pt}
 \caption{\label{fig:Normalform} Twist equivalence class represented by a polygon in normal form. The twist is $\vartheta$.}
 \end{figure}
For future reference we associate a twist parameter $\vartheta$ with any twist class using the representation $\sR = V_1 \cup \frak{M}^{\ssR} \cup V_2$ as in the beginning of this section.  Let $c_i$ be the common boundary of $\frak{M}^{\ssR}$ and $V_i$, $i=1,2$. For the arcs of curves in $\frak{M}^{\ssR}$ with initial point on $c_1$ and endpoint on $c_2$ we have the homotopy classes with endpoints moving freely on $c_1$, $c_2$. These classes are in natural one-to-one correspondence with the homotopy classes of curves from $v_1$ to $v_2$ in $\widebar{2}{\sR}$. Furthermore, in each class there is a unique \emph{orthogeodesic}, i.e. a geodesic arc that meets $c_1$, $c_2$ orthogonally at its endpoints.

Let now $A_0$ be an orthogeodesic from $c_1$ to $c_2$ (\bbref{Fig.}{fig:Normalform}) and take $\vartheta \in \R$. Along $c_1$ we attach an arc $a_{1,\vartheta}$ of oriented length $\vartheta$ whose endpoint coincides with the initial point of $A_0$; we attach a similar arc $a_{2,\vartheta}$ along $c_2$ whose initial point coincides with the endpoint of $A_0$. With respect to the earlier described coordinates for $V_1$, $V_2$ (where the $y$-coordinates are taken $\mod 1$) these arcs may be parametrised as $A_{1,\vartheta}(s) = (-1,\vartheta s +y_1)$, $A_{2,\vartheta}(s) = (1,\vartheta s +y_2)$, $s\in[0,1]$ for appropriate $y_1$, $y_2$ . By further attaching cusp geodesics $A_1 \subset V_1$ to the initial point of $A_{1,\vartheta}$ and  $A_2 \subset V_2$ to the endpoint of $A_{2,\vartheta}$ we get an admissible curve 
 \begin{equation}\label{eq:normalform}%
A_{1,\vartheta} = A_1 A_{1,\vartheta} A_0 A_{2,\vartheta} A_2  \end{equation}%
from $v_1$ to $v_2$ as depicted in \bbref{Fig.}{fig:Normalform}. By \bbref{Lemma}{lem:twistJb} and \bbref{}{lem:twistequiv} we have the following.
 \begin{rem}\label{rem:normalform}%
$A_{1,\vartheta}$ represents a twist equivalence class of admissible curves from $v_1$ to $v_2$ on $\sR$. Conversely, any twist equivalence class is represented by a polygon as in \beqref{eq:normalform} with uniquely determined orthogeodesic $A_0$ and uniquely determined $\vartheta$. We call $\vartheta$ the \emph{twist parameter} of the class and $A_{1,\vartheta}$ its representative in \emph{normal form}.

We also remark by virtue of \bbref{Lemma}{lem:twistJb} that for any fixed $A_0$ there exists a constant $c_0$ such that
 \begin{equation}\label{eq:Tandtwist}%
\int_{A_{1,\vartheta}} \Hstar \mT_1 = \vartheta + c_0, \quad \text{for all $\vartheta \in \R$.}
 \end{equation}%

 \end{rem}%
By fixing  an orthogeodesic $A_0$ or, what amounts to the same, a homotopy class of admissible curves from $v_1$ to $v_2$ on $\widebar{2}{\sR}$ we can blow up $\sR$ into a one parameter family by marking it with the twist equivalence classes occurring in \beqref{eq:normalform}. For $\sR$ marked with $A_{1,\vartheta}$ we may then call $\vartheta$ its \emph{twist parameter at infinity}.

\subsection{Period matrix with a parameter}\label{sec:Pmatpar}
Let \mbox{$\sR = V_1 \cup \frak{M}^{\ssR} \cup V_2$} be as above and mark it with the twist equivalence class of some admissible curve $\mathcal{A}_1$ from $v_1$ to $v_2$. Let $\alpha_2$ be the boundary curve of $V_1$ and select a canonical homology basis $\alpha_3, \dots, \alpha_{2g}$ of $\widebar{2}{\sR}$ satisfying
 \begin{equation*}%
\mathcal{A}_1 \cap \alpha_{j} = \emptyset, \quad j = 3, \dots, 2g.
 \end{equation*}%
To this configuration we shall associate a parametrized family of $2g$ by $2g$ matrices $P_{\sR}(\lambda)$, $\lambda>0$, where the notation suppresses the mention of the curve system. Here the parameter $\lambda$ is related to the length of the attached cylinders. 

Let $(\tau_3, \dots, \tau_{2g})$ be the dual basis of harmonic forms for $(\alpha_3, \dots, \alpha_{2g})$ on $\widebar{2}{\sR}$ (restricted to $\sR$), $\mT_1$ the exact form with linear parts $d x$ in $V_1$, $V_2$ as in \bbref{Section}{sec:HvsR} and set 
 \begin{equation}\label{eq:tautwo}%
\tau_2 = \Hstar \mT_1 - \sum_{k=3}^{2g}\big({\textstyle \int_{\alpha_k}} \Hstar \mT_1 \big) \tau_k
 \end{equation}%
so that $\tau_2$ has period 1 over $\alpha_2$ and period 0 over $\alpha_3, \dots, \alpha_{2g}$. With these forms we define
 \begin{equation}\label{eq:Pmatpar1}%
q_{ij}=\int_{\sR}\tau_i \wedge \Hstar \tau_j, \quad i,j = 2,\dots,2g, \; (i,j) \neq (2,2)
 \end{equation}%
The matrix $Q = \big(q_{i,j}\big)_{i,j \geq 3}$ is the Gram period matrix of $\widebar{2}{\sR}$ with respect to the homology basis $(\alpha_3, \dots, \alpha_{2g})$. For $j \geq 3$ the integrals $q_{2j}$ and $q_{j2}$ are well defined by the Orthogonality lemma since the $\tau_j$ have vanishing linear parts in the cusps (given that $\tau_j \in L^2(\sR)$, $j\geq 3$). For $(i,j) = (2,2)$ the integral is infinite and we define an ersatz
 \begin{equation}\label{eq:Pmatpar2}%
\pi_{22} = E(\tau_2 - \Hstar \mT_1)
 \end{equation}%
which is again finite given that $\tau_2$ and $\Hstar \mT_1$ have the same linear parts. 
We also note that, for any representation $\sR = Z_1 \cup \frak{K} \cup Z_2$,
 \begin{equation}\label{eq:Formulap22}%
\pi_{22} = \Ess_{\frak{K}}(\tau_2) - \Ess_{\frak{K}}(\mT_1).
 \end{equation}%
Writing $\tau_2 = \Hstar \mT_1 + \omega_2$, where $\omega_2$ and also $\Hstar \omega_2$ belong to $H^1(\sR,\R)$ we have, by \bbref{Lemma}{lem:T1ortw}, \linebreak $\Ess_{\frak{K}}(\tau_2)=\scp{\Hstar \mT_1}{\Hstar \mT_1}_{\frak{K}} + \scp{\omega_2}{\omega_2}_{\frak{K}} = \Ess_{\frak{K}}(\Hstar \mT_1) + E(\omega_2)$.

We complete the definitions setting 
 \begin{equation}\label{eq:Pmatpar3}%
q_{1j} = q_{j1} = 0, \quad j=1,\dots , 2g.
 \end{equation}%
Finally, we need the constants
 \begin{equation}\label{eq:Pmatpar4}%
\kappa_1 \defeq 1, \quad \kappa_j \defeq -\int_{\mathcal{A}_1}\tau_j, \quad j=2, \dots, 2g.
 \end{equation}%
In the case where $\sR$ is the limit surface $S^{\sG}$ or $S^{\sF}$ these constants will, respectivley, be denoted by
 \begin{equation}\label{eq:Pmatpar5}%
q_{ij}^{\sG}, \pi_{22}^{\sG}, \kappa_j^{\sG},
\quad
q_{ij}^{\sF}, \pi_{22}^{\sF}, \kappa_j^{\sF}, \quad i,j = 1,\dots, 2g,\; (i,j) \neq (2,2).
 \end{equation}%
 \begin{defi}\label{def:Pmatpar}%
$P_{\sR}(\lambda)$, $\lambda>0$, is the matrix function with the following entries based on \beqref{eq:Pmatpar1}-\beqref{eq:Pmatpar4},
 \begin{align*}%
p_{ij}(\lambda) &= q_{ij}+\frac{\kappa_i\kappa_j}{\lambda}, \quad i, j = 1,\dots,2g, \quad (i,j) \neq (2,2),\\
p_{22}(\lambda) &= \pi_{22} + \lambda + \frac{\kappa_2^2}{\lambda}.
 \end{align*}%
 \end{defi}%
The analog of \bbref{Theorem}{thm:small_scg} in the non separating case is now as follows, where the $\mathcal{O}_{\rm A}$-symbol is as in \beqref{eq:OAsymbol}. (The theorem is a restatement of \bbref{Theorem}{thm:nonsepGIntro}, the proof follows in  \bbref{Section}{sec:ProofThmNonsep}).
 \begin{thm}\label{thm:nonsepG}%
Let $S$ be a compact Riemann surface of genus $g \geq 2$, $\gamma$ a non separating geodesic of length $\ell(\gamma) \leq \frac{1}{4}$ on $S$ and $\big(S_L\big)_{L\geq 4}$ with limit $S^{\sG}$ the corresponding grafting family. Choose a canonical homology basis $\rm A = (\alpha_1, \alpha_2, \ldots, \alpha_{2g})$ with $\alpha_2 = \gamma$ and let $P_{S_L}$, $P_{S^{\sG}}(\lambda)$ be the corresponding Gram period matrices. Finally set $\frak{m}^{\sG} = \Ess_{\frak{M}}(\mT_1^{\sG})$, where $\frak{M}$ is the main part of $S$ with respect to $\gamma$. Then for all $L \geq 4$,
 \begin{equation*}%
P_{S_L} = P_{S^{\sG}}(\frak{m}^{\sG}+L) + \rho^{\sG}(L),
 \end{equation*}%
where the entries of the error term $\rho^{\sG}(L)$ have the following growth rates
\begin{alignat*}{2}
\nonumber
& \rho_{1j}^{\sG}(L)  = \mathcal{O}_{\rm A} \left( \tfrac{1}{L}e^{-2\pi L} \right) ,
\quad  &&\text{  \ \ for \ \ } j = 1, \dots, 2g, \,\, j \neq 2,\\
& \rho_{2j}^{\sG}(L)  = \mathcal{O}_{\rm A} \left( \tfrac{1}{L}e^{-\pi L} \right), \quad  &&\text{  \ \ for \ \ } j = 1, \dots, 2g,\\
\nonumber
& \rho_{ij}^{\sG}(L)  = \mathcal{O}_{\rm A} \left( e^{-2\pi L} \right), \quad &&\text{ \ \ for \ \ } i,j = 3, \dots, 2g.
\rule{40ex}{0ex} 
\end{alignat*}%
\end{thm}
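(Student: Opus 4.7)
The strategy is to express the entries of $P_{S_L}$ in terms of the relaxed basis $(\sigma_{1,L}, \tau_{2,L}, \ldots, \tau_{2g,L})$, then pass to the limit $L \to \infty$ using the Convergence theorem (\bbref{Theorem}{thm:Converge}) together with the quantitative comparison lemmas of \bbref{Sections}{sec:sigT1} and \bbref{}{sec:RelaxDual}. First I would use $\sigma_{j,L} = \tau_{j,L} + \kappa_{j,L}\sigma_{1,L}$ from \beqref{eq:sigmakapj} together with $\scp{\tau_{j,L}}{\sigma_{1,L}}=0$ (\bbref{Lemma}{lem:sigma1ortho}), setting $\tau_{1,L} = 0$ and $\kappa_{1,L}=1$, to obtain
\begin{equation*}
p_{ij}(S_L) = \scp{\tau_{i,L}}{\tau_{j,L}} + \kappa_{i,L}\kappa_{j,L}\, E(\sigma_{1,L}), \qquad i,j = 1,\dots,2g.
\end{equation*}
Via \beqref{eq:sigmaT1} this reduces the task to comparing three packages of invariants on $S_L$ with those on $S^{\sG}$: the essential energy $\frak{m}(S_L) \defeq \Ess(\mT_{1,L})$, the periods $\kappa_{j,L}$, and the pairings $\scp{\tau_{i,L}}{\tau_{j,L}}$.

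Next I would apply \bbref{Lemma}{L.kappa1}, \bbref{}{lem:capabreve} to compare $S_L$ with $S_{\tilde{L}}$ in the grafting setup where $\phi$ is the identity on $\frak{M}$, so $q_\phi = 1$ and $\nu_L, \rho_L = \mathcal{O}(e^{-2\pi L})$. Sending $\tilde{L}\to\infty$ and invoking \bbref{Theorem}{thm:Converge} (which delivers locally uniform convergence $\mT_{1,\tilde L}\to \mT_1^{\sG}$ and $\tau_{j,\tilde L}\to \tau_j^{\sG}$), one extracts
\begin{equation*}
|\frak{m}(S_L) - \frak{m}^{\sG}| = \mathcal{O}_{\rm A}(e^{-2\pi L}), \qquad
\big|\scp{\tau_{i,L}}{\tau_{j,L}}_{\frak{M}} - q_{ij}^{\sG}\big| = \mathcal{O}_{\rm A}(e^{-2\pi L}), \ \ (i,j)\neq (2,2),
\end{equation*}
the identification $\scp{\tau_i^{\sG}}{\tau_j^{\sG}}_{\frak{M}} = q_{ij}^{\sG}$ being direct from the definitions \beqref{eq:Pmatpar1}. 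The diagonal case $(2,2)$ is handled separately via $\pi_{22}^{\sG} = \Esss{\frak{K}}(\tau_2^{\sG}) - \Esss{\frak{K}}(\mT_1^{\sG})$ (\beqref{eq:Formulap22}) combined with $E(\tau_{2,L}) = \Ess(\tau_{2,L}) + L$ from the \bbref{Orthogonality lemma}{thm:lemorthogonal}, giving $E(\tau_{2,L}) = \frak{m}^{\sG} + L + \pi_{22}^{\sG} + \mathcal{O}_{\rm A}(e^{-2\pi L})$.

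For $\kappa_{j,L}$, \bbref{Lemma}{lem:capabreve} gives $|\kappa_{j,L} - \kappa_{j,\tilde L}|^2 \lesssim e^{-2\pi L}$, hence $|\kappa_{j,L} - \kappa_j^{\sG}| = \mathcal{O}_{\rm A}(e^{-\pi L})$ by passage to the limit; this is adequate for $p_{12}$ but not for $p_{1j}$ with $j\geq 3$. For those, I would sharpen the estimate by writing $\kappa_{j,L} = -\int_{\alpha_{1,L}}\tau_{j,L}$, splitting $\alpha_{1,L}$ into its $\frak{M}$-part and its cylinder-part, and using that $\tau_{j,L}$ for $j\geq 3$ has \emph{vanishing linear parts} on the cylinder (\bbref{Lemma}{lem:nolinear} together with $\int_{\alpha_2}\tau_{j,L} = 0$). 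Its local primitive decomposes as $h_{j,L}^- + h_{j,L}^+$, and \bbref{Lemma}{thm:lem_flat}(v)-(vi) shows that $h_{j,L}^+$ evaluated near $x=0$ and $h_{j,L}^-$ near $x=L$ both decay like $e^{-2\pi L}$. Thus the cylinder contribution to $\kappa_{j,L}$ reduces up to $\mathcal{O}(e^{-2\pi L})$ to the two decoupled boundary traces on $c_1$ and $c_2$, which converge to the corresponding traces of $\tau_j^{\sG}$ at the same rate by applying the same decay estimate on the larger surface $S_{\tilde L}$ and letting $\tilde L\to\infty$.

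Assembling: the formula $p_{ij}(S_L) = \scp{\tau_{i,L}}{\tau_{j,L}} + \kappa_{i,L}\kappa_{j,L}/(\frak{m}(S_L)+L)$ and Taylor expansion of $1/(\frak{m}^{\sG}+L+\varepsilon)$ with $\varepsilon = \mathcal{O}(e^{-2\pi L})$ absorb secondary errors, the $\frac{1}{L}$ factors arising from the denominators $\frak{m}^{\sG}+L \gtrsim L$. \textbf{The main obstacle} is Step 3: upgrading the rate of $\kappa_{j,L}-\kappa_j^{\sG}$ from $e^{-\pi L}$ to $e^{-2\pi L}$ for $j \geq 3$. \bbref{Lemma}{lem:capabreve} is inherently quadratic and loses a factor of two in the exponent, so one is forced to open up the pointwise machinery of \bbref{Section}{sec:HarmCy} and exploit the asymmetry between $\tau_2$ (which carries a linear $dy$ term, limiting it to $e^{-\pi L}$) and $\tau_j$, $j\geq 3$ (purely nonlinear in the cylinder), thereby accounting for the split between the $p_{12}$ and $p_{1j}$ bounds in the theorem statement.
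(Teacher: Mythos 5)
Your overall strategy closely mirrors the paper's: expressing $p_{ij}(S_L)$ in the relaxed basis via $\sigma_j = \tau_j + \kappa_j\sigma_1$ and \bbref{Lemma}{lem:sigma1ortho}, comparing essential energies, scalar products and $\kappa$-periods via \bbref{Lemmata}{L.kappa1}, \bbref{}{lem:capabreve} together with the \bbref{Convergence Theorem}{thm:Converge}, and expanding the $1/(\frak{m}^{\sG}+L)$ denominators. You also correctly locate where the argument becomes delicate, namely obtaining $\kappa_{j,L}-\kappa_j^{\sG} = \mathcal{O}_{\rm A}(e^{-2\pi L})$ for $j\ge 3$.

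However your diagnosis of the obstacle is partly wrong, and your proposed remedy is a sketch rather than an argument. You write that ``Lemma \ref{lem:capabreve} is inherently quadratic and loses a factor of two in the exponent,'' but that is only true of its estimate \beqref{eq:capabreve2}; the same lemma also contains the scalar-product comparison \beqref{eq:capabreve15}, which has the linear rate $\nu_L \sim e^{-2\pi L}$. The paper's actual move, which you miss, is the identity $\kappa_j = \scp{\tau_2}{\Hstar \tau_j}$ for $j\ge 3$ from \bbref{Lemma}{L.Bkappa} (equation \beqref{eq:Bkapj2}). Since $\Hstar \tau_j$ for $j\ge 3$ has vanishing linear parts in the cylinder by \bbref{Lemma}{lem:nolinear}, it lies in the span of $(\tau_i)_{i\ge 2}$, so one may apply \beqref{eq:capabreve15} directly with $\omega=\tau_{2,L}$, $\eta=\Hstar\tau_{j,L}$ to get
\begin{equation*}
\vert \kappa_{j,L} - \kappa_j^{\sG}\vert
= \big\vert \scp{\tau_{2,L}}{\Hstar \tau_{j,L}}_{\frak{M}} - \scp{\tau_2^{\sG}}{\Hstar \tau_j^{\sG}}_{\frak{M}} \big\vert
\leq \nu_L \ssqrt{q_{22}^{\sG}}\,\ssqrt{q_{jj}^{\sG}}, \qquad j = 3,\dots,2g,
\end{equation*}
and this identity, not a sharpened decay estimate, is exactly what accounts for the difference between the $e^{-\pi L}$ rate at $j=2$ and the $e^{-2\pi L}$ rate at $j\ge 3$. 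Your alternative route — splitting $\alpha_{1,L}$ into $\frak{M}$-part and cylinder-part and invoking \bbref{Lemma}{thm:lem_flat}\,(v)--(vi) — is a plausible idea, but as stated it is incomplete: you have not established that the ``decoupled boundary traces'' converge to the corresponding traces of $\tau_j^{\sG}$ at the claimed rate, and making that precise would essentially require redoing the quantitative stability analysis that \bbref{Lemma}{lem:capabreve} already packages. Everything else in your outline (the decomposition of $p_{ij}$, the $(2,2)$-entry via $\pi_{22}^{\sG}$ and the \bbref{Orthogonality lemma}{thm:lemorthogonal}, the $\frac{1}{L}$ gains from the denominators) is aligned with the paper's proof.
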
%
The Gram period matrix $P$ of a compact Riemann surface is known to be symplectic where, in accordance with our conventions for a canonical homology basis (\bbref{Section}{sec:Introd}), $P$ is called \emph{symplectic} in our context,  if $P {\bf J}P={\bf J}$ for the block diagonal matrix
 \begin{equation}\label{eq:Jblock}%
{\bf J}={\scriptstyle
\begin{pmatrix}
0&1&   &   &\\
-1&0&  & &   \rule{-4ex}{0pt}       \smash{\text{\raisebox{1.5ex}{$0$}}}\\
 & &  \ddots\\
 &  & &  0&1\\
& \rule{-4ex}{0pt}       \smash{\text{\raisebox{6pt}{$0$}}}    & &  -1&0
\end{pmatrix}}.
 \end{equation}%
As a consequence of the theorem we have the following.
 \begin{cor}\label{cor:symplectic}%
The matrices $P_{\sR}(\lambda)$ are symplectic for all $\lambda>0$.
 \end{cor}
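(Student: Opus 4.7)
The plan is to leverage the fact that each $P_{S_L}$ is symplectic (being the Gram period matrix of a compact Riemann surface with respect to a canonical homology basis, as recalled in the paragraph immediately following \bbref{Theorem}{thm:nonsepG}) and pass to the limit $L\to\infty$ via the comparison formula of \bbref{Theorem}{thm:nonsepG}. The crucial observation is that $P_{S^{\sG}}(\mu){\bf J}P_{S^{\sG}}(\mu)-{\bf J}$ has entries which are Laurent polynomials of bounded degree in $\mu$; such a matrix cannot decay faster than any polynomial along a continuous ray $\mu\to\infty$ unless every entry vanishes identically.

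First I would substitute $P_{S_L}=P_{S^{\sG}}(\mu)+\rho^{\sG}(L)$, with $\mu=\frak{m}^{\sG}+L$, into the symplectic identity $P_{S_L}{\bf J}P_{S_L}={\bf J}$, which gives
\begin{equation*}
P_{S^{\sG}}(\mu){\bf J}P_{S^{\sG}}(\mu)-{\bf J} = -\bigl(P_{S^{\sG}}(\mu){\bf J}\rho^{\sG}(L) + \rho^{\sG}(L){\bf J}P_{S^{\sG}}(\mu) + \rho^{\sG}(L){\bf J}\rho^{\sG}(L)\bigr).
\end{equation*}
By \bbref{Definition}{def:Pmatpar} the matrix $P_{S^{\sG}}(\mu)$ has the form $Q'+\mu E_{22}+\mu^{-1}KK^{\rm T}$ with constant matrices $Q'$, $E_{22}$, $K$, so the left hand side is a matrix whose entries are Laurent polynomials in $\mu$ with exponents in $\{-2,-1,0,1,2\}$.

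Next I would bound the right hand side. By the error estimates in \bbref{Theorem}{thm:nonsepG}, every entry of $\rho^{\sG}(L)$ is $\mathcal{O}_{\rm A}(\tfrac{1}{L}e^{-\pi L})$ (the slowest of the three decay rates), while every entry of $P_{S^{\sG}}(\mu)$ grows at most linearly in $\mu$. Hence each entry of the right hand side is bounded by a polynomial in $\mu$ multiplied by $e^{-\pi L}$, and therefore decays faster than any negative power of $\mu$ as $L\to\infty$.

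Finally, as $L$ varies continuously over $[4,\infty)$, so does $\mu$ over $[\frak{m}^{\sG}+4,\infty)$. A matrix of Laurent polynomials in $\mu$ whose entries are pointwise bounded, for all sufficiently large $\mu$, by an expression that decays faster than every negative power of $\mu$ must have all coefficients equal to zero. We conclude that $P_{S^{\sG}}(\mu){\bf J}P_{S^{\sG}}(\mu)={\bf J}$ holds as an identity of Laurent polynomials, and hence for every $\lambda>0$. The only delicate point is the polynomial-versus-exponential comparison: one checks, by reading off successive leading coefficients, that a nonzero Laurent polynomial of degree at most $2$ in $\mu$ cannot satisfy a bound of the form $C\mu^{2}e^{-\pi(\mu-\frak{m}^{\sG})}$ for all large $\mu$, which is exactly the bound delivered by the entries of $\rho^{\sG}(L)$ combined with the linear growth of $P_{S^{\sG}}(\mu)$.
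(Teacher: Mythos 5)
Your proposal is correct and follows essentially the same route as the paper's proof: substitute the comparison formula from Theorem \ref{thm:nonsepG} into $P_{S_L}{\bf J}P_{S_L}={\bf J}$, observe that the symplectic defect of $P_{S^{\sG}}(\mu)$ is a rational (Laurent-polynomial) function of $\mu$ while the error term is exponentially small, and conclude that the defect must vanish identically. The only cosmetic difference is that the paper prefaces the argument with the remark that an arbitrary $\sR$ can be realized as $S^{\sG}$ for some family $(S_L)$, which is needed to deduce the statement for all $\sR$ and not just for a particular grafting limit.
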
%
 \begin{proof}%
Any $\sR$ can be understood as a limit $S^{\sG}$ of some family $\big(S_L\big)_{L\geq 4}$. Using that all $P_{S_L}$ are symplectic we have, writing $S^{\sG} = \sR$, $\frak{m}^{\sG}+L = \lambda$ and $\rho^{\sG}(L) = r(\lambda)$,
 \begin{equation*}%
{\bf J} = P_{S_L} {\bf J} P_{S_L} = P_{\sR}(\lambda) {\bf J} P_{\sR}(\lambda) + \Omega(\lambda)
 \end{equation*}%
with $\Omega(\lambda) = P_{\sR}(\lambda) {\bf J} r(\lambda) + r(\lambda){\bf J}P_{\sR}(\lambda) + r(\lambda) {\bf J} r(\lambda)$. Now ${\bf J}$ is constant, $P_{\sR}(\lambda) {\bf J} P_{\sR}(\lambda)$ is a rational function of $\lambda$ and $\Omega(\lambda)$ is exponentially decaying as $\lambda \to \infty$. This is only possible if $P_{\sR}(\lambda) {\bf J} P_{\sR}(\lambda)$ is constant and $\Omega(\lambda)$ vanishes identically.
 \end{proof}%

For $t>0$, we set

 \begin{equation*}
L_t = \frac{\pi}{t}-\frac{2\arcsin t}{t}  \text{ \ with \ } \lim_{t \to 0} \frac{2\arcsin t}{t} = 2.
 \end{equation*}%

Then we obtain for the Fenchel-Nielsen construction:

\begin{thm}\label{thm:nonsepF}%
Let $\big(S_t\big)_{t \leq \ell(\gamma)}$ with limit $S^{\sF}$ be the Fenchel-Nielsen family of $S$ and set $\frak{m}^{\sF} = \Ess_{\frak{M^{\sF}}}(\mT_1^{\sF})$, where $\frak{M^{\sF}}$ is the main part of $S^{\sF}$. Then we have the following comparison for the Gram period matrices $P_{S_t}$ and $P_{S^{\sF}}$,
 \begin{equation*}%
P_{S_t} = P_{S^{\sF}}(\frak{m}^{\sF}+L_t) + \rho^{\sF}(t),
 \end{equation*}%
where  the entries of the error term $\rho^{\sF}(t)$ have the following growth rates
\begin{enumerate}
\item[{}]%
$\rho_{11}^{\sF}(t) = \mathcal{O}_{\rm A}(t^4)$, \quad
$\rho_{12}^{\sF}(t) = \mathcal{O}_{\rm A}(t^2)$\quad
$\rho_{1j}^{\sF}(t) = \mathcal{O}_{\rm A}(t^3)$, \quad $j=3, \dots, 2g$,
\item[{}]%
$\rho_{ij}^{\sF}(t)=\mathcal{O}_{\rm A}(t^2)$, \quad $i,j = 2,\dots,2g$.
\end{enumerate}
 \end{thm}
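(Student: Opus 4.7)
The plan is to bootstrap from \bbref{Theorem}{thm:nonsepG} by combining it with the quasi-conformal comparison between $S_t^{\sG}$ and $S^{\sF}$ provided by \bbref{Theorem}{thm:psRGRF}. First I would apply \bbref{Theorem}{thm:nonsepG} to the grafting family of $S_t$ (rather than of $S$): evaluating at $L = L_t$ recovers $S_t$ up to conformal equivalence, so

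\begin{equation*}
P_{S_t} = P_{S_t^{\sG}}(\frak{m}_t^{\sG} + L_t) + \mathcal{O}_{\rm A}\bigl(\tfrac{1}{L_t}e^{-\pi L_t}\bigr),
\end{equation*}

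with the sharper block-wise exponential decay rates of that theorem. Since $L_t \geq \pi/t - 2$ by \beqref{eq:Lgamma}, each such exponential error is $o(t^k)$ for every $k$ and may be absorbed into any polynomial-in-$t$ remainder. It therefore suffices to compare $P_{S_t^{\sG}}(\frak{m}_t^{\sG}+L_t)$ with $P_{S^{\sF}}(\frak{m}^{\sF}+L_t)$ entry by entry.

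For this comparison I would invoke \bbref{Theorem}{thm:psRGRF} applied to $S_t$ cut along $\gamma_t$, which yields a marking-preserving quasi-conformal homeomorphism $\psi_t : S_t^{\sG} \to S^{\sF}$ of dilatation at most $(1+2t^2)^2$ that is conformal on the attached infinite cylinders. Using $\psi_t$ I would establish the following ingredient bounds: $|q_{ij}^{\sG} - q_{ij}^{\sF}| = \mathcal{O}_{\rm A}(t^2)$ for $(i,j) \neq (2,2)$ with $i,j \geq 2$, via \bbref{Lemma}{L:qcIneq} applied to the $L^2$-forms $\tau_j$ for $j \geq 3$ and to $\tau_2 - \Hstar\mT_1$ for the mixed entries (using the essential scalar product of \beqref{eq:EssScalPeod} on $S_t^{\sG}$); $|\pi_{22}^{\sG} - \pi_{22}^{\sF}| = \mathcal{O}_{\rm A}(t^2)$ by the same type of argument based on \beqref{eq:Formulap22}; $|\frak{m}_t^{\sG} - \frak{m}^{\sF}| = \mathcal{O}_{\rm A}(t^2)$ via a limit version of \bbref{Lemma}{L.kapqcf} obtained by exhausting through $S_{L,t}^{\sG}, S_L^{\sF}$ and invoking the Convergence theorem (\bbref{Theorem}{thm:Converge}); $|\kappa_j^{\sG} - \kappa_j^{\sF}| = \mathcal{O}_{\rm A}(t^2)$ for $j \geq 3$ from the invariance of periods along $\psi_t$ combined with the $L^2$ convergence of the $\tau_j$; and finally the strictly weaker $|\kappa_2^{\sG} - \kappa_2^{\sF}| = \mathcal{O}_{\rm A}(t)$, which is all that can be hoped for because $\Hstar \mT_1$ in the intrinsic formula $\kappa_2 = \int_{\alpha_1} \Hstar \mT_1$ from \beqref{eq:relkappa2} has infinite energy and must be handled via truncated integrals.

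The entry-wise bounds for $\rho^{\sF}(t)$ then follow from \bbref{Definition}{def:Pmatpar} by elementary algebra, the crucial observation being that $1/(\frak{m}+L_t) = \mathcal{O}(t)$: an $\mathcal{O}(t^2)$ change in $\frak{m}$ yields an $\mathcal{O}(t^2/L_t^2) = \mathcal{O}(t^4)$ change in $p_{11}$; an $\mathcal{O}(t^2)$ change in $\kappa_j$ for $j\geq 3$ combined with the $1/L_t$ factor gives $\mathcal{O}(t^3)$ for $p_{1j}$; the weaker $\mathcal{O}(t)$ change in $\kappa_2$ divided by $L_t$ produces $\mathcal{O}(t^2)$ for $p_{12}$; and the remaining entries inherit $\mathcal{O}(t^2)$ directly from either the $q_{ij}$ comparison, the $\pi_{22}$ comparison, or the $(\kappa_2)^2/(\frak{m}+L_t)$ term in $p_{22}$ (where the difference of squares factors as $(\kappa_2^{\sG}-\kappa_2^{\sF})(\kappa_2^{\sG}+\kappa_2^{\sF})$, bounded by $\mathcal{O}(t)$, divided by $L_t$). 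The main technical obstacle will be the rigorous extension of the quasi-conformal comparison lemmas (\bbref{Lemmas}{L:qcIneq} and \bbref{}{L.kapqcf}) to the cusp setting $S_t^{\sG} \to S^{\sF}$, and in particular the careful treatment of $\kappa_2$ and $\frak{m}$, where the relevant harmonic differentials are not $L^2$ and where the pointwise QC inequality \beqref{eq:qcIneq2} must be paired with the Convergence theorem and appropriate truncations of the flat cylinders.
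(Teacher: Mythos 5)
Your proposal is correct in spirit, but it takes a genuinely different route from the paper. The paper's proof of \bbref{Theorem}{thm:nonsepF} does \emph{not} detour through $S_t^{\sG}$: instead, it directly rewrites the proof of \bbref{Theorem}{thm:nonsepG}, replacing the grafting family $(S_L)$ by the Fenchel-Nielsen family $(S_t)$ and the identity map $\phi : \frak{M} \to \tilde{\frak{M}}$ in \bbref{Lemma}{lem:capabreve} by the $(1+2t^2)^2$-quasi-conformal map from \bbref{Theorem}{thm:psRGRF}. This makes the error parameters $\nu_L, \rho_L$ of order $\mathcal{O}_{\rm A}(t^2)$ rather than exponentially small, and the rest of the estimate is purely the same elementary algebra you carry out at the end (with the same role for $1/L_t = \mathcal{O}(t)$). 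Crucially, \bbref{Lemma}{lem:capabreve} is phrased for \emph{compact} surfaces $S_L$, $\tilde{S}_{\tilde L}$; the paper only accesses $S^{\sF}$ as a limit via the \bbref{Convergence theorem}{thm:Converge}, so it never has to compare two non-compact surfaces directly. Your plan, by contrast, first applies \bbref{Theorem}{thm:nonsepG} to the base surface $S_t$ to pass to $P_{S_t^{\sG}}(\cdot)$ and then compares the two non-compact limits $S_t^{\sG}$ and $S^{\sF}$. That comparison requires you to port \bbref{Lemma}{L:qcIneq} and \bbref{Lemma}{L.kapqcf} to the cusp setting, which is precisely the extra work the paper's one-step argument avoids; you correctly identify this as the main technical burden.

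Two points to watch if you run your version to completion. First, when you invoke \bbref{Theorem}{thm:nonsepG} with $S_t$ as the base surface, the implied $\mathcal{O}_{\rm A}$-constants are controlled by the geometry of $S_t$, not of the fixed $S$: in particular, the constants depend on an upper bound for $\ell(\alpha_1)$ on $S_t$, which grows like $2\log(1/t)$, so the constants can grow polynomially in $1/t$ (see the proof of \bbref{Lemma}{lem:Bdkappa2}, where the ribbon width is exponentially small in $\ell(\alpha_1)$). This is harmless here because the exponential factor $e^{-\pi L_t}$ dominates any power of $1/t$, but it needs to be said explicitly, since the $\mathcal{O}_{\rm A}$ in the target statement is relative to $S$, not $S_t$. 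Second, your explanation for why $|\kappa_2^{\sG}-\kappa_2^{\sF}|$ is only $\mathcal{O}(t)$ is a bit off target: the real reason is that for $j\geq 3$ one can use the identity $\kappa_j = \scp{\tau_2}{\Hstar\tau_j}$ of \beqref{eq:Bkapj2} and apply the linear-in-$\nu_L$ bound \beqref{eq:capabreve15}, while $\kappa_2$ admits no such interior representation in terms of scalar products of $\tau_j$'s and must go through the quadratic bound \beqref{eq:capabreve2}, which only gives $\mathcal{O}(\sqrt{\rho_L}) = \mathcal{O}(t)$. Apart from these points, your entry-by-entry bookkeeping from \bbref{Definition}{def:Pmatpar} is correct and matches the exponents in the theorem.
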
%

\subsection{Proof of Theorems \ref{thm:nonsepGIntro} and \ref{thm:nonsepFIntro}}\label{sec:ProofThmNonsep}
We now prove \bbref{Theorems}{thm:nonsepG} and \bbref{}{thm:nonsepF} respectively  \bbref{Theorems}{thm:nonsepGIntro} and \bbref{}{thm:nonsepFIntro}.
 \begin{proof}[Proof of Theorem \ref{thm:nonsepG}]%
In  this proof the following notation is used: $\mT_{1,L}, \tau_{2,L}, \dots, \tau_{2g,L}$ is the relaxed dual basis on $S_L$, and $\mT_1^{\sG}, \tau_2^{\sG}, \dots, \tau_{2g}^{\sG}$ are the harmonic forms on $S^{\sG}$ as above for the case $\sR = S^{\sG}$. We recall the center part $\frak{M}$ common to all $S_L$ which also serves as center part of $S^{\sG}$ and abbreviate the essential energies
 \begin{equation*}%
\frak{m}_L = \Ess_{\frak{M}}(\mT_{1,L}), \quad \frak{m}^{\sG} = \Ess_{\frak{M}}(\mT_1^{\sG}).
 \end{equation*}%
The $q_{ij}$ in the inequalities that follow are the constants in the \bbref{Definition}{def:Pmatpar} for the matrix family $P_{S^{\sG}}(\lambda)$, 
\begin{equation}\label{eq:prf01}
q_{ij}(S^{\sG}) = \scp{\tau_i^{\sG}}{\tau_j^{\sG}}_{\frak{M}}, \quad i,j = 2,\dots,2g,
\end{equation}
where we recall that 
 \begin{equation}%
\scp{\omega}{\eta}_{\frak{M}}  \defeq \int_{\frak{M}}\omega \wedge \Hstar \eta + \int_{C(\gamma)} \omega^{\nlin} \wedge \Hstar \eta^{\nlin},
 \end{equation}%
for $\omega, \eta \in \Hv{S^{\sG}}$ (\beqref{eq:Kproduct} with $\frak{K} = \frak{M}$). In particular, $q_{ii}(S^{\sG}) = E(\tau_i^{\sG})$, $i=3,\dots,2g$, and $q_{22}(S^{\sG}) = \Esss{\frak{M}}(\tau_2^{\sG})$, where by \beqref{eq:Formulap22}
\begin{equation}\label{eq:prf02}
\Esss{\frak{M}}(\tau_2^{\sG}) = \pi_{22}+\Esss{\frak{M}}(\tau_1^{\sG})=\pi_{22} + \frak{m}^{\sG}.
\end{equation}
Finally, we recall the periods introduced in \beqref{eq:kappaj} which we now write $\kappa_{2,L}, \dots, \kappa_{2g,L}$ while $\kappa_2^{\sG}, \dots, \kappa_{2g}^{\sG}$ denote the corresponding periods on $S^{\sG}$.\\

By \bbref{Lemma}{L:sigma1ess}, \bbref{Lemma}{L.kappa1} and the  \bbref{Convergence theorem}{thm:Converge} 

\begin{equation}\label{eq:prf03}
\vert \frak{m}_L - \frak{m}^{\sG} \vert < e^{- 2 \pi L} \min\{\frak{m}_L, \frak{m}^{\sG}\},
 \quad 
\Gamma \leq \frak{m}_L, \frak{m}^{\sG} \leq \frak{z} \Gamma, 
\end{equation}%
with $\frak{z} < 2.3$.  Furthermore, for $j=2$ an application of \beqref{eq:capabreve2} in \bbref{Lemma}{lem:capabreve} together with \bbref{Theorem}{thm:Converge} yields %
\begin{equation}\label{eq:prf04}
\vert \kappa_{2,L} - \kappa_2^{\sG} \vert^2 <  3 \rho_L(1+ \mathfrak{z}\Gamma) q_{22}(S^{\sG}).
\end{equation}%
Here $\rho_L < e^{-2 \pi L}$ is from \beqref{eq:muL}. 
For $j \geq 3$ we have a stronger inequality via inequality \beqref{eq:capabreve15} in \bbref{Lemma}{lem:capabreve}: taking $\omega = \tau_{2,L}$, $\eta = \star \tau_{j,L}$ and   $\tilde{\omega} = \tau^{\sG}_{2}$, $\tilde{\eta} = \star \tau^{\sG}_{j}$ we get, in combination with \beqref{eq:Bkapj2} and \bbref{Theorem}{thm:Converge},
 \begin{equation}\label{eq:prf05}
| \kappa_{j,L} - \kappa_j^{\sG} |   = \vert \scp{\tau_{2,L}}{ \star \tau_{j,L}}_{\frak{M}} - \ \scp{\tau^{\sG}_{2}}{ \star \tau^{\sG}_{j}}_{\frak{M}}\vert \leq \nu_L \ssqrt{q_{22}(S^{\sG})} \ssqrt{q_{jj}(S^{\sG})}, \quad j=3, \dots, 2g,
 \end{equation}%
 
where $\nu_L < e^{-2 \pi L}$ is again from \beqref{eq:muL}.
Finally, we recall \bbref{Lemma}{lem:Bdkappa2} and \bbref{Lemma}{L.Bkappa}
\begin{equation}\label{eq:prf06}
\vert \kappa_{2,L}\vert \leq K, \quad \vert \kappa_{j,L} \vert \leq \sqrt{\Ess(\tau_{2,L})\, E(\tau_{j,L})}, \quad  j = 3, \dots, 2g,
\end{equation}%
where $K$ is a constant estimated in terms of $g$, $\sys(\frak{M})$ and $\ell(\alpha_1)$ on $S$. Here $\sys(\frak{M})$ is the length of the shortest closed geodesic on $\frak{M}$ and thus the shortest closed geodesic on $S$ different from $\gamma$. With the notation used for the definition of the $\mathcal{O}_{\rm A}$-symbol (below \beqref{eq:OAsymbol}): $\sys(\frak{M}) = \sys_{\gamma}(S)$.

By \beqref{eq:Esskap} and \beqref{eq:sigmaT1} (where $\Ess = \Ess_{\frak{M}}$)  we have
\begin{equation*}
p_{11}(S_L) = E(\sigma_{1,L})=\frac{1}{\Ess_{\frak{M}}(\mT_{1,L})+L}=\frac{1}{\frak{m}_L+L}.
\end{equation*}%
Together with \beqref{eq:prf03} this yields
 \begin{equation}\label{eq:prf08}
p_{11}(S_L) = \frac{1}{\frak{m}^{\sG}+L}+\rho_{11}^{\sG}(L)  \text{ \ with \ } \vert \rho_{11}^{\sG}(L) \vert \leq \frac{e^{-2\pi L}}{\frak{m}^{\sG}+L}.
\end{equation}%
By \beqref{eq:sigmakapj}, \beqref{eq:Fkapp1}  and \bbref{Lemma}{lem:sigma1ortho} we have
 \begin{equation}\label{eq:prf09}%
p_{1j}(S_L) = \kappa_{j,L} p_{11}(S_L), \quad p_{ij}(S_L) = \scp{\tau_{i,L}}{\tau_{j,L}} + \kappa_{i,L}\kappa_{j,L}p_{11}(S_L), \quad i,j = 2,\dots, 2g.
\end{equation}%
We now write, for $j=2,\dots,2g$,
\begin{equation}\label{eq:prf10}
p_{1j}(S_L) =  \frac{\kappa_j^{\sG}}{\frak{m}^{\sG}+L} + \rho_{1j}^{\sG}(L), \quad\text{with}\quad \rho_{1j}^{\sG}(L)=\frac{\kappa_{j,L}-\kappa_j^{\sG}}{\frak{m}^{\sG}+L} + \frac{\kappa_{j,L}(\frak{m}^{\sG}-\frak{m}_L)}{(\frak{m}_L + L)(\frak{m}^{\sG}+L)},
\end{equation}
By \bbref{Lemma}{lem.Bdpkk} and \bbref{Lemma}{lem:BdEsstau2} the energies $q_{kk}(S^{\sG})$, $\Ess(\tau_{2,L})$, $E(\tau_{j,L})$ in \beqref{eq:prf05}, \beqref{eq:prf06} are effectively bounded above in terms of $g$,  $\sys_{\gamma}(S)=\sys(\frak{M})$ and $\max\{\ell(\alpha_1), \dots, \ell(\alpha_{2g})\}$. The same holds, by \bbref{Theorem}{eq.bdscapaM}, for $\Gamma$. Hence, by \beqref{eq:prf03} - \beqref{eq:prf06}
\begin{equation}\label{eq:prf11}
\rho_{12}^{\sG}(L) = \mathcal{O}_{\rm A}(\tfrac{1}{L}e^{-\pi L}), 
\quad
\rho_{1j}^{\sG}(L) = \mathcal{O}_{\rm A}(\tfrac{1}{L}e^{-2\pi L}), \ j=3,\dots,2g. 
\end{equation}
In a similar way, using \beqref{eq:prf09}, we write, for $i,j = 2, \dots, 2g$, 
\begin{align*}
&p_{ij}(S_L) = \scp{\tau_i^{\sG}}{\tau_j^{\sG}}_{\frak{M}}+[L]_{ij} + \frac{\kappa_i^{\sG}\kappa_j^{\sG}}{\frak{m}^{\sG}+L} + \rho_{ij}^{\sG}(L), \quad \text{with}\\
&\rho_{ij}^{\sG}(L) = r_{ij}^{\sG}(L) +\frac{\kappa_i^{\sG}r_j + \kappa_j^{\sG}r_i + r_i r_j }{\frak{m}^{\sG}+L+ r_L}- \frac{\kappa_i^{\sG}\kappa_j^{\sG}r_L}{(\frak{m}^{\sG}+L+r_L)(\frak{m}^{\sG}+L)},\notag
 \end{align*}%
where $[L]_{22} = L$, $[L]_{ij} = 0$, if $(i,j) \neq (2,2)$, and
\begin{equation*}
r_{ij}^{\sG}(L) = \scp{\tau_{i,L}}{\tau_{j,L}}_{\frak{M}}-\scp{\tau_i^{\sG}}{\tau_j^{\sG}}_{\frak{M}}, \quad r_i = \kappa_{i,L}-\kappa_i^{\sG}, \text{ \ \ and \ \ } r_L = \frak{m}_L-\frak{m}^{\sG}. 
\end{equation*}
Using \beqref{eq:prf05} we get
 \begin{equation*}
\vert r_{ij}^{\sG}(L)\vert \leq \nu_L \Ess_{\frak{M}}^{1/2}(\tau_i^{\sG})\Ess_{\frak{M}}^{1/2}(\tau_j^{\sG}) = \nu_L\ssqrt{q_{ii}(S^{\sG})}\ssqrt{q_{jj}(S^{\sG})}, \quad i,j = 2,\dots, 2g.
 \end{equation*}%

By \beqref{eq:prf01}, \beqref{eq:prf02}, $\scp{\tau_i^{\sG}}{\tau_j^{\sG}}_{\frak{M}}+[L]_{ij} = q_{ij}(S^{\sG})$, if $(i,j) \neq (2,2)$, and $\scp{\tau_2^{\sG}}{\tau_2^{\sG}}_{\frak{M}}+[L]_{22}=\frak{m}^{\sG}+L + \pi_{22}$. We thus have
 \begin{align}\label{eq:prf14}
p_{ij}(S_L) &= q_{ij}(S^{\sG}) + \frac{\kappa_i^{\sG}\kappa_j^{\sG}}{L} + \rho_{ij}^{\sG}(L), \ i,j \geq 2, (i,j) \neq (2,2),\notag\\
p_{22}(S_L) &= \frak{m}^{\sG}+L + \pi_{22}^{\sG} + \dfrac{(\kappa_2^{\sG})^2}{\frak{m}^{\sG}+L} + \rho_{22}^{\sG}(L),
 \end{align}%
and, in a similar way as before,
\begin{equation}\label{eq:prf15}
\rho_{22}^{\sG}(L) = \mathcal{O}_{\rm A}(\tfrac{1}{L}e^{- \pi L}), \quad \rho_{ij}^{\sG}(L) = \mathcal{O}_{\rm A}(e^{- 2\pi L}), \ i,j=2, \dots, 2g, (i,j) \neq (2,2).
\end{equation}
With \beqref{eq:prf08} and the tandems \beqref{eq:prf10}, \beqref{eq:prf11} and \beqref{eq:prf14}, 
\beqref{eq:prf15} the proof of \bbref{Theorem}{thm:nonsepGIntro} respectively, \bbref{Theorem}{thm:nonsepG} is now complete.
 \end{proof}%

\begin{proof}[Proof of Theorem \ref{thm:nonsepF}]%
In  this proof the notation is used as in the proof before, except that in this case we use the Fenchel-Nielsen construction, which changes any notation ${\cdot}^{\sG}$ to ${\cdot }^{\sF}$. In this case we have
\begin{equation*}
             L = L_t = \frac{\pi}{t}-\frac{2\arcsin t}{t}. 
\end{equation*}
We recall that
 \begin{equation*}%
\frak{m}_L = \Ess_{\frak{M}}(\mT_{1,L}), \quad \frak{m}^{\sF} = \Ess_{\frak{M}}(\mT_1^{\sF}).
 \end{equation*}%
In view of the geometric bounds  from \bbref{Section}{sec:sigT1} and \bbref{}{sec:RelaxDual} the theorem is a consequence of the following inequalities.  
 \begin{align}\label{eq:formulasnonsepF}%
\nonumber
&\vert  \rho_{11}^{\sF}(L) \vert = \mathcal{O}_{\rm A} \left( t^4 \right) ,  \quad \vert  \rho_{12}^{\sF}(L) \vert = \mathcal{O}_{\rm A} \left( t^2 \right), \quad \vert  \rho_{1j}^{\sF}(L) \vert = \mathcal{O}_{\rm A} \left( t^3 \right) ,
\quad  \text{  \ \ for \ \ }  j = 3, \dots, 2g,\\
&\vert \rho_{ij}^{\sF}(L) \vert = \mathcal{O}_{\rm A} \left( t^2 \right), \quad \text{ \ \ for \ \ } i,j = 2, \dots, 2g.
\rule{40ex}{0ex} 
\end{align}%
Following the lines of the proof of \bbref{Lemma}{L.kappa1}, but adding a quasiconformal map to compare the energies on $S_L$ and $S^{\sF}$ together with the   \bbref{Convergence theorem}{thm:Converge} we obtain now 

\begin{equation}\label{eq:prf01f}%
\vert \frak{m}_L - \frak{m}^{\sF} \vert = \mathcal{O}_{\rm A}(t^2).
 \end{equation}%

The estimate for $\rho_{11}^{\sF}(L)$ then follows from \beqref{eq:EnSigma1} and the above inequality. \\  
The remaining inequalities follow in the same steps as in the previous proof, noting that now $\nu_L$ and $\rho_L$ are of order $ \mathcal{O}_{\rm A}(t^2)$ as the quasi-conformal map is involved. Furthermore by definition $\frac{1}{L} = \frac{1}{L_t}$ is of order $ \mathcal{O}_{\rm A}(t)$.
\end{proof}%

\newpage

\noindent Peter Buser \\
\noindent Department of Mathematics, Ecole Polytechnique F\'ed\'erale de Lausanne\\
\noindent Station 8, 1015 Lausanne, Switzerland\\
\noindent e-mail: \textit{peter.buser@epfl.ch}\\
\\
\\
\noindent Eran Makover\\
\noindent Department of Mathematics, Central Connecticut State University\\
\noindent 1615 Stanley Street, New Britain, CT 06050, USA\\
\noindent e-mail: \textit{makovere@ccsu.edu}\\
\\
\\
\noindent Bjoern Muetzel \\
\noindent Department of Mathematics, Dartmouth College \\
\noindent 27 N. Main Street, Hanover, NH 03755, USA\\
\noindent e-mail: \textit{bjorn.mutzel@gmail.com}\\ 
\\
\\
Robert Silhol\\	
\noindent Department of Mathematics, Universit\'e Montpellier 2 \\
\noindent place Eug\`ene Bataillon, 34095 Montpellier cedex 5, France \\
\noindent e-mail: \textit{robert.silhol@gmail.com}\\

\end{document}